\documentclass[dvips]{article}%
\usepackage[english]{babel}
\usepackage{graphicx}
\usepackage{a4wide}
\usepackage{amssymb}
\usepackage{array}
\usepackage{subfigure}
\usepackage{amsfonts}
\usepackage{amsmath}
\usepackage{color}
\setcounter{MaxMatrixCols}{30}

\providecommand{\U}[1]{\protect\rule{.1in}{.1in}}
\newtheorem{theorem}{Theorem}[section]

\newtheorem{lemma}[theorem]{Lemma}

\newtheorem{proposition}[theorem]{Proposition}
\newtheorem{remark}[theorem]{Remark}

\newenvironment{proof}[1][Proof]{\noindent\textbf{#1.}}{\ \rule{0.5em}{0.5em}}

\catcode`\@=11

\@addtoreset{equation}{section}

\begin{document}

\title{Existence of solutions describing accumulation in a thin-film flow}
\author{C.~M. Cuesta\thanks{University of the Basque Country (UPV/EHU), 
Departamento de Matem\'aticas, Aptdo. 644, 48080 Bilbao,Spain.}, 
J.~J.~L. Vel\'{a}zquez\thanks{Institut f\"{u}r Angewandte Mathematik,
Universit\"{a}t Bonn. Endenicher Allee 60, 53115 Bonn, Germany.} }
\date{}
\maketitle

\begin{abstract}
We consider a third order non-autonomous ODE that arises as a model of fluid accumulation 
in a two dimensional thin-film flow driven by surface tension and gravity. With the appropriate matching
 conditions, the equation describes the inner structure of solutions 
around a stagnation point. In this paper we prove the existence of solutions that satisfy this problem.
In order to prove the result we first transform the equation into a four dimensional dynamical system.
In this setting the problem consists of finding heteroclinic connections that are the intersection 
of a two dimensional centre-stable manifold and a three-dimensional centre-unstable one. We then use 
a shooting argument that takes advantage of the information of the flow in the far-field, part of the analysis also requires 
the understanding of oscillatory solutions with large amplitude. The far-field is represented
 by invariant three-dimensional subspaces and the flow on them needs to be understood, most of the necessary results in 
this regard are obtained in \cite{CV}. This analysis focuses on the understanding of oscillatory solutions 
and some results are used in the current proof, although the structure of oscillations is somewhat more complicated.
\end{abstract}
\tableofcontents 
\section{Introduction}\label{sec:intro}
In this paper is to prove the existence of solutions of 
\begin{equation}\label{S2E3}%
\left(  \frac{d^{3}H}{d\xi^{3}} + \xi^{2} + a\right)  H^{3}=1\,,
\quad a\in\mathbb{R}\,. 
\end{equation}
that satisfy the following behaviour
\begin{equation}\label{S2E7}%
H\sim\frac{1}{|\xi|^{\frac{2}{3}}}\quad\mbox{as}\quad|\xi| \to\infty\,. 
\end{equation}
This equation has been deduced in \cite{CV3} (see also \cite{CV4}). 
It arises in a two dimensional model describing steady coating of a bumpy
surface by a thin-film approximation. In particular (\ref{S2E3}) results in the particular case 
that the motion of the fluid is driven by a balance of capillarity
and gravity effects. 

In some regions the curvature of the substrate induces
capillary forces of the same order of magnitude than the gravitational ones.
In the steady regime the model describing such flows has the form (cf.
(\cite{CV3})):
\begin{equation}\label{A1}%
\frac{\partial}{\partial s}
\left(\left(Q(s)+\varepsilon\frac{\partial^{3}h}{\partial s^{3}}\right)h^{3}\right)=0\,,
\quad\varepsilon > 0
\end{equation}
where we have neglected some non-relevant terms. The variable $s$ stands for
the arc-length that parametrises the substrate, and $h$ is the height of the
fluid over this surface. The parameter $\varepsilon$ is the ratio of the
characteristic height of the fluid and the characteristic radius of curvature of the
substrate. The function $Q(s)$ describes the balance between gravitational and
the capillary forces induced by the geometry of the substrate, it measures the
tendency of the fluid to move in the tangential direction to the substrate as
a result of the afore mentioned forces.

If the function $Q(s)$ has a constant sign, the motion of the fluid takes
place always in the same direction. In such case, (\ref{A1}) can be
approximated by the leading order term of (\ref{A1}):%
\begin{equation}\label{A2}%
\frac{\partial}{\partial s}(Q(s) h^{3}) =0\,. 
\end{equation}
However, this approximation breaks down and it cannot be uniformly valid for
arbitrary values of $s$ if $Q(s)$ changes sign. In such cases (\ref{A2})
predicts the onset of regions, where $Q(s)$ is close to zero, with infinite
height $h$, i.e. the fluid accumulates in those regions. As a consequence, the
approximation (\ref{A2}) must be replaced by the model (\ref{A1}). In
the particular case in which in most of the substrate $Q(s)$ is positive, but
there exists a sufficiently small region (of size $\varepsilon^{\frac{3}{17}}$
to be precise), where $Q(s)=0$, a boundary layer analysis shows that, under
suitable non-degeneracy conditions, the height of the fluid can be approximated
by means of (\ref{S2E3}), the height of the fluid becoming of order
$\varepsilon^{-\frac{2}{17}}$. This asymptotic analysis shows also that the 
solutions of (\ref{S2E3}) describing the stationary flows in those regions
must satisfy (\ref{S2E7}).

Equations similar to (\ref{A1}) where the main driving terms are the gravity 
and the curvature of the substrate, have been obtained, in a slightly different context, 
in \cite{MyersSolidII} and \cite{MyersSolidI}. This model can be obtained also as a 
particular case of the ones considered in \cite{Royetal} for specific choices 
of the parameters. See also \cite{Howell} for a model that neglects gravity. 
Similar problems have been investigated in relation with industrial applications, 
such as the drainage of (metal) foams (e.g. \cite{StockerHosoi}), manufacture of lenses (e.g. \cite{Howell} and  \cite{JChiniK}, 
although in the later case the effect of gravity can be neglected. These works offer numerical as well as formal 
(using perturbation methods) results. 

It is our aim to study the solutions of (\ref{S2E3})-(\ref{S2E7}) rigorously. 
The main result of the paper is the following:
\begin{theorem}\label{Main} 
For any $a\in\mathbb{R}$ there exists a solution of (\ref{S2E3})
satisfying (\ref{S2E7}).
\end{theorem}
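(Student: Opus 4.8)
The plan is to recast the third-order equation \eqref{S2E3} as a four-dimensional dynamical system and to locate the desired solution as a heteroclinic orbit. Writing $u_1=H$, $u_2=H'$, $u_3=H''$ and adjoining $\xi$ itself as a fourth dependent variable $u_4=\xi$, the equation becomes the autonomous system $u_1'=u_2$, $u_2'=u_3$, $u_3'=H^{-3}-u_4^2-a$, $u_4'=1$. The far-field behaviour \eqref{S2E7} forces $H\to 0$ like $|\xi|^{-2/3}$ as $\xi\to\pm\infty$, so in both limits the orbit must approach the invariant subspaces sitting ``at $u_4=\pm\infty$''. I would desingularise these ends — e.g. by a Poincar\'e-type change of variables $\eta=1/\xi$ together with a rescaling of $H$ by the expected power $|\xi|^{-2/3}$ — turning each far-field regime into a genuine invariant three-dimensional manifold carrying a flow that can be analysed explicitly. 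The leading behaviour \eqref{S2E7} is then an equilibrium (or a simple invariant set) on these desingularised boundary manifolds, and the results of \cite{CV} on the flow there, in particular the classification of oscillatory solutions of large amplitude, describe its local stable/unstable structure.

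Next I would count dimensions. After the desingularisation, the condition of matching \eqref{S2E7} at $\xi\to-\infty$ selects a centre-unstable manifold $\mathcal M_-$ of the boundary equilibrium; tracking dimensions through the rescaling shows $\dim\mathcal M_-=3$. Likewise the matching at $\xi\to+\infty$ selects a centre-stable manifold $\mathcal M_+$ with $\dim\mathcal M_+=2$. In the four-dimensional phase space a generic intersection of a $3$-manifold and a $2$-manifold is one-dimensional, i.e.\ exactly an orbit; so the expected answer is that such heteroclinic connections exist, and the theorem asserts one does for every $a$. The strategy to produce it is a shooting argument: fix a point on $\mathcal M_-$ depending on the free parameters of that manifold together with $a$, flow forward, and show that for a suitable choice the trajectory lands on $\mathcal M_+$. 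To set this up rigorously I would introduce a ``target'' functional measuring the deviation of the forward orbit from the correct decaying behaviour at $+\infty$ — concretely, extract the amplitude and phase of the oscillation the orbit develops — and establish continuity and suitable monotonicity or winding properties of this functional as the shooting parameter sweeps its range, so that an intermediate-value / topological-degree argument forces a zero.

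The key inputs from \cite{CV} enter in controlling the global behaviour of the shot trajectory: away from the boundary manifolds the orbit oscillates with growing amplitude (the $\xi^2$ term acts like a time-dependent restoring coefficient, giving WKB-type oscillations), and I need to know that these oscillations do not cause $H$ to vanish — which would be a finite-time singularity of the original equation since the nonlinearity is $H^{-3}$ — before the orbit has a chance to settle onto $\mathcal M_+$. So a central technical step is an a priori lower bound on $H$ along the relevant trajectories, obtained by combining an energy-type quantity for the oscillator with the explicit structure of the large-amplitude oscillatory solutions analysed in \cite{CV}; the paper's abstract already flags that ``the structure of oscillations is somewhat more complicated'' here, so this is where the real work lies. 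I expect the main obstacle to be exactly this: showing that the shooting curve in parameter space is continuous and sweeps across the target set without the trajectory escaping to a singularity, i.e.\ proving the compactness/continuation estimates that make the topological shooting argument go through uniformly in $a$. Once those estimates and the connectedness of the shooting curve are in hand, the intermediate-value step yields, for each $a\in\mathbb R$, a heteroclinic orbit and hence a solution of \eqref{S2E3}–\eqref{S2E7}, proving Theorem~\ref{Main}.
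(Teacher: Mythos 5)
Your high-level skeleton agrees with the paper's: autonomize the ODE by adjoining a variable for $\xi$, compactify the ends (the paper uses $\xi=\tan\theta$ together with $H=(\xi^2+1)^{-1/3}\Phi$ rather than a Poincar\'e chart, but these are equivalent ideas), obtain a $3$-dimensional centre-unstable manifold at one end and a $2$-dimensional centre-stable manifold at the other, and realize the solution as a transverse heteroclinic found by shooting. The dimension count you give is exactly the paper's. But two of the ingredients that make the argument actually close are missing or pointed in the wrong direction.

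First, the orientation and structure of the shooting. You propose to shoot \emph{forward} from the $3$-dimensional manifold $\mathcal M_-$. Since trajectories there form a $2$-parameter family, this would require a genuinely two-dimensional topological-degree argument, and you would then need to control a full $2$-parameter sweep against singularity formation. The paper instead shoots \emph{backward} from the $2$-dimensional manifold $\mathcal V_+$, so the shooting is parametrised by a single real $\nu$ (the coordinate along the stable eigendirection, Proposition~\ref{set:shooting} and Lemma~\ref{trivia}). What then makes the one-parameter intermediate-value argument work is a trichotomy that you do not identify: as $\tau\to-\infty$ the only possible fates are $\Phi\to\infty$, $\Phi\to 0$ at finite $\tau_*$, or $\Phi$ globally bounded and bounded away from $0$. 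Lemmas~\ref{continfinity} and~\ref{contzero} show the first two fates define disjoint open sets of $\nu$, and Lemma~\ref{start:shooting} shows both are nonempty; hence some $\nu$ gives the third fate. Your proposed ``amplitude/phase target functional'' and winding argument are not a substitute for this trichotomy; without it, there is no codimension-one structure for an IVT to bite on.

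Second, and more seriously, the a priori bound preventing finite-time blow-up or vanishing. You suggest deriving a lower bound on $H$ ``by combining an energy-type quantity for the oscillator with the explicit structure of the large-amplitude oscillatory solutions''. That does not match how the proof actually goes, and I see no way an energy estimate could supply what is needed: the hard content (Sections~\ref{sec:dynamics}--\ref{sec:proof:osci}) is a \emph{matched-asymptotics contradiction} showing that if neither of the two ``bad'' fates occurs yet $\Phi$ is unbounded, then the sequence of local maxima $\Phi(\tau_n^\ast)$ would have to \emph{decrease}, and the local minima $\Phi(\tau_n^{min})$ to \emph{increase}. The outer regions near each maximum are compared to fifth-order polynomial solutions of $d^3H/d\xi^3=-\xi^2$ (Appendix~\ref{sec:polynomials}); the bouncing regions near each minimum are controlled by the phase-plane separatrix of $d^3H/d\xi^3=H^{-3}$ (Appendix~\ref{sec:summary:II} and Lemma~\ref{MatchingLemma}); and it is the requirement that the outer polynomial emerging from a bounce have a \emph{double zero} that pins down the iteration $\Phi(\xi_{n-1}^\ast)\simeq C(\Phi(\xi_n^\ast))^{10}$ and yields the contradiction. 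No Lyapunov-type quantity of the kind you invoke is available across the bouncing regions, because there the dominant balance changes and the problem is genuinely singularly perturbed. This inner/outer matching analysis is the technical core of the paper, and your sketch does not engage with it.

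In short: correct global picture (autonomization, compactification, $3/2$-dimensional manifolds, shooting), but the one-parameter trichotomy-based shooting from $\mathcal V_+$ and the polynomial/separatrix matching that controls the oscillations are both missing, and the replacements you suggest (two-parameter degree argument; energy estimate) would not go through.
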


We sketch the main ideas in the proof of Theorem~\ref{Main}. We first
observe that the terms $\xi^{2}H^{3}$ and $1$ in (\ref{S2E3}) give the natural
scaling $H\sim|\xi|^{-\frac{2}{3}}\Phi$ with $\Phi\sim1$ (cf. (\ref{S2E7})),
which gives the leading order behaviour of (\ref{S2E3}), namely,
\[
\left(  |\xi|^{-\frac{8}{3}} \frac{d^{3}\Phi}{d\xi^{3}}+1+a\,|\xi
|^{-2}\right)  \Phi^{3}=1\,, \quad|\xi|\gg1\,.
\]
A change of variables with behaviour 
$\tau\sim\frac{9}{17}\xi|\xi|^{\frac{8}{9}}$ as $|\xi|\to\infty$ 
gives the dominant balance problem
\[
\left(  \frac{d^{3}\Phi}{d\tau^{3}}+1\right)  \Phi^{3}\sim1 \quad
\mbox{as}\quad|\xi|\to+\infty\,,\quad \Phi\to 1\mbox{as}\quad |\tau|\to\infty\,.
\]
In such set of variables $\Phi$ and $\tau$ (\ref{S2E3}) becomes an autonomous dynamical system of the form
\begin{eqnarray*}
\frac{d^3\Phi}{d\tau^3}+1  =  \frac{1}{\Phi^{3}}-(a-1)(\cos\theta)^{2}-F(\theta)\,,\quad
\frac{d\theta}{d\tau}  = (\cos\theta)^{\frac{26}{9}} \,, 
\end{eqnarray*}
that we shall denote by $(D)$, for the unknown 
$(\Phi,d\Phi/d\tau,d^{2}\Phi/d\tau^{2},\theta) \in\mathbb{R}^{+}\times\mathbb{R}^{2}\times[-\pi/2,\pi/2]$.
Here $\theta$ is defined by $\xi=\tan\theta$ and the function $F$ is a linear combination of 
$\Phi$ and its derivatives with coefficients that depend only on $\theta$ and that vanish at 
$\theta=\pm\pi/2$. Thus, this system has the property that the three dimensional subspaces 
$\{\theta=\pm\pi/2 \}$ are invariant and the flow on them is described by the ODE 
\begin{equation} \label{ODEtapas}%
\frac{d^{3}\Phi}{d\tau^{3}}+1=\frac{1}{\Phi^{3}}\,. 
\end{equation}
The system associated to (\ref{ODEtapas}) for the unknown
$(\Phi,d\Phi/d\tau,d^{2}\Phi/d\tau^{2})$ was studied in \cite{CV}. 
It has one single critical point, $P_{s}=(1,0,0)$ and therefore $(D)$ has two critical points,
 $p_{-}=(1,0,0,-\pi/2)$ and $p_{+}=(1,0,0,\pi/2)$. Then, the solutions of (\ref{S2E3}) satisfying 
the matching conditions (\ref{S2E7}) correspond to solutions of $(D)$ contained in the trajectories 
that connect the critical point $p_{-}$ as $\tau\to-\infty$ to $p_{+}$ as $\tau\to\infty$. 
Or equivalently, they are contained in heteroclinic orbits connecting these two critical points.

The existence of a heteroclinic orbit for $(D)$ is proved by means of a shooting argument 
in the direction of decreasing $\tau$. The shooting starts close to the invariant manifold 
$\{\theta=\pi/2\}$ and the final argument will require information of the flow on the 
invariant manifold $\{\theta=-\pi/2\}$. For that reason we shall need the following 
information on (\ref{ODEtapas}). First, that the critical point $P_{s}$ is hyperbolic and has
a one-dimensional stable manifold and a two-dimensional stable manifold.
Secondly, we proved in \cite{CV} that the only possible asymptotic behaviour
of solutions on the stable manifold correspond to either
\begin{equation}\label{B1}
\lim_{\tau\to-\infty}\Phi(\tau)=\infty
\end{equation}
or to
\begin{equation}\label{B2}
\lim_{\tau\to(\tau_{*})^{+}}\Phi(\tau)= 0\quad\mbox{with} \quad\tau_{*}>-\infty\,.
\end{equation}
We shall also recall later that (\ref{ODEtapas}) has a increasing Lyapunov 
function, and that this in particular guarantees the non-existence of periodic
orbits.

To start the shooting we first prove that there exists an invariant 
two-dimensional centre-stable manifold $\mathcal{V}_{+}$ locally defined 
near the point $p_{+}$. All the trajectories associated to $(D)$ 
whose starting initial data is contained in $\mathcal{V}_{+}$ converge to
$p_{+}$ as $\tau\to\infty$. We can parametrise the set of trajectories
in $\mathcal{V}_{+}$ by means of one real parameter $\nu$ taking values in
some large interval. The behaviours (\ref{B1}) and (\ref{B2}) define two sets of values $\nu$. 
We prove that for very large values of $\nu$ the corresponding trajectory satisfies (\ref{B1}). 
On the contrary, if $\nu$ is very negative we show that there exists a 
$\tau_{\ast}=\tau_{\ast}(\nu)$ such that (\ref{B2}). 

It turns out that the sets of values $\nu$ such that the corresponding
trajectories satisfy either (\ref{B1}) or (\ref{B2}) are disjoint open sets. This
implies, the existence of $\nu$'s for which the corresponding trajectory does not satisfy neither 
(\ref{B1}) nor (\ref{B2}).

The final step is to show that the trajectories associated to such $\nu$
are globally defined in $\tau\in\mathbb{R}$ and that they satisfy
\begin{equation}\label{B3}
\left(  \frac{1}{\Phi}+\Phi\right)  +\left|  \frac{d\Phi}{d\tau}\right|
+\left|  \frac{d^{2}\Phi}{d\tau^{2}}\right|  \leq C\,,\quad\mbox{for any}\quad
\tau\in\mathbb{R} 
\end{equation}
for some $C>0$, and that
\begin{equation}\label{B4}%
\lim_{\tau\to-\infty}\theta(\tau)=-\frac{\pi}{2} \,.
\end{equation}
The idea is that if (\ref{B3}) and (\ref{B4}) are satisfied 
we can use the fact that the dynamics of $(D)$ 
become close to the ones associated to the trajectories contained in the unstable manifold
of $P_{s}$ for the system associated to (\ref{ODEtapas}) and the trajectories have no alternative 
but to approach $p_{-}$ as $\tau\to-\infty$.

The most technical part of the paper is the proofs of (\ref{B3})
and of (\ref{B4}). These require to show that oscillatory behaviours with
large amplitude for the solutions of $(D)$ as $\tau\to-\infty$ must have a
decreasing amplitude for decreasing $\tau$ if neither (\ref{B1}) nor
(\ref{B2}) take place. The key point is that the structure of oscillatory solutions can be identified by looking 
at the several asymptotic regimes of (\ref{S2E3}). There are, in particular,
two very distinctive ones. For instance, the balance $\xi^{2/3}H \sim\infty$ 
for very negative $\xi$ will be relevant in our analysis. In this case the behaviour 
of solutions is described by
\begin{equation}\label{ecuacionminima1}
\frac{d^{3} H}{d\xi^{3}}= - \xi^{2} \,.
\end{equation}
This equation can be integrated giving that, in such regions, $H$ behaves like
a fifth order polynomial. The solutions of (\ref{ecuacionminima1}) are in fact a 
two-parameter family of polynomials, as we shall see. 
On the other hand, if $\xi^{2/3}H \sim0$ on bounded intervals, the dominant
balance there is given by the equation
\begin{equation}\label{ecuacionminima2}
\frac{d^{3} H}{d\xi^{3}} = \frac{1}{H^{3}} \,.
\end{equation}
The analysis of (\ref{ecuacionminima2}) plays a crucial role in our proofs and was already
studied in \cite{CV}. 
The possibility of alternating regions where either (\ref{ecuacionminima1}) or 
(\ref{ecuacionminima2}) dominates, builds up a scenario where solutions with 
large oscillations exist: The bouncing region of the oscillations are described by 
(\ref{ecuacionminima2}) and the maximum amplitude regions are close to solutions 
of (\ref{ecuacionminima1}). This phenomenon has been already observed for 
(\ref{ODEtapas}) in \cite{WilsonJones} and explored rigorously in \cite{CV}.

In order to prove (\ref{B1}) and (\ref{B2}) we exploit this mechanism of oscillation. 
We argue by contradiction and assume first that (\ref{B3}) does not hold. 
This gives (after a number of technical lemmas) that there exists a sequence 
$\{\tau_{n}^{\ast}\}$ with $\lim_{n\to\infty}\tau_{n}^{\ast}=-\infty$ such that 
$\Phi(\tau_{n}^{\ast})$ is a local maximum and $\lim_{n\to\infty}\Phi(\tau_{n}^{\ast})=\infty$. 
We use that the oscillatory solutions with very large amplitude for very negative values of $\tau$ 
can be approximated, after a suitable rescaling, by a sequence of functions 
$|\xi|^{2/3}\mathcal{H}_{n}(\xi)$ where each $\mathcal{H}_{n}$ solves (\ref{ecuacionminima1}) 
in intervals $[\xi(\tau_{n+1}^{min}),\xi(\tau_{n}^{min})]$. The values $\tau_{n}^{min}$ being such that 
$\Phi(\tau_{n}^{min})$ is the minimum in $(\tau_{n}^{\ast},\tau_{n-1}^{\ast})$. In particular, in such intervals $\mathcal{H}_{n}(\xi)$ 
are close to a fifth order polynomial solving (\ref{ecuacionminima1}). 
The matching between two consecutive such functions is done into the inner region 
where $\Phi$ and $\mathcal{H}_{n}$ become close to $0$, as it turns out, this inner regions lies around $\tau_{n}^{min}$. 
As we have mentioned the dynamics in such bouncing region are dominated by (\ref{ecuacionminima2}) and the rigorous matching can be adapted 
from that performed for (\ref{ODEtapas}) (see \cite{CV}): the study of (\ref{ecuacionminima2}) 
reduces to the one of a phase-plane analysis in which the bouncing can be encoded into the 
behaviour of a separatrix. This object attracts trajectories for increasing $\xi$, 
implying that its behaviour is generic. Reading off this behaviour into the functions $\mathcal{H}_{n}$ 
implies that in the outer region they behave as a polynomial with a double zero near $\xi(\tau_{n+1}^{min})$. 
This in particular reduces the family of polynomials that give the outer 
region around each $\tau_{n}^{\ast}$ to a one-parameter family. Moreover, this analysis allows to get information on the relative size of 
consecutive maxima and minima, namely that the sequence of the maximum values decreases and that the sequence of 
minimum values increases (as $n \to \infty$) and these contradict the assumption that (\ref{B3}) does not hold.

The paper is organised as follows. Section~\ref{sec:preliminaries} is divided in three 
preliminary parts. First in Section~\ref{sec:summary} we give some results concerning (\ref{ODEtapas}), 
most of which are proved in \cite{CV}. In Section~\ref{sec:DS} we reformulate (\ref{S2E3}) as a four 
dimensional dynamical system and reformulate Theorem~\ref{Main} in this setting. 
The third part is Section~\ref{sec:loc-analysis} where we prove the existence of the centre-unstable 
manifold around $p_{+}$. Section~\ref{sec:3} is devoted to the analysis of the behaviours 
(\ref{B1}) and (\ref{B2}) for $(D)$; in Section~\ref{sec:stability} we show stability under small 
perturbations of solutions that satisfy either of these properties, and in Section~\ref{sec:classes} 
we give necessary conditions on solutions of (\ref{S2E3}) to satisfy either (\ref{B1}) 
or (\ref{B2}). With the analysis carried out up to here we can then prove in Section~\ref{sec:4} that there exist 
solutions on $\mathcal{V}_{+}$ that do not satisfy neither (\ref{B1}) nor (\ref{B2}). 
We continue by proving that these trajectories of $\mathcal{V}_{+}$ do satisfy (\ref{B3}) and (\ref{B4}).
In order to do that we first find in Section~\ref{sec:dynamics} that if (\ref{B3}) is not satisfied 
the sequences $\{\tau_{n}^{\ast}\}$ and $\{\tau_{n}^{min}\}$, described above, are well defined. 
Second, in Section~\ref{sec:proof:osci} we find the contradictory results that $\{\Phi(\tau_{n}^{\ast})\}$ is decreasing and that 
$\{\Phi(\tau_{n}^{min})\}$ is increasing. This part is very technical and needs by itself a few steps. 
Thus, in Section~\ref{sec:aux} we identify the scales of the outer region and the approximating polynomials 
near local maxima. This is based on the analysis of the solutions of (\ref{ecuacionminima1}) that is carried out in Appendix~\ref{sec:polynomials}.   
In Section~\ref{sec:rescale:H} we perform the right scaling of the solutions under consideration and identify the range in which they 
are approximated by the polynomials. 
In this section we also prove that the approximating polynomials must have a double zero. This step requires the analysis of 
(\ref{ecuacionminima2}) given in Appendix~\ref{sec:summary:II} as well as the {\it matching lemma} given in Appendix~\ref{sec:control}
(a result that has been adapted from \cite{CV}). In Section~\ref{sec:formal}, with detailed information of the matching regions, we derive an 
(iterative) expression that relates the elements of the sequence of local maxima and another that relates the local minima, and that 
contradict that (\ref{B3}) is not satisfied. Finally, in Section~\ref{sec:6} we finish the prove of Theorem~\ref{Main}.

Finally, we recall that equations similar to (\ref{ODEtapas}) have been studied intensively, see \cite{CrasterMatar}, 
\cite{Eggers}, \cite{EggersII}, \cite{Hoc01}, \cite{MyersRev}, \cite{MyersSolidII}, \cite{MyersSolidI}, \cite{ODB97} and \cite{wilson}), 
to mention a few, where similar equations arise in several related physical situations. Rigorous results concerning such equations can be 
found also in \cite{BerettaHP} and, concerning travelling wave solutions, in \cite{BS00}, \cite{Boatto}, \cite{KH75}, \cite{Mich88} and \cite{Ren96}. 
It is interesting to note that many of these models yield higher order ODEs describing oscillatory fluid interfaces. We refer to \cite{CV}, where 
this aspect and related works are put into context.

\section{Preliminaries}\label{sec:preliminaries}
\subsection{A summary of results for (\ref{ODEtapas})}\label{sec:summary} 
We now summarise some properties of (\ref{ODEtapas}), most
of which have been proved in \cite{CV} and will be used later in the proof of
Theorem~\ref{Main}. It is convenient to rewrite (\ref{ODEtapas}) in the
equivalent form%
\begin{equation}\label{S3E1}
\frac{d\Phi}{d\tau}=W\,,\ \frac{dW}{d\tau}=\Psi\,,\ \frac{d\Psi}{d\tau}%
=\frac{1}{\Phi^{3}}-1\,, 
\end{equation}
we then have the following result.

\begin{proposition}\label{Tapas1}
\begin{enumerate}
\item There is a unique critical point for (\ref{S3E1}) in the domain 
$\{\Phi>0\,, \ W\in\mathbb{R}\,, \ \Psi\in\mathbb{R}\}$ given by:%
\[
P_{s}=(\Phi,W,\Psi)=(1,0,0) \,.
\]

\item The point $P_{s}$ is hyperbolic. the stable manifold of (\ref{ODEtapas}) at the point
$P_{s}$ is tangent to the vector:
\[
v_{1}:=\left(
\begin{array}
[c]{c}%
3^{-\frac{2}{3}}\\
-3^{-\frac{1}{3}}\\
1
\end{array}
\right)
\]
and the corresponding eigenvalue is $\lambda_{1}:=-3^{\frac{1}{3}}$.

\item At $P_{s}$ there is a two-dimensional unstable manifold locally spanned by the
eigenvectors $v_{2}:=(-3^{\frac{1}{6}}/6\,,\,3^{\frac{2}{3}}/6\,,\,1)^T$ and 
$v_{3}:=(3^{\frac{5}{6}}/6\,,\,3^{\frac{1}{6}}/2\,,\,0)^T$. 
The eigenvalues associated to the plane spanned by $\{v_{2},v_{3}\}$ 
are $\lambda_{2}:=3^{\frac{1}{3}}(1+i\,3^{\frac{1}{2}})/2$ and $\lambda_{3}=\overline{\lambda_{2}}$.

\item The trajectories associated to (\ref{S3E1}) that are contained in the
stable manifold and satisfy $(\Phi,W,\Psi) \not\equiv P_{s}$, behave in one of the
two following ways for decreasing $\tau$: Either they are defined for all 
$\tau\in\mathbb{R}$ and satisfy
\begin{equation}\label{P1}
\lim_{\tau\to-\infty}(\Phi,W,\Psi)=(\infty,-\infty,\infty) 
\end{equation}
or, alternatively, there exists a $\tau_{\ast}>-\infty$ such that
\begin{equation}\label{P2}
\lim_{\tau\to(\tau_{\ast})^{+}}\Phi(\tau) =0 \,. 
\end{equation}
Moreover, the points of the stable manifold associated to $P_{s}$ with
$\Phi>1$ satisfy (\ref{P1}) and those with $\Phi<1$ satisfy (\ref{P2}).

\item Suppose that there exist $\tau_{0}\in \mathbb{R}$ and $C_{0}>1$ such that 
\[
(\Phi(\tau),W(\tau),\Psi(\tau)) \in
\left\{(\Phi,W,\Psi)\in\mathbb{R}^{3}: 
\ \frac{1}{C_{0}} \leq\Phi\leq C_{0},\ -C_{0}\leq W\leq C_{0}\ ,\ -C_{0}\leq\Psi\leq C_{0}\right\}
\] 
for all $\tau\leq \tau_{0}$, then
\begin{equation}\label{S3E3b}
\lim_{\tau\to-\infty}(\Phi(\tau),W(\tau),\Psi(\tau)) =P_{s}
\end{equation}
and the corresponding trajectory is contained in the unstable manifold of $P_{s}$.
\end{enumerate}
\end{proposition}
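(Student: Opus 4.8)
The plan is to take parts (1)--(4) from \cite{CV} and to give in detail only part (5), which is the form in which the flow on $\{\theta=\pm\pi/2\}$ will be used later. Part (1) is immediate: $1/\Phi^{3}=1$ together with $W=\Psi=0$ forces $(\Phi,W,\Psi)=(1,0,0)$ in $\{\Phi>0\}$. For parts (2)--(3) one linearises (\ref{S3E1}) at $P_{s}$: the Jacobian is the companion-type matrix with last row $(-3,0,0)$, so its characteristic polynomial is $\lambda^{3}+3=0$, with roots $\lambda_{1}=-3^{1/3}$ and the conjugate pair $\lambda_{2,3}=3^{1/3}(1\pm i\sqrt{3})/2$, all with nonzero real part; hyperbolicity, the one-dimensional stable and two-dimensional unstable manifolds and the eigenvectors $v_{1},v_{2},v_{3}$ then follow from the stable/unstable manifold theorem together with an elementary eigenvector computation. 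Part (4) is quoted verbatim from \cite{CV}.

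For part (5) the key ingredient is the increasing Lyapunov function of (\ref{S3E1}), which we recall (and which is verified by multiplying the third equation of (\ref{S3E1}) by $W=d\Phi/d\tau$ and integrating by parts): the function
\[
E:=\Psi\,W+\frac{1}{2\Phi^{2}}+\Phi
\]
satisfies $dE/d\tau=\Psi^{2}\ge 0$ along every solution of (\ref{S3E1}) with $\Phi>0$, so $E$ is nondecreasing in $\tau$. Under the hypothesis of (5) the trajectory lies, for all $\tau\le\tau_{0}$, in the compact set $K:=\{\,1/C_{0}\le\Phi\le C_{0},\ |W|\le C_{0},\ |\Psi|\le C_{0}\,\}$, on which $E$ is bounded; this is exactly where the lower bound $\Phi\ge 1/C_{0}$ is needed, to control $1/(2\Phi^{2})$. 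Being monotone and bounded, $E(\tau)$ has a finite limit $E_{-}$ as $\tau\to-\infty$.

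I would then apply the LaSalle invariance principle in backward time. The $\alpha$-limit set $A$ of the trajectory is a nonempty, compact, connected, invariant subset of $K$, and by continuity $E\equiv E_{-}$ on $A$. Hence along any complete trajectory contained in $A$ one has $dE/d\tau=\Psi^{2}\equiv 0$, so $\Psi\equiv 0$ on $A$; differentiating once more along such a trajectory (equivalently, using invariance of $A$ again), $0=d\Psi/d\tau=1/\Phi^{3}-1$, so $\Phi\equiv 1$, and then $W=d\Phi/d\tau\equiv 0$ on $A$. Therefore $A=\{P_{s}\}$, which gives (\ref{S3E3b}). Finally, since $P_{s}$ is hyperbolic, any trajectory with $\lim_{\tau\to-\infty}(\Phi,W,\Psi)=P_{s}$ lies, for all sufficiently negative $\tau$, in the local unstable manifold $W^{u}_{\mathrm{loc}}(P_{s})$ ---by the characterisation of $W^{u}_{\mathrm{loc}}$ as the set of points whose backward orbit remains near $P_{s}$ and tends to it--- and hence the whole trajectory lies in the global unstable manifold of $P_{s}$, namely the union of the forward images of $W^{u}_{\mathrm{loc}}(P_{s})$ under the flow of (\ref{S3E1}).

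The only step that needs genuine care is the invariance-principle argument, i.e. the passage from \emph{``$E$ admits a finite limit along the orbit''} to \emph{``$\Psi$ vanishes identically on the $\alpha$-limit set''}: this is classical, but it relies on $A$ being invariant, which in turn uses precompactness of the backward orbit ---precisely the compactness hypothesis of (5). All the remaining ingredients (boundedness of $E$ on $K$, the ODE identities forcing $\Phi\equiv 1$ and $W\equiv 0$ on $A$, and the local characterisation of $W^{u}_{\mathrm{loc}}$) are routine.
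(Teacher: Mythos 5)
Your proof of part (v) is correct but takes a genuinely different route from the paper. Both start from the same Lyapunov function $E=\Psi W+\frac{1}{2\Phi^{2}}+\Phi$ with $dE/d\tau=\Psi^{2}$, and both exploit the compactness hypothesis $(\Phi,W,\Psi)\in K$ for $\tau\le\tau_{0}$. The paper then argues \emph{by hand}: from boundedness of $E$ it deduces $\int_{-\infty}^{\tau_{0}}\Psi^{2}<\infty$, and then shows successively that $\Psi\to0$, $W\to0$, $\Phi\to1$ as $\tau\to-\infty$ by explicit contradiction arguments using the uniform bound $d\Psi/d\tau\ge-1$ and the fact that if $\Psi$ (resp. $W$) stayed bounded away from $0$ on a sequence of intervals of fixed length, the $\Psi^{2}$ integral (resp. the bound $1/C_{0}\le\Phi\le C_{0}$) would be violated. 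You instead invoke LaSalle's invariance principle in backward time: the $\alpha$-limit set $A\subset K$ is nonempty, compact, connected and invariant, $E$ is constant on $A$, so $\Psi\equiv0$ on $A$, and invariance then forces $1/\Phi^{3}-1\equiv0$ and $W\equiv0$, i.e. $A=\{P_{s}\}$. This is morally the same argument packaged conceptually: LaSalle subsumes the paper's sequential estimates. Your version is shorter and cleaner once one grants the invariance principle; the paper's version is more elementary and self-contained (it does not appeal to any black-box theorem). Both then finish identically by hyperbolicity of $P_{s}$ to place the orbit on the unstable manifold. One small remark: you should note explicitly, as the paper does, that $A=\{P_{s}\}$ combined with precompactness of the backward orbit yields full convergence $\lim_{\tau\to-\infty}(\Phi,W,\Psi)=P_{s}$ (not merely along sequences), which is what is needed to apply the local characterisation of $W^{u}_{\mathrm{loc}}(P_{s})$; this is standard but worth a line.
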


\begin{proof}[Proof]
All the statements of this proposition have been already proved in \cite{CV} 
except for \textit{(v)}. In order to prove this, we use an argument similar to
the one used to prove Lemma~2.4 in \cite{CV}. We first recall that there exists an
increasing Lyapunov functional $E$ associated to (\ref{ODEtapas}):
\[
E:=\Psi W + \frac{1}{2\Phi^{2}} + \Phi\,,\quad\frac{dE}{d\tau}=\Psi^{2}\geq0\,. 
\]
This and the assumptions made imply that
\begin{equation}\label{finite:integral}
\int_{-\infty}^{\tau_{0}}\Psi^{2}(s) ds<\infty\,.
\end{equation}
Using (\ref{S3E1}), it then follows that $\lim_{\tau\to-\infty}\Psi(\tau)=0$. 
Indeed, arguing by contradiction, one can construct a sequence $\tau_{n}\to-\infty$ 
such that there exits a $\varepsilon_{0}>0$ such that either 
$\Psi(\tau_{n})\geq \varepsilon_{0}$ or $\Psi(\tau_{n})\leq -\varepsilon_{0}$. 
Then (\ref{S3E1}) implies that $d\Psi/d\tau\geq -1$, so either 
$\Psi(\tau)\geq\varepsilon_{0}+(\tau_{n}-\tau)$ for $\tau>\tau_{n}$ or 
$\Psi(\tau)\leq -\varepsilon_{0}+(\tau_{n}-\tau)$ for $\tau>\tau_{n}$. 
But this contradicts (\ref{finite:integral})) since either 
$\int_{\tau_{n}}^{\tau_{n}+\varepsilon_{0}/2}(\Psi(\tau))^{2}d\tau\geq \varepsilon^{3}/8$ or 
$\int_{\tau_{n}-\varepsilon_{0}/2}^{\tau_{n}}(\Psi(\tau))^{2}d\tau\geq \varepsilon^{3}/8$ 
for all $n$. 

Now the second equation in (\ref{S3E1}) implies that $W$ remains 
approximately constant as $\tau\to-\infty$ in any finite interval of arbitrary
fixed length $L$. Therefore, if there is a subsequence $\{\tau_{n}\}$ with
$\lim_{n\to\infty}\tau_{n}=-\infty$ satisfying $\lim_{n\to\infty}W(\tau
_{n})\neq 0$, we obtain that $\inf_{\tau\in\left[  \tau_{n},\tau_{n}+L\right]
}|W(\tau)| \geq\varepsilon_{0}>0$ for $n$ sufficiently large. It then follows
from the first equation in (\ref{S3E1}) that the condition $\frac{1}{C_{0}}%
\leq\Phi\leq C_{0}$ fails if $L$ is assumed to be sufficiently large 
(integration on the interval $(\tau_{n},\tau_{n}+L)$ for sufficiently large $n$ 
gives that 
$|\Phi(\tau_{n}+L)-\Phi(\tau_{n})|>\varepsilon_{0} L>0$, but 
$|\Phi(\tau_{n}+L)-\Phi(\tau_{n})|<(C_{0}^{2}-1)/C_{0}$ for all $n$ and $L$). 
Therefore $\lim_{\tau\to-\infty}W(\tau) =0$. 

Using the last equation in (\ref{S3E1}) as well as the fact that 
$\lim_{\tau\to-\infty}\Psi(\tau)=0$ it then follows in a similar way that 
$\lim_{\tau\to-\infty}\Phi(\tau)=1$. This gives (\ref{S3E3b}) and the result follows.
\end{proof}

The next lemma gives the detailed asymptotic behaviour in both cases (\ref{P1}) and
(\ref{P2}):

\begin{lemma}\label{Tapas2}
The trajectories associated to (\ref{S3E1}) that are contained in the stable
manifold and satisfy $(\Phi,W,\Psi) \neq P_{s}$ satisfy that either they are
defined for all $\tau\in\mathbb{R}$ and (\ref{P1}) holds with
\begin{equation}
\lim_{\tau\to-\infty}\frac{\Phi(\tau)}{\tau^{3}}=-\frac{1}{6}\,,
\label{P1:prime}%
\end{equation}
or, alternatively, there exists a $\tau_{\ast}>-\infty$ such that (\ref{P2})
holds with
\begin{equation}
\lim_{\tau\to\tau_{\ast}^{+}}\frac{\Phi(\tau) }{(\tau-\tau_{\ast})^{\frac{3}{4}}}
=\left(  \frac{64}{15}\right)^{\frac{1}{4}}\,. 
\label{P2:prime}%
\end{equation}

\end{lemma}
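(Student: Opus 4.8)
The plan is to sharpen the two alternatives of Proposition~\ref{Tapas1}(iv) into precise rates, treating them separately. \emph{Case (\ref{P1}).} On this branch the trajectory is global and $\Phi(\tau)\to\infty$ as $\tau\to-\infty$, so $\Phi^{-3}\to0$ and the last equation of (\ref{S3E1}) gives $\Phi'''(\tau)=\Phi(\tau)^{-3}-1\to-1$. The rate then follows by integrating three times: writing $\Phi$ through its second-order Taylor polynomial at a fixed $\tau_0$ with integral remainder,
\[
\Phi(\tau)=\Phi(\tau_0)+\Phi'(\tau_0)(\tau-\tau_0)+\frac{1}{2}\Phi''(\tau_0)(\tau-\tau_0)^2+\frac{1}{2}\int_{\tau_0}^{\tau}(\tau-r)^2\,\Phi'''(r)\,dr,
\]
and using $\Phi'''(r)\to-1$ together with $(\tau-r)^2\ge0$, one gets $\Phi(\tau)/\tau^3\to-\frac{1}{6}$, which is (\ref{P1:prime}). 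Equivalently, three successive applications of the Stolz--Ces\`{a}ro (l'H\^{o}pital) rule — legitimate since $\tau,\tau^2,\tau^3$ are eventually monotone and diverge — give in turn $\Psi/\tau\to-1$, $W/\tau^2\to-\frac12$ and $\Phi/\tau^3\to-\frac16$ via $W'=\Psi$ and $\Phi'=W$; this is consistent with the signs $W\to-\infty$, $\Psi\to\infty$ already recorded in Proposition~\ref{Tapas1}(iv).

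\emph{Case (\ref{P2}): qualitative structure.} Now $\Phi\to0$ as $\tau\to\tau_\ast^+$ with $\tau_\ast>-\infty$; put $s=\tau-\tau_\ast\in(0,s_1)$, so $\Phi(0^+)=0$. Since $\Phi<1$ near $s=0$ we have $\Phi'''=\Phi^{-3}-1>0$ there, and in fact $\Phi'''=\Phi^{-3}-1\sim\Phi^{-3}\to\infty$. I would first run a non-degeneracy bootstrap showing that neither $\Phi''$ nor $\Phi'$ stays bounded near $s=0$: if $\Phi''$ were bounded, then $\Phi$ would be Lipschitz with $\Phi(0^+)=0$, hence $\Phi(s)\le Cs$ and $\Phi^{-3}\ge C^{-3}s^{-3}$ non-integrable, forcing $\Phi''(s)=\Phi''(s_1)-\int_s^{s_1}(\Phi^{-3}-1)\to-\infty$, a contradiction; so $\Psi=\Phi''\to-\infty$. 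Feeding this back (so $W$ is monotone near $0$ and $\Phi''(s)\le -c\,s^{-2}+O(1)$) gives $W=\Phi'\to+\infty$. Thus, as $\tau\to\tau_\ast^+$, $\Phi\downarrow0$ monotonically with $\Phi'\to+\infty$, $\Phi''\to-\infty$, $\Phi'''\to+\infty$; in particular the solution does not oscillate near $\tau_\ast$.

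\emph{Case (\ref{P2}): the rate, and the main obstacle.} The precise behaviour is governed by the dominant balance $\Phi'''=\Phi^{-3}$ of (\ref{ecuacionminima2}): the $-1$ is negligible because $\Phi^{-3}\to\infty$, and the correction it induces in $\Phi$ is $O(s^3)=o(s^{3/4})$. Since (\ref{ecuacionminima2}) is invariant under $\Phi(s)\mapsto\lambda^{-3/4}\Phi(\lambda s)$, the natural candidate is the similarity profile $\Phi=A\,s^{3/4}$, and substitution forces $\frac{15}{64}A=A^{-3}$, i.e. $A=(64/15)^{1/4}$. To make this rigorous I would pass to the self-similar variables $\Phi(s)=s^{3/4}\phi(t)$ with $t=-\ln s\to+\infty$, which turns (\ref{ecuacionminima2}) into an autonomous third-order system whose only fixed point with $\phi>0$ is $\phi\equiv A$; the limits obtained above keep the $\phi$-orbit in the attracting set of that fixed point, and convergence $\phi(t)\to A$ follows by a LaSalle-type argument from the monotone Lyapunov functional inherited from $E=\Psi W+\frac{1}{2\Phi^2}+\Phi$. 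Undoing the changes of variables then yields (\ref{P2:prime}); alternatively this last step is precisely the description of the extinguishing solutions of (\ref{ecuacionminima2}) obtained in \cite{CV}, with the $-1$ treated as a lower-order perturbation. The easy part is (\ref{P1}); the main obstacle is this last step of (\ref{P2}) — proving that in self-similar variables $\phi(t)$ genuinely converges to the single profile (it neither oscillates indefinitely nor drifts) and that the constant is exactly $(64/15)^{1/4}$, which is where the Lyapunov structure, the sign information above, and the phase-plane picture of the extinction solutions of (\ref{ecuacionminima2}) from \cite{CV} must be combined carefully.
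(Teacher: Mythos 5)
Your handling of (\ref{P1}) is essentially identical to the paper's: both integrate the limiting relation $\Phi'''\to -1$ (the paper via two-sided bounds $-1<\Phi'''<\Phi_0^{-3}-1$ and triple integration, you via the Taylor remainder or triple Stolz--Ces\`aro), and the resulting algebra is the same.

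For (\ref{P2}) you propose a genuinely different parametrisation and, in doing so, correctly identify where the difficulty lies. The paper does \emph{not} pass to self-similar variables $\Phi=s^{3/4}\phi(t)$, $t=-\ln s$; it uses the transformation (\ref{SEphi0-trans}) to the system (\ref{Phi-z-system}) in $(u,v,\Phi)$ with modified time $z$, whose key feature is that the $(u,v)$ subsystem \emph{decouples} in the limit $\Phi\to 0$ and reduces to the autonomous planar system (\ref{phi0system}). The unique critical point $(u_e,v_e)$ there has complex eigenvalues with positive real part, so backward in $z$ it is a spiral sink, and Lemma~\ref{tozero-bwds} guarantees that any orbit with $\Phi\to 0$ and $(u,v)$ bounded converges to $(u_e,v_e)$ with the exponential rate (\ref{W3E6e}). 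The first equation of (\ref{Phi-z-system}) then gives the two-sided exponential estimate (\ref{phi:0:esti}) and hence (\ref{phi:0:limit}), and the time-change relation $d\tau=\Phi^{4/3}dz$ converts this to the rate $\Phi\sim(64/15)^{1/4}(\tau-\tau_*)^{3/4}$. Your self-similar variables are conjugate to this but hide the decoupled planar structure that makes the convergence proof routine.

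The gap in your proposal is exactly the step you flagged: you appeal to a LaSalle-type argument using ``the monotone Lyapunov functional inherited from $E=\Psi W+\frac{1}{2\Phi^2}+\Phi$,'' but $E$ is of no direct use near the extinction time. Along the candidate profile $\Phi\sim A s^{3/4}$ one has $\Psi W\sim -\frac{9}{64}A^2 s^{-3/2}$ and $\frac{1}{2\Phi^2}\sim \frac{1}{2A^2}s^{-3/2}$, which do not cancel at $A=(64/15)^{1/4}$, so $E\to-\infty$; it is neither bounded nor pre-compact and does not furnish a LaSalle limit set in the rescaled variables. You would need to construct a different, bounded Lyapunov functional in the $\phi$-variables, or --- equivalently --- do what the paper does and reduce to the planar $(u,v)$ phase portrait where convergence to $(u_e,v_e)$ is a consequence of the eigenvalue structure and the separatrix analysis carried out in \cite{CV}. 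Your preliminary bootstrap (showing $\Phi'\to+\infty$, $\Phi''\to-\infty$) and the dimensional identification of $A$ are both correct, but as written the proof of (\ref{P2:prime}) is incomplete precisely at the point you yourself labelled the main obstacle.
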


\begin{proof}[Proof]
That either (\ref{P1}) or (\ref{P2}) hold is just the statement \textit{(iv)} of 
Proposition~\ref{Tapas1}. Then, (\ref{P1:prime}) follows from (\ref{P1}) and
integrating the equation. Indeed, for all $\Phi_{0}>0$ there exist a $\tau_{0}$ with
$|\tau_{0}|$ large enough so that for all $\tau<\tau_{0}$, then 
\[
\Phi(\tau)>\Phi_{0}>0\,, \ \frac{d\Phi(\tau_{0})}{d\tau}<-\Phi_{0}<0\,, 
\ \frac{d^{2}\Phi(\tau_{0})}{d\tau^{2}}>\Phi_{0}>0\,.
\]
Thus
\[
-1<\frac{d^{3}\Phi}{d\tau^{3}}<\frac{1}{\Phi_{0}^{3}}-1\,.
\]
and integrating this expression with $\tau<\tau_{0}<0$ we obtain
\[
\left(  \frac{1}{\Phi_{0}^{3}}-1\right)  \frac{(\tau-\tau_{0})^{3}}{6} \, < \,
\Phi(\tau) \, < \, \Phi(\tau_{0}) - \frac{d\Phi(\tau_{0})}{d\tau} (\tau_{0}-\tau)
+\frac{d^{2}\Phi(\tau_{0})}{d\tau^{2}}  \frac{(\tau-\tau_{0})^{2}}{2} 
-\frac{(\tau-\tau_{0})^{3}}{6}\,,
\]
for $|\tau_{0}|$ large enough. Then dividing by $-\tau^{3}/6$ and taking the limit 
$\tau\to-\infty$ implies (\ref{P1:prime}), since $\tau_{0}$ can be made arbitrarily 
negative and $\Phi_{0}$ arbitrarily large.

In order to prove (\ref{P2:prime}) we use the phase-plane analysis of 
the Appendix~\ref{sec:summary:II}. 

We employ the transformation (\ref{SEphi0-trans}) with $\zeta$ replaced by $\tau$ for (\ref{S3E1}) 
(see also \cite{CV}) that gives the system
\begin{equation}
\label{Phi-z-system}\frac{d\Phi}{dz}=u\,\Phi\, , \quad\frac{du}{dz}
=v+\frac{1}{3}u^{2}\,, \quad\frac{dv}{dz}=1+\frac{5}{3}u\,v-\Phi^{3}\,,
\end{equation}
which corresponds to (\ref{phi0system}) with $\Phi=0$ in the last equation.
For further reference, the flow field of the phase plane of (\ref{phi0system}) is also depicted in
Figure~\ref{phase-plane} where, in particular, the direction of the field, the critical 
point $(u_{e},v_{e})$ and the attractive separatrix $v=\bar{v}(u)$ are shown. 
Using Lemma~\ref{tozero-bwds} and the behaviour of trajectories of the system
(\ref{phi0system}) entering the only critical point $(u_{e},v_{e})$ as $z\to -\infty$ 
it is easy to show that if
\begin{equation}
\label{Hyp1}
\lim_{z\to -\infty}\Phi(z)\to 0\,, \ \mbox{and} \ \|(u,v)\| 
\ \mbox{is uniformly bounded as} \ z\to -\infty\,,
\end{equation}
then, by a bootstrap argument, the trajectory $(u,v)$ remains close to $(u_{e},v_{e})$, in particular
the estimate (\ref{W3E6e}) holds for $z$ large enough. That (\ref{Hyp1}) and that 
$\tau_{\ast}=\lim_{z\to-\infty}\tau(z)$ are satisfied is a consequence of the proof of 
(\ref{P2}) in \cite{CV} and the transformation (\ref{SEphi0-trans}).

Using the first equation in (\ref{Phi-z-system}) one obtains that there exists positive 
constants $C_{1}$ and $C_{2}$ such that
\begin{equation}\label{phi:0:esti}
e^{C_{2}e^{\lambda z}} e^{u_{e}(z-z_{0})}\Phi(z_{0})
<\Phi(z)<e^{C_{1}e^{\lambda z}} e^{u_{e}(z-z_{0})}\Phi(z_{0})\,,
\end{equation}
with $\lambda=\mbox{Re}(\lambda_{+})=(7/2)(1/(15^{\frac{1}{3}})>0$, for all $z<z_{0}$ where $z_{0}<0$ and $|z_{0}|$ is 
sufficiently large. And this in particular implies that
\begin{equation}\label{phi:0:limit}
\lim_{z\to-\infty}\Phi(z) e^{-u_{e}z}
=\Phi(z_{0})e^{-u_{e}z_{0}}\,.
\end{equation}
Using now the last equation of (\ref{SEphi0-trans}) and (\ref{phi:0:esti}) one
can infer that, considering $\tau$ as a function of $z$  
\[
\lim_{z\to-\infty} (\tau(z)-\tau_{\ast})^{\frac{3}{4}} e^{-u_{e}z}
=\left(  \frac{15}{64}\right)^{\frac{1}{4}}\Phi(z_{0})e^{-u_{e}z_{0}}\,,
\]
(where we use $u_{e}=(5/9)^{\frac{1}{3}}$ to compute the explicit coefficient). 
Finally, this and (\ref{phi:0:limit}) imply (\ref{P2:prime}).
\end{proof}

\subsection{A dynamical systems approach}\label{sec:DS} 
As anticipated in the Introduction, in this section we
reformulate the main result in terms of a Dynamical Systems approach. We first
transform (\ref{S2E3}) into a suitable systems of four autonomous ODEs, and
(\ref{S2E7}) into its corresponding boundary conditions.

Since we are interested in solutions for which $|\xi|^{\frac{2}{3}}H$ remains
bounded for all $\xi$ it is convenient to introduce the following change of
variables
\begin{equation}
H(\xi) =\frac{1}{(\xi^{2}+1)^{\frac{1}{3}}}\Phi(\tau) 
\label{S2E8a}%
\end{equation}
where the variable $\tau$ is defined by means of
\begin{equation}
(\xi^{2} + 1)^{\frac{4}{9}}d\xi= d\tau\,,\quad\tau
= \int_{0}^{\xi}(\eta^{2} +1)^{\frac{4}{9}}d\eta\,. 
\label{S2E8}%
\end{equation}
With this transformation, we have that
\begin{align}
\frac{dH}{d\xi}  &  
= -\frac{2}{3} \frac{\xi}{(\xi^{2}+1)^{\frac{4}{3}}}\Phi\,
+\,(\xi^{2}+1)^{\frac{1}{9}}\frac{d\Phi}{d\tau}\,,
\label{1st:dH}\\
\frac{d^{2}H}{d\xi^{2}}  &  
= -\frac{2}{3} \frac{1-\frac{5}{3}\xi^{2}}{(\xi^{2}+1)^{\frac{7}{3}}}\Phi
-\frac{4}{9}\frac{\xi}{(\xi^{2}+1)^{\frac{8}{9}}}\frac{d\Phi}{d\tau} \,
+\, (\xi^{2}+1)^{\frac{5}{9}}\frac{d^{2}\Phi}{d\tau^{2}}\,, 
\label{2nd:dH}%
\end{align}
and (\ref{S2E3}) becomes
\begin{equation}\label{S2E9}
\frac{d^{3}\Phi}{d\tau^{3}}
=\frac{1}{\Phi^{3}}-\frac{\xi(\tau)^{2}+ a}{\xi(\tau)^{2}+1}-F(\tau) \,,
\quad n\in\mathbb{N}\,,\quad s\in[-1,0]
\end{equation}
with
\begin{equation}\label{S2E9bis}
F(\tau) =\frac{16}{3}\frac{\xi}{(\xi^{2}+1)^{\frac{10}{3}}} 
\left(1-\frac{14}{9}\frac{\xi^{2}}{ \xi^{2}+1} \right)  \Phi
+ \frac{1}{(\xi^{2}+1)^{\frac{17}{9}}}\left( \frac{208}{81}\frac{\xi^{2}}{\xi^{2}+1}
-\frac{10}{9}\right)  \frac{d\Phi}{d\tau} 
+\frac{2}{3}\frac{\xi}{(\xi^{2}+1)^{\frac{13}{9}}} \frac{d^{2}\Phi}{d\tau^{2}}\,, 
\end{equation}
where $\xi$ is given as a function of $\tau$ by means of (\ref{S2E8}). In
other words, we use $\tau$ as independent variable, while $\xi$ becomes a
dependent one, making the system autonomous.

It is convenient to transform $\xi$ further into a new variable that takes
values in a compact set, namely, we define the variable $\theta$ by
\begin{equation}
\xi=\tan\theta\quad\theta\in\left[  -\frac{\pi}{2},\frac{\pi}{2}\right]  \,.
\label{xi}%
\end{equation}
Finally, we can reformulate (\ref{S2E8}) and (\ref{S2E9})-(\ref{S2E9bis})  as
\begin{align}
\frac{d\Phi}{d\tau}  &  =W\,,\label{compact1}\\
\frac{dW}{d\tau}  &  =\Psi\,,\label{compact2}\\
\frac{d\Psi}{d\tau}  &  =\frac{1}{\Phi^{3}}-1-(a-1)(\cos\theta)^{2}
-\left[
\left(  \frac{16}{3}\sin\theta-\frac{224}{27}(\sin\theta)^{3}\right)
(\cos\theta)^{\frac{17}{3}}\,\Phi\right. \nonumber\\
&  \left.  +\left(  \frac{208}{81}(\sin\theta)^{2}-\frac{10}{9}\right)
(\cos\theta)^{\frac{34}{9}}\,W+\frac{2}{3}\sin\theta(\cos\theta)^{\frac{17}{9}}\,\Psi
\right]  \,,\label{compact3}\\
\frac{d\theta}{d\tau}  &  =(\cos\theta)^{\frac{26}{9}} \,, \label{compact4}%
\end{align}
that has critical points
\begin{equation}\label{stst}
p_{-}:=\left( 1,0,0,-\frac{\pi}{2}\right) \quad\mbox{and} \quad p_{+}:=\left( 1,0,0, \frac{\pi}{2}\right)\,.
\end{equation}

We aim to prove the following theorem:
\begin{theorem}\label{hetcon} 
There exists a heteroclinic connection of the system (\ref{compact1})-(\ref{compact4}) 
between the points $p_{-}$ and $p_{+}$ given in (\ref{stst}).
\end{theorem}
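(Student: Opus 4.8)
The plan is to construct the heteroclinic orbit by a shooting argument run backwards in $\tau$, exactly as outlined in the Introduction. First I would establish the local structure near $p_{+}$: the linearisation of \eqref{compact1}--\eqref{compact4} at $p_{+}$ decouples, since the bracketed $F$-terms and the factor $(\cos\theta)^{26/9}$ in \eqref{compact4} all vanish at $\theta=\pi/2$. The $(\Phi,W,\Psi)$-block inherits the eigenvalues $\lambda_1,\lambda_2,\lambda_3$ of Proposition~\ref{Tapas1}, one negative real and a complex pair with positive real part, while the $\theta$-direction carries a zero eigenvalue (the flow \eqref{compact4} approaches $\theta=\pi/2$ only algebraically). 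Hence $p_{+}$ has a one-dimensional stable manifold together with a one-dimensional centre direction, giving a two-dimensional centre-stable manifold $\mathcal{V}_{+}$; this is the content of the local analysis section referenced in the excerpt, and I would quote it. Trajectories starting on $\mathcal{V}_{+}$ converge to $p_{+}$ as $\tau\to+\infty$, and I would parametrise the ones not lying in $\{\theta=\pi/2\}$ by a single real parameter $\nu$ ranging over a large interval, with $\theta<\pi/2$ along each.

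Next I would run the shooting for decreasing $\tau$ and sort the parameter values into two classes according to the far-field behaviour. Using the stability-under-perturbation results (Section~\ref{sec:stability}) together with the classification of solutions of \eqref{ODEtapas} on the stable manifold of $P_s$ (Proposition~\ref{Tapas1}(iv), Lemma~\ref{Tapas2}), I would show: for $\nu$ very large the orbit tracks a solution with $\Phi\to\infty$ and blows up like a cubic, i.e. \eqref{B1} holds; for $\nu$ very negative the orbit is driven towards $\Phi\to 0$ in finite backward time, i.e. \eqref{B2} holds (here one uses that near $\theta=\pi/2$ the system is a small perturbation of \eqref{ODEtapas}, so the dichotomy $\Phi>1$ vs.\ $\Phi<1$ on the relevant manifold transfers). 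Both sets are open in $\nu$ — openness of \eqref{B1} and of \eqref{B2} follows from continuous dependence plus the fact that each behaviour is realised by an open condition on a finite piece of trajectory — and they are nonempty and disjoint, so their complement in the parameter interval is nonempty. Fix $\nu_0$ in this complement; the corresponding orbit satisfies neither \eqref{B1} nor \eqref{B2}.

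The heart of the argument is to show that this exceptional orbit is the desired heteroclinic, i.e.\ that it is globally defined with the a priori bound \eqref{B3} and with $\theta\to-\pi/2$ as $\tau\to-\infty$ (so that \eqref{B4} holds). Granting \eqref{B3} and \eqref{B4}, the system becomes, for $\tau$ very negative, a vanishing perturbation of \eqref{ODEtapas} with $(\Phi,W,\Psi)$ trapped in a compact subset of $\{\Phi>0\}$; Proposition~\ref{Tapas1}(v) — adapted to the perturbed system via the Lyapunov function $E$, which is only perturbed by terms that are integrable in $\tau$ because of the $(\cos\theta)$ powers — then forces $(\Phi,W,\Psi)\to P_s=(1,0,0)$, so the orbit converges to $p_{-}$ and the connection is complete. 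Establishing \eqref{B3} is where essentially all the work lies: one argues by contradiction, and if $\Phi$ is unbounded along a backward sequence of local maxima one extracts maxima $\tau_n^{\ast}\to-\infty$ with $\Phi(\tau_n^{\ast})\to\infty$ and intervening minima $\tau_n^{min}$, rescales so that $|\xi|^{2/3}\mathcal H_n$ solves the polynomial equation \eqref{ecuacionminima1} on the large-amplitude stretches while the bouncing near the minima is governed by \eqref{ecuacionminima2}, and uses the separatrix attracting structure of \eqref{ecuacionminima2} (Appendix~\ref{sec:summary:II}) plus the matching lemma (Appendix~\ref{sec:control}) to deduce that the approximating polynomial has a double zero near each minimum; reading this constraint into the iteration relating consecutive maxima (and minima) shows $\{\Phi(\tau_n^{\ast})\}$ is decreasing and $\{\Phi(\tau_n^{min})\}$ increasing, contradicting unboundedness. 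The analogous but simpler analysis near $\theta=-\pi/2$, combined with ruling out $\Phi\to 0$ by the same mechanism, yields \eqref{B4}. I expect this oscillation-control step — identifying the correct scales, justifying the polynomial and bouncing approximations uniformly in $n$, and extracting the monotonicity of the amplitude sequences — to be by far the main obstacle; the local manifold construction and the openness/connectedness part of the shooting are comparatively routine.
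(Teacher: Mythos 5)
Your proposal follows the paper's overall strategy faithfully: the centre--stable manifold $\mathcal{V}_{+}$, the shooting in $\nu$ with the open--open--disjoint dichotomy between (\ref{B1}) and (\ref{B2}), the extraction of an exceptional $\nu$, and the oscillation-control argument (rescaling, polynomial outer approximation, separatrix/matching in the bouncing region, monotonicity of the max/min sequences) to establish (\ref{B3}). Where you depart from the paper is the final step, and there is a gap there. You propose to transfer Proposition~\ref{Tapas1}(v) directly to the perturbed system by arguing that the Lyapunov function $E$ is perturbed only by $\tau$-integrable terms. But this is false for one of the terms: the $\Psi$-coefficient in $F$ carries the factor $\sin\theta\,(\cos\theta)^{17/9}$, and since $d\tau=(\cos\theta)^{-26/9}d\theta$ one has $(\cos\theta)^{17/9}\sim c/|\tau|$ as $\tau\to-\infty$, which is not integrable. (The $\Phi$ and $W$ coefficients, with exponents $17/3$ and $34/9$, as well as the $(\cos\theta)^2$ term, are fine.) As written, your claim that $\int_{-\infty}^{\tau_0}\Psi^2\,d\tau<\infty$ does not follow. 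The paper sidesteps this entirely by a translation--compactness argument: it considers the translates $(\Phi(\cdot-n),W(\cdot-n),\Psi(\cdot-n))$, uses the a~priori bounds (\ref{F5E2})--(\ref{F5E3}) and Arzel\`a--Ascoli to pass to a subsequential limit, observes that the limit solves the exact system (\ref{S3E1}) because all the $\cos\theta$-coefficients vanish in the limit, and only then invokes Proposition~\ref{Tapas1}(v). Your route could probably be repaired (e.g.\ by Young's inequality, $|(\cos\theta)^{17/9}\Psi W|\le\tfrac12\Psi^2+C(\cos\theta)^{34/9}W^2$, which absorbs the bad term into $\Psi^2$ and leaves an integrable remainder), but the paper's compactness argument avoids the issue and also packages the stable-manifold-theorem step more cleanly.

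Two smaller points. First, (\ref{B4}) requires no oscillation analysis near $\theta=-\pi/2$: since $d\theta/d\tau=(\cos\theta)^{26/9}>0$ on $(-\pi/2,\pi/2)$, $\theta$ is strictly increasing, and any solution that is globally defined backwards must satisfy $\theta(\tau)\to-\pi/2$. What actually needs work near $\tau\to-\infty$ is keeping $\Phi$ bounded away from $0$ and $\infty$, i.e.\ (\ref{F5E2})--(\ref{F5E3}), which you do address. Second, after establishing $(\Phi,W,\Psi)\to P_s$ you still need to promote this to convergence of the full trajectory to $p_-$; the paper does so by noting that the translates eventually enter a small ball around $p_-$ and that backward boundedness forces the orbit onto the centre--unstable manifold of $p_-$, a step worth making explicit.
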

We notice that Theorem~\ref{Main} is just a corollary of Theorem~\ref{hetcon};
this is implied by (\ref{S2E7}) and (\ref{S2E8a}). 

We point out that the system (\ref{compact1})-(\ref{compact4}) reduces to
(\ref{S3E1}) on the subspaces $\theta=-\pi/2$ and $\theta=+\pi/2$. We shall take
advantage of this fact in some of the arguments that follow.

\subsection{Existence of the centre-stable manifold}\label{sec:loc-analysis}
We now proceed to describe in detail the construction of a centre-stable manifold 
at $p_{+}$ that we denote by $\mathcal{V}_{+}$. Let us first define a set of 
transformations $F_{\tau}(x),\ \tau\in\mathbb{R}$ for any given 
$x\in\mathbb{R}^{+}\times\mathbb{R}^{2}\times\left[-\frac{\pi}{2},\frac{\pi}{2}\right]$ 
by means of
\begin{equation}\label{C1}
(\Phi(\tau),W(\tau),\Psi(\tau),\theta(\tau))=F_{\tau}(x) 
\end{equation}
where $(\Phi,W,\Psi,\theta)$ solves (\ref{compact1})-(\ref{compact4}) with
$(\Phi(0),W(0),\Psi(0),\theta(0))=x$. Classical ODE theory ascertains that the
family of transformations $F_{\tau}(\cdot)$ is well defined in some suitable
interval $\tau\in(\tau_{1}(x),\tau_{2}(x))$. We have the following result.

\begin{proposition}\label{set:shooting} There exists a two-dimensional $C^{1}$ manifold 
$\mathcal{V}_{+}$ contained in the ball $B_{\delta}(p_{+})\cap\mathbb{R}^{3}\times[-\pi/2,\pi/2]$ for
some $\delta>0$ sufficiently small, tangent to the subspace spanned by the vectors
\begin{equation}
\tilde{v}_{1}=\left(
\begin{array}
[c]{c}%
3^{-\frac{2}{3}}\\
-3^{-\frac{1}{3}}\\
1\\
0
\end{array}
\right)  \,,\quad\tilde{v}_{4}=\left(
\begin{array}
[c]{c}%
0\\
0\\
0\\
1
\end{array}
\right)  \, . 
\label{stable:eigenvec}%
\end{equation}
If $x\in\mathcal{V}_{+}$, the flow $F_{\tau}$ defined in (\ref{C1}) is defined
for any $\tau>0$ and
\begin{equation}
F_{\tau}(x) \in\mathcal{V}_{+}\quad\mbox{for any} \quad\tau\geq0
\label{mansta}%
\end{equation}
with
\begin{equation}
\lim_{\tau\to\infty}F_{\tau}(x) =p_{+}\,. 
\label{limman}%
\end{equation}
\end{proposition}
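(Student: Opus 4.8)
The plan is to construct $\mathcal{V}_+$ as a classical local centre-stable manifold for the system (\ref{compact1})-(\ref{compact4}) near $p_+$, using the fact that on the invariant subspace $\{\theta=\pi/2\}$ the system reduces to (\ref{S3E1}), whose linearisation at $P_s$ has been computed in Proposition~\ref{Tapas1}. First I would linearise the full four-dimensional system (\ref{compact1})-(\ref{compact4}) at $p_+$. Because the coefficients of the bracketed $F$-terms in (\ref{compact3}) carry factors $(\cos\theta)^{17/3}$, $(\cos\theta)^{34/9}$, $(\cos\theta)^{17/9}$ that vanish at $\theta=\pi/2$, the linearisation decouples: the $(\Phi,W,\Psi)$-block is exactly the Jacobian of (\ref{S3E1}) at $P_s$, with eigenvalues $\lambda_1=-3^{1/3}$ (eigenvector $v_1$) and the complex pair $\lambda_2,\lambda_3=\overline{\lambda_2}$ with positive real part, while the $\theta$-equation contributes a single zero eigenvalue (since $\frac{d}{d\theta}(\cos\theta)^{26/9}=0$ at $\theta=\pi/2$) with eigenvector $\tilde v_4=(0,0,0,1)^T$. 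Thus at $p_+$ we have a one-dimensional stable eigenspace spanned by $\tilde v_1$, a one-dimensional centre eigenspace spanned by $\tilde v_4$, and a two-dimensional unstable eigenspace.

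Next I would invoke the standard centre-stable manifold theorem (e.g. the local invariant manifold theory for $C^1$ — indeed the right-hand side here is smooth in $\Phi>0$ and in $\theta$ — vector fields) to obtain a two-dimensional locally invariant $C^1$ manifold $\mathcal{V}_+\subset B_\delta(p_+)$ tangent at $p_+$ to $\mathrm{span}\{\tilde v_1,\tilde v_4\}$, which is exactly (\ref{stable:eigenvec}). Local invariance gives $F_\tau(x)\in\mathcal{V}_+$ for small $\tau$; to upgrade this to all $\tau\ge 0$ together with (\ref{limman}) I would argue as follows. On $\{\theta=\pi/2\}$, the restriction of $\mathcal{V}_+$ is the one-dimensional stable manifold of $P_s$ for (\ref{S3E1}), on which trajectories converge to $p_+$ as $\tau\to+\infty$; this is the invariant boundary curve of $\mathcal{V}_+$. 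For $x\in\mathcal{V}_+$ with $\theta(0)<\pi/2$, equation (\ref{compact4}) shows $\theta$ is strictly increasing and, since $(\cos\theta)^{26/9}>0$ for $\theta<\pi/2$ and the flow stays in $B_\delta(p_+)$, $\theta(\tau)\to\pi/2$ as $\tau\to\infty$; hence the remaining coordinates, being slaved to a trajectory that shadows the hyperbolic stable direction with the centre direction drifting monotonically to its endpoint, also converge, giving $F_\tau(x)\to p_+$. The key quantitative point making the flow forward-complete and trapped in $B_\delta(p_+)$ is a Lyapunov/contraction estimate: near $p_+$ the dynamics transverse to $\mathcal{V}_+$ are governed by the unstable pair, so on $\mathcal{V}_+$ itself the dynamics are a contraction toward $p_+$ (in the stable direction) composed with a slow monotone centre drift, and one shows $\|F_\tau(x)-p_+\|$ cannot leave the ball — I would make this precise via a graph-transform fixed-point argument producing $\mathcal{V}_+$ directly as a graph over $\mathrm{span}\{\tilde v_1,\tilde v_4\}$, which simultaneously yields invariance for all $\tau\ge 0$ and exponential-times-algebraic decay to $p_+$.

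I expect the main obstacle to be the centre direction: because $\tilde v_4$ carries a genuine zero eigenvalue (the $\theta$-equation is degenerate at $\theta=\pi/2$, with $d\theta/d\tau\sim(\frac{\pi}{2}-\theta)^{26/9}$ near the critical point), the manifold is only centre-stable rather than stable, so uniqueness fails and, more importantly, the convergence $F_\tau(x)\to p_+$ is not exponential but polynomial in $\tau$ along the centre direction. One must therefore check that this slow centre drift does not prevent the trajectory from remaining in $B_\delta(p_+)$ for all $\tau\ge 0$: concretely, integrating $d\theta/d\tau=(\cos\theta)^{26/9}$ gives $\frac{\pi}{2}-\theta(\tau)\sim c\,\tau^{-9/17}\to 0$, which is slow but monotone and bounded, so it is compatible with staying in the ball; and the coupling back into (\ref{compact3}) through terms of size $O((\cos\theta)^{17/9})=O(\tau^{-17/17})$ is integrable, so it only perturbs the hyperbolic dynamics by a decaying forcing, which a Gronwall/variation-of-constants estimate controls. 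A secondary technical point is ensuring $\Phi$ stays bounded away from $0$ and $\infty$ along the flow so that $1/\Phi^3$ remains smooth; this follows from staying in $B_\delta(p_+)$ with $\delta$ small. Assembling these estimates — the graph transform for existence and $C^1$-smoothness, plus the explicit integration of (\ref{compact4}) and a Gronwall argument for the slaved coordinates — gives (\ref{mansta}) and (\ref{limman}).
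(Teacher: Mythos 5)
Your overall strategy---linearise at $p_+$, read off the one stable eigenvalue $\lambda_1=-3^{1/3}$ with eigenvector $\tilde v_1$, a zero eigenvalue for $\theta$ with eigenvector $\tilde v_4$, and a complex unstable pair, then invoke a centre-stable manifold theorem and show forward trapping in $B_\delta(p_+)$ using the monotonicity of $\theta$ together with contraction along the stable direction---matches the paper's. However, your parenthetical claim that ``the right-hand side here is smooth'' is wrong, and it matters. The point $p_+$ sits on the boundary $\theta=\pi/2$ of the phase space, so to apply any local invariant-manifold theorem you must first extend the vector field across $\theta=\pi/2$; the paper does this by replacing $\cos\theta$ with $|\cos\theta|$. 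After that extension the terms $(\cos\theta)^{17/9}$, $(\cos\theta)^{34/9}$, $(\cos\theta)^{26/9}$ are no longer $C^\infty$ at $\theta=\pi/2$: the least regular one has first derivative behaving like $|\pi/2-\theta|^{8/9}$, so the extended right-hand side is only $C^{17/9}$, i.e. $C^{1,8/9}$. The classical $C^k$, $k\ge 2$, centre-manifold theorems therefore do not directly apply, and a bare $C^1$ hypothesis is generally insufficient for the graph-transform scheme you describe; the paper cites Gallay precisely because $17/9>1$ furnishes a $C^{1,\alpha}$-type theorem. You need to state the true regularity and justify which version of the invariant-manifold theorem you are invoking.

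On the trapping step you differ from the paper in technique but not in substance. Where you sketch a graph-transform/Gronwall argument exploiting the monotone drift of $\theta$, the paper builds a cube $\mathcal{Q}$ around $p_+$ as an isolating block and shows, via the tangency of $\mathcal{V}_+$ to $\mathrm{span}\{\tilde v_1,\tilde v_4\}$, that $\frac{d}{d\tau}\langle\tilde v_1\cdot(F_\tau(x)-p_+)\rangle^2\le 0$ near $p_+$, so trajectories cannot exit through the faces transverse to $\tilde v_1$, while $\theta$ nondecreasing blocks exit through the $\theta$-face. Either route can be made rigorous; your assertion that the coupling through $(\cos\theta)^{17/9}\sim\tau^{-1}$ is ``integrable'' is borderline (it is not absolutely integrable in $\tau$), but because the stable direction decays at the fixed rate $3^{1/3}$ this forcing is still controllable by a variation-of-constants estimate, so this is a quantitative point to be tightened rather than an obstruction. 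The regularity issue above is the genuine gap.
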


\begin{proof}[Proof]
In order to apply standard results it is convenient to extend the range of 
values of $\theta$, replacing $\cos\theta$ by $|\cos\theta|$, where the system 
(\ref{compact1})-(\ref{compact4}) is defined. 
The resulting system can be defined in a neighbourhood of $p_{+}$ and the right hand side of
(\ref{compact1})-(\ref{compact4}) is in $C^{\frac{17}{9}}(\mathbb{R}^{3}\times(-\pi/2,\pi/2))$. 
Since $\frac{17}{9}>1$ we can apply the results in
\cite{Gallay}. In our setting, this means the existence of a two-dimensional 
manifold $\mathcal{V}_{+}\in C^{\frac{17}{9}}(\mathbb{R}^{3}\times(-\pi
/2,\pi/2))$ tangential to the plane spanned by $\{\tilde{v}_{1}, \tilde{v}_{4}\}$ 
at $p_{+}$ that remains invariant under the flow $F_{\tau}$ if one can prove that 
the corresponding trajectories on this manifold remain inside a ball 
$B_{\delta}(p_{+})$ for some small $\delta>0$. Let us show that $\mathcal{V}_{+}$ 
is invariant.

Let us consider a four-dimensional cube $\mathcal{Q}=[1-\delta/2,1+\delta/2]\times[-\delta/2,\delta/2]^{2}\times [\pi/2-\delta/2,\pi/2]$ 
contained in a ball $B_{\delta}(p_{+})$. 
The cube has four pairs of parallel -$3$ dimensional- sides. One pair with normal
direction $\tilde{v}_{1}$, another pair with normal direction $\tilde{v}_{4}$,
the other two pairs of parallel sides contain a plane parallel to the one
spanned by $\tilde{v}_{1}$ and $\tilde{v}_{4}$. The set $\mathcal{Q}\cap\mathcal{V}_{+}$ gives
four $C^{1}$ curves and, due to the tangency of $\mathcal{V}_{+}$ to the plane
spanned by $\tilde{v}_{1}$ and $\tilde{v}_{4}$, two are contained in each of
the parallel sides of the cube that are orthogonal to $\tilde{v}_{1}$, and the
other two are contained in parallel subspaces orthogonal to $\tilde{v}_{4}$. 
More specifically, one of later is contained in the subspace $\mathbb{R}^{3}\times\{\theta=\pi/2\}$. 
Notice that $\mathcal{V}_{+}\cap(\mathbb{R}^{3}\times\{\theta=\pi/2\})$ 
gives a portion of the stable manifold associated 
to (\ref{S3E1}) for $\delta$ small enough. Therefore, if 
$x\in\mathcal{V}_{+}\cap(\mathbb{R}^{3}\times\{\theta=\pi/2\})$, $F_{\tau}(x)\in B_{\delta}(p_{+})$ 
for arbitrary values of $\tau>0$. On the other hand, for the curve contained in a subspace
with constant $\theta<\pi/2$ and orthogonal to $\tilde{v}_{4}$, we use the
fact that $\theta$ is increasing, thus trajectories could only scape the cube
through the other boundaries that intersect $\mathcal{V}_{+}$. But the
points $x$ on the other two boundary curves satisfy $\tilde{v}_{1}\cdot(x-p_{+})=\pm c\delta$ for some $c>0$ (small or at most of order one). We
then use that
\[
\left.  \frac{d}{d\tau}
\left(  \frac{\langle\tilde{v}_{1}\cdot(F_{\tau}(x)-p_{+})\rangle^{2}}{2} \right)  
\right|_{\tau=0}
=\langle\tilde{v}_{1}\cdot(x-p_{+})\rangle\left\langle \tilde{v}_{1}\cdot\left.  
\frac{dF_{\tau}(x) }{d\tau} \right|_{\tau=0}\right\rangle \,.
\]
Since the manifold $\mathcal{V}_{+}$ is tangent to the plane spanned by
$\tilde{v}_{1}$ and $\tilde{v}_{4}$ it follows, using \textit{(iii)} in
Proposition~\ref{Tapas1} as well as (\ref{stable:eigenvec}) that
\begin{equation}\label{C2}%
\left.  \frac{d}{d\tau}
\left(  \frac{\langle\tilde{v}_{1}\cdot(F_{\tau}(x)-p_{+})\rangle^{2}}{2} \right)  
\right|_{\tau=0} 
=-3^{\frac{1}{3}}\langle\tilde{v}_{1}\cdot(x-p_{+})\rangle^{2} 
+ o\left(  \langle\tilde{v}_{1}\cdot(x-p_{+}) \rangle^{2}\right)  \,. 
\end{equation}
Therefore, if $\delta$ is sufficiently small this quantity is negative and the
trajectories in $\mathcal{V}_{+}$ remain always in the ball $B_{\delta}(p_{+})$ and 
(\ref{mansta}) follows. It only remains to show (\ref{limman}).
To this end, we observe that (\ref{compact4}) implies $\lim_{\tau\to\infty}\theta(\tau)=\pi/2$. 
Using (\ref{C2}) we then obtain (\ref{limman}).
\end{proof}

For further reference, let us denote by $\Pi\subset\mathbb{R}^{4}$ the affine
plane spanned by the stable eigenvectors at $p_{+}$, namely,
\begin{equation}\label{parPlane}
\Pi=\{ w=(\nu,\sigma)\in p_{+}+\mathbb{R}^{4}:\ w-p_{+}= 
\nu\tilde{v}_{1}+\sigma\tilde{v}_{4} \ \nu\,, \sigma\in\mathbb{R} \}\,,
\end{equation}
with $\tilde{v}_{1}$ and $\tilde{v}_{4}$ as in (\ref{stable:eigenvec}). Every 
$w\in\Pi$ can be identified by its coordinates, thus we write $w=(\nu,\sigma)$ and
$p_{+}=(0,0)$ with this set of coordinates. 
Since $\mathcal{V}_{+}$ is tangent to $\Pi$ at $p_{+}$, there exist local
differentiable parametrisation of $\mathcal{V}_{+}$.

\begin{lemma}[Local parametrisation of $\mathcal{V}_{+}$]\label{trivia}
Let $\Pi$ be given by (\ref{parPlane}). There exists a $\delta_{0}>0$ 
and a differentiable mapping $\Lambda:\Pi\to\mathbb{R}^{4}$ that maps a neighbourhood of 
$\Pi$ into $\mathcal{V}_{+}\cap B_{\delta_{0}}(p_{+})\cap\{\theta\leq\pi/2\}$. Moreover, 
$\partial_{\nu}\Lambda(0,0)$, $\partial_{\sigma}\Lambda(0,0)\in\Pi$.
\end{lemma}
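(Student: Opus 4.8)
The plan is to realise $\mathcal{V}_{+}$ near $p_{+}$ as a graph over the plane $\Pi$ and to take $\Lambda$ to be (the inverse of) the projection onto $\Pi$; this is the standard graph representation of a $C^{1}$ submanifold at a point where it is tangent to a reference plane. The one mild point to keep in mind is that $\mathcal{V}_{+}$ was produced in Proposition~\ref{set:shooting} as an invariant piece of a manifold-with-boundary (the constraint $\theta\le\pi/2$), so I would work throughout with the ambient $C^{17/9}$ manifold obtained there for the system with $\cos\theta$ replaced by $|\cos\theta|$, and impose the restriction $\{\theta\le\pi/2\}$ only at the end. Recall from the proof of Proposition~\ref{set:shooting} that $\mathcal{V}_{+}$ is of class $C^{17/9}$ and, by the tangency asserted there, $T_{p_{+}}\mathcal{V}_{+}$ coincides with $\Pi':=\mathrm{span}\{\tilde{v}_{1},\tilde{v}_{4}\}$, the direction space of the affine plane $\Pi$ from (\ref{parPlane}).

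First I would introduce the orthogonal projection $\pi_{\Pi}\colon\mathbb{R}^{4}\to\Pi$ and set $P:=\pi_{\Pi}|_{\mathcal{V}_{+}}$. Since $P$ is the restriction to the $2$-manifold $\mathcal{V}_{+}$ of an affine map, it is $C^{17/9}$, and its differential at $p_{+}$ is the restriction of the linear projection $\pi_{\Pi'}$ to $T_{p_{+}}\mathcal{V}_{+}=\Pi'$, hence the identity on $\Pi'$. By the inverse function theorem applied on the manifold $\mathcal{V}_{+}$, $P$ is therefore a diffeomorphism from a neighbourhood of $p_{+}$ in $\mathcal{V}_{+}$ onto a neighbourhood $U$ of $p_{+}$ in $\Pi$; shrinking if necessary I would take $U=\{(\nu,\sigma):\ \nu^{2}+\sigma^{2}<r^{2}\}$ in the coordinates of (\ref{parPlane}). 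Then I would define $\Lambda:=P^{-1}\colon U\to\mathbb{R}^{4}$; by construction $\Lambda(U)\subset\mathcal{V}_{+}$, $\Lambda(0,0)=p_{+}$, and $\Lambda$ is differentiable. Choosing $\delta_{0}>0$ (and further shrinking $U$) so that $\Lambda(U)\subset B_{\delta_{0}}(p_{+})$, and recalling that $\mathcal{V}_{+}\subset\{\theta\le\pi/2\}$ because of the cube $\mathcal{Q}$ used in its construction, I obtain $\Lambda(U)\subset\mathcal{V}_{+}\cap B_{\delta_{0}}(p_{+})\cap\{\theta\le\pi/2\}$, which is the asserted inclusion.

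Finally, to identify the partial derivatives at the origin, I would differentiate the identity $\pi_{\Pi}\circ\Lambda=\mathrm{id}_{U}$ at $(0,0)$, obtaining $\pi_{\Pi'}\circ D\Lambda(0,0)=\mathrm{id}_{\Pi'}$; since $D\Lambda(0,0)$ maps $\Pi'$ into $T_{p_{+}}\mathcal{V}_{+}=\Pi'$ and $\pi_{\Pi'}$ restricted to $\Pi'$ is the identity, this forces $D\Lambda(0,0)=\mathrm{id}_{\Pi'}$, so $\partial_{\nu}\Lambda(0,0)=\tilde{v}_{1}$ and $\partial_{\sigma}\Lambda(0,0)=\tilde{v}_{4}$, both lying in (the direction space of) $\Pi$. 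As to difficulty, there is essentially no obstacle here: all the substance is the tangency $T_{p_{+}}\mathcal{V}_{+}=\Pi'$ already supplied by Proposition~\ref{set:shooting} together with the inverse function theorem. The only places demanding a line of care are the bookkeeping of which neighbourhood to shrink so that the image stays inside $B_{\delta_{0}}(p_{+})$, and the remark that the constraint $\theta\le\pi/2$ is inherited automatically from the construction of $\mathcal{V}_{+}$ rather than being an extra condition to enforce.
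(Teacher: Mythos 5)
The paper itself gives no proof of this lemma (it is stated and immediately followed by the next section), so there is nothing to compare against; the authors evidently regarded it as immediate from the tangency of $\mathcal{V}_{+}$ to $\Pi$ furnished by Proposition~\ref{set:shooting}. Your argument is the standard one and is correct: since $T_{p_{+}}\mathcal{V}_{+}=\mathrm{span}\{\tilde v_{1},\tilde v_{4}\}$, the orthogonal projection $\pi_{\Pi}$ restricted to $\mathcal{V}_{+}$ has invertible differential at $p_{+}$, so by the inverse function theorem its local inverse $\Lambda=(\pi_{\Pi}|_{\mathcal{V}_{+}})^{-1}$ is a differentiable local parametrisation; differentiating $\pi_{\Pi}\circ\Lambda=\mathrm{id}$ at the origin gives $D\Lambda(0,0)=\mathrm{id}_{\Pi'}$, hence $\partial_{\nu}\Lambda(0,0)=\tilde v_{1}$ and $\partial_{\sigma}\Lambda(0,0)=\tilde v_{4}$. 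You also handle correctly the only mildly delicate point, namely that $\mathcal{V}_{+}$ is a manifold-with-boundary along $\{\theta=\pi/2\}$: applying the inverse function theorem on the extended centre-stable manifold (from the $|\cos\theta|$ system) and only then intersecting with $\{\theta\le\pi/2\}$ avoids any issue at the boundary, and the containment $\mathcal{V}_{+}\subset\{\theta\le\pi/2\}$ is indeed automatic from the cube $\mathcal{Q}$ used in Proposition~\ref{set:shooting}. This fills the gap the paper leaves tacit.
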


\section{Analysis of the behaviours (\ref{B1}) and (\ref{B2})}\label{sec:3}
\subsection{Stability}\label{sec:stability} 
We now prove that both asymptotic behaviours (\ref{B1}) and 
(\ref{B2}) represent two disjoint open sets of solutions of 
(\ref{compact1})-(\ref{compact4}). More precisely, we have the following results:

\begin{lemma}\label{continfinity}
Suppose that $F_{\tau}(x)$ is a solution of (\ref{compact1})-(\ref{compact4}) with 
$x\in\mathbb{R}^{+}\times\mathbb{R}^{2} \times(-\pi/2,\pi/2)$. 
Let us also assume that for such a solution $\lim_{\tau\to-\infty}\Phi(\tau)=\infty$. 
Then, there exists a $\delta=\delta(x)>0$ sufficiently small
such that for any 
$y\in B_{\delta}(x) \cap(\mathbb{R}^{+}\times\mathbb{R}^{2}\times(-\pi/2,\pi/2))$ 
$F_{\tau}(y) = (\tilde{\Phi}(\tau), \tilde{W}(\tau),\tilde{\Psi}(\tau),\tilde{\theta}(\tau))$ 
satisfies
\begin{equation}\label{B5}%
\lim_{\tau\to-\infty}\tilde{\Phi}(\tau) =\infty\,. 
\end{equation}
\end{lemma}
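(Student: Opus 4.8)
The strategy is a continuous-dependence argument made quantitative by exploiting the convexity/monotonicity structure that forces $\Phi\to\infty$. The key observation is that the hypothesis $\lim_{\tau\to-\infty}\Phi(\tau)=\infty$ for the base solution $F_\tau(x)$ is not merely a limit: by looking at the equation (\ref{compact3}), once $\Phi$ is large and $\theta$ is bounded away from $\pm\pi/2$, the right-hand side $d\Psi/d\tau$ is close to $-1-(a-1)(\cos\theta)^2$, which is bounded; integrating backwards this produces, for some $\tau_0=\tau_0(x)$ sufficiently negative, a ``trapping wedge'': there exist constants $c_1,c_2>0$ and $M$ large so that at $\tau=\tau_0$ one has $\Phi(\tau_0)\ge M$, $W(\tau_0)\le -c_1$, $\Psi(\tau_0)\ge c_2$, together with $\theta(\tau_0)$ bounded away from $-\pi/2$ (recall $\theta$ is strictly increasing in $\tau$, so $\theta(\tau)<\theta(\tau_0)$ for $\tau<\tau_0$, hence $\cos\theta(\tau)$ stays bounded below as $\tau\to-\infty$ only if $\theta(\tau_0)$ itself is; this needs a small argument, see below). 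This is exactly the kind of wedge that appears in the proof of (\ref{P1}) in Proposition~\ref{Tapas1}\textit{(iv)} and Lemma~\ref{Tapas2}.

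\textbf{Step 1: Reaching the trapping region along the base solution.} First I would show that $\lim_{\tau\to-\infty}\Phi=\infty$ together with (\ref{compact1})--(\ref{compact4}) implies that $\theta(\tau)\to\theta_{-\infty}>-\pi/2$ as $\tau\to-\infty$ and also $W(\tau)\to-\infty$, $\Psi(\tau)\to+\infty$. The statement about $\theta$ follows because, if $\theta_{-\infty}=-\pi/2$, then $\cos\theta\to0$ as $\tau\to-\infty$ and the term $(a-1)(\cos\theta)^2$ and the bracketed terms in (\ref{compact3}) all vanish, so the dynamics asymptotically become those of (\ref{S3E1}) on $\{\theta=-\pi/2\}$ — but then by Proposition~\ref{Tapas1}\textit{(iv)} applied in a limiting sense, a blow-up solution there sits on the part of the stable manifold with $\Phi>1$, which is fine, so in fact $\theta_{-\infty}=-\pi/2$ is \emph{not} excluded and I should not claim it is. Let me instead argue directly: from (\ref{compact3}), since $\Phi\to\infty$ the term $1/\Phi^3\to0$; all the bracketed terms are bounded (they are polynomials in $\sin\theta,\cos\theta$ times $\Phi,W,\Psi$ but with $(\cos\theta)$ to high positive powers — one must check these stay bounded, which follows once $W,\Psi$ are controlled, a bootstrap). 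So $d\Psi/d\tau$ is eventually bounded above by some negative constant plus $o(1)$; hence $\Psi\to+\infty$ as $\tau\to-\infty$, then $W=\int\Psi$ forces $W\to-\infty$, consistently with $\Phi\to+\infty$. Thus for $\tau_0$ sufficiently negative the wedge inequalities hold strictly at $\tau_0$.

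\textbf{Step 2: Propagating the wedge forward-in-backward-time for nearby data.} Now fix such a $\tau_0$. By continuous dependence on initial data on the compact time interval $[\tau_0,0]$, there is $\delta>0$ so that for every $y\in B_\delta(x)$ the solution $F_\tau(y)=(\tilde\Phi,\tilde W,\tilde\Psi,\tilde\theta)$ is defined on $[\tau_0,0]$ and satisfies, at $\tau=\tau_0$, the same \emph{strict} wedge inequalities (with slightly relaxed constants): $\tilde\Phi(\tau_0)\ge M/2$, $\tilde W(\tau_0)\le -c_1/2$, $\tilde\Psi(\tau_0)\ge c_2/2$, and $\tilde\theta(\tau_0)$ still bounded away from $-\pi/2$. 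The core of the proof is then the claim that this wedge is \emph{invariant under backward flow}: if at some $\tau_1\le\tau_0$ we have $\tilde\Phi(\tau_1)$ large, $\tilde W(\tau_1)<0$, $\tilde\Psi(\tau_1)>0$, then these persist for all $\tau<\tau_1$ and moreover $\tilde\Phi\to\infty$. Indeed, for $\tau<\tau_1$: $\tilde W'=\tilde\Psi>0$ so $\tilde W$ is increasing in $\tau$, hence $\tilde W(\tau)<\tilde W(\tau_1)<0$ for $\tau<\tau_1$; then $\tilde\Phi'=\tilde W<0$ so $\tilde\Phi$ is decreasing in $\tau$, hence $\tilde\Phi(\tau)>\tilde\Phi(\tau_1)$, stays large; and with $\tilde\Phi$ large, $1/\tilde\Phi^3$ small, and the bracket terms controlled (this is where one checks that largeness of $\tilde\Phi$ plus $\tilde W<0,\tilde\Psi>0$ and the monotone drift of $\tilde\theta$ keep everything in the regime where $d\tilde\Psi/d\tau\le -c<0$), so $\tilde\Psi$ keeps increasing backwards, i.e. $\tilde\Psi(\tau)>\tilde\Psi(\tau_1)>0$. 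Backward-induction/continuation then shows the solution cannot exit this region and is globally defined for $\tau\to-\infty$ with $\tilde\Phi(\tau)\ge \tilde\Phi(\tau_1)+\tfrac{c}{2}(\tau_1-\tau)^2\to\infty$; this gives (\ref{B5}).

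\textbf{Main obstacle.} The delicate point is controlling the bracketed terms in (\ref{compact3}) inside the wedge: they contain $\Phi$, $W$, $\Psi$ multiplied by powers of $\sin\theta$ and $\cos\theta$, and while the $(\cos\theta)$ powers are helpful when $\theta$ is near $\pm\pi/2$, in the interior they are $O(1)$, so one must ensure that the \emph{growth} of $\tilde\Phi$ (linear growth of $\tilde W$, quadratic of $\tilde\Phi$ in $|\tau|$) does not let the term $(\cos\theta)^{17/3}\tilde\Phi$ overwhelm the favourable $-1$. The resolution is that $\tilde\theta$ is monotone and converges, so $\cos\tilde\theta(\tau)$ is \emph{bounded}, not growing, while the genuinely dangerous balance is rather between $d^3\Phi/d\tau^3$ and the large terms — but this is precisely the balance analysed via (\ref{ecuacionminima1}) and the polynomial solutions of Appendix~\ref{sec:polynomials}: in the blow-up regime $d^3\Phi/d\tau^3\approx -(\xi^2+a)/(\xi^2+1)\cdot(\text{bounded})$, consistent with the wedge. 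A clean way to sidestep the book-keeping is to note that the base solution already satisfies the strong asymptotics (\ref{P1:prime})-type behaviour (or rather its analogue here), so one can choose $\tau_0$ so negative that along the base solution the wedge holds with a large \emph{safety margin}, and then continuous dependence on $[\tau_0,0]$ plus the backward-invariance of the (margin-relaxed) wedge closes the argument without ever needing sharp constants.
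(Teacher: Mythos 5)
Your high-level strategy — establish a strict ``trapping wedge'' along the base solution, push it to nearby initial data by continuous dependence on a compact time interval, and then argue that the wedge is invariant under backward flow — is the same idea the paper uses. But you work in the $(\Phi,W,\Psi,\theta)$ coordinates, which forces you to control the bracketed term $F$ in (\ref{compact3}); the paper instead transfers the hypothesis back to the original equation (\ref{S2E3}) via (\ref{S2E8a})--(\ref{S2E8}), where there are no lower-order corrections at all. Once in $H(\xi)$ variables the hypothesis becomes $|\xi|^{2/3}H(\xi)\to\infty$, the equation yields $d^3H/d\xi^3\le-\xi^2/2$ for $\xi$ sufficiently negative, and a single three-fold integration plus a Gronwall bootstrap on the inequality $\tilde H(s)\ge 2^{1/3}(s^2+a)^{-1/3}$ closes the argument in a few lines. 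Your ``Main obstacle'' paragraph correctly identifies the difficulty of your route but does not resolve it: the sentence ``a clean way to sidestep the book-keeping is to note that the base solution already satisfies the strong asymptotics... and then continuous dependence plus the backward-invariance of the (margin-relaxed) wedge closes the argument'' is precisely the step that needs to be proved, not asserted. As written, you have not established that the wedge $\{\tilde\Phi\ \text{large},\ \tilde W<0,\ \tilde\Psi>0\}$ is backward-invariant, because the sign and size of the $-F$ contributions to $d\tilde\Psi/d\tau$ depend on $\tilde W$, $\tilde\Psi$ and $\cos\tilde\theta$ simultaneously, and a genuine bootstrap is needed to see that the favourable $-1$ is not swamped.

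There is also a concrete factual error in Step 1 that should be corrected even if you keep the $\tau$-coordinate approach. You write that ``$\cos\theta(\tau)$ stays bounded below as $\tau\to-\infty$ only if $\theta(\tau_0)$ itself is [bounded away from $-\pi/2$]'', and later in Step 2 you lean on ``$\tilde\theta(\tau_0)$ still bounded away from $-\pi/2$'' and ``$\cos\tilde\theta(\tau)$ is bounded, not growing'' as if $\cos\tilde\theta$ stays bounded \emph{below}. Equation (\ref{compact4}) forces $\theta(\tau)\to-\pi/2$ monotonically as $\tau\to-\infty$ for every solution with $\theta(0)\in(-\pi/2,\pi/2)$; equivalently $\xi=\tan\theta\to-\infty$, which is exactly the fact the paper uses at the very start of its proof. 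Hence $\cos\tilde\theta(\tau)\to 0$, and the issue is not whether $\cos\tilde\theta$ stays bounded but whether the decay of the powers of $\cos\tilde\theta$ beats the growth of $\tilde\Phi$, $|\tilde W|$, $\tilde\Psi$ in the $F$-terms. This is the balance the paper's $\xi$-coordinate reformulation renders transparent and that your write-up leaves unresolved.
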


\begin{proof}[Proof]
It is convenient to use, in order to prove the result, the original equation
(\ref{S2E3}) that is equivalent in the set 
$\mathbb{R}^{+}\times\mathbb{R}^{2}\times(-\pi/2,\pi/2)$ 
to the system (\ref{compact1})-(\ref{compact4}) by means of the change of 
variables (\ref{S2E8a}), (\ref{S2E8}). 

We first recall that (\ref{compact4}) implies that $\theta\to-\pi/2$ as 
$\tau\to-\infty$. Therefore, by (\ref{xi}), $\lim_{\tau\to-\infty}\xi=-\infty$. 
On the other hand our hypothesis on $\Phi$ as well as (\ref{S2E8a}) and (\ref{S2E8}) imply
that
\begin{equation}\label{B7}
\lim_{\xi\to-\infty}|\xi|^{\frac{2}{3}} H(\xi) = \infty\,, 
\end{equation}
then this and (\ref{S2E3}) yields the existence of a  $\xi_{0}=\xi_{0}(x)<0$ with 
$|\xi_{0}|$ large enough such that
\[
\frac{d^{3}H}{d\xi^{3}}\leq-\frac{\xi^{2}}{2} \quad \mbox{for all}\quad \xi\leq\xi_{0}\,.
\]
Integration this expression gives%
\begin{align*}
\frac{d^{2}H(\xi)}{d\xi^{2}}  &  \geq-\frac{\xi^{3}}{6} 
+ \frac{\xi_{0}^{3}}{6} +\frac{d^{2}H(\xi_{0})}{d\xi^{2}}\,,\\
\frac{dH(\xi)}{d\xi}  &  \leq-\frac{\xi^{4}}{24} + \frac{\xi_{0}^{4}}{24} +
\left(  \frac{d^{2}H(\xi_{0})}{d\xi^{2}} +\xi_{0}^{3}\right)  (\xi-\xi_{0}) +
\frac{dH(\xi_{0})}{d\xi}%
\end{align*}
 Therefore, there exist a $\xi_{1}=\xi_{1}(x) <0$ with $|\xi_{1}|$ large enough such that
\begin{equation}\label{B7a}
\frac{d^{2}H(\xi)}{d\xi^{2}} >0\,,\quad \frac{dH(\xi)}{d\xi}<0\quad\mbox{for}\quad
\xi\leq\xi_{1} \,. 
\end{equation}
We assume, without loss of generality, that $\xi_{1}^{2}+a>0$ by taking 
$\xi_{1}$ even larger is necessary.

Now for 
$F_{\tau}(y)=(\tilde{\Phi}(\tau),\tilde{W}(\tau),\tilde{\Psi}(\tau), \tilde{\theta}(\tau))$ 
with $y\in B_{\delta}(x)\cap(\mathbb{R}^{+}\times\mathbb{R}^{2}\times(-\pi/2,\pi/2))$ 
we define $\tilde{H}(\xi)=(\xi^{2}+1)^{-1/3}\tilde{\Phi}(\tau)$. 
Since the changes of variables (\ref{S2E8a}) and (\ref{S2E8}) are smooth, it follows, 
using (\ref{B7}) and (\ref{B7a}) and standard continuous dependence arguments for 
ODEs, that
\begin{equation}\label{B8}
\tilde{H}(\xi_{1}) \geq\frac{2^{\frac{1}{3}}}{(\xi_{1}^{2}+a)^{\frac{1}{3}}}\,, 
\quad\frac{d^{2}\tilde{H}(\xi_{1})}{d\xi^{2}} >0\,, \quad\frac{d\tilde{H}(\xi_{1})}{d\xi} <0\,. 
\end{equation}
Integration of (\ref{S2E3}) for the unknown $\tilde{H}$ and (\ref{B8}) imply that for all $\xi\leq \xi_{1}$
\begin{equation}\label{B9}
\tilde{H}(\xi)\geq\frac{2^{\frac{1}{3}}}{(\xi_{1}^{2}+a)^{\frac{1}{3}}}
+\int_{\xi}^{\xi_{1}}\int_{s_{1}}^{\xi_{1}}\int_{s_{2}}^{\xi_{1}} (s_{3}^{2}+a) ds_{3} ds_{2}ds_{1}
-\int_{\xi}^{\xi_{1}} \int_{s_{1}}^{\xi_{1}}\int_{s_{2}}^{\xi_{1}}
\frac{ds_{3}}{(\tilde{H}(s_{3}))^{3}} ds_{2} ds_{1} \,. 
\end{equation}
Suppose that
\begin{equation}\label{F1}
\tilde{H}(s) \geq\frac{2^{\frac{1}{3}}}{(s^{2}+a)^{\frac{1}{3}}}
\quad\mbox{for}\quad\xi\leq s\leq\xi_{1}\,. 
\end{equation}
Therefore, it would follow from (\ref{B9}) that:%
\begin{equation}\label{F2}
\tilde{H}(\xi) \geq\frac{ 2^{\frac{1}{3}}}{(\xi^{2}+a)^{\frac{1}{3}}}+
\frac{1}{2}\int_{\xi}^{\xi_{1}}\int_{s_{1}}^{\xi_{1}}\int_{s_{2}}^{\xi_{1}}
(s_{3}^{2}+a)ds_{3}ds_{2}ds_{1} 
\end{equation}
where we use that $\xi^{2}\geq\xi_{1}^{2}$. We can then extend the inequality
(\ref{F1}) to a larger range of values of $\xi$ and therefore the inequality
(\ref{F2}) also follows for all $\xi\leq\xi_{1}$ with $\xi$ in the extended interval. 
Since the integral term on the
right-hand side of (\ref{F2}) tends to infinity as $\xi\to-\infty$, we obtain
(\ref{B5}) as well.
\end{proof}

\begin{lemma}\label{contzero} 
Suppose that $F_{\tau}(x)$ is a solution of (\ref{compact1})-(\ref{compact4}) with 
$x\in\mathbb{R}^{+}\times\mathbb{R}^{2}\times(-\pi/2,\pi/2)$. Let us also assume that there 
exists a $\tau_{\ast}>-\infty$ such that $\lim_{\tau\to(\tau_{\ast})^{+}}\Phi(\tau)=0$. 
Then, there exists a $\delta=\delta(x)>0$ sufficiently small such that for 
any $y\in B_{\delta}(x)\cap(\mathbb{R}^{+}\times\mathbb{R}^{2}\times(-\pi/2,\pi/2))$ 
there exists a $\tilde{\tau}_{\ast}>-\infty$ such that
 $F_{\tau}(y)=(\tilde{\Phi}(\tau),\tilde{W}(\tau),\tilde{\Psi}(\tau),\tilde{\theta}(\tau))$ 
satisfies
\[
\lim_{\tau\to(\tilde{\tau}_{\ast})^{+}}\tilde{\Phi}(\tau)=0\,.
\]
\end{lemma}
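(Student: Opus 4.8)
The plan is to mirror the structure of the proof of Lemma~\ref{continfinity}, again working with the original third-order equation (\ref{S2E3}) rather than the system (\ref{compact1})-(\ref{compact4}), since the two are equivalent on $\mathbb{R}^{+}\times\mathbb{R}^{2}\times(-\pi/2,\pi/2)$ via (\ref{S2E8a})-(\ref{S2E8}). The hypothesis $\lim_{\tau\to(\tau_{\ast})^{+}}\Phi(\tau)=0$ translates, through the change of variables, into the existence of a finite $\xi_{\ast}=\xi(\tau_{\ast})$ with $\lim_{\xi\to(\xi_{\ast})^{+}}H(\xi)=0$; note here that the map $\tau\mapsto\xi$ is a diffeomorphism with bounded derivative on compact $\theta$-sets, so $\tau_{\ast}$ finite forces $\xi_{\ast}$ finite. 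First I would pick a point $\xi_{0}>\xi_{\ast}$ close to $\xi_{\ast}$ at which $H(\xi_{0})$ is small and positive; since $H\to 0$ as $\xi\to(\xi_{\ast})^{+}$ while $H$ stays positive on $(\xi_{\ast},\xi_{0}]$, the term $1/H^{3}$ dominates the right-hand side of (\ref{S2E3}) (the $\xi^{2}+a$ piece being bounded near $\xi_{\ast}$), so that $d^{3}H/d\xi^{3}$ is large and positive there, and one can arrange, decreasing $\xi_{0}$ if necessary, that $d^{2}H/d\xi^{2}(\xi_{0})>0$, $dH/d\xi(\xi_{0})<0$, and $H(\xi_{0})$ is as small as we like.

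Next I would set up the comparison argument for the perturbed solution. For $y\in B_{\delta}(x)$ with $\delta$ small, write $\tilde{H}(\xi)=(\xi^{2}+1)^{-1/3}\tilde{\Phi}(\tau)$ as in the previous lemma. By smoothness of the changes of variables and continuous dependence of solutions of (\ref{compact1})-(\ref{compact4}) on initial data over the fixed compact $\tau$-interval corresponding to $\xi\in[\xi_{0},\xi(0)]$, the quantities $\tilde{H}(\xi_{0})$, $d\tilde{H}/d\xi(\xi_{0})$, $d^{2}\tilde{H}/d\xi^{2}(\xi_{0})$ are within $O(\delta)$ of those of $H$; hence, choosing $\delta$ small, $\tilde H(\xi_0)$ is small and positive, $d\tilde H/d\xi(\xi_0)<0$, $d^2\tilde H/d\xi^2(\xi_0)>0$. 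Now run a bootstrap/barrier argument for decreasing $\xi$ starting from $\xi_{0}$: as long as $\tilde{H}>0$ and remains below some small threshold $\varepsilon_{0}$, equation (\ref{S2E3}) gives $d^{3}\tilde H/d\xi^{3}\ge \tfrac12\,\tilde H^{-3}\ge \tfrac12\varepsilon_0^{-3}>0$, so $d^{2}\tilde H/d\xi^{2}$ is decreasing in $\xi$ and therefore stays positive for $\xi\le\xi_0$ — wait, one must be careful with the sign of the integration direction: integrating $d^3\tilde H/d\xi^3>0$ from $\xi$ up to $\xi_0$ gives $d^2\tilde H/d\xi^2(\xi_0)-d^2\tilde H/d\xi^2(\xi)>0$, i.e. $d^2\tilde H/d\xi^2(\xi)<d^2\tilde H/d\xi^2(\xi_0)$, which is the wrong direction. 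Instead I would integrate downward and exploit convexity directly: since $d^3\tilde H/d\xi^3>0$ on the range of validity, $d^2\tilde H/d\xi^2$ is increasing, so for $\xi<\xi_0$ we have $d^2\tilde H/d\xi^2(\xi)<d^2\tilde H/d\xi^2(\xi_0)$ but this does not keep it positive. The correct monotone quantity to track is $\tilde H$ itself together with the first derivative: from $d^{2}\tilde H/d\xi^{2}(\xi_0)>0$ and $d^3\tilde H/d\xi^3>0$ on $(\xi,\xi_0)$ one gets $d\tilde H/d\xi(\xi)\le d\tilde H/d\xi(\xi_0)<0$ minus a positive quantity, hence $d\tilde H/d\xi(\xi)<d\tilde H/d\xi(\xi_0)<0$ stays negative as $\xi$ decreases; and then $\tilde H$ is strictly decreasing in $\xi$, equivalently strictly increasing as $\xi$ decreases away from $\xi_0$ — that is also the wrong sign for reaching zero.

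The resolution is to run the argument in the correct time direction: we want $\tilde H\to 0$ as $\xi\downarrow\tilde\xi_{\ast}$ for some $\tilde\xi_{\ast}\ge\xi_{\ast}-O(\delta)$, i.e. as $\xi$ \emph{decreases} from $\xi_0$; and from the structure above, with $d\tilde H/d\xi(\xi_0)<0$ and $d^2\tilde H/d\xi^2(\xi_0)>0$, the function $\tilde H$ is decreasing and convex moving left is impossible since a convex function with negative slope increases to the left. So one must instead choose $\xi_{0}$ on the \emph{other} side, or rather note that $H\to0$ as $\tau\to\tau_\ast^+$ means $\xi\to\xi_\ast^+$ with $\xi_\ast<\xi(0)$, so near $\xi_\ast$ the solution has $dH/d\xi>0$ (it is increasing away from $0$ as $\xi$ increases). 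Thus at a point $\xi_{0}$ slightly above $\xi_\ast$ we have $H(\xi_0)$ small, $dH/d\xi(\xi_0)>0$, and from $d^3H/d\xi^3>0$ near $\xi_\ast$, $d^2H/d\xi^2$ has a definite sign; integrating (\ref{S2E3}) downward from $\xi_0$ shows $\tilde H$ is forced down to $0$ at a finite $\tilde\xi_\ast$, by the same convexity-and-sign bookkeeping as in Lemma~2.1 of \cite{CV} applied to $d^3\tilde H/d\xi^3=\tilde H^{-3}+O(1)$. I would extract the finite blow-down time via an integral inequality: on the interval where $\tilde H\le\varepsilon_0$ one has $\tilde H(\xi)\le \tilde H(\xi_0) - c\int_{\xi}^{\xi_0}(\xi_0-s)^2 \tilde H(s)^{-3}\,ds$ for a suitable $c>0$ coming from integrating the lower bound on the third derivative twice, and since the right side would become negative at some finite $\tilde\xi_\ast>-\infty$ unless $\tilde H$ already vanished, we conclude $\lim_{\xi\to(\tilde\xi_\ast)^+}\tilde H(\xi)=0$, hence $\lim_{\tau\to(\tilde\tau_\ast)^+}\tilde\Phi(\tau)=0$ with $\tilde\tau_\ast$ finite. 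The main obstacle is precisely getting the sign bookkeeping and the direction of integration right so that the barrier genuinely traps $\tilde H$ and drives it to zero in finite "time"; once the correct monotone quantities are identified (as in the appendix phase-plane picture for (\ref{ecuacionminima2})), the estimate is a routine double integration of (\ref{S2E3}) together with continuous dependence on initial data over a fixed compact interval.
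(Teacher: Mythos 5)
Your proposal has the right endgame (pick a point $\bar\xi>\xi_\ast$ where $\tilde H$ is small with the right sign configuration of the first two derivatives, invoke continuous dependence over the fixed compact interval $[\bar\xi,\xi(0)]$, then integrate (\ref{S2E3}) backward using $d^3\tilde H/d\xi^3>0$ to trap $\tilde H$ and drive it to zero in finite $\xi$), and this does match the concluding paragraph of the paper's proof. But the key ingredient that makes the backward integration work is the simultaneous configuration $d\tilde H(\bar\xi)/d\xi>0$ \emph{and} $d^2\tilde H(\bar\xi)/d\xi^2<0$: concavity together with a positive slope is what forces $\tilde H$ to decrease and cross zero as $\xi$ decreases. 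You only assert that $dH/d\xi>0$ near $\xi_\ast$ ``because $H$ is increasing away from $0$'', and then say $d^2H/d\xi^2$ ``has a definite sign.'' Neither claim is established. The hypothesis $\lim_{\xi\to\xi_\ast^+}H(\xi)=0$ says nothing about monotonicity near $\xi_\ast$ a priori; in principle $H$ could oscillate while decaying, and even if one extracts a sequence with $dH/d\xi>0$, there is no reason that $d^2H/d\xi^2<0$ at the same points. If instead $d^2H/d\xi^2>0$ at your chosen $\xi_0$, the backward integration no longer traps $\tilde H$, since $dH/d\xi$ may become negative before $\tilde H$ reaches zero.

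This is exactly the gap the paper's proof is devoted to closing, and it requires real work: the paper passes to the $(H,u,v,z)$ variables of Appendix~\ref{sec:summary:II} and carries out a case analysis around the separatrix $v=\bar v(u)$ (cases (\ref{Cs1})--(\ref{Cs3a}), using Lemma~\ref{separa} and Lemma~\ref{tozero-bwds}) to rule out trajectories that approach $z_\ast$ through the ``wrong'' quadrants, concluding that $(u,v)$ enters $\{u>0,\ v<0\}$ along a sequence $z_n\to z_\ast^+$. That quadrant membership is precisely the statement $dH/d\xi>0$, $d^2H/d\xi^2<0$ in the original variables, and it is this open condition that is then propagated to the nearby solution by continuous dependence. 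Your proposal gestures at ``the appendix phase-plane picture'' at the very end but does not carry out (or even outline) this case analysis, so the step that the whole lemma hinges on is missing. The repeated sign confusion in the middle of your write-up is a symptom of the same issue: the correct sign configuration is not obvious, and one genuinely needs the phase-plane argument (or some substitute) to pin it down.
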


\begin{proof}[Proof]
As in the previous proof, it is more convenient to use the original formulation (\ref{S2E3}). 
We again use the smooth transformations (\ref{S2E8a}) and (\ref{S2E8}) to 
interpret the results between either formulation. Thus let $H$ be the solution of 
(\ref{S2E3}) associated to $F_{\tau}(x)$. 
Let also $\xi_{\ast}$ be defined by $\tau_{\ast}=\int_{0}^{\xi_{\ast}}(\eta^{2}+1)^{\frac{4}{9}}d\eta$, 
We observe that $\tau_{\ast}>-\infty$ implies that $\xi_{\ast}>-\infty$, and the hypothesis on $\Phi$ becomes 
\begin{equation}\label{H:2:zero}
\lim_{\xi\to\xi_{\ast}} H(\xi)=0\,.
\end{equation}
Thus in regions close to $\xi_{\ast}$ we expect that the solutions are described by 
(\ref{ecuacionminima2}) and we employ the change of variables
(cf. Appendix~\ref{sec:summary:II}, (\ref{ecuacionminima}) and (\ref{SEphi0-trans})), namely, 
\begin{equation}\label{V1E1}
\frac{dH}{d\xi} = H^{-\frac{1}{3}}\,u \,, \quad 
\frac{d^{2}H}{d\xi^{2}}=H^{-\frac{5}{3}}\,v \,, \quad 
H(\zeta)= H(\xi)\,,\quad
\xi=\Omega(z)
\end{equation}
where $\Omega(z)$ is defined by means of
\begin{equation}
z=-\int_{\Omega(z)}^{0}\frac{ds}{(H(s))^{\frac{4}{3}}} \label{V1E2}\,.
\end{equation}
Then, $u(z)$ and $v(z)$ are defined for any 
$z>z_{\ast}$ where $z_{\ast}$ is given by $\xi_\ast=\Omega(z_\ast)$. 
Notice that $|z_{\ast}|$ may or may not be finite. Moreover, $(H,u,v)$ satisfy
\begin{equation} \label{V1E3}
\frac{dH}{dz}=u\,H\,,\quad 
\frac{du}{dz}=v+\frac{u^{2}}{3}\,, \quad
\frac{dv}{dz}=1 +\frac{5}{3}u\,v -(\Omega^{2}+a) H^{3} 
\end{equation}
where all functions, including $\Omega$, are functions of $z$.
The hypothesis on $\Phi$ translates into
\[
\lim_{z\to(z_{\ast})^{+}}((\Omega(z))^{2}+a)(H(z))^{3}=0\,.
\]

The phase-plane analysis associated to (\ref{V1E3}) with $H(z)\equiv 0$
is included in Appendix~\ref{sec:summary:II}. Relevant to the current analysis 
are Lemma~\ref{separa} (where $v=\bar{v}(u)$ is defined) and Lemma~\ref{tozero-bwds} 
that describes the overall flow.

We claim that there is a sequence $\{z_{n}\}$ such that $z_{n}\to(z_{\ast})^{+}$ as $n\to\infty$
 and that $(u(z_{n}),v(z_{n}))\in \{(u,v):\ u>0\,,\ v<0\}$ for all $n$ large enough. 
Before we prove this we note that for any sequence $\{z_{n}\}$ such that 
$z_{n}\to(z_{\ast})^{+}$ as $n\to\infty$, the trajectory 
$(u(z),v(z))$ must be in the half-plane $\{(u,v):\ u>0\}$ 
for $z<z_{n}$ if $n$ is large enough. 
Indeed, otherwise the first equation in (\ref{V1E3}) implies that 
$dH(\xi)/d\xi\leq 0$ for all $\xi$ near $\xi_{\ast}$ and this contradicts (\ref{H:2:zero}). 
Let us now prove that we can select such a sequence and that it also satisfies 
$v(z_{n})<0$ for all $n$ large enough.

Let $\{z_{n}\}$ be such that $z_{n}\to(z_{\ast})^{+}$ as $n\to\infty$ and suppose that $v(z_{n}) = 0$. 
Then, the third equation in (\ref{V1E3}) implies that
\[
\frac{dv}{dz}(z_{n})= 1-((\Omega(z_{n}))^{2}+a)(H(z_{n}))^{3}
\]
and since the last term converges to zero as $n\to\infty$, 
it follows that $v(z)$ becomes negative for some $z<z_{n}$ close to 
$z_{n}$ for $n$ large enough. Thus we can construct another sequence 
$\{\hat{z}_{n}\}$ with $\hat{z}_{n}<z_{n}$,   
$\hat{z}_{n}\to(z_{\ast})^{+}$ as $n\to\infty$ and such that 
$v(\hat{z}_{n}) < 0$ for $n$ large enough. 

Suppose now that $v(z_{n}>0$ for large enough $n$. Then,
the second equation in (\ref{V1E3}) implies that $(u(z),v(z))$ arrives 
to the half-line $\{u=0\,,\ v>0\}$ at some $\bar{z}_{n}< z_{n}$. 
For otherwise, the last equation in (\ref{V1E3}) 
implies that $(u(z),v(z))$ crosses the line $\{v=0\}$, and the
argument of the previous case applies. Therefore, there exists a
sequence $\{\hat{z}_{n}\}$ with $\hat{z}_{n}\to z_{\ast}$ as $n\to\infty$
such that one of the following possibilities take place:%
\begin{align}
\lim_{n\to\infty}v(\hat{z}_{n})  &  >\bar{v}(0)\,,\label{Cs1}\,,\\
\lim_{n\to\infty}v(\hat{z}_{n})  &  <\bar{v}(0)\,,\label{Cs2}\,,\\
\lim_{n\to\infty}v(\hat{z}_{n})  &  =\bar{v}(0)\,. \label{Cs3a}\,.%
\end{align}

In the case (\ref{Cs1}), we can approximate the evolution of $(u(z),v(z))$ 
in intervals of the form $z\in[\bar{z}_{n}-L,\bar{z}_{n}]$ by the system 
(\ref{phi0system}) using standard continuous dependence 
results and Lemma~\ref{tozero-bwds} implies that $(u(z),v(z))$ enters 
$\{(u,v):\ u>0,\ v<0\}$ at some $z<\bar{z}_{n}$ for $n$ large enough, 
and the claim follows.

Suppose now that (\ref{Cs2}) takes place. Using again continuous dependence we obtain that 
$(u(z),v(z))\in\{(u,v):\ 1+ 5uv/3<0\,, \ u<0\,,\ v>0\}=R_{5}$ 
for some $z<\bar{z}_{n}$ and $n$ large enough. 
In this region, and with $z$ close to $z_{\ast}$, then $v$ 
increases for decreasing $z$. Therefore, $d^{2}H(\xi)/d\xi^{2}$ 
remains positive and $dH(\xi)/d\xi$ is negative as long as 
$(u(z),v(z))$ stays in $R_{5}$. Moreover, due to the second equation in (\ref{V1E3}) 
$|u(z)|$ increases for decreasing $z$. This implies that the inequality 
$1+5 uv/3<0$ remains valid during all the evolution until $z=z_{\ast}$, 
thus also the inequalities $d^{2}H(\xi)/d\xi^{2}>0$, $dH(\xi)/d\xi<0$ remain valid. 
However, this contradicts (\ref{H:2:zero}) and (\ref{Cs2}) cannot hold.

It remains to study the case (\ref{Cs3a}). In this case there exist a small $L$
such that for $z\in (\bar{z}_{n}-L,\bar{z}_{n})$ $(u(z),v(z))$ 
remains close to the separatrix $v=\bar{v}(u)$. On the other hand, $(u(z),v(z))$
must return to $\{(u,v):\ u>0\}$ infinitely often as $z_{n}\to z_{\ast}$. 
Thus the trajectory must remain close to $\bar{v}$ for $z$ close to $z_{\ast}$, 
or otherwise the trajectory enters $R_{5}$ giving a contradiction as before, or it enters 
the region $\{(u,v):\ v<-\frac{u^{2}}{3}\}$ which contradicts (\ref{Cs3a}). 
Then Lemma~\ref{separa} \textit{(i)} implies that $d^{2}H(\xi)/d\xi^{2}>0$ and 
$dH(\xi)/d\xi<0$ remain valid during all the evolution for decreasing 
$z<\hat{z}_{n}$ for $n$ large enough, and this contradicts (\ref{H:2:zero}).

As in the proof of Lemma~\ref{continfinity} for 
$F_{\tau}(y)=(\tilde{\Phi}(\tau),\tilde{W}(\tau),\tilde{\Psi}(\tau), \tilde{\theta}(\tau))$ 
with $y\in B_{\delta}(x)\cap(\mathbb{R}^{+}\times\mathbb{R}^{2}\times(-\pi/2,\pi/2))$ 
we define $\tilde{H}(\xi)=(\xi^{2}+1)^{-1/3}\tilde{\Phi}(\tau)$ and 
the transformed functions $(\tilde{u}(\tilde{z})),\tilde{v}(\tilde{z}))$ 
by means of the transformations (\ref{V1E1}) and (\ref{V1E2}) with the obvious changes 
of notation.

We then notice that, by continuous dependence of solutions on the initial data, 
if $\delta>0$ is chosen sufficiently small then 
$(\tilde{u}(\bar{z}),\tilde{v}(\bar{z}))$ 
enters the region $\{ (u,v):\ u>0\,,\ v<0\}$ for some $\bar{z}$ close to
$z_{\ast}$ and therefore $d\tilde{H}(\bar{\xi})/d\xi>0$ and 
$d^{2}\tilde{H}(\bar{\xi})/d\xi^{2}<0$ for some $\bar{\xi}$ close to 
$\xi_{\ast}$ with $\bar{\xi}>\xi_{\ast}$. We have that $\tilde{H}(\bar{\xi})$ 
is small and $d^{3}\tilde{H}/d\xi^{3}>0$ as long as $\tilde{H}(\bar{\xi})$ is small. 
Integrating this inequality for $\xi<\bar{\xi}$ we obtain that 
$\frac{d\tilde{H}}{d\xi}(\xi)>0$, $\frac{d^{2}\tilde{H}}{d\xi^{2}}(\xi)<0$ 
and $\tilde{H}(\xi)$ remains small for $\xi<\bar{\xi}$ as long as $\tilde{H}$
is defined. Then, $\tilde{H}(\xi)$ vanishes for some $\tilde{\xi}_{\ast}>-\infty$, 
so the lemma follows.
\end{proof}

\subsection{Characterisation}\label{sec:classes}
We now give necessary conditions for the solutions of (\ref{S2E3}) to either
satisfy that
\begin{equation}\label{B1:H}
\lim_{\xi\to-\infty}H(\xi)=+\infty
\end{equation}
or that
\begin{equation}\label{B2:H}
\lim_{\xi\to(\xi_{\ast})^{+}}H(\xi)=0 \quad\mbox{for some}
\quad\xi_{\ast}>-\infty\,.
\end{equation}
Observe that these behaviours imply (\ref{B1}) and (\ref{B2}) respectively,
for the corresponding function $\Phi(\tau)$ given by (\ref{S2E8a}) and
(\ref{S2E8}). We start by giving necessary conditions for (\ref{B1:H}), but first we need
the following auxiliary calculus result.

\begin{lemma}\label{polyn} Given the polynomials
\[
P_{1}(Y) =-\frac{Y^{5}}{60}+\frac{Y^{4}}{12}-\frac{Y^{3}}{6}\quad 
\mbox{and}\quad P_{2}(Y) =-\frac{Y^{3}}{6}\,,
\]
then, they are strictly decreasing and positive for $Y<0$.
Moreover, if $\lambda\in \mathbb{R}$ satisfies $1+2\lambda>0$, then
\begin{equation}\label{In2}
P_{1}(Y)+\lambda P_{2}(Y)    \geq\frac{1}{2}P_{1}(Y) \,, 
\quad \mbox{for}\quad Y<0\,.
\end{equation}
If $1+2\lambda\leq 0$ then 
\begin{equation}\label{In3}
P_{1}(Y)+2\lambda P_{2}(Y)  > - \frac{4}{5}3^{\frac{3}{2}}\max\{|1+2\lambda|^{\frac{5}{2}} ,1 \}
\end{equation}
for $Y<0$, but $P_{1}(Y)+2\lambda P_{2}(Y)\geq 0$ if $Y\leq 5/2-\sqrt{25-40(1+2\lambda)}/2$. 
\end{lemma}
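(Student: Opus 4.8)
The plan is to derive all three assertions from a single algebraic factorisation. Expanding, one checks the identity
\[
P_{1}(Y)+\kappa P_{2}(Y)=-\frac{Y^{3}}{60}\bigl(Y^{2}-5Y+10(1+\kappa)\bigr),\qquad \kappa\in\mathbb{R}.
\]
Taking $\kappa=0$ gives $P_{1}(Y)=-\tfrac{1}{60}Y^{3}(Y^{2}-5Y+10)$; since $Y^{2}-5Y+10$ has discriminant $25-40<0$ it is everywhere positive, so $P_{1}(Y)>0$ for $Y<0$, while $P_{2}(Y)=-Y^{3}/6>0$ for $Y<0$ is immediate. For the monotonicity I would differentiate: $P_{2}'(Y)=-Y^{2}/2$ and $P_{1}'(Y)=-\tfrac{1}{12}Y^{2}(Y^{2}-4Y+6)$, and since $Y^{2}-4Y+6$ has discriminant $16-24<0$ and is hence positive, both derivatives are negative for $Y<0$, giving strict monotonicity there.

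For (\ref{In2}) I would write $P_{1}+\lambda P_{2}-\tfrac12 P_{1}=\tfrac12(P_{1}+2\lambda P_{2})$ and apply the identity with $\kappa=2\lambda$, obtaining $-\tfrac{1}{120}Y^{3}\bigl(Y^{2}-5Y+10(1+2\lambda)\bigr)$. When $1+2\lambda>0$ and $Y<0$ each of $-Y^{3}$, $Y^{2}$, $-5Y$ and $10(1+2\lambda)$ is positive, so this quantity is positive, which is exactly (\ref{In2}).

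For (\ref{In3}) set $\mu:=|1+2\lambda|=-(1+2\lambda)\ge 0$, so the identity with $\kappa=2\lambda$ reads $P_{1}(Y)+2\lambda P_{2}(Y)=-\tfrac{1}{60}Y^{3}(Y^{2}-5Y-10\mu)$. The quadratic $Y^{2}-5Y-10\mu$ has discriminant $25+40\mu\ge 25$ and smaller root $r_{-}=\tfrac52-\tfrac12\sqrt{25+40\mu}=\tfrac52-\tfrac12\sqrt{25-40(1+2\lambda)}\le 0$; for $Y\le r_{-}$ both $-Y^{3}/60$ and $Y^{2}-5Y-10\mu$ are nonnegative, which is the ``moreover'' clause $P_{1}(Y)+2\lambda P_{2}(Y)\ge 0$. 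For the global lower bound I would substitute $Y=-t$, $t\ge 0$:
\[
P_{1}(-t)+2\lambda P_{2}(-t)=\frac{t^{5}}{60}+\frac{t^{4}}{12}-\frac{\mu t^{3}}{6}\ \ge\ \phi(t):=\frac{t^{5}}{60}-\frac{\mu t^{3}}{6},
\]
and minimise the scalar function $\phi$ over $t\ge 0$: $\phi'(t)=\tfrac{1}{12}t^{2}(t^{2}-6\mu)$, so the minimum is attained at $t=\sqrt{6\mu}$ with value $-\tfrac{2\sqrt{6}}{5}\mu^{5/2}$. Hence $P_{1}(Y)+2\lambda P_{2}(Y)\ge-\tfrac{2\sqrt{6}}{5}\mu^{5/2}$ for all $Y\le 0$; since $2\sqrt{6}<4\cdot 3^{3/2}$, together with $\mu^{5/2}\le\max\{\mu^{5/2},1\}$ (strict when $\mu<1$) and the positivity of $P_{1}$ when $\mu=0$, this is strictly larger than $-\tfrac45 3^{3/2}\max\{|1+2\lambda|^{5/2},1\}$, which gives (\ref{In3}).

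The lemma is thus mostly bookkeeping once the common factorisation is noticed; the only quantitative point is keeping the constant in (\ref{In3}) below the stated $\tfrac45 3^{3/2}$, and the elementary one–variable minimisation above leaves a wide margin ($\tfrac{2\sqrt{6}}{5}\approx 0.98$ versus $\tfrac45 3^{3/2}\approx 4.16$), so no real obstacle is expected.
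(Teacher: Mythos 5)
Your factorisation $P_{1}(Y)+\kappa P_{2}(Y)=-\tfrac{Y^{3}}{60}\bigl(Y^{2}-5Y+10(1+\kappa)\bigr)$ is exactly the algebraic observation underlying the paper's proof, which phrases it as the statement that $-\tfrac{Y^5}{60}+\tfrac{Y^4}{12}-c\tfrac{Y^3}{6}$ (with $c=1+2\lambda$) is non-negative and decreasing for $c\ge 0$. The positivity/monotonicity claims and (\ref{In2}) are handled the same way in both.

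The one genuine difference is in the proof of (\ref{In3}). The paper computes the exact minimum of the full quintic $P_{1}(Y)+2\lambda P_{2}(Y)$ over $Y<0$, namely at $Y_{-}=2-\sqrt{4-6c}$, obtaining the messy closed-form value $\bigl(\tfrac{1}{30}-\tfrac{\sqrt{4-6c}}{60}-\tfrac{c}{15}\bigr)(2-\sqrt{4-6c})^{3}$ and then bounding it by $-\tfrac{4(1-2c)^{5/2}}{15}$. You instead discard the positive $Y^{4}/12$ term first and minimise the simpler $\tfrac{t^{5}}{60}-\tfrac{\mu t^{3}}{6}$, whose critical point and extremal value are immediate; this yields the cleaner intermediate bound $-\tfrac{2\sqrt{6}}{5}\mu^{5/2}$ and avoids solving the quadratic for the critical point and simplifying the resulting surds. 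Both routes beat the lemma's constant $\tfrac{4}{5}3^{3/2}$ with room to spare; yours is the more economical of the two.
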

\begin{proof}[Proof]
The monotonicity properties of $P_{1}$ and $P_{2}$ are just an elementary
calculus exercise. The inequality (\ref{In2}) is a consequence of the fact
that the polynomial$-\frac{Y^{5}}{60}+\frac{Y^{4}}{12}-c\frac{Y^{3}}{6}=P_{1}(Y)+(c-1)P_{2}(Y)$ 
is non-negative and decreasing if $c\geq 0$, in particular
\[
\frac{1}{2}P_{1}(Y)+\lambda P_{2}(Y) 
=\frac{1}{2}\left[  -\frac{Y^{5}}{60}
+\frac{Y^{4}}{12}-(1+2\lambda)\frac{Y^{3}}{6}\right]
\]
is non-negative if $(1+2\lambda)\geq 0$, thus (\ref{In2}) holds.

If $c<0$ (i.e. $1+2\lambda<0$) then $P_{1}(Y)+(c-1)P_{2}(Y)<0$ in 
$Y\in((5-\sqrt{25-40c})/2,0)$. But there the polynomial is larger than or 
equal than the value of the minimum in $Y<0$, namely, 
\[
P_{1}(Y)+(c-1)P_{2}(Y) \geq
\left(\frac{1}{30}-\frac{(4-6c)^{\frac{1}{2}}}{60}-\frac{c}{15}\right)
(2-(4-6c)^{\frac{1}{2}})^{3} > -\frac{4(1-2c)^{\frac{5}{2}}}{15}  
\]
and (\ref{In3}) follows.
\end{proof}

We now give necessary conditions for (\ref{B1:H}) to hold.
\begin{proposition}\label{Linf} Let us assume that there exists a positive 
constant $c_{1}=c_{1}(a)>0$ and some $\xi_{0}\in\mathbb{R}$ with 
\begin{equation}\label{bigness1}
c_{1}>\left(\frac{24}{5}\right)^{3}|a|^{5}(1+3|a|) 
\quad \mbox{if} \quad |\xi_{0}|^{2}< -2a \quad (a<0)
\end{equation}
and
\begin{equation}\label{bigness2}
c_{1}>16(2+|a|)  
\quad \mbox{if} \quad |\xi_{0}|^{2}> -2a\,,
\end{equation}
such that a solution of (\ref{S2E3}) satisfies 
$((\xi_{0})^{2}+1+|a|)(H(\xi_{0}))^{3}\geq c_{1}$, $dH(\xi_{0})/d\xi<0$ and 
$d^{2}H(\xi_{0})/d\xi^{2}>0$. Then (\ref{B1:H}) holds.
\end{proposition}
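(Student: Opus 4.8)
The plan is to argue by a bootstrap/continuation argument using the integral form of \eqref{S2E3}, following the pattern already used in the proof of Lemma~\ref{continfinity} but now tracking the polynomial contributions carefully via Lemma~\ref{polyn}. Write the solution starting from $\xi_0$ in the backward direction (decreasing $\xi$) using the triple integral of the equation: for $\xi<\xi_0$,
\[
H(\xi)=H(\xi_0)-\frac{dH(\xi_0)}{d\xi}(\xi_0-\xi)+\frac{d^2H(\xi_0)}{d\xi^2}\frac{(\xi-\xi_0)^2}{2}
+\int_\xi^{\xi_0}\!\!\int_{s_1}^{\xi_0}\!\!\int_{s_2}^{\xi_0}\Bigl((s_3^2+a)-\frac{1}{H(s_3)^3}\Bigr)\,ds_3\,ds_2\,ds_1.
\]
Since $dH(\xi_0)/d\xi<0$ and $d^2H(\xi_0)/d\xi^2>0$, the boundary terms are all nonnegative for $\xi\le\xi_0$, so it suffices to control the triple integral of $(s^2+a)-H(s)^{-3}$ from below. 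The sign condition $((\xi_0)^2+1+|a|)H(\xi_0)^3\ge c_1$ with $c_1$ large (via \eqref{bigness1}–\eqref{bigness2}) guarantees that $H(\xi_0)$ is large enough that $H(\xi_0)^{-3}$ is dominated by $(\xi_0^2+a)$ (or by the polynomial growth away from $\xi_0$), so that near $\xi_0$ the integrand is bounded below by roughly $\tfrac12(s^2+a)$ or by a positive constant.

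The key step is to set up the continuation: let $I$ be the maximal interval $(\xi_1,\xi_0]$ on which the a priori bound
\[
H(\xi)\ \ge\ \frac{C}{(\xi^2+1+|a|)^{1/3}}
\]
holds for a suitable constant $C=C(c_1)$ chosen so that $C^{-3}\le \tfrac12$ of the relevant lower bound on $(s^2+a)$. On $I$ the integrand $(s^2+a)-H(s)^{-3}$ is $\ge \tfrac12(s^2+a)$ when $s^2>-2a$, and when $s^2<-2a$ (only possible if $a<0$) one writes $s^2+a-H(s)^{-3}\ge -|a|-H(s)^{-3}$ but here the polynomial term $\int\int\int (s_3^2+a)$ from the part of the interval with $s^2\ge -2a$ already dominates — this is precisely where the quantitative thresholds \eqref{bigness1} and \eqref{bigness2} and the polynomial estimates $P_1,P_2$ of Lemma~\ref{polyn} are invoked, with $Y=\xi-\xi_0<0$ (or a shifted variable), $P_1$ accounting for the $\xi^2$ term integrated three times and $P_2$ for the constant $a$ term. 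One concludes that on $I$ one actually has the strictly stronger inequality $H(\xi)\ge C'(\xi^2+1+|a|)^{-1/3}$ with $C'>C$, plus $dH/d\xi<0$ and $d^2H/d\xi^2>0$ on all of $I$; hence $I$ cannot have a finite left endpoint $\xi_1>-\infty$ (openness of the continuation set), so $I=(-\infty,\xi_0]$. Since the triple integral $\int_\xi^{\xi_0}\int_{s_1}^{\xi_0}\int_{s_2}^{\xi_0}(s_3^2+a)\,ds_3\,ds_2\,ds_1\to+\infty$ as $\xi\to-\infty$ (leading term $\sim |\xi|^5/60$), \eqref{B1:H} follows.

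The main obstacle is the bookkeeping in the regime $a<0$ with $\xi_0^2<-2a$: there the sign of $\xi^2+a$ is negative near $\xi_0$, so the naive "integrand $\ge\tfrac12(s^2+a)$" argument fails on an initial stretch, and one must show that the genuinely positive polynomial growth contributed once $|\xi|>\sqrt{-2a}$ outweighs both the bounded negative contribution from $|\xi|\le\sqrt{-2a}$ and the $H^{-3}$ term. This is exactly the content of the inequalities \eqref{In2} and \eqref{In3} of Lemma~\ref{polyn} applied with $\lambda$ essentially $a$ (up to scaling), together with the threshold \eqref{bigness1}: the factor $(24/5)^3|a|^5(1+3|a|)$ is chosen so that $c_1$ large enough forces $H(\xi_0)^{-3}$ to be negligible compared to the positive part of the polynomial, and so that $H$ cannot dip below the barrier before the polynomial term takes over. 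Once this quantitative estimate is in place, the monotonicity of $dH/d\xi$ and $d^2H/d\xi^2$ propagates automatically by the same triple-integration, closing the bootstrap.
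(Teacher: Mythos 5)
Your proposal follows the same route as the paper: integrate (\ref{S2E3}) three times from $\xi_0$, discard the nonnegative boundary terms coming from the sign hypotheses on $dH(\xi_0)/d\xi$ and $d^2H(\xi_0)/d\xi^2$, express the resulting triple integral of $s^2+a$ via the polynomials $P_1,P_2$ of Lemma~\ref{polyn}, run a continuation argument on a lower barrier for $H$, split into the regimes $\xi_0^2<-2a$ and $\xi_0^2>-2a$, and conclude from the fifth-order polynomial blowup. So the skeleton is right. However, your first-pass estimate is too strong to close under the stated thresholds: you claim the \emph{pointwise} domination $(s^2+a)-H(s)^{-3}\ge\tfrac12(s^2+a)$ on the continuation interval, but at $s=\xi_0$ this requires $H(\xi_0)^{-3}\le\tfrac12(\xi_0^2+a)$, i.e.\ $c_1\ge 2(\xi_0^2+1+|a|)/(\xi_0^2+a)$, which is unbounded as $|\xi_0|\to0$ (for $a=0$ it reads $c_1\ge 2+2/\xi_0^2$), whereas (\ref{bigness2}) is uniform in $\xi_0$. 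The paper sidesteps this by estimating the \emph{integrated} quantity, not the integrand: $\int\!\!\int\!\!\int(s^2+a)\ge\tfrac12|\xi_0|^5P_1(Y)$ is exactly (\ref{In2}) of Lemma~\ref{polyn}, and the term $\int\!\!\int\!\!\int H^{-3}$ is separately absorbed into it using the \emph{constant} barrier $H(\xi)^3\ge c_1/\bigl(2(\xi_0^2+1+|a|)\bigr)$ (not the $\xi$-dependent one you propose), which gives $\int\!\!\int\!\!\int H^{-3}\le\tfrac{2(\xi_0^2+1+|a|)}{c_1}|\xi_0|^3P_2(Y)$. The troublesome regime $|\xi_0|<1$ is then treated by a further split $|\xi-\xi_0|\le2$ versus $|\xi-\xi_0|>2$, in which $|\xi_0|^3P_2(Y)$ is bounded by absolute constants rather than compared term-by-term to $|\xi_0|^5P_1(Y)$; that bookkeeping is precisely what produces the explicit threshold $16(2+|a|)$ in (\ref{bigness2}). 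You do gesture at falling back to the $P_1,P_2$ estimates, which is the right instinct, but the step from ``the thresholds are invoked'' to actually verifying that they suffice is the real content of the proof and is not present in your sketch.
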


\begin{proof}[Proof]
Integrating (\ref{S2E3}) three times for $\xi<\xi_{0}$ we obtain:
\begin{equation}\label{In1}%
H(\xi)\geq H(\xi_{0}) + \int_{\xi}^{\xi_{0}}\int_{s_{1}}^{\xi_{0}}\int_{s_{2}}^{\xi_{0}} 
\left((s_{3}^{2}+a) - \frac{1}{(H(s_{3}))^{3}}\right)  ds_{3}ds_{2}ds_{1}\,.
\end{equation}

Given the polynomials defined in Lemma~\ref{polyn} and letting, 
for every $\xi<\xi_{0}$, 
\[
Y=\left\{
\begin{array}{l}
\frac{\xi}{|\xi_{0}|} - 1  \ \mbox{if} \ \xi_{0} > 0 \\
\frac{\xi}{|\xi_{0}|} + 1  \ \mbox{if} \ \xi_{0} < 0
\end{array}
\right.
\]  
then, we can write
\begin{equation}\label{xi0:int}
\int_{\xi}^{\xi_{0}}\int_{s_{1}}^{\xi_{0}}\int_{s_{2}}^{\xi_{0}} 
(s_{3}^{2}+a)ds_{3}ds_{2}ds_{1}
  = |\xi_{0}|^{5}\left[  P_{1}(Y) + \frac{a}{|\xi_{0}|^{2}} P_{2}(Y)\right]\,.  
\end{equation}
Clearly $\xi/|\xi_{0}|<-1$ if $\xi<\xi_{0}$ and $\xi_{0}<0$ and 
$\xi/|\xi_{0}|<1$ if $\xi<\xi_{0}$ and $\xi_{0}>0$, thus in either case 
$Y<0$ and the polynomials are in the range of values considered 
in Lemma~\ref{polyn}. We can now distinguish two cases. 

Suppose first that $\xi_{0}^{2}<-2a$. Then a Gronwall type of argument shows 
that for any $\xi<\xi_{0}$ with $\xi\in[-\sqrt{2|a|},\sqrt{2|a|}]$ then 
$H(\xi)>1>0$. Indeed, as long as $H(\xi)>1$ then (\ref{In1}) can be 
estimated from below by
\begin{equation}\label{In4}
H(\xi)\geq \frac{c_{1}^{\frac{1}{3}}}{(3|a|+1)^{\frac{1}{3}}} + 
|\xi_{0}|^{5}\left[  P_{1}(Y) +\left(\frac{a}{|\xi_{0}|^{2}}
-1\right) P_{2}(Y)\right] 
\end{equation}
(using (\ref{xi0:int})). Then we can apply (\ref{In3}) with 
$2\lambda=a/|\xi_{0}|^{2}-1(\leq -1)$, hence 
\[
P_{1}(Y)+\left(\frac{a}{|\xi_{0}|^{2}}
-1\right) P_{2}(Y)  > - \frac{4}{5}3^{\frac{3}{2}}\max\left\{\frac{|a|^{\frac{5}{2}}}{|\xi_{0}|^{5}} ,1 \right\}
\]
using this in (\ref{In4}) yields
\[
\begin{array}{l}
H(\xi)>\frac{c_{1}^{\frac{1}{3}}}{(3|a|+1)^{\frac{1}{3}}} - \frac{4}{5}3^{\frac{3}{2}}|a|^{\frac{5}{2}}
\end{array}
\]
and (\ref{bigness1}) implies the claim by a continuity argument.

Let us assume now that $\xi_{0}^{2}>-2a$. Using (\ref{xi0:int}) we obtain
\[
\int_{\xi}^{\xi_{0}}\int_{s_{1}}^{\xi_{0}}\int_{s_{2}}^{\xi_{0}} 
(s_{3}^{2}+a)ds_{3}ds_{2}ds_{1}>\frac{|\xi_{0}|^{5}}{2}P_{1}(Y) 
\]
by Lemma~\ref{polyn}. Applying now this inequality to (\ref{In1}) we obtain the 
following estimate for $\xi_{0}^{2}>-2a$:
\begin{equation}\label{G1}
H(\xi)\geq H(\xi_{0})
+\frac{|\xi_{0}|^{5}}{2}P_{1}(Y)  
-\int_{\xi}^{\xi_{0}} \int_{s_{1}}^{\xi_{0}} \int_{s_{2}}^{\xi_{0}}
\frac{ds_{3}}{(H(s_{3}))^{3}}ds_{2}ds_{1}  \,, \quad\xi\leq\xi_{0}\,. 
\end{equation}

Now, we can use a Gronwall type of argument to prove that if $c_{1}$ 
satisfies (\ref{bigness2}) then (\ref{G1}) implies 
\begin{equation}\label{G2}
(H(\xi))^{3}\geq\frac{c_{1}}{2( |\xi_{0}|^{2}+1+|a|)  }
\quad \mbox{for} \quad \xi\leq\xi_{0} \,.
\end{equation}
We observe that (\ref{G2}) holds by hypothesis and that it also holds for 
$\xi$ close to $\xi_{0}$ by continuity. Then, as long as (\ref{G2}) is 
satisfied, (\ref{G1}) implies that 
\begin{equation}\label{G3}%
H(\xi)\geq\left(  \frac{c_{1}}{|\xi_{0}|^{2}+1+|a|}\right)^{\frac{1}{3}}
+\frac{|\xi_{0}|^{5}}{2}P_{1}(Y)
-\frac{2(|\xi_{0}|^{2}+1+|a|) }{c_{1}}|\xi_{0}|^{3}P_{2}(Y) \,. 
\end{equation}

We can apply Lemma~\ref{polyn}, and this implies that the last term in 
(\ref{G3}) can be estimated by the previous one for any $\xi<\xi_{0}$ if 
$c_{1}$, $a$ and $\xi_{0}$ satisfy 
\begin{equation}\label{cond:c1:2}
c_{1}>8(|\xi_{0}|^{2}+1+|a|)/|\xi_{0}|^{2}
\end{equation}
and (\ref{G2}) follows. Let us then prove (\ref{cond:c1:2}).

If $|\xi_{0}| \geq 1$, (\ref{bigness2}) implies 
(\ref{cond:c1:2}). If $|\xi_{0}| < 1$ we consider two further cases. 
For $| \xi-\xi_{0}|\leq 2$ we obtain that the last two terms in (\ref{G3}) can 
be bounded from below by $-8(2+|a|)/(3c_{1})$. But this quantity can be absorbed by 
the first term if $c_{1}>(40)^{\frac{3}{4}}(2+|a|)$,
which is satisfied if (\ref{bigness2}) is satisfied. Therefore 
(\ref{G2}) holds for this range of values.

On the other hand, if $|\xi- \xi_{0}| >2$, then the second term of 
(\ref{G3}) can be estimated from below by $(24/15)|\xi- \xi_{0}|^{3}$, 
while the last term in 
(\ref{G3}) can be estimated by $-(2+|a|)|\xi-\xi_{0}|^{3}/(3c_{1})$. 
Then,  we can absorb the last term in (\ref{G3}) into the second one 
if $c_{1}>5(2+|a|)/24$ which is guaranteed by (\ref{bigness2}).

Thus, the inequality (\ref{G3}) holds for arbitrary values of $\xi\leq\xi_{0}$, 
and this implies (\ref{B1:H}) by taking the limit $\xi\to-\infty$.
\end{proof}

We end this section by giving necessary conditions for (\ref{B2:H}) to hold
\begin{proposition}\label{Lext} Let us assume that there exist positive 
constants $c_{2}$ and $c_{3}$, depending on $a$, and some $\xi_{0}\in\mathbb{R}$ with 
\begin{equation}\label{smallness}
\frac{c_{2}^{\frac{1}{3}}}{c_{3}} (|\xi_{0}|^{2}+1+|a|)^{\frac{4}{3}}< \frac{1}{10}\,
\end{equation}
such that a solution of (\ref{S2E3}) satisfies 
$0<((\xi_{0})^{2}+1+|a|)(H(\xi_{0}))^{3}\leq c_{2}$, 
$(|\xi_{0}|+1+|a|)^{\frac{5}{3}}dH(\xi_{0})/d\xi>c_{3}$ and 
$d^{2}H(\xi_{0})/d\xi^{2}<0$. Then there exists 
$\xi_{\ast}\in(-\infty,\xi_{0})$ such that (\ref{B2:H}) holds.
\end{proposition}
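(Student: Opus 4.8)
The strategy is a bootstrap/Gronwall argument applied directly to (\ref{S2E3}) for decreasing $\xi$, parallel to the proof of Proposition~\ref{Linf} but now exploiting that near a point where $H$ is small the term $1/H^{3}$ in $d^{3}H/d\xi^{3}=1/H^{3}-\xi^{2}-a$ is large and positive, which forces $H$ to keep decreasing at a fixed positive rate and hence to reach $0$ at a finite value of $\xi$. Write $L:=\xi_{0}^{2}+1+|a|$. The hypotheses say that at $\xi_{0}$ one has $0<H(\xi_{0})^{3}\le c_{2}/L$, $d^{2}H(\xi_{0})/d\xi^{2}<0$, and $dH(\xi_{0})/d\xi>c_{3}(|\xi_{0}|+1+|a|)^{-5/3}$; together with (\ref{smallness}) (and the elementary bound $|\xi_{0}|+1+|a|\le 2L$) these imply that $H(\xi_{0})/(dH(\xi_{0})/d\xi)$ is smaller than an absolute constant, say below $\tfrac13$.

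First I would record the monotonicity consequences of the sign conditions. Suppose that on some interval $(\eta,\xi_{0}]$ one has $H>0$ together with the a priori bound $(\xi^{2}+1+|a|)H(\xi)^{3}\le 4c_{2}$; choosing $c_{2}\le\tfrac14$ this forces $H^{3}(\xi^{2}+a)\le H^{3}(\xi^{2}+1+|a|)\le 1$ and hence $d^{3}H/d\xi^{3}=1/H^{3}-(\xi^{2}+a)\ge 0$ there. Integrating three times from $\xi_{0}$ down to $\xi$ then gives, successively, $d^{2}H(\xi)/d\xi^{2}\le d^{2}H(\xi_{0})/d\xi^{2}<0$, hence $dH(\xi)/d\xi\ge dH(\xi_{0})/d\xi>0$, hence $H(\xi)\le H(\xi_{0})-(dH(\xi_{0})/d\xi)(\xi_{0}-\xi)$. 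Thus $H$ decreases at least linearly in $|\xi|$ with a fixed positive slope, so it cannot remain positive once $\xi_{0}-\xi$ exceeds $H(\xi_{0})/(dH(\xi_{0})/d\xi)$; in particular the interval of positivity of $H$ to the left of $\xi_{0}$ is contained in $[\xi_{0}-\tfrac13,\xi_{0}]$, a bounded set.

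Next I would close the bootstrap. On this bounded interval one has $|\xi|\le|\xi_{0}|+\tfrac13$, so $\xi^{2}+1+|a|\le 3L$ (again an elementary estimate using $L\ge 1$), and since $H$ is decreasing there, $(\xi^{2}+1+|a|)H(\xi)^{3}\le 3L\,H(\xi_{0})^{3}\le 3c_{2}<4c_{2}$, which reproduces the a priori bound with strict inequality. A standard continuation argument then shows that this bound, and therefore $d^{3}H/d\xi^{3}\ge 0$ and the monotonicity chain, hold on the whole positivity interval. Let $\xi_{\ast}$ be its left endpoint; by the above $\xi_{\ast}\ge\xi_{0}-\tfrac13>-\infty$. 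Since $H$, $dH/d\xi$, $d^{2}H/d\xi^{2}$ stay bounded as long as $H$ is bounded away from $0$ (because then $d^{3}H/d\xi^{3}$ is bounded), the solution cannot be continued past $\xi_{\ast}$ for any reason other than $H\to 0$; hence $\lim_{\xi\to(\xi_{\ast})^{+}}H(\xi)=0$, which is (\ref{B2:H}) with $\xi_{\ast}\in(-\infty,\xi_{0})$.

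The main obstacle is precisely the bootstrap step: although $H^{3}$ shrinks as $\xi$ decreases, the competing factor $\xi^{2}$ grows, so one must guarantee that the solution reaches $H=0$ before $\xi$ has travelled far enough for $\xi^{2}$ to destroy the estimate. This is exactly what the quantitative hypotheses — $H(\xi_{0})$ small and $dH(\xi_{0})/d\xi$ large relative to the natural scale $L$, as quantified by (\ref{smallness}) — are arranged to ensure, and the bookkeeping of the comparison between $|\xi_{0}|+1+|a|$ and $L$ (and of the sign of $\xi^{2}+a$) is routine. An alternative would be to run the argument through the phase-plane variables $(H,u,v)$ of (\ref{V1E3}) used in the proof of Lemma~\ref{contzero}: the hypotheses place the trajectory at $z_{0}$ in $\{u>0,\,v<0\}$ with $(\Omega^{2}+a)H^{3}$ small, Lemma~\ref{tozero-bwds} then drives it towards $H\to 0$, and $\xi_{\ast}>-\infty$ follows from $d\xi/dz=H^{4/3}$ being integrable; but the direct integration above seems shorter here.
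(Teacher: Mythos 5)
Your argument is correct and follows essentially the same route as the paper's proof of Proposition~\ref{Lext}: use the a priori smallness of $(\xi^{2}+1+|a|)H^{3}$ to force $d^{3}H/d\xi^{3}\ge 0$, integrate to get the monotonicity chain and a linear upper bound on $H$, use (\ref{smallness}) to confirm the first zero is reached within a short interval to the left of $\xi_{0}$, and close via continuation since the a priori bound strictly improves on that interval. (Your additional assumption $c_{2}\le\tfrac14$ is the same implicit ``$c_{2}$ sufficiently small'' that the paper's proof also begins with.)
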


\begin{proof}[Proof]
Suppose that $c_{2}$ is sufficiently small. Then, as long as 
$0<((\xi)^{2}+1+|a|)(H(\xi))^{3}\leq2 c_{2}$ we obtain from (\ref{S2E3}) that 
$d^{3}H(\xi)/d\xi^{3}>0$. Integrating this equation over $(\xi,\xi_{0})$ once we obtain that,
 as long as $(\xi^{2}+1+|a|)(H(\xi))^{3}\leq 2 c_{2}$ is satisfied for
$\xi<\xi_{0}$, then $d^{2}H(\xi)/d\xi^{2}<0$ and, integrating a second time, also that  
$dH(\xi)/d\xi>c_{3}(|\xi_{0}|^{2}+1+|a|)^{-\frac{5}{3}}$. 
Then this concavity implies that $H(\xi)$ vanishes at some $\xi=\xi^*$. 
But a third integration implies that
\[
H(\xi)  \leq  H(\xi_{0})-\frac{dH(\xi_{0})}{d\xi} (\xi_{0}-\xi)\leq 
\left( 2^{\frac{1}{3}}  -     \frac{c_{3}  (\xi_{0}-\xi)    }{ c_{2}^{\frac{1}{3}}(|\xi_{0}|^{2}+1+|a|)^{\frac{4}{3}}} \right) \left( \frac{c_{2}}{|\xi_{0}|^{2}+1+|a|}  \right)^{\frac{1}{3}}  
\]
thus $\xi^*\geq\xi_{0}-\frac{c_{2}^{\frac{1}{3}}}{c_{3}}(|\xi_{0}|^{2}+1+|a|)^{\frac{4}{3}}$.
Finally the condition (\ref{smallness}) implies that we can replace $\xi_{0}$ by $\xi\in (\xi^*,\xi_{0})$, thus 
$(\xi^{2}+1+|a|)(H(\xi))^{3}\leq 2 c_{2}$ follows in this interval and 
the result follows by a classical continuation argument.
\end{proof}

\section{Shooting argument}\label{sec:4}
In this section we apply a standard shooting argument to prove the existence of solutions 
of (\ref{compact1})-(\ref{compact4}) such that (\ref{B4}) holds, and such that $\Phi$ remains positive 
and bounded for all $\tau\in \mathbb{R}$. Specifically, the main result of this section is:

\begin{proposition}\label{existence:1} There exists a solution of 
(\ref{compact1})-(\ref{compact4}) $(\Phi(\tau),W(\tau),\Psi(\tau),\theta(\tau))$ defined for
all $\tau\in(-\infty,\infty)$ such that $\lim_{\tau\to\infty}(\Phi
(\tau),W(\tau),\Psi(\tau),\theta(\tau))=(1,0,0,\pi/2)$ and satisfying $\Phi(\tau)>0$ for all 
$\tau\in \mathbb{R}$, (\ref{B4}) and 
\begin{equation}\label{F4E3}
\lim\inf_{\tau\to-\infty}\Phi(\tau) <\infty\,.
\end{equation}
\end{proposition}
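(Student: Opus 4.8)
The plan is to run a shooting argument on the two-dimensional centre-stable manifold $\mathcal{V}_{+}$, using the parametrisation of Lemma~\ref{trivia}. Recall that $\mathcal{V}_{+}$ is parametrised by $(\nu,\sigma)\in\Pi$; the curve $\{\sigma=0\}$ lies in the invariant subspace $\{\theta=\pi/2\}$ and corresponds (for $\nu\neq 0$) to a trajectory on the stable manifold of $P_{s}$ for (\ref{S3E1}), which by Proposition~\ref{Tapas1}\textit{(iv)} either blows up as in (\ref{P1}) or reaches $\Phi=0$ in finite ``time'' as in (\ref{P2}), according to the sign of $\Phi-1$, i.e. according to the sign of $\nu$ (since $\tilde v_{1}$ has positive first component). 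I would fix a small $\sigma_{0}>0$ and shoot along the one-parameter family $\tau\mapsto F_{\tau}(\Lambda(\nu,\sigma_{0}))$, $\nu$ ranging over a large interval $[-N,N]$. First I would show that for $\nu$ large and positive the trajectory satisfies (\ref{B1}): for such initial data, by continuity of $\Lambda$ and continuous dependence, after flowing backward a little the solution enters (in the original variables $H$, via (\ref{S2E8a})--(\ref{S2E8})) a region where the hypotheses of Proposition~\ref{Linf} hold — namely $(\xi_{0}^{2}+1+|a|)H^{3}\geq c_{1}$, $dH/d\xi<0$, $d^{2}H/d\xi^{2}>0$ at some $\xi_{0}$ — because on the nearby orbit with $\sigma=0$, $\Phi\to\infty$ fast and one can pick such a point, then perturb; hence (\ref{B1:H}) and thus (\ref{B1}) hold. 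Symmetrically, for $\nu$ very negative the $\sigma=0$ orbit satisfies (\ref{P2}), so a point with $(\xi_{0}^{2}+1+|a|)H^{3}\leq c_{2}$, $(|\xi_{0}|+1+|a|)^{5/3}\,dH/d\xi>c_{3}$, $d^{2}H/d\xi^{2}<0$ and (\ref{smallness}) is available, and Proposition~\ref{Lext} gives (\ref{B2:H}), hence (\ref{B2}).

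Next I would invoke Lemmas~\ref{continfinity} and \ref{contzero}: the set $A_{1}\subset[-N,N]$ of parameters $\nu$ whose trajectory satisfies (\ref{B1}) and the set $A_{2}$ of those satisfying (\ref{B2}) are both open (relatively) in the parameter interval. They are non-empty by the previous paragraph, and disjoint (a trajectory cannot both blow up and hit $\Phi=0$). By connectedness of $[-N,N]$ there exists $\nu^{\ast}$ with trajectory in neither $A_{1}$ nor $A_{2}$; call this solution $(\Phi,W,\Psi,\theta)$. It is defined for all $\tau\geq 0$ and tends to $p_{+}$ as $\tau\to\infty$ by Proposition~\ref{set:shooting}. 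It remains to analyse it as $\tau\to-\infty$. Since $\theta$ is strictly increasing (by (\ref{compact4})) and $\theta(0)<\pi/2$, as $\tau$ decreases $\theta$ decreases monotonically to a limit $\theta_{-}\in[-\pi/2,\theta(0))$; if the solution is defined on all of $(-\infty,0]$ then in fact $d\theta/d\tau=(\cos\theta)^{26/9}\to 0$ forces $\theta_{-}=-\pi/2$, which is (\ref{B4}). So the two things to check are: (a) the solution does not cease to exist at some finite $\tau_{1}>-\infty$, and (b) $\liminf_{\tau\to-\infty}\Phi(\tau)<\infty$, which is (\ref{F4E3}).

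For (a) and (b) I would argue by contradiction using the classification already in hand. Loss of existence in finite backward time for this system can only come from $\Phi\to 0$ or $\Phi\to\infty$ (the $\theta$ equation is harmless and $W,\Psi$ cannot blow up while $\Phi$ stays in a compact subset of $(0,\infty)$ on a bounded $\tau$-interval). If $\Phi(\tau)\to 0$ as $\tau\downarrow\tau_{1}>-\infty$ then, examining the sign structure via the change of variables (\ref{V1E1})--(\ref{V1E3}) exactly as in the proof of Lemma~\ref{contzero}, the trajectory eventually enters $\{u>0,\ v<0\}$, i.e.\ $dH/d\xi>0$, $d^{2}H/d\xi^{2}<0$ with $H$ small; but then Proposition~\ref{Lext}'s hypotheses are met at a nearby point, giving (\ref{B2}), a contradiction. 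If instead $\Phi\to\infty$ on a finite backward interval, or merely $\limsup_{\tau\to-\infty}\Phi=\infty$ while existing for all $\tau$, I would similarly produce a point satisfying the hypotheses of Proposition~\ref{Linf} (pass through the balance $|\xi|^{2/3}H\to\infty$, where (\ref{ecuacionminima1}) dominates, so $H$ behaves like a quintic with $dH/d\xi<0$, $d^{2}H/d\xi^{2}>0$ and $(\xi^{2}+1+|a|)H^{3}$ as large as we like), yielding (\ref{B1:H}) and hence (\ref{B1}), again a contradiction. This forces the solution to exist for all $\tau\in\mathbb{R}$ and to satisfy $\liminf_{\tau\to-\infty}\Phi<\infty$, i.e.\ (\ref{F4E3}), and $\Phi>0$ throughout. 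The main obstacle is precisely this last step: ruling out $\Phi\to 0$ and $\limsup\Phi=\infty$ in a way that actually triggers Propositions~\ref{Linf} and \ref{Lext}, since a priori $\Phi$ could oscillate with growing amplitude without ever hitting the clean sign pattern ($dH/d\xi$ and $d^{2}H/d\xi^{2}$ of definite signs together with the size condition) required by those propositions — making honest use of the local analysis of (\ref{ecuacionminima1})--(\ref{ecuacionminima2}) near extrema, rather than a soft argument, is the crux, and the paper defers the bulk of this to Sections~\ref{sec:dynamics}--\ref{sec:formal}; here one only needs enough of it to close the finite-time and $\limsup$ alternatives.
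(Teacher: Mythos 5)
Your shooting skeleton — open disjoint sets $\mathcal{U}_{+}$, $\mathcal{U}_{-}$ on the curve $\{\sigma=\text{const}\}\subset\mathcal{V}_{+}$, non-empty by Lemma~\ref{start:shooting}, open by Lemmas~\ref{continfinity}/\ref{contzero}, and picking $\nu^{\ast}$ in neither — is exactly the paper's. The problem is at the end. You believe you must rule out $\limsup_{\tau\to-\infty}\Phi=\infty$, acknowledge you cannot do so with the tools in hand, and remark that the paper ``defers the bulk of this'' — but the paper defers nothing for this proposition, because (\ref{F4E3}) only asserts $\liminf_{\tau\to-\infty}\Phi<\infty$, not $\limsup<\infty$. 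If $\liminf=\infty$ then $\lim=\infty$, i.e.\ (\ref{B1}) holds and $\nu^{\ast}\in\mathcal{U}_{+}$, an immediate contradiction. Oscillations of growing amplitude are perfectly compatible with Proposition~\ref{existence:1}; excluding them is the content of Propositions~\ref{char:max}--\ref{min:decrease} and of Proposition~\ref{ControlOscillations}, which come afterward. So the ``crux'' you identify is not needed here, and once removed your argument closes.

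Two smaller points. For finite-time backward extinction, the relevant difficulty is that a priori you only know $\liminf_{\tau\to\tau_{1}^{+}}\Phi=0$ (cf.\ Remark~\ref{positivity}), not that $\Phi$ has a limit; the clean tool to upgrade $\liminf=0$ to $\lim=0$ (hence (\ref{B2})) is Lemma~\ref{L1}, which you never cite — your phase-plane argument is in the right spirit but is essentially re-proving that lemma in a less tidy way, and as written it assumes $\Phi(\tau)\to 0$ at the outset, in which case (\ref{B2}) already holds and no further argument (nor any appeal to Proposition~\ref{Lext}) is needed. Also, $\Phi\to\infty$ at a finite $\tau_{1}>-\infty$ cannot occur at all: once $\Phi$ is bounded below on a bounded $\tau$-interval, the right-hand sides of (\ref{compact1})--(\ref{compact4}) are bounded, so $\Phi$, $W$, $\Psi$ cannot blow up there — so that branch should simply be discarded rather than folded into the $\limsup$ discussion.
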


The proof of Proposition~\ref{existence:1} is divided in several steps. 
First we prove that points placed in the curve 
$\mathcal{V}_{+}\cap\mathbb{R}^{3}\times\{\theta=\frac{\pi}{2}-\varepsilon\}$ 
with $\varepsilon>0$ sufficiently small, yield
solutions of the equation (\ref{S2E3}) satisfying the hypotheses 
of Proposition~\ref{Linf} if $\nu>0$ and those of Proposition~\ref{Lext} if $\nu<0$:

\begin{lemma}\label{start:shooting} Let $\delta_{0}$ and $\Lambda(\nu,\sigma)$ be as in
Lemma~\ref{trivia}. Then there exist $\nu_{0}>0$ and $\varepsilon>0$, such
that for $w=(\nu,\sigma)\in\Pi$ with $\nu_{0}\leq\nu\leq \delta_{0}/4$
and $\sigma=-\varepsilon$ the trajectory associated to (\ref{compact1})-(\ref{compact4}) 
starting at $\Lambda(\nu,\sigma)$ satisfies (\ref{B1}).
Moreover, if $-\delta_{0}/4\leq\nu\leq-\nu_{0}$ and $\sigma=-\varepsilon$ the
corresponding trajectory of (\ref{compact1})-(\ref{compact4}) satisfies (\ref{B2}).
\end{lemma}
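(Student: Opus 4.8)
\textbf{Proof proposal for Lemma~\ref{start:shooting}.}
The plan is to translate the starting data $\Lambda(\nu,-\varepsilon)$ back into data for the original ODE~(\ref{S2E3}) via the inverse of the change of variables (\ref{S2E8a})--(\ref{S2E8}), and then to verify that the hypotheses of Proposition~\ref{Linf} (for $\nu>0$) or of Proposition~\ref{Lext} (for $\nu<0$) are met at some well-chosen point $\xi_0$. The point is that $\Lambda(0,0)=p_+$, and by Lemma~\ref{trivia} the derivatives $\partial_\nu\Lambda(0,0)=\tilde v_1$ and $\partial_\sigma\Lambda(0,0)=\tilde v_4$; so for $\nu$ and $\varepsilon$ small we have, in the coordinates $(\Phi,W,\Psi,\theta)$,
\[
\Lambda(\nu,-\varepsilon)=p_+ + \nu\tilde v_1 - \varepsilon\tilde v_4 + o\!\left(|\nu|+\varepsilon\right)
=\left(1+\nu\,3^{-\frac23},\,-\nu\,3^{-\frac13},\,\nu,\,\tfrac\pi2-\varepsilon\right)+o(|\nu|+\varepsilon).
\]
First I would fix $\varepsilon>0$ small and let $\xi_0=\xi_0(\varepsilon)=\tan(\pi/2-\varepsilon)$ be the corresponding (large, positive) value of $\xi$; note $\xi_0\to+\infty$ as $\varepsilon\to0$. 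Using (\ref{S2E8a}), (\ref{1st:dH}) and (\ref{2nd:dH}), the value $H(\xi_0)$ is $\sim(\xi_0^2+1)^{-1/3}(1+\nu\,3^{-2/3})$, while $dH/d\xi(\xi_0)$ and $d^2H/d\xi^2(\xi_0)$ are determined to leading order by the $\Phi$-part (the explicitly negative $-\tfrac23\xi(\xi^2+1)^{-4/3}\Phi$ term dominates $dH/d\xi$, and the $\tfrac59$-weighted $\Psi$ contribution, together with the sign-definite $\Phi$-term, controls $d^2H/d\xi^2$). The sign pattern $dH/d\xi<0$, $d^2H/d\xi^2>0$ required by Proposition~\ref{Linf} will hold automatically for large $\xi_0$ because the dominant contributions to these derivatives come from the algebraically decaying prefactors in (\ref{1st:dH})--(\ref{2nd:dH}) and have the right sign for $\xi_0>0$ large; the $\nu$-correction only shifts things by $O(\nu)$.

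The key quantitative step is to estimate the quantity $(\xi_0^2+1+|a|)(H(\xi_0))^3$. Since $H(\xi_0)\sim(\xi_0^2+1)^{-1/3}(1+\nu\,3^{-2/3})$ we get
\[
(\xi_0^2+1+|a|)(H(\xi_0))^3\;\sim\;\left(1+\tfrac{|a|}{\xi_0^2+1}\right)\left(1+\nu\,3^{-\frac23}\right)^3\big(1+o(1)\big)\;=\;1+3\cdot 3^{-\frac23}\,\nu+o(\nu)+O(\xi_0^{-2}).
\]
This is the delicate point: this product is $\approx 1$, far from the ``largeness'' constant $c_1$ in (\ref{bigness1})--(\ref{bigness2}), so Proposition~\ref{Linf} cannot be applied at $\xi_0$ directly. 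The resolution — and I expect this to be the main obstacle — is to first run the flow backward from $\xi_0$ a little and use (\ref{B7})-type growth, or more precisely to observe that with the good sign pattern $dH/d\xi<0$, $d^2H/d\xi^2>0$ in force, a single integration of (\ref{S2E3}) (as in (\ref{In1})) shows that $d^3H/d\xi^3\ge \xi^2+a-H^{-3}$, and since $H$ stays comparable to $\xi^{-2/3}$ while the derivatives keep their signs, $H$ grows and after decreasing $\xi$ by an $O(1)$ amount (in a rescaled variable) one reaches a point $\xi_1<\xi_0$, still large, where $(\xi_1^2+1+|a|)(H(\xi_1))^3$ has become as large as we like — in particular larger than $c_1$ — while retaining $dH/d\xi<0$, $d^2H/d\xi^2>0$. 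One then applies Proposition~\ref{Linf} at $\xi_1$ to conclude (\ref{B1:H}), hence (\ref{B1}). To make the ``after decreasing $\xi$ by an $O(1)$ amount'' rigorous one works in the $(\Phi,W,\Psi,\theta)$ system near $\theta=\pi/2$, where the flow is a small perturbation of (\ref{S3E1}); by Proposition~\ref{Tapas1}(iv) (more precisely its analogue on $\mathcal V_+$) the stable-manifold trajectory through $\Lambda(\nu,-\varepsilon)$ with $\Phi$-coordinate $>1$, i.e.\ $\nu>0$, is exactly the branch satisfying $\Phi\to\infty$ for the limiting system, and a continuity/stability argument (Lemma~\ref{continfinity}) shows the perturbed trajectory still reaches a region where the hypotheses of Proposition~\ref{Linf} hold. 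Here $\nu_0>0$ is chosen so that for $\nu\ge\nu_0$ the $O(\nu)$ separation from $\Phi=1$ dominates the $o(1)$ errors coming from $\theta\ne\pi/2$.

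For the case $\nu<0$ the argument is the mirror image: now the $\Phi$-coordinate of $\Lambda(\nu,-\varepsilon)$ is $<1$, so by Proposition~\ref{Tapas1}(iv) the limiting stable-manifold trajectory is on the branch with $\Phi\to0$ in finite ``time'', and one must produce a point $\xi_0'$ (reached after flowing backward a bounded amount) where the smallness/largeness hypotheses of Proposition~\ref{Lext} hold, namely $0<(\xi_0'^2+1+|a|)(H(\xi_0'))^3\le c_2$ with $c_2$ small, $(|\xi_0'|+1+|a|)^{5/3}\,dH/d\xi(\xi_0')>c_3$, and $d^2H/d\xi^2<0$, with $c_2,c_3$ satisfying (\ref{smallness}). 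Since the trajectory is near the branch of the stable manifold of $P_s$ along which $\Phi$ decreases toward $0$ with $W<0$, $\Psi$ changing sign appropriately, the quantity $(\xi^2+1)(H)^3\sim\Phi^3$ does become small while $dH/d\xi$ picks up the required lower bound from the $(\xi^2+1)^{1/9}W$ term in (\ref{1st:dH}) with $W$ bounded away from $0$; the concavity $d^2H/d\xi^2<0$ follows from $\Psi<0$ dominating in (\ref{2nd:dH}). Stability (Lemma~\ref{contzero}) and continuous dependence let one pass from the invariant subspace $\{\theta=\pi/2\}$ to $\theta=\pi/2-\varepsilon$, provided $\varepsilon$ is small and $|\nu|\ge\nu_0$; then Proposition~\ref{Lext} gives (\ref{B2:H}), hence (\ref{B2}). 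Finally, choosing $\varepsilon$ small enough and then $\nu_0$ large enough (but with $\nu_0\le\delta_0/4$ so that $\Lambda$ is defined on the stated range) makes all of the above uniform over $\nu_0\le|\nu|\le\delta_0/4$, completing the proof.
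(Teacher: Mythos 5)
Your overall strategy is the paper's: flow backward from $\Lambda(\nu,-\varepsilon)$, use that the limiting trajectory on the invariant subspace $\{\theta=\pi/2\}$ obeys the asymptotics of Lemma~\ref{Tapas2}, and verify the hypotheses of Proposition~\ref{Linf} (for $\nu>0$) or Proposition~\ref{Lext} (for $\nu<0$) at a suitable $\xi_0$; you also correctly identify the key obstacle that at the starting point $(\xi_0^2+1+|a|)H^3\approx 1$, which is neither large enough for Proposition~\ref{Linf} nor small enough for Proposition~\ref{Lext}. However, several details are wrong and one of them would make the argument fail. First, a sign error: from (\ref{S2E3}) one has $d^3H/d\xi^3 = H^{-3}-\xi^2-a$, not $\geq \xi^2+a-H^{-3}$; and invoking ``(\ref{B7})-type growth'' here is circular since (\ref{B7}) is a hypothesis of Lemma~\ref{continfinity}, not a conclusion available at this stage. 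Second, and more seriously, in the $\nu<0$ case you claim the relevant branch has $W<0$. But from (\ref{P2:prime}) one has $\Phi\sim C(\tau-\tau_*)^{3/4}$ as $\tau\to\tau_*^+$, hence $W=d\Phi/d\tau>0$ and $W\to+\infty$ (the paper uses $(\tau-\tau_*)^{1/4}W\to\tfrac34(64/15)^{1/4}$). With $W<0$ the hypothesis $(|\xi_0|+1+|a|)^{5/3}dH(\xi_0)/d\xi>c_3>0$ of Proposition~\ref{Lext} would fail, so this is not cosmetic. Third, the citation of Lemma~\ref{continfinity} is misplaced: that lemma only says behavior (\ref{B1}) is stable once you already have a trajectory satisfying it; what you actually need is plain continuous dependence on initial data over a finite backward interval $[\tau_0,0]$, with the quantifiers in the right order (fix $\tau_0$ negative with $|\tau_0|$ large so that, via (\ref{P1:prime}), the invariant-subspace trajectory has $\Phi(\tau_0)$ large; then choose $\varepsilon$ small so the perturbed trajectory stays $\rho_0$-close, as in (\ref{G4})). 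Finally, for the $\nu<0$ case you only remark that $\Phi^3$ is small and $dH/d\xi$ is bounded away from $0$; Proposition~\ref{Lext} requires the quantitative compatibility (\ref{smallness}) between $c_2$ and $c_3$, which the paper obtains by tracking the precise rates $c_2\propto(\tau_0-\tau_*)^{1/4}$ and $c_3\propto(\tau_0-\tau_*)^{-1/4}$ coming from Lemma~\ref{Tapas2}; without quantifying these rates the proof of the second half is incomplete.
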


\begin{proof}[Proof]
The dynamics induced by the system (\ref{compact1})-(\ref{compact4}) on the
invariant subspace $\mathbb{R}^{3}\times\{\theta=\pi/2\}$ have been summarised
in Proposition~\ref{Tapas1} and Lemma~\ref{Tapas2}. In particular, 
the trajectory starting at $\Lambda(\nu,0)$ with $\nu>0$ sufficiently small
satisfies (\ref{P1:prime}) and, as it can be easily deduced, also that 
\[
\lim_{\tau\to-\infty}\frac{W(\tau)}{\tau^{2}}=-\frac{1}{2}\quad \mbox{and}\quad \lim_{\tau\to-\infty}\frac{\Psi(\tau)}{\tau}=-1\,.
\]

Then, classical continuous dependence results for ODEs imply that for any
$\rho_{0}>0$ arbitrarily small and $\nu_{0}>0$ small enough there exists
$\varepsilon$ sufficiently small such that, for $\nu_{0}\leq\nu\leq\delta_{0}/4$ 
the trajectory starting at $\Lambda(\nu,-\varepsilon)$ at $\tau=0$ satisfies:%
\begin{equation}\label{G4}%
\left|  \Phi(\tau_{0})+\frac{\tau_{0}^{3}}{6}\right|  \leq\rho_{0}|\tau_{0}|^{3} \,,
\quad\left|  W(\tau_{0})+\frac{\tau_{0}^{2}}{2}\right|  
\leq\rho_{0}\left|  \tau_{0}\right|  ^{2}\,,
\quad\left|  \Psi(\tau_{0}) + \tau_{0}\right|  
\leq\rho_{0}|\tau_{0}| 
\end{equation}
for some $\tau_{0}<0$. Using (\ref{G4}) and (\ref{S2E8a})-(\ref{2nd:dH}) to
get $H$, $dH/d\xi$ and $d^{2}H/d\xi^{2}$ at the value $\xi_{0}$ (given by (\ref{S2E8})),
 we obtain
\[
H(\xi_{0}) \geq c_{1}|\xi_{0}|^{-\frac{2}{3}} \,, \quad\frac{dH(\xi_{0})}{d\xi} <0
\,,\quad\frac{d^{2}H(\xi_{0})}{d\xi^{2}} >0
\]
where $c_{1}>0$ can be made arbitrarily large choosing $\varepsilon$ sufficiently
small and $|\tau_{0}|$ sufficiently large to guarantee that (\ref{bigness2}) is satisfied. 
Then we apply Proposition~\ref{Linf} to obtain (\ref{B1:H}) and hence (\ref{B1}) follows.

On the other hand the trajectories starting at $\Lambda(\tilde{\nu},0)$ with $\tilde{\nu}<0$ satisfy 
$\lim_{\tau\to\tau_{\ast}^{+}}\Phi(\tau)=0$, for some $\tau_{\ast}^{+}>-\infty$.
Moreover, (\ref{P2:prime}) is satisfied, as well as 
\[
\lim_{\tau\to\tau_{\ast}^{+}}(\tau-\tau_{\ast})^{\frac{1}{4}}W(\tau)
=\frac{3}{4}\left(\frac{64}{15}\right)^{\frac{1}{4}} 
\quad \mbox{and} \quad
\lim_{\tau\to\tau_{\ast}^{+}}(\tau-\tau_{\ast})^{\frac{5}{4}}\Psi(\tau)=
 -\frac{3}{16}\left(  \frac{64}{15}\right)^{\frac{1}{4}}\,.
\]

Suppose now that $-\delta_{0}/4\leq\zeta\leq-\nu_{0}$, $\sigma
=-\varepsilon$. Assuming again that $\varepsilon$ is sufficiently small we
obtain that the numbers
\[
\frac{\Phi(\tau_{0}) }{(\tau_{0}-\tau_{\ast})^{\frac{3}{4}}} -\left(
\frac{64}{15}\right)  ^{\frac{1}{4}}\,, \quad(\tau_{0}-\tau_{\ast})^{\frac
{1}{4}}W(\tau_{0}) -\frac{3}{4}\left(  \frac{64}{15}\right)  ^{\frac{1}{4}}\,,
\quad(\tau_{0}-\tau_{\ast})^{\frac{5}{4}}\Psi(\tau_{0}) +\frac{3}{16}\left(
\frac{64}{15}\right)  ^{\frac{1}{4}}
\]
can be made arbitrarily small for $\tau_{0}$ close to $\tau_{\ast}$, 
$\tau_{0}>\tau_{\ast}$. We can use this approximation to obtain that
\[
H(\xi_{0}) \leq c_{2}| \xi_{0}|^{-\frac{2}{3}} \,,\quad \frac{dH(\xi_{0})}{d\xi}>c_{3} \,,
\quad \frac{d^{2}H(\xi_{0})}{d\xi^{2}} <0
\]
where $c_{2}\propto(\tau_{0}-\tau_{\ast})^{1/4}$ and $c_{3}\propto(\tau_{0}-\tau_{\ast})^{-1/4}$, 
thus they can be chosen to satisfy (\ref{smallness}) by taking $\varepsilon>0$ sufficiently 
small. We can now apply Proposition~\ref{Lext} to conclude the proof of the result.
\end{proof}

Next we prove that if for every compact set $K\subset
(-\infty,\infty)$ we have that\ $\lim\inf_{\tau\to(\tau_{\ast})^{+}}\Phi(\tau)
=0$ for some $\tau_{\ast}>-\infty$, then $\lim_{\tau\to(\tau_{\ast})^{+}}%
\Phi(\tau)=0$. Therefore, we will be in the situation stated in Lemma
\ref{contzero} and it will be possible to prove continuity of this behaviour
for small changes of the initial values. 

\begin{lemma}\label{L1} 
Let $\Phi(\tau)$ be a solution of (\ref{S2E9})-(\ref{S2E9bis}) defined in some interval
$(\tau_{\ast},\tau^{\ast})$ with $\tau_{\ast}>-\infty$, $\tau^{\ast}\leq\infty$,
$\Phi(\tau)>0$ for $\tau>\tau_{\ast}$. Then 
\begin{equation}\label{F1E1:2}
\lim\inf_{\tau\to(\tau_{\ast})^{+}}\Phi(\tau) =0\,.
\end{equation}
implies (\ref{B2}) for this value $\tau=\tau_{\ast}$.
\end{lemma}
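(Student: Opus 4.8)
The plan is to argue by contradiction: assume that $\liminf_{\tau\to(\tau_\ast)^+}\Phi(\tau)=0$ but that $\Phi$ does \emph{not} tend to $0$, so that there is a sequence $\tau_n\to(\tau_\ast)^+$ along which $\Phi(\tau_n)\geq 2c$ for some fixed $c>0$, while simultaneously (by the $\liminf$ hypothesis) there is another sequence along which $\Phi$ drops below $c$. I would then work with the original equation (\ref{S2E3}), using the smooth change of variables (\ref{S2E8a})--(\ref{S2E8}), so that $\tau\to(\tau_\ast)^+$ corresponds to $\xi\to(\xi_\ast)^+$ with $\xi_\ast>-\infty$, and the statement to be disproved becomes: $H(\xi)$ oscillates without settling to $0$ as $\xi\to(\xi_\ast)^+$. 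On any interval where $H$ stays bounded away from $0$ and $\xi$ stays in the bounded set near $\xi_\ast$, equation (\ref{S2E3}) gives a uniform bound $|d^3H/d\xi^3|\leq C$; integrating, $H, dH/d\xi, d^2H/d\xi^2$ are all uniformly bounded and in fact uniformly continuous on such intervals, so $H$ cannot jump between the level $2c$ and the level $c$ in arbitrarily short $\xi$-intervals. This forces the existence of a first (largest) $\xi$-value, call it $\bar\xi_n\in(\xi_\ast,\tau_n)$-image, at which $H(\bar\xi_n)=c$ while $H>c$ to its right up to $\tau_n$'s preimage; along these the descent towards $c$ must have $dH/d\xi>0$ somewhere (reading increasing $\xi$) — equivalently, going in the direction of decreasing $\xi$ one reaches the small-$H$ regime.

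Next I would transfer the analysis to the phase-plane variables $(H,u,v)$ of (\ref{V1E3}) via (\ref{V1E1})--(\ref{V1E2}), exactly as in the proof of Lemma~\ref{contzero}: when $H$ is small the term $(\Omega^2+a)H^3$ is negligible, so the flow is governed by the planar system (\ref{phi0system}) with $H\equiv 0$, whose structure is recorded in Lemma~\ref{separa} (the attractive separatrix $v=\bar v(u)$) and Lemma~\ref{tozero-bwds} (the global flow, including the fact that trajectories are driven into the region $\{u>0,\ v<0\}$ and then $H\to 0$ at a finite $z$). The key point is that once the trajectory $(u(z),v(z))$ enters a neighbourhood of $\{u>0,\ v<0\}$ with $H$ small — which the oscillation scenario forces, by the same case analysis (\ref{Cs1})--(\ref{Cs3a}) carried out in Lemma~\ref{contzero} — one has $dH/d\xi>0$ and $d^2H/d\xi^2<0$ persisting for all smaller $\xi$, and then $d^3H/d\xi^3>0$ as long as $H$ stays small, so $H$ is forced monotonically down to $0$ at some finite $\tilde\xi_\ast\geq\xi_\ast$. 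This contradicts the assumption that $H$ returns to the level $2c$ infinitely often as $\xi\to(\xi_\ast)^+$, hence $\lim_{\tau\to(\tau_\ast)^+}\Phi(\tau)=0$, which is precisely (\ref{B2}).

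The one remaining subtlety is to make sure the ``small-$H$'' regime is actually reached: from the $\liminf$ hypothesis $H$ does get arbitrarily close to $0$ along some sequence, but a priori the derivatives $u,v$ (equivalently the rescaled $dH/d\xi$, $d^2H/d\xi^2$) could be large there. I would handle this by noting that on each excursion where $H$ decreases from $2c$ down towards $0$ within the bounded $\xi$-interval near $\xi_\ast$, the integrated form of (\ref{S2E3}) controls the derivatives in terms of $c$ and the length of the interval, so along a suitable subsequence $(u,v)$ is bounded; then Lemma~\ref{tozero-bwds} and the bootstrap argument already used in Lemma~\ref{contzero} apply verbatim.

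I expect the main obstacle to be exactly this bootstrapping step — showing that the oscillation hypothesis is incompatible with the phase-plane dynamics near $H=0$, i.e.\ ruling out a trajectory that repeatedly skims close to $H=0$ and then climbs back up. The resolution is that any such near-approach with bounded $(u,v)$ must, by the attractiveness of the separatrix and the case analysis (\ref{Cs1})--(\ref{Cs3a}), land the trajectory in $\{u>0,\ v<0\}$, from which there is no return (the sign conditions on $dH/d\xi$, $d^2H/d\xi^2$, $d^3H/d\xi^3$ are self-perpetuating while $H$ is small). Everything else is a routine adaptation of the arguments already developed for Lemma~\ref{contzero}, so the proof is essentially: extract the contradiction sequence, pass to $(H,u,v)$, invoke Lemmas~\ref{separa} and~\ref{tozero-bwds}, and conclude that $H$ must in fact hit zero at a finite point.
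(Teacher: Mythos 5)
Your proposal takes a genuinely different route from the paper, and it has a gap. The paper's proof of this lemma is entirely elementary and does not touch the phase-plane machinery: working with $H(\xi)$, it extracts sequences of local maxima $\hat\xi_n$ (with $\limsup H(\hat\xi_n)>0$) and local minima $\xi_n$ (with $H(\xi_n)\to 0$), proves the key estimate (\ref{F3E5}) that $|d^2H(\hat\xi_n)/d\xi^2|/H(\hat\xi_n)$ stays bounded away from zero, and then uses only the one-sided bound $d^3H/d\xi^3\geq -C_1$ (valid automatically for $H>0$ and $\xi$ in a bounded set) together with a Taylor expansion about $\hat\xi_n$ to conclude that $H$ must vanish somewhere in $[\xi_{n+1},\hat\xi_n]$ for $n$ large, contradicting positivity. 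No invocation of the variables $(u,v)$, of Lemma~\ref{separa}, or of Lemma~\ref{tozero-bwds} is needed.

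The first gap in your argument is the boundedness of $(u,v)$. You assert that ``the integrated form of (\ref{S2E3}) controls the derivatives in terms of $c$ and the length of the interval, so along a suitable subsequence $(u,v)$ is bounded.'' This is not warranted: $d^3H/d\xi^3$ is bounded \emph{below} on bounded $\xi$-intervals but not above (it behaves like $1/H^3$ as $H\to 0$), so one-sided integration gives at best a one-sided bound on $d^2H/d\xi^2$ relative to its value at $\hat\xi_n$, which is itself uncontrolled a priori. That $u=H^{1/3}dH/d\xi$ and $v=H^{5/3}d^2H/d\xi^2$ remain order one in the bouncing region is precisely what the inner-scaling analysis (Appendix~\ref{sec:summary:II}/\ref{sec:control}) is designed to establish, so assuming it here is close to circular. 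The second gap is that the case analysis (\ref{Cs1})--(\ref{Cs3a}) you cite from Lemma~\ref{contzero} is carried out under the hypothesis $\lim_{\xi\to\xi_*^+}H(\xi)=0$ --- i.e.\ exactly what this lemma is trying to prove. The preliminary step there, that $(u(z),v(z))$ must lie in $\{u>0\}$ for all $z<z_n$ once $n$ is large, uses that full limit (``otherwise $dH/d\xi\le 0$ near $\xi_*$, contradicting $H\to 0$''); under the weaker hypothesis $\liminf H=0$ with $\limsup H>0$, $H$ is not monotone near $\xi_*$ and $u$ necessarily changes sign, so that step and the whole case analysis do not carry over. Redoing the phase-plane argument from scratch for the oscillating regime is possible --- it is essentially the content of Sections~\ref{sec:dynamics} and \ref{sec:proof:osci} --- but it is much heavier than the direct Taylor argument the paper uses here, and you have not supplied the needed adaptation.
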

\begin{proof}[Proof]
It is easier to work with the original equation (\ref{S2E3}), observe that then, 
(\ref{F1E1:2}) is equivalent to 
\begin{equation}\label{F1E1}
\lim\inf_{\xi\to(\xi_{\ast})^{+}}H(\xi) =0 
\end{equation}
for $\xi_{\ast}$ given by $\tau_{\ast}=\int_{0}^{\xi_{\ast}}(\eta^{2} +1)^{\frac{4}{9}}d\eta$. 
Let us then prove that (\ref{F1E1}) implies (\ref{B2:H}), and therefore (\ref{B2}) will follow.

We argue by contradiction. We then assume that (\ref{F1E1}) is satisfied, but
(\ref{B2:H}) does not hold, this means that also
\begin{equation}\label{F1E3}
\lim\sup_{\xi\to(\xi_{\ast})^{+}}H(\xi)>0 \,.
\end{equation}
On the one hand (\ref{F1E1}) gives the existence a decreasing sequence $\{\bar{\xi}_{n}\}$ such that $\bar{\xi}_{n}\to\xi_{\ast}$ as $n\to\infty$, $H(\bar{\xi}_{n+1})<H(\bar{\xi
}_{n})$ and $\lim_{n\to\infty}H(\bar{\xi}_{n}) =0$. And (\ref{F1E3}) implies the existence 
of a sequence with elements $\tilde{\xi}_{n}\in(\xi_{\ast},\bar{\xi}_{n+1})$ 
such that $H(\tilde{\xi}_{n})=H(\bar{\xi}_{n})$. Then there exists another sequence $\{\xi_{n}\}$ 
with $\xi_{n}\in(\tilde{\xi}_{n},\bar{\xi}_{n})$ and $\lim_{n\to\infty}\xi_{n}=\xi_{\ast}$, where local minima are attained, i.e. satisfying
\begin{equation}\label{F3E4:I}
H(\xi_{n}) =\min_{\xi\in(\tilde{\xi}_{n},\hat{\xi}_{n})}H(\xi)\,, \quad
\lim_{n\to\infty}H(\xi_{n}) =0 \,,\quad \frac{dH(\xi_{n})}{d\xi} =0 \,, \quad
\frac{d^{2}H(\xi_{n})}{d\xi^{2}} \geq0\,.
\end{equation}
Let also $\{\hat{\xi}_{n}\}$ be the sequence where local maxima 
are attained, such that $\xi_{n+1}<\hat{\xi}_{n}<\xi_{n}$ and satisfying
\begin{equation}\label{F3E4}%
 H(\hat{\xi}_{n}) =\max_{\xi\in(\xi_{n+1},\xi_{n})}H(\xi)\,, \quad
\lim\sup_{n\to\infty}H(\hat{\xi}_{n}) >0\,, \quad
\frac{dH}{d\xi}(\hat{\xi}_{n})=0\,,\quad\frac{d^{2}H}{d\xi^{2}}(\hat{\xi}_{n})
\leq 0\,.
\end{equation}

Let us now show that
\begin{equation}\label{F3E5}
\lim_{n\to\infty} \left(\left|  \frac{d^{2}H(\hat{\xi}_{n})}{d\xi^{2}} \right|  \frac{1}{H(\hat{\xi}_{n}) }\right) >0\,. 
\end{equation}
Indeed, from (\ref{S2E3}) we obtain that $d^{3}H/d\xi^{3}\geq-C_{1}$ if $\xi
\in\left[  \xi_{n+1},\xi_{n}\right]$ and integrating this inequality, we also obtain
\[
H(\xi_{n}) \geq H(\hat{\xi}_{n}) +\frac{d^{2}H(\hat{\xi}_{n})}{d\xi^{2}} 
\frac{(\xi_{n}-\hat{\xi}_{n})^{2}}{2} -C_{1}\frac{( \xi_{n}-\hat{\xi}_{n})^{3}}{6}\,.
\]
Now, if (\ref{F3E5}) fails, it follows that $H(\xi_{n}) >\frac{H(\hat{\xi}_{n})}{2}$ 
for some subsequence, and this contradicts (\ref{F3E4:I}) and (\ref{F3E4}). Thus
(\ref{F3E5}) holds.

We now claim that (\ref{F3E5}) implies that $H(\xi)$ vanishes for some $\xi
\in[\xi_{n+1},\hat{\xi}_{n}]$. Indeed, since $d^{3}H/d\xi^{3}\geq-C_{1}$ we
then have that, for $n$ large enough,
\[
\frac{d^{2}H(\xi)}{d\xi^{2}} \leq\frac{d^{2}H(\hat{\xi}_{n})}{d\xi^{2}} +
C_{1}(\hat{\xi}_{n}-\xi_{n+1}) \leq\frac{1}{2}\frac{d^{2}H(\hat{\xi}_{n}%
)}{d\xi^{2}} <0\,,\quad\xi\in[\xi_{n+1},\hat{\xi}_{n}]\,.
\]
This implies that $dH(\xi)/d\xi>0$ for $n$ large enough with $\xi\in[\xi
_{n+1},\hat{\xi}_{n}]$, but this contradicts the definition of $\xi_{n+1}$,
and so for $n$ large enough there is a first value $\xi\in[\xi_{n+1},\hat{\xi
}_{n}]$ such that $H(\xi)=0$, i.e. (\ref{B2:H}) holds.
\end{proof}

\begin{remark}\label{positivity} Notice that a classical Gronwall argument implies that any
solution $\Phi(\tau)$ can be extended for arbitrary negative values of $\tau$ as long as 
$\Phi(\tau)$ remains away from zero. More precisely, if 
$\lim\inf_{\tau\to\tau_{0}^{+}}\Phi(\tau)>0$ for any $\tau_{0}\geq\tau_{\ast}>-\infty$, 
it is possible to extend $\Phi(\tau)$ as a solution of
(\ref{compact1})-(\ref{compact4}) for times $\tau>\tau_{\ast}-\delta$ and some
$\delta>0$. Reciprocally, the maximal existence time, due to Lemma \ref{L1},
is finite and it is given by $\tau_{\ast}>-\infty$ if $\lim\inf_{\tau\to
\tau_{\ast}}\Phi(\tau)=0$.
\end{remark}

We are now ready to prove Proposition~\ref{existence:1}.

\begin{proof}[Proof of Proposition~\ref{existence:1}]
We consider the one-dimensional family of solutions of 
(\ref{compact1})-(\ref{compact4}) obtained choosing in Lemma~\ref{start:shooting} the
parameters $\sigma=\varepsilon>0$ with $\varepsilon>0$ small enough and 
$\nu\in(-\delta_{0}/4,\delta_{0}/4)$. We define as
$\mathcal{U}_{+}$ the set of values of $\nu$ such that the corresponding
solution of (\ref{compact1})-(\ref{compact4}) satisfies (\ref{B1}). 
On the other hand, we denote by $\mathcal{U}_{-}$ the
set of values of $\nu$ such that the corresponding solution of 
(\ref{compact1})-(\ref{compact4}) satisfy (\ref{B2}) 
for some $\tau_{\ast}>-\infty$. Due to Lemma~\ref{start:shooting} we have that
$\mathcal{U}_{+}\neq\varnothing$ and $\mathcal{U}_{-}\neq\varnothing$. 
Moreover, by definition $\mathcal{U}_{+}\cap\mathcal{U}_{-}=\varnothing$. Due
to lemmas \ref{continfinity} and \ref{contzero} we have that the sets
$\mathcal{U}_{+}$ and $\mathcal{U}_{-}$ are open sets. Therefore, there exists
$\bar{\nu}\in\left(  -\frac{\delta_{0}}{4},\frac{\delta_{0}}{4}\right)  $ such
that $\bar{\nu}\notin\mathcal{U}_{+}\cup\mathcal{U}_{-}$.

The corresponding solution of (\ref{compact1})-(\ref{compact4}) associated to
the parameter $\bar{\nu}$ has the property that, for any $\tau_{0}>-\infty$ we
have $\inf_{\tau\in(\tau_{0},\infty) }\Phi(\tau) \geq C_{-}(\tau_{0}) >0$,
since otherwise $\bar{\nu}\in\mathcal{U}_{-}$ due to
Lemma~\ref{L1}. This implies also that $\sup_{\tau\in(\tau_{0},\infty) }%
\Phi(\tau) \leq C_{+}(\tau_{0}) <\infty$ because the right-hand side of
(\ref{compact1})-(\ref{compact4}) is bounded in compact sets if 
$\Phi(\tau)\geq C_{-}(\tau_{0})$. Therefore, this solution is globally defined for 
$\tau\in(-\infty,\infty)$. Moreover, (\ref{F4E3}) holds, since otherwise 
$\bar{\nu}\in\mathcal{U}_{+}$ and the result follows.
\end{proof}

\section{Oscillatory solutions}\label{sec:dynamics}
We recall that the final aim is to prove that 
the solutions found in Proposition~\ref{existence:1} have no alternative but to approach 
the invariant subspace $\theta=-\pi/2$ as $\tau\to-\infty$ and they remain uniformly bounded 
while $\Phi$ stays positive (see (\ref{B3})-(\ref{B4})). The argument is by contradiction 
and in this section we prove the following lemma that is the first step in the argument.

\begin{proposition}\label{char:max} 
Suppose that $(\Phi(\tau),W(\tau),\Psi(\tau),\theta(\tau))$ 
is a solution of (\ref{compact1})-(\ref{compact4})
defined for all $\tau\in(-\infty,\infty)$ and satisfying
\begin{equation}\label{F5E1}%
\lim\inf_{\tau\to-\infty}\Phi(\tau) <\infty\,
\end{equation}
and that
\begin{equation}\label{G1E6}%
\lim\sup_{\tau\to-\infty}\left(  \Phi(\tau) +\left|  \frac{d\Phi(\tau)}{d\tau}\right|  
+ \left|  \frac{d^{2}\Phi(\tau)}{d\tau^{2}} \right|  \right)
=\infty\,.
\end{equation}
Then, there exists a decreasing sequence $\{\tau_{n}^{\ast}\}$ with 
$\lim_{n\to\infty}\tau_{n}^{\ast}=-\infty$ and a sequence $\{\varepsilon_{n}\}$ with 
$\varepsilon_{n}>0$ small enough such that 
\begin{equation}\label{def:max}
\Phi(\tau_{n}^{\ast}) =\max_{\tau\in[\tau_{n}^{\ast}-\varepsilon_{n},\tau_{n}^{\ast}+\varepsilon_{n}]}\Phi(\tau) 
\end{equation}
and that
\begin{equation}\label{G5E3}
\lim\sup_{n\to\infty}\Phi( \tau_{n}^{\ast})=\infty\,. 
\end{equation}
\end{proposition}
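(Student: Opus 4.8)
The plan is to extract from the failure of boundedness \eqref{G1E6} a sequence of genuine local maxima of $\Phi$ going to $-\infty$ whose heights are unbounded. First I would observe that the key is to show that the unboundedness of $\Phi + |d\Phi/d\tau| + |d^2\Phi/d\tau^2|$ as $\tau\to-\infty$ forces the unboundedness of $\Phi$ itself along a sequence of local maxima. To this end I would argue by contradiction in two stages. Suppose first that $\Phi$ stays bounded as $\tau\to-\infty$, say $\Phi(\tau)\le M$; then since the trajectory is globally defined and $\Phi$ stays positive (by Remark~\ref{positivity}, the maximal backward existence time being finite would force $\liminf\Phi=0$, contradicting \eqref{F5E1} together with the boundedness assumption — here one must be a little careful, so I would instead first show that \eqref{F5E1} plus the hypothesis that the solution is globally defined on $(-\infty,\infty)$ already gives $\liminf_{\tau\to-\infty}\Phi(\tau)>0$). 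Once $\Phi$ is bounded above and below away from zero on some $(-\infty,\tau_0]$, the right-hand sides of \eqref{compact1}-\eqref{compact4} are bounded there, so in particular $d^2\Phi/d\tau^2=\Psi$ would satisfy $|d\Psi/d\tau|\le C$; combining with a Landau–Kolmogorov type interpolation inequality (or a direct integration argument as in the proof of Proposition~\ref{Tapas1}(v)) one gets that $W$ and $\Psi$ are also bounded, contradicting \eqref{G1E6}. Hence $\Phi$ must be unbounded as $\tau\to-\infty$, i.e. $\limsup_{\tau\to-\infty}\Phi(\tau)=\infty$.

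Next I would produce the sequence of local maxima. Since $\limsup_{\tau\to-\infty}\Phi(\tau)=\infty$ while $\liminf_{\tau\to-\infty}\Phi(\tau)<\infty$ by \eqref{F5E1}, the function $\Phi$ must oscillate: pick any level $\ell$ strictly above $\liminf_{\tau\to-\infty}\Phi$; then there are intervals arbitrarily far to the left on which $\Phi$ rises above any prescribed large value $m_n\to\infty$ and also points arbitrarily far to the left where $\Phi<\ell$. On each such excursion, between a point where $\Phi<\ell$ and a point where $\Phi>m_n$ the function must attain an interior maximum of value at least $m_n$ (indeed, take the point $\tau_n^\ast$ where $\Phi$ attains its maximum on a suitable compact interval containing the excursion; since the maximum exceeds $\ell$ and the endpoints can be arranged so that $\Phi$ is below that maximum there, $\tau_n^\ast$ is an interior local max). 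Choosing these excursions to march off to $-\infty$ gives a decreasing sequence $\tau_n^\ast\to-\infty$ with $\Phi(\tau_n^\ast)\ge m_n\to\infty$ and $d\Phi(\tau_n^\ast)/d\tau=0$, $d^2\Phi(\tau_n^\ast)/d\tau^2\le 0$. Finally I would take $\varepsilon_n>0$ small enough that $[\tau_n^\ast-\varepsilon_n,\tau_n^\ast+\varepsilon_n]$ lies inside the interval on which $\tau_n^\ast$ was a maximum; then \eqref{def:max} holds, and \eqref{G5E3} is immediate from $\Phi(\tau_n^\ast)\ge m_n\to\infty$.

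The step I expect to be the main obstacle is the first one: rigorously ruling out the case where $\Phi$ remains bounded (above \emph{and} away from zero) while $|d\Phi/d\tau|+|d^2\Phi/d\tau^2|$ is unbounded. Naively one wants to say ``bounded $\Phi$ with bounded third derivative forces bounded first and second derivatives,'' but the third-derivative bound only holds where $\Phi$ is bounded \emph{below}, so the argument must be localised: one shows that on any interval where $\Phi$ stays in a fixed compact subset of $(0,\infty)$ the quantity $\Psi=d^2\Phi/d\tau^2$ has a derivative bounded by a constant depending only on that compact set and on $|a|$, hence an oscillation bound over unit intervals; then an integration argument (exactly of the flavour used in Proposition~\ref{Tapas1}(v): if $W$ or $\Psi$ is large at some point it stays large on a unit interval, forcing $\Phi$ to leave the compact set) yields uniform bounds on $W$ and $\Psi$ and contradicts \eqref{G1E6}. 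Once this localisation is set up correctly the rest is routine.
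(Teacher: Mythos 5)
Your Stage 2 (extracting a sequence of interior local maxima once $\limsup_{\tau\to-\infty}\Phi=\infty$ is known, and shrinking $\varepsilon_n$ so that (\ref{def:max}) holds) is sound and is essentially what the paper does via Rolle's theorem after Lemma~\ref{L2}. Stage 1, however, contains two genuine errors.

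First, the parenthetical claim that (\ref{F5E1}) together with global backward existence ``already gives $\liminf_{\tau\to-\infty}\Phi(\tau)>0$'' is false. Global existence (Remark~\ref{positivity}, Lemma~\ref{L1}) rules out $\Phi\to 0$ at a \emph{finite} $\tau_*$, but it is perfectly compatible with $\liminf_{\tau\to-\infty}\Phi(\tau)=0$, and (\ref{F5E1}) only asserts $\liminf<\infty$. This is not a cosmetic point: the paper's Lemma~\ref{L3} proves precisely the dichotomy $\liminf\Phi=0$ or $\limsup\Phi=\infty$ under boundedness above, and Lemma~\ref{L4} is then needed to exclude the combination $\liminf=0$, $\limsup<\infty$ by a separate rescaling around local minima. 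The uniform positive lower bound on $\Phi$ is only obtained at the very end (Proposition~\ref{ControlOscillations}), after the whole oscillation analysis. You therefore cannot take as a starting point that $\Phi$ stays in a compact subset of $(0,\infty)$.

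Second, and more seriously, your key claim that on any interval where $\Phi$ stays in a fixed compact subset of $(0,\infty)$ one has $|d\Psi/d\tau|\leq C$ with $C$ depending only on that compact set and $|a|$ is not true for the system (\ref{compact1})--(\ref{compact4}). The right-hand side of (\ref{compact3}) contains the terms
\[
\Bigl(\tfrac{208}{81}(\sin\theta)^2-\tfrac{10}{9}\Bigr)(\cos\theta)^{\frac{34}{9}}W
\ +\ \tfrac{2}{3}\sin\theta(\cos\theta)^{\frac{17}{9}}\Psi,
\]
which are \emph{linear in $W$ and $\Psi$}. Their coefficients tend to zero as $\theta\to-\pi/2$, but they do not vanish for finite $\tau$, so ``bounded $\Phi$ with bounded third derivative'' is exactly the hypothesis you do not have a priori; the Landau--Kolmogorov estimate cannot be invoked before $W$, $\Psi$ are controlled, and your sketch does not close this bootstrap. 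The paper avoids the circularity by a compactness and rescaling argument (Lemma~\ref{L3}): one fixes a sequence $\hat{\tau}_n$ with $|W(\hat{\tau}_n)|+|\Psi(\hat{\tau}_n)|\to\infty$, rescales time by $\gamma_n=\bigl(|W(\hat{\tau}_n)|+\sqrt{|\Psi(\hat{\tau}_n)|}\bigr)^{-1}\to 0$ so that the rescaled initial data are $O(1)$ and the whole right-hand side carries a factor $\gamma_n^3$, and passes to the limit. The limiting equation is $d^3\bar{\Phi}_\infty/d\bar{z}^3=0$, whose solution with nontrivial first or second derivative is a non-constant polynomial of degree at most two and must exit the compact set $[1/C_0,C_0]$, giving the contradiction. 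This rescaling device is the missing idea in your proposal.
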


Before we prove this result prove three auxiliary lemmas. 
First we show that there exists a decreasing 
sequence of local minima attained at certain $\tau=\tau_{n}$ with 
$\lim_{n\to\infty}\tau_{n}=- \infty$.
\begin{lemma}\label{L2} 
Let $(  \Phi(\tau),W(\tau),\Psi(\tau),\theta(\tau))$ 
satisfy the assumptions of Proposition~\ref{char:max}. Then, there exists a decreasing sequence 
$\{\tau_{n}\}$ such that $\lim_{n\to\infty}\tau_{n}=-\infty$ and that
\begin{equation}\label{G1E7}
\lim_{n\to\infty}\Phi(\tau_{n}) \leq 1 
\quad\mbox{and}\quad\quad\frac{d\Phi(\tau_{n})}{d\tau}=0\,, 
\quad\frac{d^{2}\Phi(\tau_{n})}{d\tau^{2}} \geq 0
\quad\mbox{for all}\quad n\,.
\end{equation}
\end{lemma}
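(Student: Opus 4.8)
The plan is to extract the sequence $\{\tau_n\}$ from the hypothesis (\ref{F5E1}) that $\liminf_{\tau\to-\infty}\Phi(\tau)<\infty$, combined with the structure of equation (\ref{compact3}) near the invariant subspace $\theta=-\pi/2$. First I would observe that since $(\Phi,W,\Psi,\theta)$ is globally defined, Remark~\ref{positivity} forces $\liminf_{\tau\to-\infty}\Phi(\tau)>0$ is not required, but in any case $\Phi(\tau)>0$ for all $\tau$. From (\ref{F5E1}) there is a sequence $\sigma_k\to-\infty$ with $\Phi(\sigma_k)\le M$ for some $M<\infty$. The first goal is to produce, near each such $\sigma_k$, a genuine local minimum of $\Phi$ whose value is close to (or below) a controlled threshold. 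The natural threshold is $1$, because on the invariant subspace $\theta=\pm\pi/2$ the equation (\ref{ODEtapas}) has its critical point at $\Phi=1$, and the Lyapunov function $E$ of Proposition~\ref{Tapas1} distinguishes $\Phi<1$ from $\Phi>1$.

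The key step is to rule out the possibility that $\Phi$ stays bounded away below by a constant strictly greater than $1$ along a whole tail $\tau\le\tau_0$. Suppose, for contradiction, that $\liminf_{\tau\to-\infty}\Phi(\tau)=:m$ with $1<m\le M<\infty$ (the case $m\le 1$ being the easy one, since then there are local minima with values approaching something $\le 1$ directly). If $\Phi$ were also bounded above on a tail, then by (\ref{compact4}) $\theta(\tau)\to-\pi/2$ as $\tau\to-\infty$, the perturbation terms in (\ref{compact3}) and the term $(a-1)(\cos\theta)^2$ both vanish, so the dynamics approach those of (\ref{S3E1}); then Proposition~\ref{Tapas1}\textit{(v)} applies and forces $(\Phi,W,\Psi)\to P_s=(1,0,0)$, contradicting $m>1$. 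Hence, under the standing assumption (\ref{G1E6}) that $\Phi+|W|+|\Psi|$ is unbounded as $\tau\to-\infty$, $\Phi$ cannot be squeezed between two positive constants on a tail, and combining this with (\ref{F5E1}) (which keeps $\Phi$ from blowing up along a sequence) one shows $\liminf_{\tau\to-\infty}\Phi(\tau)\le 1$. Concretely: along $\sigma_k\to-\infty$ with $\Phi(\sigma_k)\le M$, between consecutive returns $\Phi$ must at some point dip; picking the points where $d\Phi/d\tau=0$ and $d^2\Phi/d\tau^2\ge0$ (local minima) that lie in $(\sigma_{k+1},\sigma_k)$ and extracting a subsequence whose $\Phi$-values converge, the limit must be $\le 1$ by the dichotomy just established.

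I expect the main obstacle to be the careful argument that a local minimum actually exists in each window and that one can select a \emph{decreasing} sequence $\tau_n\to-\infty$ with $\limsup$ (or $\lim$ along a subsequence, which is all (\ref{G1E7}) asks with the $\lim$ — actually one only needs $\lim_{n\to\infty}\Phi(\tau_n)\le1$, so passing to a subsequence is harmless) of the minimum values being $\le1$. The subtlety is that $\Phi$ could in principle be monotone decreasing on a long stretch, so "local minimum" has to be argued from the global boundedness of the trajectory on compact $\tau$-intervals (the right-hand side of (\ref{compact1})--(\ref{compact4}) is bounded wherever $\Phi$ is bounded away from $0$, by Remark~\ref{positivity}) together with (\ref{G1E6}): if $\Phi$ were eventually monotone, it would have a limit, which by the equation forces convergence to the critical point and contradicts (\ref{G1E6}). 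So $\Phi$ oscillates, producing infinitely many local minima accumulating at $-\infty$; discarding those with value $>1$ (there can only be finitely many consecutive such before a return forced by (\ref{F5E1})) and relabelling gives the decreasing sequence $\{\tau_n\}$ with the stated properties. The conditions $d\Phi(\tau_n)/d\tau=0$ and $d^2\Phi(\tau_n)/d\tau^2\ge0$ are then automatic at an interior local minimum.
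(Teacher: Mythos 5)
Your overall strategy — exploit the threshold $\Phi=1$, show $\liminf_{\tau\to-\infty}\Phi(\tau)\le 1$, and then harvest local minima from oscillation — matches the skeleton of the paper's argument, but the core step has a genuine gap. You deduce $\liminf\le1$ by arguing that \emph{if} $\Phi$ were also bounded above on a tail, then the dynamics approach (\ref{S3E1}) and Proposition~\ref{Tapas1}\textit{(v)} would force convergence to $P_s$, contradicting $m>1$; you then say that "combining this with (\ref{F5E1}) \dots one shows $\liminf\le1$". This does not close the argument: the remaining alternative is $\liminf=m>1$ together with $\limsup=\infty$, which is perfectly compatible with (\ref{F5E1}) (which only says $\liminf<\infty$) and is not ruled out by anything you wrote. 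The paper avoids this entirely with a direct integration: if $\liminf>1$ then $\Phi(\tau)\ge1+2\varepsilon_0$ on a tail, hence $H(\xi)\ge(1+\varepsilon_0)|\xi|^{-2/3}$, hence $d^3H/d\xi^3\le-\varepsilon_1\xi^2$, and three integrations give $H(\xi)\to\infty$ as $\xi\to-\infty$, contradicting (\ref{F5E1}). This argument needs no upper bound on $\Phi$ and is the key estimate you are missing. A symmetric integration (with $\limsup<1$) rules out $H$ staying below the critical profile, giving $\limsup\ge1$, which the paper also needs.

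Two smaller issues. First, your appeal to Proposition~\ref{Tapas1}\textit{(v)} is about the reduced 3D system (\ref{S3E1}), not the full system; transferring it to the full system on a tail where $\theta\to-\pi/2$ requires a limiting/compactness argument, which the paper makes explicit in the case $\liminf=\limsup=1$ (via the rescaled functions $\Phi_n(s)=\Phi(s-s_n)$ and integration of (\ref{G2E1})) and again at the end of Section~\ref{sec:6}. You cannot simply quote Tapas1\textit{(v)} in the 4D setting. Second, the "if $\Phi$ were eventually monotone it would have a limit, which by the equation forces convergence to the critical point" claim is not proved; the paper instead constructs the local-minimum sequence directly once $\liminf<\limsup$ is established (take $\alpha$ the average, find intervals $(\tilde\tau_n,\hat\tau_n)$ where $\Phi\le\alpha$, and take $\tau_n=\arg\min$), which sidesteps the monotonicity discussion altogether.
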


\begin{proof}[Proof]
First, we claim that
\begin{equation}\label{G1E3}
\lim\inf_{\tau\to-\infty}\Phi(\tau) \leq 1\,,
\quad \lim\sup_{\tau\to-\infty} \Phi(\tau) \geq 1\,. 
\end{equation}
Indeed, suppose first that $\lim\inf_{\tau\to-\infty}\Phi(\tau) >1$. Then,
there exists $\varepsilon_{0}>0$ and $\tau_{0}$ sufficiently negative, such
that $\Phi(\tau)\geq 1 + 2\varepsilon_{0}$ for $\tau\leq\tau_{0}$. Then,
(\ref{S2E8a}) and (\ref{S2E8}) imply $H(\xi)\geq(1+\varepsilon_{0})|\xi|^{-2/3}$ 
for $\xi\leq\xi_{0}$, where $\xi_{0}$ is related to $\tau_{0}$ 
by means of (\ref{S2E8}). This inequality applied to (\ref{S2E3}) gives
\begin{equation}
\frac{d^{3}H}{d\xi^{3}}=\frac{1}{H^{3}}-(\xi^{2}+a) \leq-\varepsilon_{1}\xi^{2} 
\label{G1E4}%
\end{equation}
for $\xi\leq\xi_{0}$ and some $\varepsilon_{1}>0$ (by taking a more negative
$\tau_{0}$ if necessary). Integrating (\ref{G1E4}) three times for $\xi\leq\xi_{0}$ 
gives $H(\xi)>\varepsilon_{1}|\xi_{0}|^{5}P_{1}(1+\xi/|\xi_{0}|)$ (where $P_{1}$ is as 
in Lemma~\ref{polyn}). Thus $\lim_{\xi\to-\infty}H(\xi)=\infty$, but this contradicts (\ref{F5E1}).

We now prove the second inequality in (\ref{G1E3}). Suppose on the contrary that  
$\lim\sup_{\tau\to-\infty}\Phi(\tau)<1$, then 
$\Phi(\tau) \leq1 -\varepsilon_{0}$ for some $\varepsilon_{0}>0$ and for
$\tau\leq\tau_{0}$ if $\tau_{0}<0$ with $|\tau_{0}|$ large enough. 
The transformation (\ref{S2E8})-(\ref{S2E8a}), with the obvious correspondence
in notation, implies that $H(\xi) \leq(1-\varepsilon_{0})|\xi|^{-\frac{2}{3}}$ for
$\xi\leq\xi_{0}$, whence (\ref{S2E3}) yields 
\begin{equation}\label{G1E5}
\frac{d^{3}H}{d\xi^{3}}=\frac{1}{H^{3}}-(\xi^{2}+a) \geq\varepsilon_{2}\xi^{2}
\end{equation}
for some $\varepsilon_{2}>0$ and $\xi\leq\xi_{0}$. Integrating (\ref{G1E5})
for $\xi\leq\xi_{0}$ we obtain $H(\xi)<-\varepsilon_{1}|\xi_{0}|^{5}P_{1}(1+\xi/|\xi_{0}|)$. 
This implies the existence of a $\xi_{\ast}>-\infty$ such that $\lim_{\xi\to(\xi^{\ast})^{+}}H(\xi)=0$. 
This contradicts the assumption that $\xi_{\ast}=-\infty$ (see Remark~\ref{positivity}) and
(\ref{G1E3}) follows.

Suppose now that
\begin{equation}\label{lim:1}
\lim\inf_{\tau\to-\infty}\Phi(\tau)= \lim\sup_{\tau\to-\infty}\Phi(\tau)
=\lim_{\tau\to-\infty}\Phi(\tau)=1\,. 
\end{equation}
We define a sequence of functions $\{\Phi_{n}(s)\}$ with $s\in[-1,0]$ as
follows. For every $n\in\mathbb{N}$ the variable $\xi_{n}(s)$ is given by, 
cf. (\ref{S2E8a}),
\begin{equation}\label{G1E9}%
s = -\int_{\xi_{n}(s) }^{-n}( 1+\eta^{2})^{\frac{4}{9}}d\eta\,,\quad
n\in\mathbb{N} \,, 
\end{equation}
then, each $\Phi_{n}(s)$ is defined by, cf. (\ref{S2E8}),
\[
\Phi_{n}(s) =\left(1+|\xi_{n}(s)|^{2}\right)^{\frac{1}{3}}H(\xi_{n}(s))
\,,\quad s\leq0\,,\quad n\in\mathbb{N} \,. 
\]
We observe that then
\[
\Phi_{n}(s) =\Phi(s-s_{n}) \,,\quad\mbox{where} \quad s_{n}
= \int_{-n}^{0}(1+\eta^{2})^{\frac{4}{9}}d\eta\,,
\]
where $\Phi$ solves (\ref{S2E9})-(\ref{S2E9bis}). Also, for every $s\in[-1,0]$ 
the corresponding sequence $\tau_{n}=s-s_{n}$ converges to $-\infty$ as $n\to\infty$, 
since $\lim_{n\to\infty}(s_{n}) =\infty$. On the other hand, the functions $\Phi_{n}(s)$ 
solve (cf. (\ref{S2E9})-(\ref{S2E9bis}))
\begin{equation} \label{G2E1}
\frac{d^{3}\Phi_{n}}{ds^{3}}=\frac{1}{\Phi_{n}^{3}}
-\frac{\left(  \xi_{n}^{2}+a\right)  }{\xi_{n}^{2}+1} - F_{n}(s) \,,
\quad n\in\mathbb{N}\,,\quad s\in[-1,0] 
\end{equation}
where $F_{n}(s)$ is given by the expression of $F$ in (\ref{S2E9bis}) 
with $\Phi$ and $\xi$ replaced by $\Phi_{n}$ and $\xi_{n}$, respectively.

Consider now the result of integrating (\ref{G2E1}):
\begin{equation}\label{phi:n:inte}%
\begin{array}[c]{ll}
\Phi_{n}(s) & = \displaystyle{ \Phi_{n}(0) 
+\frac{d\Phi_{n}(0)}{ds}s +\frac{d^{2}\Phi_{n}(0)}{ds^{2}}\frac{s^{2}}{2} }\\
& \\
& \displaystyle{ 
-\int_{s}^{0}\int_{s_{1}}^{0}\int_{s_{2}}^{0}
\left[  
\frac{1}{(\Phi_{n}(s_{3}))^{3}}-\frac{|\xi_{n}(s_{3})|^{2}+a}{|\xi_{n}(s_{3})|^{2}+1}
\right] ds_{3}ds_{2}ds_{1}
 }\\
& \\
& \displaystyle{ -\int_{s}^{0}\int_{s_{1}}^{0}\int_{s_{2}}^{0}F_{n}(s_{3})ds_{3}ds_{2}ds_{1} }\,.
\end{array}
\end{equation}
We now pass to the limit in the integral terms. 
Observe that the assumption (\ref{lim:1}) implies that $\lim_{n\to\infty}\Phi_{n}(s)=1$
uniformly on $[-1,0]$. Moreover, (\ref{G1E9}) yields $\lim_{n\to\infty}\xi
_{n}(s) =-\infty$ uniformly on $[-1,0]$.
The first term in (\ref{phi:n:inte}) can be seen
to converge to zero using the limit properties of $\Phi_{n}(s)$ and $\xi_{n}(s)$. 
In the last term we integrate by parts where necessary in order to
get integrands with $\Phi_{n}(s)$ as a coefficient (this gives boundary terms
with a double or single integral, but these are estimated similarly, because
$s\in[-1,0]$). The resulting integrands have $\Phi_{n}(s)$ multiplied by a
function of $\xi_{n}(s)$ and its derivatives, which can be computed using
(\ref{G1E9}): $d\xi_{n}(s)/ds = (|\xi_{n}(2)|^{2}+1)^{-4/9}$ and 
$d^{2}\xi_{n}(s)/ds^{2} = -4(|\xi_{n}(2)|^{2}+1)^{-17/9}/9$.
Then, one can conclude that the limit of the last term in (\ref{phi:n:inte})
 tends also to zero as $n\to\infty$, and we are left with 
\[
\lim_{n\to\infty}\left(\left|\frac{d\Phi_{n}(0)}{ds}\right|
+ \left|\frac{d^{2}\Phi_{n}(0)}{ds^{2}}\right|\right)=0\,,
\] 
but this contradicts (\ref{G1E6} and (\ref{lim:1}) cannot hold. Then 
$\lim\inf_{\tau\to-\infty}\Phi(\tau)<\lim\sup_{\tau\to-\infty}\Phi(\tau)$. 

We can now construct a sequence that satisfies (\ref{G1E7}). 
We first take the following quantity
\[
\alpha:=\frac{1}{2} \left(  \lim\sup_{\tau\to-\infty}\Phi(\tau) 
+ \lim\inf_{\tau\to-\infty}\Phi(\tau)\right)
\]
(that might be infinite if $\lim\sup_{\tau\to-\infty}\Phi(\tau)=\infty$). Due to 
the continuity of $\Phi$, there exist decreasing sequences $\{\tilde{\tau}_{n}\}$ 
and $\{ \hat{\tau}_{n}\} $ such that $\lim_{n\to\infty}\tilde{\tau}_{n}
=\lim_{n\to\infty}\hat{\tau}_{n}=-\infty$, $\tilde{\tau}_{n}<\hat{\tau}_{n}$ and that 
$\max_{\tau\in(\tilde{\tau}_{n},\hat{\tau}_{n}) }\Phi(\tau)\leq\alpha$. We define another 
sequence $\{\tau_{n}\}$ by
\[
\Phi(\tau_{n}) =\min_{\tau\in[ \tilde{\tau}_{n},\hat{\tau}_{n}] }\Phi(\tau)\,,
\]
and this one satisfies (\ref{G1E7}).
\end{proof}

We continue with another consequence of assuming (\ref{G1E6}), namely,
\begin{lemma}\label{L3} 
Suppose that $(\Phi(\tau),W(\tau),\Psi(\tau),\theta(\tau))$ satisfies the assumptions 
of Proposition~\ref{char:max}. Then, at least one of the following identities holds:
\[
\lim\inf_{\tau\to-\infty}\Phi(\tau) =0\,, \quad
\lim\sup_{\tau\to-\infty}\Phi( \tau) =\infty\,. 
\]
\end{lemma}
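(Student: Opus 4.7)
I argue by contradiction, assuming that neither conclusion holds, so there exist $0<c_{0}<C_{0}<\infty$ and $\tau_{0}<0$ with $c_{0}\leq\Phi(\tau)\leq C_{0}$ for all $\tau\leq\tau_{0}$. The goal is to deduce that $(\Phi(\tau),W(\tau),\Psi(\tau),\theta(\tau))\to p_{-}$ as $\tau\to-\infty$, which will contradict hypothesis (\ref{G1E6}).

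The behaviour of $\theta$ is easy: since $d\theta/d\tau=(\cos\theta)^{26/9}\geq 0$ by (\ref{compact4}), $\theta$ is nondecreasing and the limit $\theta_{-}:=\lim_{\tau\to-\infty}\theta(\tau)\in[-\pi/2,\pi/2]$ exists. If $\theta_{-}>-\pi/2$ then $(\cos\theta)^{26/9}$ is bounded below by a positive constant for all $\tau$ sufficiently negative, and backward integration of (\ref{compact4}) would drive $\theta$ out of $[-\pi/2,\pi/2]$. Hence $\theta(\tau)\to-\pi/2$, and all the $\theta$-dependent coefficients in (\ref{compact3}) vanish in the limit, so $(D)$ becomes asymptotically the limit system (\ref{S3E1}).

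The core technical step is the bootstrap from boundedness of $\Phi$ to boundedness of $W$ and $\Psi$ on $(-\infty,\tau_{0}]$. I would work with the Lyapunov functional $E:=\Psi W+1/(2\Phi^{2})+\Phi$ from Proposition~\ref{Tapas1}; a direct computation along $(D)$, using (\ref{compact1})--(\ref{compact3}), gives
\[
\frac{dE}{d\tau}=\Psi^{2}-(a-1)W(\cos\theta)^{2}-W\,\tilde F(\Phi,W,\Psi,\theta),
\]
where $\tilde F$ denotes the bracket in (\ref{compact3}). A key sign observation: the coefficient of $W$ inside $\tilde F$ is $\left((208/81)\sin^{2}\theta-10/9\right)(\cos\theta)^{34/9}$, which is strictly positive for $\theta$ near $-\pi/2$, so $-W\tilde F$ contributes a genuinely dissipative term $-c(\cos\theta)^{34/9}W^{2}$ with $c>0$. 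The remaining mixed terms in $\tilde F$ carry factors of the form $(\cos\theta)^{17/9}$ and $(\cos\theta)^{17/3}$ and can be absorbed via Young's inequality into the dissipative $\Psi^{2}$ and $(\cos\theta)^{34/9}W^{2}$ contributions together with the trapped piece $1/(2\Phi^{2})+\Phi$. This should show both that $E$ is bounded on $(-\infty,\tau_{0}]$ and, as a by-product, that $\int_{-\infty}^{\tau_{0}}\Psi^{2}\,d\tau<\infty$; since $1/(2\Phi^{2})+\Phi$ is bounded, $\Psi W$ is bounded and hence so are $W$ and $\Psi$ individually.

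With boundedness of $(\Phi,W,\Psi)$ established, the conclusion is a straightforward adaptation of item (v) of Proposition~\ref{Tapas1}: the integrability of $\Psi^{2}$ together with the uniform bound $|\Psi'|\leq C$ coming from (\ref{compact3}) forces $\Psi(\tau)\to 0$; the same finite-interval oscillation argument (integrating $W'=\Psi$ and $\Phi'=W$ over intervals $(\tau_{n},\tau_{n}+L)$) then yields $W\to 0$ and finally $\Phi\to 1$. Combined with $\theta\to-\pi/2$ this produces the contradiction with (\ref{G1E6}). The main obstacle is the Lyapunov bootstrap in the previous paragraph: the sign-indefinite term $(a-1)W(\cos\theta)^{2}$ and the $(\cos\theta)^{17/9}$-weighted cross terms in $\tilde F$ are not integrable in $\tau$ (because $\int(\cos\theta)^{17/9-26/9}d\theta$ diverges), so a naive Gronwall estimate fails, and one must carefully exploit the dissipative $(\cos\theta)^{34/9}W^{2}$ contribution that the system provides for free in order to close the estimate.
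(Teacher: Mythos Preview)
Your approach is genuinely different from the paper's. The paper also argues by contradiction, assuming $1/C_{0}\leq\Phi(\tau)\leq C_{0}$ for $\tau\leq\tau_{0}$, but then runs a short rescaling (blow-up) argument rather than a Lyapunov one: from (\ref{G1E6}) it extracts a sequence $\hat\tau_{n}\to-\infty$ with $|W(\hat\tau_{n})|+|\Psi(\hat\tau_{n})|\to\infty$, sets $\gamma_{n}=\bigl(|W(\hat\tau_{n})|+|\Psi(\hat\tau_{n})|^{1/2}\bigr)^{-1}\to 0$, and rescales $\bar\Phi_{n}(\bar z)=\Phi(\hat\tau_{n}+\gamma_{n}\bar z)$. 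In these variables (\ref{S2E9}) becomes $d^{3}\bar\Phi_{n}/d\bar z^{3}=O(\gamma_{n}^{3})$ on compact sets (using $\Phi\geq 1/C_{0}$), so any subsequential limit $\bar\Phi_{\infty}$ solves $\bar\Phi_{\infty}'''=0$ and is a non-constant polynomial of degree at most two; such a polynomial either vanishes or exceeds any prescribed bound, contradicting $1/C_{0}\leq\Phi\leq C_{0}$. This bypasses all energy estimates.

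Your Lyapunov plan has a real circularity that you flag but do not resolve. Along $(D)$ one finds $dE/d\tau=\Psi^{2}-B(\theta)W^{2}-(\text{cross terms})$ with $B(\theta)\sim(118/81)(\cos\theta)^{34/9}>0$ near $\theta=-\pi/2$; the two sign-definite pieces carry \emph{opposite} signs, so integrating from $\tau$ to $\tau_{0}$ yields neither an upper bound on $\int\Psi^{2}$ (which would require $E(\tau)$ bounded below, hence $\Psi W$ bounded below) nor a bound on $E$ (which would require $\int B(\theta)W^{2}<\infty$) without first controlling the other. The $-B(\theta)W^{2}$ term is therefore not ``dissipation for free'' in the direction you need. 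Separately, the inference ``$\Psi W$ bounded, hence $W$ and $\Psi$ bounded individually'' is false as stated: one may have $\Psi\to 0$ and $|W|\to\infty$ with bounded product. A correct route from $\int\Psi^{2}<\infty$ to $W$ bounded would use $|W(b)-W(a)|\leq(b-a)^{1/2}\bigl(\int_{a}^{b}\Psi^{2}\bigr)^{1/2}$ together with $\Phi$ bounded, and only afterwards a Gronwall bound on $\Psi$ via (\ref{compact3}); but this presupposes the integral bound that the circularity above blocks. The paper's rescaling argument sidesteps all of this in a few lines.
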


\begin{proof}[Proof]
We argue by contradiction. Suppose that
\[
\lim\inf_{\tau\to-\infty}\Phi(\tau) >0
\quad\mbox{and} \quad
\lim\sup_{\tau\to-\infty}\Phi(\tau) <\infty\,,
\]
then, there exists a $C_{0}>0$ and a $\tau_{0}$ sufficiently negative such that
\begin{equation}
\frac{1}{C_{0}}\leq\Phi(\tau) \leq C_{0}\,,\quad\tau\leq\tau_{0}\,. \label{G2E2a}%
\end{equation}
Then, (\ref{G1E6}) implies the existence of a sequence $\{ \hat{\tau}_{n}\}$
with $\hat{\tau}_{n}\to-\infty$ and such that
\begin{equation}
\lim_{n\to\infty} \left(  \left|  \frac{d\Phi( \hat{\tau}_{n})}{d\tau}\right|  
+\left|  \frac{d^{2}\Phi(\hat{\tau}_{n})}{d\tau^{2}}\right|  \right)
=\infty\,. \label{G2E2}%
\end{equation}
We now define a sequence of functions $\Phi_{n}(z)$ by means of $
\Phi_{n}(z) =\Phi(z+\hat{\tau}_{n})$ 
and observe that they solve (\ref{S2E9})-(\ref{S2E9bis}) with the obvious 
changes in notation and with $\xi_{n}(z)$ defined by
\[
z+\hat{\tau}_{n}=-\int_{\xi_{n}(z)}^{0}(1+\eta^{2})^{\frac{4}{9}}d\eta\,,
\quad n\in\mathbb{N}\,.
\]
They also satisfy, due to (\ref{G2E2}), that
\[
\lim_{n_{\infty}} \left(  \left|  \frac{d\Phi_{n}( 0) }{dz}\right|  
+\left|\frac{d^{2}\Phi_{n}( 0) }{dz^{2}}\right|  \right)  = \infty\,.
\]
This allows us to introduce, for every $n$, the length scale
\[
\gamma_{n}=\left(  \left|  \frac{d\Phi_{n}(0)}{dz} \right|  
+ \sqrt{\left|\frac{d^{2}\Phi_{n}(0)}{dz^{2}}\right|  } \right)^{-1}\,,
\]
which clearly satisfies $\lim_{n\to\infty} \gamma_{n}=0$. We now set $z=\gamma_{n}\bar{z}$, 
$\bar{\xi_{n}}(\bar{z})=\xi_{n}(\gamma_{n}\bar{z})$, 
$\bar{\Phi}_{n}(\bar{z})=\Phi_{n}(\gamma_{n}\bar{z})$ and 
$\bar{F}_{n}(\bar{z})=F_{n}(\gamma_{n}\bar{z})$, to obtain that $\bar{\Phi}_{n}$ 
satisfies
\[
\frac{d^{3}\bar{\Phi}_{n}}{d\bar{z}^{3}}
= (\gamma_{n})^{3} \left[  \frac{1}{\bar{\Phi}_{n}^{3}} 
- \frac{\bar{\xi}_{n}^{2} + a }{\bar{\xi}_{n}^{2} +1}\right]  
+ (\gamma_{n})^{3} \bar{F}_{n}(\bar{z}) \,,\quad n\in\mathbb{N}\,.
\]
It is clear that there exists a $\bar{C}>0$ such that for $n$ large enough
\begin{equation}\label{init:bdd}
\left|  \left(  \bar{\Phi}_{n}(0),\frac{d\bar{\Phi}_{n}(0)}{d\bar{z}}, 
\frac{d^{2}\bar{\Phi}_{n}(0)}{d\bar{z}^{2}}\right)  \right|
\leq \bar{C} \quad \mbox{and}\quad
\left|  \left(  \frac{d\bar{\Phi}_{n}(0)}{d\bar{z}}, 
\frac{d^{2}\bar{\Phi}_{n}(0)}{d\bar{z}^{2}} \right)  \right|  \geq\frac{1}{\bar{C}}\,.
\end{equation}
We can now use classical continuous dependence results for ODEs. Let 
$\bar{\Phi}_{\infty}$ denote a function that solves the limiting problem 
$d^{3}\bar{\Phi}_{\infty}/d\bar{z}^{3}=0$ with initial conditions close to 
$(\bar{\Phi}_{n}(0),d\bar{\Phi}_{n}(0)/d\bar{z}, d^{2}\bar{\Phi}_{n}(0)/d\bar{z}^{2})$ 
for $n$ large enough, and thus also satisfying (\ref{init:bdd}) (with $n=\infty$),
then
\[
\bar{\Phi}_{n}(\bar{z}) \to\bar{\Phi}_{\infty}(\bar{z}) \quad\mbox{as}\ n\to\infty\,,
\]
as well as its derivatives, uniformly in compact sets of $\bar{z}$.

Since $\bar{\Phi}_{\infty}$ is a polynomial at most of second order that is not
identically constant, there exist values of $\bar{s}$ such that
either $\bar{\Phi}_{\infty}(\bar{z}) =0$ or $\bar{\Phi}_{\infty}(\bar{z})\geq 2M$. 
But this contradicts (\ref{G2E2a}).
\end{proof}

The third auxiliary lemma is the following:
\begin{lemma}\label{L4} 
Suppose that $(\Phi(\tau),W(\tau),\Psi(\tau),\theta(\tau))$ satisfies is the assumptions 
of Proposition~\ref{char:max}. Then,
\[
\lim\sup_{\tau\to-\infty}\Phi(\tau) =\infty\,.
\]
\end{lemma}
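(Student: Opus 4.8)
The plan is to argue by contradiction, combining Lemma~\ref{L2} and Lemma~\ref{L3}. Suppose that $\lim\sup_{\tau\to-\infty}\Phi(\tau)<\infty$. Then Lemma~\ref{L3} forces $\lim\inf_{\tau\to-\infty}\Phi(\tau)=0$, while at the same time, by (\ref{F5E1}) together with the first inequality of (\ref{G1E3}) in the proof of Lemma~\ref{L2}, we have $\lim\inf_{\tau\to-\infty}\Phi(\tau)\le 1$ and $\lim\sup_{\tau\to-\infty}\Phi(\tau)\ge 1$, so that $\Phi$ oscillates, taking values close to $0$ and values bounded away from $0$ infinitely often as $\tau\to-\infty$. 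In particular there is a decreasing sequence of local minima $\tau_n\to-\infty$ with $\Phi(\tau_n)\to 0$, $d\Phi(\tau_n)/d\tau=0$, $d^2\Phi(\tau_n)/d\tau^2\ge 0$. Translating back to the original equation~(\ref{S2E3}) through (\ref{S2E8a})--(\ref{S2E8}), this says the corresponding $H$ has a sequence of local minima $\xi_n\to-\infty$ with $H(\xi_n)\to 0$ but with $\limsup_{\xi\to-\infty}|\xi|^{2/3}H(\xi)>0$; in particular $\xi_*=-\infty$, i.e.\ the solution does not touch zero at a finite $\xi$.

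First I would make the local-minimum structure quantitative exactly as in the proof of Lemma~\ref{L1}. Near a local minimum $\xi_n$ with $H(\xi_n)$ small, the right-hand side $1/H^3-(\xi^2+a)$ of~(\ref{S2E3}) is large and positive, so $d^3H/d\xi^3>0$ there, and more precisely $d^3H/d\xi^3\ge -C_1$ on the whole interval $[\xi_{n+1},\xi_n]$ between consecutive minima because $H$ stays bounded and $\xi$ is bounded on that scale (or, if the interval is long, one splits it). Then I would look at the intervening local maximum $\hat\xi_n\in(\xi_{n+1},\xi_n)$ and show, exactly as in the derivation of (\ref{F3E5}) in Lemma~\ref{L1}, that $|d^2H(\hat\xi_n)/d\xi^2|/H(\hat\xi_n)$ stays bounded below away from zero: if it did not, integrating $d^3H/d\xi^3\ge -C_1$ backwards from $\hat\xi_n$ to $\xi_n$ would give $H(\xi_n)>\tfrac12 H(\hat\xi_n)$ along a subsequence, contradicting $H(\xi_n)\to 0$ while $\limsup H(\hat\xi_n)>0$. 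With this strict concavity at the maxima, the argument at the end of the proof of Lemma~\ref{L1} applies verbatim: $d^2H/d\xi^2\le \tfrac12 d^2H(\hat\xi_n)/d\xi^2<0$ on $[\xi_{n+1},\hat\xi_n]$, hence $dH/d\xi>0$ there, contradicting that $\xi_{n+1}$ is a local minimum; more to the point, $H$ must vanish at some finite $\xi\in[\xi_{n+1},\hat\xi_n]$.

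This vanishing of $H$ at a finite $\xi$ is precisely the contradiction: it means (\ref{B2:H}) holds with $\xi_*>-\infty$, equivalently $\tau_*>-\infty$, contradicting the standing assumption that the solution $(\Phi,W,\Psi,\theta)$ is defined for all $\tau\in(-\infty,\infty)$ (as in the hypotheses of Proposition~\ref{char:max} and in the solution produced by Proposition~\ref{existence:1}; cf.\ Remark~\ref{positivity}). Hence the assumption $\lim\sup_{\tau\to-\infty}\Phi(\tau)<\infty$ is untenable, and $\lim\sup_{\tau\to-\infty}\Phi(\tau)=\infty$.

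The main obstacle is making sure the concavity-propagation step is valid on the full interval between consecutive minima, i.e.\ controlling the term $\xi^2+a$ (which is large and negative for very negative $\xi$) so that $d^3H/d\xi^3\ge -C_1$ with a constant $C_1$ uniform in $n$; this is handled, as in Lemma~\ref{L1}, by working on the $\tau$-scale where $\Phi$, $W$, $\Psi$ are the relevant variables and the offending terms are the bounded coefficients in (\ref{compact3}), or equivalently by noting that on the interval between two consecutive minima with small values the dominant balance is (\ref{ecuacionminima2}), for which the sign $d^3H/d\xi^3=1/H^3>0$ is automatic. A secondary point is that Lemma~\ref{L3} is invoked with its contrapositive, so one should double-check that "$\lim\inf\Phi=0$ or $\lim\sup\Phi=\infty$" combined with "$\lim\sup\Phi<\infty$" indeed yields $\lim\inf\Phi=0$ — which it does immediately.
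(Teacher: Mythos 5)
The key difficulty is that the Lemma~\ref{L1} argument you are trying to transplant relies crucially on two features that are available there but not here, and your proposal does not really close the gap you yourself flag at the end.

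In Lemma~\ref{L1}, the sequences $\hat\xi_n,\xi_n$ accumulate at a \emph{finite} $\xi_*$, which gives you (a) $|\hat\xi_n-\xi_n|\to 0$; (b) a \emph{uniform} lower bound $d^3H/d\xi^3\geq -C_1$ with $C_1$ independent of $n$, since $\xi^2+a$ is bounded on the fixed compact interval $[\xi_*,\xi_0]$; and (c) $\limsup_n H(\hat\xi_n)>0$. In the setting of Lemma~\ref{L4}, you have $\xi_n\to-\infty$, so (a) fails (the $\xi$-gap between consecutive extrema does not shrink — it actually tends to widen in $\xi$), (b) fails because $\xi^2+a\sim\xi_n^2\to\infty$ on $[\xi_{n+1},\xi_n]$, and, most damagingly, (c) fails: since $H=(\xi^2+1)^{-1/3}\Phi$ and you are assuming $\Phi$ bounded, you in fact have $\lim_{\xi\to-\infty}H(\xi)=0$, so $H(\hat\xi_n)\to 0$ as well. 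That destroys the derivation of the analogue of (\ref{F3E5}) — both numerator and denominator of $|H''(\hat\xi_n)|/H(\hat\xi_n)$ vanish and the contradiction ``$H(\xi_n)>\tfrac12 H(\hat\xi_n)$'' in L1 has no bite. Your suggested fixes do not recover what is lost: passing to $\tau$-variables replaces $\xi^2+a$ with a bounded coefficient but introduces the $F$-term whose $W$- and $\Psi$-contributions are not a priori bounded (recall the hypothesis of Proposition~\ref{char:max} is precisely that $\Phi+|W|+|\Psi|$ is unbounded); and the remark that the dominant balance is (\ref{ecuacionminima2}) is only true close to the minima where $\xi^2 H^3\ll 1$, while the interval $[\xi_{n+1},\xi_n]$ contains the intervening maximum $\hat\xi_n$, where that regime breaks down.

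The paper's actual argument avoids all of this. It works \emph{locally} at the small minimum $\tau_n$: it rescales $\varphi(s)=\Phi(\tau_n+\varepsilon_n^{4/3}s)/\varepsilon_n$ with $\varepsilon_n=\Phi(\tau_n)\to 0$, so that $\varphi$ solves a small perturbation of $d^3\varphi/ds^3=1/\varphi^3$ with $\varphi(0)=1$, $\varphi'(0)=0$, $\varphi''(0)\geq0$. Strict convexity of this limit problem guarantees $\varphi'(s_0)\geq 2$ at an $O(1)$ value $s_0$. Undoing the rescaling, $W(\bar\tau_n)\to\infty$ and $\Psi(\bar\tau_n)\to\infty$ (while $\Phi(\bar\tau_n)\to0$). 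Then integrating equation (\ref{S2E9}) forward over a unit $\tau$-interval, with the $F$-term controlled by the decaying coefficients, forces $\Phi(\bar\tau_n+1)\to\infty$, which is the desired contradiction with $\limsup\Phi<\infty$. Note that the contradiction obtained is that $\Phi$ becomes large, not that it vanishes at a finite $\tau$; this is a genuinely different route from the one you attempt, and it bypasses exactly the three obstacles (a)--(c) above.
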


\begin{proof}[Proof]
Suppose that
\begin{equation}\label{G6E2}
\lim\sup_{\tau\to-\infty}\Phi(\tau) <\infty\,. 
\end{equation}
Then, due to Lemma~\ref{L3} we have $\lim\inf_{\tau\to-\infty}\Phi( \tau) =0$, and 
there exists a sequence of points $\{\tau_{n}\}$ such that (\ref{G1E7}) and that 
$\lim_{n\to\infty}\Phi(\tau_{n}) =0$.
For every $n$ we introduce the changes of variables
\begin{equation}\label{G4E2a}
\varepsilon_{n}:=\Phi(\tau_{n}) \,,\quad s= \varepsilon_{n}^{-\frac{4}{3}}( \tau-\tau_{n})
\quad\mbox{and}\quad\Phi(\tau) =\varepsilon_{n} \varphi(s) \,. 
\end{equation}
Observe that if $\xi_{n}$ is related to $\tau_{n}$ by means of (\ref{S2E8}). 
In particular, this reflects that when $|\tau-\tau_{n}|$ remains bounded as 
$n \to\infty$, then $|\xi|\to\infty$ as $n\to\infty$, a fact that we shall apply below.

We write (\ref{S2E9}) in the new variables, then $\varphi$ solves
\begin{equation}\label{G4E2}
\frac{d^{3}\varphi}{ds^{3}} 
= \frac{1}{\varphi^{3}} - \varepsilon_{n}^{3} \frac{\xi^{2}+a}{\xi^{2}+1}  
- \varepsilon_{n}^{\frac{4}{3}} F_{n}(s)
\end{equation}
with (cf. \ref{S2E9bis})
\begin{eqnarray*}
F_{n}(s) &=& \varepsilon_{n}^{\frac{8}{3}} \frac{16}{3}\frac{\xi}{(\xi^{2}+1)^{\frac{10}{3}}} 
\left[1-\frac{14}{9}\frac{\xi^{2}}{ \xi^{2}+1} \right]  \varphi(s) \\
& + & \varepsilon_{n}^{\frac{4}{3}}  \frac{1}{(\xi^{2}+1)^{\frac{17}{9}}}\left[  \frac{208}{81}\frac{\xi^{2}}{\xi^{2}+1}
-\frac{10}{9}\right]  \frac{d\varphi(s)}{ds} 
+   \frac{2}{3}\frac{\xi}{(\xi^{2}+1)^{\frac{13}{9}}} \frac{d^{2}\varphi(s)}{ds^{2}}\,, 
\end{eqnarray*}
and subject to
\begin{equation}\label{G4E3}
\varphi(0) =1\,,\quad\frac{d\varphi(0)}{ds}=0\,,\quad\frac{d^{2}\varphi(0) }{ds^{2}}\geq0\,.
\end{equation}
The coefficients involving $\xi$ are functions of $s$; 
$\xi(\tau)=\xi(\tau_{n}+\varepsilon_{n}^{\frac{4}{3}}s)$. 
Then, since $\varepsilon_{n}\to0$, we can use classical continuous dependence results
for ODEs to approximate the solutions of (\ref{G4E2})-(\ref{G4E3}) by the solutions 
$\bar{\varphi}$ of the limiting problem (with $\varepsilon_{n}=0$)
\begin{align*}
\frac{d^{3}\bar{\varphi}}{ds^{3}} - \frac{1}{\bar{\varphi}^{3}}  &  =0\\
\bar{\varphi}(0)  &  =1\,,\quad\frac{d\bar{\varphi}(0) }{ds}=0 \,,
\quad\frac{d^{2}\bar{\varphi}(0) }{ds^{2}}=\frac{d^{2}\varphi(0)}{ds^{2}} \geq0\nonumber
\end{align*}
on compact sets of $s$. Clearly, $d^{2}\bar{\varphi}(s)/ds^{2}$ is 
increasing for $s>0$, and there exists a $\delta>0$ such that 
$d^{2}\bar{\varphi}(s)/ds^{2} \geq \delta>0$ for all $s\geq 1$. Thus, 
$d\bar{\varphi}(s)/ds$ increases at least linearly for $s$ large enough and
one can find a value $s_{0}>0$ such that $d\bar{\varphi}(s_{0})/ds \geq 2$. 
Continuous dependence results then imply that $d\varphi(s_{0})/ds \geq 1$ if
$n$ is sufficiently large (see (\ref{G4E2})). We now return to the original
variables and get estimates on $\Phi(\tau)$ for every $n$ in an interval
around the local minimum. Using (\ref{G4E2a}), (\ref{G6E2}) and the notation
\[
\bar{\tau}_{n}=\tau_{n}+\varepsilon_{n}^{\frac{4}{3}} s_{0}\,,
\]
we obtain that 
\[
\lim_{n\to\infty} \Phi(\bar{\tau}_{n})=0\,,\quad
\lim_{n\to\infty} \frac{d\Phi(\bar{\tau}_{n})}{d\tau}
=\infty\,,\quad\lim_{n\to\infty}\frac{d^{2}\Phi(\bar{\tau}_{n})}{d\tau^{2}}
=\infty\,.
\]
For $n$ large enough and $\tau$ such that $|\tau-\bar{\tau}_{n}|\leq1$, there
exists a positive sequence $\{B_{n}\}$ with $\lim_{n\to\infty}B_{n}=\infty$ 
and such that
\[
B_{n}<\frac{1}{\Phi(\tau)^{3}}-\frac{\xi(\tau)^{2}+a}{\xi(\tau)^{2}+1}%
<B_{n+1}\,.
\]
Let us denote by
\[
G_{n}(\tau)= \int_{\bar{\tau}_{n}}^{\tau}\int_{\bar{\tau}_{n}}^{\tau_{1}}%
\int_{\bar{\tau}_{n}}^{\tau_{2}} F(\tau_{3}) d\tau_{3}d\tau_{2}d\tau_{1}\,,
\]
where the function $F(\tau)$ is given by (\ref{S2E9bis}), and let also
\begin{align*}
p_{n}(\tau)= \Phi(\bar{\tau}_{n})+\frac{d\Phi(\bar{\tau}_{n})}{d\tau}  (\tau-\bar{\tau}_{n}) 
+\frac{d^{2}\Phi(\bar{\tau}_{n})}{d\tau^{2}}\frac{(\tau-\bar{\tau}_{n})^{2}}{2}\,.
\end{align*}
Then, integrating (\ref{S2E9}) we can write for $\tau\in[\bar{\tau}_{n}-1,\bar{\tau}_{n}+1]$ that
\begin{equation}\label{G5E1} 
\frac{B_{n}}{3}\frac{d^{k}(\tau-\bar{\tau}_{n})^{3}}{d\tau^{k}}<\frac{d^{k}(\Phi-p_{n}-G_{n})(\tau)}{d\tau^{k}} <  \frac{B_{n+1}}{3}\frac{d^{k}(\tau-\bar{\tau}_{n})^{3}}{d\tau^{k}} \,, \quad k=0,1,2\,.
\end{equation}
Then, taking $n$ large enough and combining the inequalities (\ref{G5E1}) we obtain that
\begin{align}
|\Phi(\tau)|  < \Phi(\bar{\tau}_{n})   +   2 \frac{d\Phi(\bar{\tau}_{n})}{d\tau}     |\tau-\bar{\tau}_{n} | 
+ 2 \frac{d^{2}\Phi(\bar{\tau}_{n})}{d\tau^{2}} |\tau-\bar{\tau}_{n} |^{2} + |G_{n}(\tau)|\,,\nonumber\\
\left|  \frac{d\Phi(\tau)}{d\tau} \right|  < 2 \frac{d\Phi(\bar{\tau}_{n})}{d\tau} + 2 \frac{d^{2}\Phi(\bar{\tau}_{n})}{d\tau^{2}}
|\tau-\bar{\tau}_{n} | + \left|  \frac{dG_{n}}{d\tau} \right|  \,, \label{G5E1tris} \\
\left|  \frac{d^{2}\Phi(\tau)}{d\tau^{2}}\right|  < 2\frac{d^{2}\Phi(\bar{\tau}_{n})}{d\tau^{2}}
 + \left|\frac{d^{2}G_{n}}{d\tau^{2}} \right|  \,, \nonumber%
\end{align}
and that
\begin{equation}\label{low:bdd}
\Phi(\tau)\geq \Phi(\bar{\tau}_{n}) + \frac{d\Phi(\bar{\tau}_{n})}{d\tau}  (\tau-\bar{\tau}_{n}) 
+ \frac{d^{2}\Phi(\bar{\tau}_{n})}{d\tau^{2}}    \frac{(\tau-\bar{\tau}_{n})^{2}}{2}  
+ \bar{B}_{n}(\tau-\bar{\tau}_{n})^{3} + G_{n}(\tau) 
\end{equation}
for some sequence $\bar{B}_{n}>0$ with $\lim_{n\to\infty} \bar{B}_{n}=\infty$ and for
all $\tau\in[\bar{\tau}_{n}-1, \bar{\tau}_{n}+1]$. We now observe that $F$ has the form 
$F(\tau) = f_{1}(\tau) \Phi(\tau) +f_{2}(\tau) d \Phi(\tau)/d\tau+f_{3}(\tau) d^{2}\Phi(\tau)/d\tau^{2}$ 
where the functions $f_{1}(\tau)$, $f_{2}(\tau)$ and $f_{3}(\tau)$ converge
uniformly to zero on sets $\left|  \tau-\bar{\tau}_{n}\right|\leq1$ for every $n$ 
(cf. (\ref{S2E8}) and (\ref{S2E9bis})). This allows to get estimates on the integral terms 
(those involving $G_{n}$) as follows:
\[
|G_{n}(\tau)|  \,, \ \left|  \frac{dG_{n}(\tau)}{d\tau}\right|\,,\ 
\left|  \frac{d^{2} G_{n}(\tau)}{d\tau^{2}} \right|  \leq \tilde{\varepsilon}_{n}
\left(  \sup_{|\tau-\bar{\tau}_{n}|\leq1}\Phi(\tau) 
+ \sup_{|\tau-\bar{\tau}_{n}|\leq1}\left|  \frac{d\Phi(\tau)}{d\tau}\right|  
+ \sup_{|\tau-\bar{\tau}_{n}|\leq1}\left|  \frac{d^{2}\Phi(\tau)}{d\tau^{2}}\right|  \right)
\]
for all $n$ with $\tau$ such that $|\tau-\bar{\tau}_{n}|\leq 1$ and where 
$\tilde{\varepsilon}_{n}\to 0^{+}$. Applying this to (\ref{G5E1tris}) and to (\ref{low:bdd}) we obtain, taking $n$ sufficiently large, that  
\[
\Phi(\tau) \geq\frac{d\Phi(\bar{\tau}_{n})}{d\tau}\frac{(\tau-\bar{\tau}_{n})}{2}
+\frac{d^{2}\Phi(\bar{\tau}_{n})}{d\tau^{2}}\frac{(\tau-\bar{\tau}_{n})^{2}}{4}+\bar{B}_{n}(\tau-\tau_{n})^{3}%
\]
for $\left|  \tau-\bar{\tau}_{n}\right|  \leq1$. 
Choosing, say $\tau-\bar{\tau}_{n}=1$, we obtain that $\lim_{n\to\infty}\Phi(\bar{\tau}_{n}+1)=\infty$, 
but this contradicts (\ref{G6E2}), whence the lemma follows.
\end{proof}

We are now ready to prove Proposition~\ref{char:max}.

\begin{proof}[Proof of Proposition~\ref{char:max}]
The assumptions imply that we can use the statements of lemmas~\ref{L2} and \ref{L4}. 
In particular, by Rolle's theorem, we can guarantee the existence of local maxima in each interval
$(\tau_{n+1},\tau_{n})$ where $\{\tau_{n}\}$ is the sequence of minima defined
in Lemma~\ref{L2}. We then observe that the regularity of a solution $\Phi$ of
(\ref{S2E9}) guarantees that the points at which $\Phi$ attains local maxima
or minima are isolated. Otherwise $\Phi$ would take constant values on closed
intervals, but constants are not solutions of (\ref{S2E9}). Hence, we can
define the sequence such that (\ref{def:max}) holds. On the other hand,
Lemma~\ref{L4} implies (\ref{G5E3}).
\end{proof}

\section{Properties of oscillatory solutions}\label{sec:proof:osci} 
In this section we proof the following proposition:
\begin{proposition}\label{max:increase} 
Let the assumptions of Proposition~\ref{char:max} hold and let $\{\tau_{n}^{\ast}\}$ be the 
sequence found in this proposition. Then there exists $n_{0}\in\mathbb{N}$ large such that for all $n>n_{0}$
\begin{equation}\label{G5E4}
\Phi( \tau_{n-1}^{\ast}) > \Phi( \tau_{n}^{\ast}) \,.
\end{equation}
\end{proposition}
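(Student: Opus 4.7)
My plan is to work in the original $(\xi,H)$ variables using the change of variables \eqref{S2E8a}--\eqref{S2E8}, denote by $\xi_n^{\ast}$ the image of $\tau_n^{\ast}$ and, for each $n$, by $\xi_n^{\min}$ the image of the first local minimum $\tau_n^{\min}\in(\tau_n^{\ast},\tau_{n-1}^{\ast})$ of $\Phi$. Since Proposition~\ref{char:max} produces a subsequence of maxima with $\Phi(\tau_n^{\ast})\to\infty$, the first task is to upgrade this to an estimate valid for every sufficiently large $n$: I would show that if $\Phi(\tau_n^{\ast})$ is large, then $|\xi|^{2/3}H$ is large throughout an outer subinterval of $(\xi_{n+1}^{\min},\xi_n^{\min})$ and correspondingly $|\xi|^{2/3}H$ is small near the minima. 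In that outer region, the balance in \eqref{S2E3} that dominates is \eqref{ecuacionminima1}, while near $\xi_n^{\min}$ the balance is the one governed by \eqref{ecuacionminima2}.

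Next, I would perform a suitable rescaling around each $\tau_n^{\ast}$ (using the natural size $M_n:=\Phi(\tau_n^{\ast})$ and length scale proportional to a power of $M_n$ and $|\xi_n^{\ast}|$, dictated by the forcing $\xi^2$ in \eqref{ecuacionminima1}) to show that, after extracting a subsequence, the rescaled solutions converge in $C^3$ on compact sets of the outer region to a fifth order polynomial $\mathcal{H}_n$ solving \eqref{ecuacionminima1}. The analysis of such polynomials carried out in Appendix~\ref{sec:polynomials} then provides the general two parameter family and, in particular, the form of the maxima and minima of $\mathcal{H}_n$ in terms of its parameters.

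The crucial step is to reduce this two parameter family to a one parameter one by exploiting the bouncing region near $\xi_{n+1}^{\min}$. Here I would use the phase plane for \eqref{V1E3} of Appendix~\ref{sec:summary:II}, exactly as in Lemma~\ref{contzero}: the attractive separatrix $v=\bar v(u)$ for the system \eqref{phi0system} forces the orbit, as $z$ decreases through the minimum of $H$, to leave the neighbourhood of $(u_e,v_e)$ along $\bar v$, so that for increasing $\xi$ the trajectory that survived from the previous maximum must arrive to the minimum tangentially to $\bar v(u)$. The matching lemma of Appendix~\ref{sec:control} translates this in the outer variables into the statement that $\mathcal{H}_n$ has a double zero at the matching point near $\xi_{n+1}^{\min}$. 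Imposing the double zero (plus the fact that $\mathcal{H}_n$ reaches its local maximum at the rescaled position of $\xi_n^{\ast}$) eliminates one degree of freedom.

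With the one parameter family of polynomials in hand, an explicit computation (using the formulas of Appendix~\ref{sec:polynomials}) yields an iterative relation between consecutive heights that, after unwinding the scaling in $|\xi_n^{\ast}|$ and $M_n$ and controlling the error in the matching by a small parameter that depends on $\Phi(\tau_n^{\min})/\Phi(\tau_n^{\ast})$, takes the schematic form
\begin{equation*}
\Phi(\tau_n^{\ast})=\Phi(\tau_{n-1}^{\ast})(1-\kappa_n)+o(\Phi(\tau_{n-1}^{\ast}))
\end{equation*}
with $\kappa_n>\kappa>0$ bounded away from zero, whence \eqref{G5E4} follows for $n\geq n_0$. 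The main obstacle will not be the formal iteration but the rigorous control of the error between the true solution of \eqref{S2E3} and the approximating polynomial uniformly in $n$: it requires showing that the matching region is thin on the scale of the outer region, that the bouncing time is negligible against the outer duration $\xi_n^{\min}-\xi_{n+1}^{\min}$, and that the sub leading terms in \eqref{S2E3} (the $a$ term and the terms collected in $F$ through \eqref{S2E9bis}) remain harmless, uniformly as $n\to\infty$. These are precisely the technical points deferred to the matching lemma of Appendix~\ref{sec:control} and the auxiliary results of Sections~\ref{sec:aux}--\ref{sec:rescale:H}.
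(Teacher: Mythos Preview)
Your overall strategy coincides with the paper's: approximate $H$ in the outer region by the fifth-order polynomials of Appendix~\ref{sec:polynomials}, use the matching lemma of Appendix~\ref{sec:control} across the bouncing layer, and deduce from the attractivity of the separatrix $\bar v$ that the approximating polynomial on the far side of each minimum must have a double zero, thereby reducing to the one-parameter family $\bar P(\cdot;M_n,\beta_\ast(M_n))$. This is exactly the route of Sections~\ref{sec:aux}--\ref{sec:formal}.

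The gap is in the iterative relation you write down. The explicit computation does \emph{not} produce a relation of the form $\Phi(\tau_n^\ast)=\Phi(\tau_{n-1}^\ast)(1-\kappa_n)$ with $\kappa_n$ bounded away from zero. What actually emerges from matching the quadratic behaviour \eqref{quadratic:exit} on one side of $\xi_n^{\min}$ to the double-zero expansion \eqref{quadratic:exit3} of $\mathcal{H}_{n-1}$ on the other is the \emph{power law}
\[
\Phi(\tau_{n-1}^\ast)\;\simeq\;C\,\bigl(\Phi(\tau_n^\ast)\bigr)^{10},
\]
see \eqref{J3E8}. The exponent comes from the factor $1/\delta_n$ in the quadratic coefficient after the bounce: by Lemma~\ref{MatchingLemma} the solution leaves the inner layer with $\mathcal{H}_n\simeq (D_n/\delta_n)(\zeta_n-\zeta_0)^2$, and $\delta_n=|\xi_n^\ast|^{-17}M_n^{-3}=(\Phi(\tau_n^\ast))^{-3}$ is tiny, not of order one. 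Equating this huge quadratic coefficient to the order-one coefficient $\Gamma_{n-1}$ of $\mathcal{H}_{n-1}$ at its double zero, after the change of scales \eqref{J1E3}--\eqref{J1E4}, is what produces the exponent $10$. Your linear relation would follow only if the bounce preserved the order of magnitude of the second derivative, which it does not. Although your stated relation happens to imply the same inequality \eqref{G5E4}, you cannot derive it from the machinery you invoke; the computation you outline leads to \eqref{J3E8} instead, and without recognising the $1/\delta_n$ amplification you will not be able to close the matching of coefficients.

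A minor point: the matching through the minimum is carried out \emph{forward} in $\xi$ (equivalently forward in $z$), which is the attractive direction for $\bar v$ (cf.\ the last line of the proof of Lemma~\ref{MatchingLemma}). Your phrase ``as $z$ decreases through the minimum'' points in the repelling direction of the separatrix and would not give the generic quadratic exit behaviour you need.
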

We observe that this result is in contradiction with (\ref{G5E3}). 
We now define a sequence $\{\tau_{n}^{min}\}$ as follows
\begin{equation}\label{def:min}
\Phi(\tau_{n}^{min})=\min_{\tau\in(\tau_{n}^{\ast},\tau_{n-1}^{\ast})} \Phi(\tau)\,, 
\quad \frac{d\Phi(\tau_{n}^{min})}{d\tau}=0\,, 
\quad  \frac{d^{2}\Phi(\tau_{n}^{min})}{d\tau^{2}}\geq 0\,, 
\end{equation}
i.e. $\Phi$ reaches the minimum in the interval $(\tau_{n}^{\ast},\tau_{n-1}^{\ast})$ at 
$\tau=\tau_{n}^{min}$. Then as part of the construction necessary to prove (\ref{G5E4}) it will 
follow that: 
\begin{proposition}\label{min:decrease} 
Let the assumptions of Proposition~\ref{char:max} hold. Then the sequence $\{\tau_{n}^{min}\}$ 
given in (\ref{def:min}) is well-defined, $\lim_{n\to\infty}\tau_{n}^{min}=-\infty$ and there exists 
$n_{0}$ such that for all $n>n_{0}$ then
\begin{equation}\label{G5E5}
\Phi(\tau_{n-1}^{min})<\Phi(\tau_{n}^{min}) \,.
\end{equation}
\end{proposition}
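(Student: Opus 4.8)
The plan is to establish the two assertions separately: the well-definedness of $\{\tau_n^{min}\}$ together with $\lim_{n\to\infty}\tau_n^{min}=-\infty$ is elementary, whereas the monotonicity (\ref{G5E5}) will be obtained hand in hand with (\ref{G5E4}) from a quantitative asymptotic description of the oscillations, carried out in Sections~\ref{sec:aux}, \ref{sec:rescale:H} and \ref{sec:formal}. For the first part, recall that under (\ref{F5E1})--(\ref{G1E6}) Proposition~\ref{char:max} provides the decreasing sequence $\{\tau_n^{\ast}\}$ of local maxima with $\tau_n^{\ast}\to-\infty$ and $\limsup_n\Phi(\tau_n^{\ast})=\infty$. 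Since $\Phi$ solves the third order equation (\ref{S2E9}), which has no constant solutions, the critical points of $\Phi$ are isolated; hence on each interval $(\tau_n^{\ast},\tau_{n-1}^{\ast})$ the infimum of $\Phi$ is attained at an interior point $\tau_n^{min}$ with $d\Phi(\tau_n^{min})/d\tau=0$ and $d^2\Phi(\tau_n^{min})/d\tau^2\ge 0$, so (\ref{def:min}) determines $\{\tau_n^{min}\}$ unambiguously, and $\tau_n^{min}\in(\tau_n^{\ast},\tau_{n-1}^{\ast})$ forces $\tau_n^{min}\to-\infty$.

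For the monotonicity the idea is to exploit the two dominant balances of (\ref{S2E3}) singled out in the Introduction. First I would fix $n$ large and introduce in Section~\ref{sec:aux} the natural scaling near the large maximum $\tau_n^{\ast}$: writing $H=(\xi^2+1)^{-1/3}\Phi$ and rescaling $\xi$, the leading order equation in the outer (maximal amplitude) region is (\ref{ecuacionminima1}), $d^3\mathcal{H}/d\xi^3=-\xi^2$, whose solutions form the two-parameter family of fifth order polynomials analysed in Appendix~\ref{sec:polynomials}. A continuous dependence argument, controlling the lower order terms of (\ref{S2E3}) (in particular the $F$-term of (\ref{S2E9bis})) on the $\xi$-interval corresponding to $[\tau_{n+1}^{min},\tau_n^{min}]$, shows in Section~\ref{sec:rescale:H} that the suitably rescaled solution $\mathcal{H}_n$ is uniformly close there to such a polynomial $\mathcal{P}_n$.

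The crucial step is then the matching of $\mathcal{H}_n$ to the bouncing region around the minimum $\tau_{n+1}^{min}$, where $\Phi$ (hence $H$) is small and the dominant balance is (\ref{ecuacionminima2}), $d^3H/d\xi^3=1/H^3$. Passing to the phase plane of Appendix~\ref{sec:summary:II} via the change of variables (\ref{V1E1})--(\ref{V1E2}), this region is governed by the flow of (\ref{phi0system}) with $H\equiv 0$, which possesses the attractive separatrix $v=\bar{v}(u)$ of Lemma~\ref{separa}; since this object attracts trajectories for increasing $\xi$ (Lemma~\ref{tozero-bwds}), the orbit has no alternative but to follow it through the bounce. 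Using the matching lemma of Appendix~\ref{sec:control} (adapted from \cite{CV}) to transfer this information back to the outer region shows that $\mathcal{P}_n$ must have, to leading order, a double zero near $\xi(\tau_{n+1}^{min})$, which reduces the admissible polynomials to a one-parameter family. Finally, in Section~\ref{sec:formal} I would read off from this one-parameter family explicit leading order expressions for $\Phi(\tau_n^{\ast})$, for $\Phi(\tau_{n+1}^{min})$, and for the parameter of $\mathcal{P}_{n+1}$ in terms of that of $\mathcal{P}_n$; these yield iterative relations $\Phi(\tau_{n+1}^{\ast})=g(\Phi(\tau_n^{\ast}))$ and $\Phi(\tau_{n+1}^{min})=h(\Phi(\tau_n^{min}))$ with $g$ strictly decreasing and $h$ strictly increasing for $n$ large, giving (\ref{G5E4}) and (\ref{G5E5}); since (\ref{G5E4}) contradicts $\limsup_n\Phi(\tau_n^{\ast})=\infty$ in (\ref{G5E3}), the argument by contradiction closes and (\ref{B3}) holds.

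The main obstacle I expect is this matching step: making rigorous and, above all, uniform in $n$ the approximation of the true oscillation by the concatenation of a polynomial outer region with a separatrix-driven inner region. One must control all error terms — the $F$-term of (\ref{S2E9bis}), the competition between $1/\Phi^3$ and $\xi^2+a$ in (\ref{S2E3}), and the curvature of the separatrix in the phase plane — on intervals whose length grows with $n$, and then extract from the separatrix asymptotics a double-zero statement sharp enough to drive the monotone iteration rather than merely to reproduce its leading order, which is precisely what the more delicate estimates of Sections~\ref{sec:rescale:H} and \ref{sec:formal} are designed to supply.
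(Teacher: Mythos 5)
Your proposal follows the paper's own route exactly: well-definedness of $\{\tau_n^{min}\}$ by the isolated-critical-points argument, then the rescaling to $\mathcal{H}_n$ near each $\xi_n^\ast$, approximation by the fifth-order polynomials of Appendix~\ref{sec:polynomials}, the phase-plane separatrix and Lemma~\ref{MatchingLemma} to force the approximate double zero $\beta_n\approx\beta_\ast(M_n)$, and finally the iterative relations in Section~\ref{sec:formal}. One point to correct before you could write this out: the iteration maps you call $g$ and $h$ are \emph{not} ``strictly decreasing'' and ``strictly increasing''. What the paper obtains is $\Phi(\tau_{n-1}^\ast)\simeq C\,(\Phi(\tau_n^\ast))^{10}$ and, via $\Phi(\tau_n^{min})\sim h_n(0)\,|\xi_n^{min}/\xi_n^\ast|^{2/3}\,(\Phi(\tau_n^\ast))^{-2}$, also $\Phi(\tau_{n-1}^{min})\simeq C'(\Phi(\tau_n^{min}))^{10}$; rewritten forward in $n$ both maps are $x\mapsto(x/\mathrm{const})^{1/10}$, hence strictly \emph{increasing}. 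What drives (\ref{G5E4}) and (\ref{G5E5}) is not any monotonicity of $g$ or $h$ as functions but the exponent $10>1$ together with $\Phi(\tau_n^\ast)\to\infty$ (so $g(x)<x$ there) and $\Phi(\tau_n^{min})\to0$ (so $h(x)>x$ there); a literally decreasing $g$ would produce an oscillating, not monotone, sequence, so the inference ``$g$ decreasing, hence (\ref{G5E4})'' does not follow as written.
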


The proofs of these propositions are divided in several steps that we outline below for clarity. 
We first identify a two-parameter family of polynomials that approximate $H$ 
near a large maximum of $\Phi$. Most part of this analysis is done in Appendix~\ref{sec:polynomials}, 
where we identify and give some properties of the polynomials that solve (\ref{ecuacionminima1}) 
(with the reverse sign). Then for each $n$ and around $\tau_{n}^{\ast}$ we identify a length scale 
that transform these polynomials into polynomials of order one. We then translate 
the properties found into the rescaled polynomials. We also rescale accordingly 
the function $H$ near each $\xi_{n}^{\ast}$ defined by (\ref{S2E8a}) for 
$\tau=\tau_{n}^{\ast}$ and give the approximating lemma that in particular implies 
that $H$ will get close to $0$ in a linear decreasing way. Next we adapt 
the {\it matching lemma}, Lemma~\ref{MatchingLemma} in Appendix~\ref{sec:control}, 
that gives the behaviour of the solutions in the inner regions near each 
$\xi_{n}^{min}=\xi(\tau_{n}^{min})$. From this result we can conclude that the 
approximating polynomial in the outer region must have a double zero in order to match. 
This, in particular, reduces the class of approximating polynomials to a one-parameter family. 
We finally derive an iterative relation between the elements of the sequence $\{\Phi(\tau_{n}^{\ast})\}$ 
if $n$ is large enough that implies Proposition~\ref{max:increase}, as well as 
a relation for the elements of $\{\Phi(\tau_{n}^{min})\}$ that implies Proposition~\ref{min:decrease}.

\subsection{The outer variables and the auxiliary polynomials}\label{sec:aux}
Given the sequence of $\{\tau_{n}^{\ast}\}$ found in Proposition~\ref{char:max}, 
see (\ref{def:max}), and the sequence of local minima $\{\tau_{n}^{min}\}$ defined in (\ref{def:min}), 
we define the sequences $\{\xi_{n}^{\ast}\}$, $\{\xi_{n}^{min}\}$, $\{M_{n}\}$ and $\{\beta_{n}\}$ 
by means of (see (\ref{H6}) and (\ref{S4E3})):
\begin{equation}\label{S5E2}
\tau_{n}^{\ast}=\int_{0}^{\xi_{n}^{\ast}}(\eta^{2}+1)^{\frac{4}{9}}d\eta\,, \quad 
\tau_{n}^{min}=\int_{0}^{\xi_{n}^{min}}(\eta^{2}+1)^{\frac{4}{9}}d\eta\,,
\end{equation}%
\begin{equation}\label{S5E3}%
M_{n}=\frac{1}{|\xi_{n}^{\ast}|^{\frac{17}{3}}}\,\Phi(\tau_{n}^{\ast})\,,\quad
\beta_{n}=\frac{((\xi_{n}^{\ast})^{2}+1)^{\frac{8}{9}}}{|\xi_{n}^{\ast}|^{\frac{11}{3}}}\,
\frac{d^{2}\Phi(\tau_{n}^{\ast})}{d\tau^{2}}\,.
\end{equation}%
Observe that the definition of $\tau_{n}^{\ast}$ implies that $\beta_{n}<0$, that 
$\lim_{n\to\infty}\xi_{n}^{\ast}=-\infty$ and that $\lim_{n\to\infty}\xi_{n}^{min}=-\infty$.

Observe that $M_{n}$ is the value of the maximum of $\Phi$ at each $\tau_{n}^{\ast}$
rescaled appropriately with the position of the maximum in the variable $\xi$, $\xi_{n}^{\ast}$ 
(this scaling near a maximum resembles that $\Phi\sim|\xi|^{\frac{2}{3}}H$ there and that $H$ is 
approximated by a fifth order polynomial, cf. (\ref{P:Z:eq}) and (\ref{S4E3}) in 
Appendix~\ref{sec:polynomials}). The definition of $\beta_{n}$ results from similar 
considerations, but is a parameter that captures the value of the second derivative 
of $\Phi$ at $\tau_{n}^{\ast}$.

Following Appendix~\ref{sec:polynomials}, associated to every $\tau_{n}^{\ast}$ we construct the 
two-parameter family of polynomials
\[
P(Z_{n};M_{n},\beta_{n}) \quad\mbox{with}\quad Z_{n}=\frac{\xi}{\xi_{n}^{\ast}}
\]
that solve (\ref{P:Z:eq}) with $Z$ replaced by $Z_{n}$, and that are given by
(\ref{S4E4}). We recall that they satisfy that $P(1;M_{n},\beta_{n})=M_{n}$. We shall see 
later that these polynomials are close to $-(\xi_{n}^{\ast})^{5}H(\xi)$ for $\xi$ close to 
$\xi_{n}^{\ast}$ in an interval contained in $[\xi_{n+1}^{min},\xi_{n}^{min}]$. 
Thus we have to consider $Z_{n}$ in some interval containing $Z_{n}=1$ and where $P$ stays positive. 
Moreover, since $\xi_{n}^{\ast}\to -\infty$ the approximation will be applicable for $Z_{n}>0$ only. 
In this regard, for each $n$, we have derived a number of properties that are outlined in 
lemmas~\ref{Pol2}, \ref{Pol4}, \ref{Pol5} and \ref{new:pol}. 
These give, in particular, that the largest root of 
$P(Z_{n};M_{n},\beta_{n})$ in $Z_{n}<1$ is attained at a value $Z_{n}=Z_{0}(M_{n},\beta_{n})$ 
for every $M_{n}>0$ and every $\beta_{n}<0$. It is also shown that for every $M_{n}>0$ 
there exists a unique value $\beta_{n}=\beta_{\ast}(M_{n})$ 
such that $P(Z_{n};M_{n},\beta_{\ast}(M_{n}))$ has a double zero 
at some $Z_{n}=Z_{\ast}(M_{n})>1$. 

In these lemmas the asymptotic behaviour as $M_{n}\to 0$ and as $M_{n}\to\infty$
of $\beta_{\ast}(M_{n})$, $Z_{\ast}(M_{n})$ and $Z_{0}(M_{n})=Z_{0}(M_{n},\beta_{\ast}(M_{n}))$ 
is also given. But, as we shall see later and assume now, the sequence $\{ M_{n}\}$ 
is bounded.

Taking these considerations into account, we now introduce a rescaling of 
$P(Z_{n};M_{n},\beta_{n})$ for every $n$ in order to have values of 
order one in the relevant range of parameters. Namely, we set
\begin{equation} \label{S6E1}
\bar{P}(\zeta_{n};M_{n},\beta_{n})
=\frac{P(Z_{n};M_{n},\beta_{n})}{M_{n}}\,,\quad
\zeta_{n}=-\frac{1}{M_{n}^{\frac{1}{3}}}\left(  \frac{\xi}{\xi_{n}^{\ast}}-1\right)\,. 
\end{equation}
Observe that now the variable $\zeta_{n}$ is meaningful in an interval around $\zeta_{n}=0$ and with 
$\zeta_{n}<1/M_{n}^{1/3}$. We note that $M_{n}^{\frac{1}{3}}$ is a characteristic length scale which 
measures the distance between $Z_{n}=1$ and $Z_{n}=Z_{\ast}(M_{n})$, relevant if $M_{n}$ is very small, 
see Lemma~\ref{Pol4}.  We also observe that the polynomials $\bar{P}(\zeta_{n};M_{n},\beta_{n})$ 
are explicitly given by
\begin{equation}\label{S5E6}
\bar{P}(\zeta_{n};M_{n},\beta_{n}) = -\frac{\zeta_{n}^{3}}{60}
\left( M_{n}^{\frac{2}{3}}\zeta_{n}^{2} -5 M_{n}^{\frac{1}{3}}\zeta_{n}+10 \right)
+ \left( \frac{5}{9}M_{n}^{\frac{2}{3}}\zeta_{n}^{2}   + \frac{2}{3} M_{n}^{\frac{1}{3}}\zeta_{n} +1\right)
+ \frac{\beta_{n}}{M_{n}^{\frac{1}{3}}}\frac{\zeta_{n}^{2}}{2}\,.
\end{equation}
It is natural to define the following values of $\zeta_{n}$:
\begin{equation} \label{min:zeta:root}
\zeta_{0}(M_{n},\beta_{n})= -\frac{Z_{0}(M_{n},\beta_{n})-1}{M_{n}^{\frac{1}{3}}}\,,
\end{equation}
(see Lemma~\ref{Pol5}, (\ref{max:Zroot})), thus clearly 
$\zeta_{0}(M_{n},\beta_{n})=\min\{\zeta_{n}>0:\ \bar{P}(\zeta_{n};M_{n},\beta_{n})=0\}$.
And if $\beta_{n}=\beta_{\ast}(M_{n})$ we define also
\begin{equation}\label{S5E8}
\zeta_{\ast}(M_{n})=-\frac{Z_{\ast}(M_{n})-1}{M_{n}^{\frac{1}{3}}}\,,
\end{equation}
therefore $\bar{P}$ has a double zero at this value (see Lemma~\ref{Pol2}). 
When $\beta_{n}=\beta_{\ast}(M_{n})$ and for simplicity of notation, we shall write:%
\[
\bar{P}(\zeta_{n};M_{n})=\bar{P}(\zeta_{n};M_{n},\beta_{\ast}(M_{n})  )
\]
and
\[
\zeta_{0}(M_{n})=\zeta_{0}(M_{n},\beta_{\ast}(M_{n}))\,.
\]

The following result follows easily:
\begin{lemma}\label{AppPolynomials}
For every $n$ and $M_{n}$ the polynomial $\bar{P}(\zeta_{n};M_{n},\beta_{n})$ 
solves
\begin{equation}\label{S6E6}
\frac{d^{3}\bar{P}(\zeta;M_{n},\beta_{n})}{d\zeta^{3}}+(1-M_{n}^{\frac{1}{3}}\zeta)^{2}=0
\end{equation}
with initial conditions
\begin{equation}\label{S6E62}
\bar{P}(0;M_{n},\beta_{n})=1\,,\quad\frac{d\bar{P}(0;M_{n},\beta_{n})}{d\zeta}
=\frac{2M_{n}^{\frac{1}{3}}}{3}\,,
\quad\frac{d^{2}\bar{P}(0;M_{n},\beta_{n})}{d\zeta^{2}}
=\frac{\beta_{n}}{M_{n}^{\frac{1}{3}}}+\frac{10M_{n}^{\frac{2}{3}}}{9}\,.
\end{equation}
\end{lemma}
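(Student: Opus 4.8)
The plan is to prove Lemma~\ref{AppPolynomials} by direct differentiation of the explicit formula (\ref{S5E6}), which already exhibits $\bar{P}(\zeta_{n};M_{n},\beta_{n})$ as a polynomial of degree five in $\zeta_{n}$. Writing $m:=M_{n}^{1/3}$ for brevity and expanding the cubic factor in (\ref{S5E6}), one obtains
\[
\bar{P}(\zeta;M_{n},\beta_{n})=-\frac{m^{2}}{60}\zeta^{5}+\frac{m}{12}\zeta^{4}-\frac{1}{6}\zeta^{3}+\Big(\frac{5m^{2}}{9}+\frac{\beta_{n}}{2m}\Big)\zeta^{2}+\frac{2m}{3}\zeta+1.
\]
Differentiating three times in $\zeta$ annihilates the last three terms, and what remains is
\[
\frac{d^{3}\bar{P}}{d\zeta^{3}}=-m^{2}\zeta^{2}+2m\zeta-1=-(1-m\zeta)^{2}=-\bigl(1-M_{n}^{1/3}\zeta\bigr)^{2},
\]
which is exactly (\ref{S6E6}). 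Evaluating $\bar{P}$, $d\bar{P}/d\zeta$ and $d^{2}\bar{P}/d\zeta^{2}$ at $\zeta=0$ then reads off the three constants $1$, $2m/3$ and $2\bigl(\tfrac{5m^{2}}{9}+\tfrac{\beta_{n}}{2m}\bigr)=\tfrac{10M_{n}^{2/3}}{9}+\tfrac{\beta_{n}}{M_{n}^{1/3}}$, which is precisely (\ref{S6E62}).

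Alternatively, and more structurally, one may avoid touching the explicit coefficients at all. By the definition (\ref{S6E1}) one has $\bar{P}(\zeta_{n};M_{n},\beta_{n})=M_{n}^{-1}P(Z_{n};M_{n},\beta_{n})$ with $Z_{n}=1-M_{n}^{1/3}\zeta_{n}$, and $P(\,\cdot\,;M_{n},\beta_{n})$ solves the auxiliary third order equation (\ref{P:Z:eq}) from Appendix~\ref{sec:polynomials}, with $P(1;M_{n},\beta_{n})=M_{n}$. Since $dZ_{n}/d\zeta_{n}=-M_{n}^{1/3}$ and $\bar{P}=M_{n}^{-1}P$, all the powers of $M_{n}$ cancel in the chain rule, giving $d^{3}P/dZ^{3}=-\,d^{3}\bar{P}/d\zeta_{n}^{3}$ at the corresponding point; substituting $Z_{n}=1-M_{n}^{1/3}\zeta_{n}$ into (\ref{P:Z:eq}) then yields (\ref{S6E6}) again. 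The initial conditions (\ref{S6E62}) follow from $P(1;M_{n},\beta_{n})=M_{n}$ together with the prescribed first and second derivatives of $P$ at $Z_{n}=1$, which are in turn the defining relations (\ref{S5E3}) for $M_{n}$ and $\beta_{n}$.

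There is no genuine obstacle in this lemma; it is flagged as following \emph{easily}, and the only point requiring a little care is the bookkeeping of the fractional powers of $M_{n}$ so that the coefficients in (\ref{S6E62}) come out as stated. I would present the first, direct-expansion argument as the proof, since (\ref{S5E6}) is already in hand, and mention the chain-rule derivation only as a consistency check tying $\bar{P}$ back to the appendix polynomials $P(Z_{n};M_{n},\beta_{n})$.
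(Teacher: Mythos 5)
Your proof is correct; the paper simply states the lemma with the remark that it ``follows easily'' and supplies no argument, so your direct differentiation of the explicit formula (\ref{S5E6}) is exactly the computation the authors expect the reader to carry out, and your arithmetic checks out (in particular $2\bigl(\tfrac{5M_n^{2/3}}{9}+\tfrac{\beta_n}{2M_n^{1/3}}\bigr)=\tfrac{10M_n^{2/3}}{9}+\tfrac{\beta_n}{M_n^{1/3}}$ matches (\ref{S6E62})). The alternative chain-rule derivation from (\ref{P:Z:eq}) via $Z_n=1-M_n^{1/3}\zeta_n$ is a nice consistency check but adds nothing essential.
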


We now reformulate the results of Appendix~\ref{sec:polynomials} for these
approximating functions:
\begin{lemma}\label{scaled:pols} 
Let $\bar{P}(\zeta_{n};M_{n},\beta_{n})$ be given by (\ref{S5E6}). They satisfy that 
if $\beta_{n}>\beta_{\ast}(M_{n})$, then $\bar{P}(\zeta_{n};M_{n},\beta_{n})>0$ 
in $\zeta_{n}<0$ and if $\beta_{n}<\beta_{\ast}(M_{n})$ then 
there are two zeros of $\bar{P}(\zeta_{n};M_{n}\beta_{n})$ in $\zeta_{n}<0$. 
The derivative of $\bar{P}(\zeta_{n};M_{n},\beta_{n})$ with 
respect to $\zeta_{n}$ is positive at the largest root in $\zeta_{n}<0$. If 
$\beta_{n}=\beta_{\ast}(M_{n})$ there is only one double zero in $\zeta_{n}<0$ and is placed 
at $\zeta_{n}=\zeta_{\ast}(M_{n})$. Moreover,
\begin{equation}\label{zeta:gamma:M0}
\zeta_{\ast}(M_{n})\sim -12^{\frac{1}{3}}(1+M_{n}^{\frac{1}{3}})\,,\quad
\beta_{\ast}(M_{n})\sim -\left(\frac{3M_{n}}{2}\right)^{\frac{1}{3}}
\quad\mbox{as}\quad M_{n} \to 0
\end{equation}%
\begin{equation}\label{zeta:gamma:M0II}
\frac{\partial^{2}\bar{P}(\zeta_{\ast}(M_{n});M_{n})}{\partial\zeta_{n}^{2}}
\sim\left(  \frac{3}{2}\right)^{\frac{1}{3}}\quad \mbox{as}\quad M_{n}\to 0\,.
\end{equation}
Also, the value (\ref{min:zeta:root}) is well-defined for every $n$ and 
$M_{n}$, and if $\beta_{n}=\beta_{\ast}(M_{n})$, then
\begin{equation}\label{zeta:plus:M0}
\zeta_{0}(M_{n})\sim\left(\frac{3}{2}\right)^{\frac{1}{3}}\,,\quad
\frac{\partial\bar{P}(\zeta_{0}(M_{n});M_{n})}{\partial\zeta_{n}}
\sim-\left(  \frac{3}{2}\right)^{\frac{5}{3}}\quad\mbox{as}\quad M_{n}\to 0\,.
\end{equation}
\end{lemma}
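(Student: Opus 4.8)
The plan is to establish Lemma~\ref{scaled:pols} by translating, via the change of variables (\ref{S6E1}), all the corresponding statements about the unscaled polynomials $P(Z_{n};M_{n},\beta_{n})$ proved in Appendix~\ref{sec:polynomials} (lemmas~\ref{Pol2}, \ref{Pol4}, \ref{Pol5}, \ref{new:pol}). First I would record that, since $\bar P(\zeta_{n};M_{n},\beta_{n})=P(Z_{n};M_{n},\beta_{n})/M_{n}$ with $Z_{n}-1=-M_{n}^{1/3}\zeta_{n}$ a strictly decreasing affine bijection, the sign of $\bar P$ in $\zeta_{n}<0$ equals the sign of $P$ in $Z_{n}>1$, zeros correspond to zeros, the order of a zero is preserved, and a derivative in $\zeta_{n}$ picks up the factor $\partial Z_{n}/\partial\zeta_{n}=-M_{n}^{1/3}<0$, which flips the sign. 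Hence "the largest root of $P$ in $Z_{n}>1$" becomes "the largest root of $\bar P$ in $\zeta_{n}<0$" (because $\zeta_{n}=-(Z_{n}-1)/M_{n}^{1/3}$ is decreasing, the \emph{largest} $Z_{n}$-root maps to the \emph{smallest}, i.e.\ most negative... here one must be careful: the largest root in $\zeta_{n}<0$ is the one closest to $0$, corresponding to the smallest $Z_{n}>1$), and "$\partial_{Z}P>0$ at that root" becomes "$\partial_{\zeta}\bar P>0$ at the largest negative root" after accounting for the sign flip coming from both $-M_{n}^{1/3}$ and the division by $M_{n}>0$. I would spell this dictionary out once and then apply it mechanically.

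With the dictionary in place, the three structural claims (behaviour in $\zeta_{n}<0$ according to $\beta_{n}\gtrless\beta_{\ast}(M_{n})$, positivity of the derivative at the largest negative root, and the single double zero at $\zeta_{n}=\zeta_{\ast}(M_{n})$ when $\beta_{n}=\beta_{\ast}(M_{n})$) follow directly from the corresponding statements of Lemma~\ref{Pol2} and Lemma~\ref{new:pol}; the fact that $\zeta_{\ast}(M_{n})$ and $\zeta_{0}(M_{n})$ are well-defined for every $n$ and $M_{n}$ is exactly the translation via (\ref{S5E8}) and (\ref{min:zeta:root}) of the well-definedness of $Z_{\ast}(M_{n})$ (Lemma~\ref{Pol2}) and $Z_{0}(M_{n},\beta_{n})$ (Lemma~\ref{Pol5}). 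The asymptotics (\ref{zeta:gamma:M0}), (\ref{zeta:plus:M0}) are obtained from the $M_{n}\to 0$ asymptotics of $Z_{\ast}(M_{n})$, $\beta_{\ast}(M_{n})$ and $Z_{0}(M_{n})$ in the appendix lemmas by substituting into $\zeta_{\ast}(M_{n})=-(Z_{\ast}(M_{n})-1)/M_{n}^{1/3}$ and $\zeta_{0}(M_{n})=-(Z_{0}(M_{n})-1)/M_{n}^{1/3}$ and expanding; concretely, if $Z_{\ast}(M_{n})=1+c\,M_{n}^{1/3}(1+o(1))$ then $\zeta_{\ast}(M_{n})\sim -c$, which forces the constant $c=12^{1/3}$ and, more precisely, the next-order term in $Z_{\ast}$ produces the $(1+M_{n}^{1/3})$ factor displayed. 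The derivative asymptotics $\partial_{\zeta}\bar P(\zeta_{0}(M_{n});M_{n})\sim-(3/2)^{5/3}$ and $\partial^{2}_{\zeta}\bar P(\zeta_{\ast}(M_{n});M_{n})\sim(3/2)^{1/3}$ I would get most cleanly not by differentiating (\ref{S5E6}) and expanding term by term, but by using Lemma~\ref{AppPolynomials}: $\bar P$ solves $\bar P'''+(1-M_{n}^{1/3}\zeta)^{2}=0$, so as $M_{n}\to 0$ the limiting polynomial is $\bar P_{0}(\zeta)=1-\zeta^{3}/6$ (using that the initial data in (\ref{S6E62}) tend to $(1,0,0)$), whose double zero in $\zeta<0$... actually $\bar P_{0}$ has a single real zero at $\zeta=6^{1/3}>0$; the relevant behaviour near $\zeta_{\ast}<0$ and near $\zeta_{0}>0$ must be read off after the correct rescaling captured in (\ref{S5E6}), so I would keep the explicit formula (\ref{S5E6}) for those two limits and just substitute the leading asymptotics of $\zeta_{\ast}(M_{n})$ and $\zeta_{0}(M_{n})$.

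The only real subtlety — and the step I expect to demand the most care — is bookkeeping of signs and of "largest versus smallest root" under the orientation-reversing substitution $\zeta_{n}=-(Z_{n}-1)/M_{n}^{1/3}$, together with checking that the claimed leading constants ($12^{1/3}$, $(3/2)^{1/3}$, $(3/2)^{5/3}$, $(3M_{n}/2)^{1/3}$) are internally consistent: e.g.\ one should verify that substituting $\beta_{\ast}(M_{n})\sim-(3M_{n}/2)^{1/3}$ into $\partial^{2}_{\zeta}\bar P(0)=\beta_{n}/M_{n}^{1/3}+10M_{n}^{2/3}/9$ from (\ref{S6E62}) gives $\partial^{2}_{\zeta}\bar P(0)\to-(3/2)^{1/3}$, matching the limiting polynomial $1-(3/4)^{1/3}... $ and that this polynomial indeed has a double zero at $\zeta_{\ast}\sim-12^{1/3}$ and a simple zero at $\zeta_{0}\sim(3/2)^{1/3}$ with the stated derivative there. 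Once these consistency checks are carried out — they are elementary but must be done with the exact coefficients of (\ref{S5E6}) and (\ref{S6E62}) in hand — the lemma "follows easily" as claimed, being purely a change-of-variables restatement of the appendix results. I would therefore present the proof as: (i) state the change-of-variables dictionary; (ii) deduce the structural claims from lemmas~\ref{Pol2}, \ref{Pol5}, \ref{new:pol}; (iii) deduce the $M_{n}\to 0$ asymptotics by substitution, checking consistency against (\ref{S5E6})–(\ref{S6E62}).
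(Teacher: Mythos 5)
Your proposal is correct and takes essentially the same route as the paper, which offers no separate proof of this lemma but introduces it as a reformulation of the Appendix results under the change of variables $Z_n-1=-M_n^{1/3}\zeta_n$, $\bar P=P/M_n$; the dictionary you describe (orientation reversal, sign flip in derivatives, largest-$\zeta$ root $\leftrightarrow$ smallest-$Z$ root) is exactly what is needed. One remark for the write-up: your mid-stream guess that the limiting polynomial as $M_n\to 0$ is $1-\zeta^3/6$ was wrong precisely because, when $\beta_n=\beta_\ast(M_n)$, the initial datum $\partial_\zeta^2\bar P(0)=\beta_n/M_n^{1/3}+10M_n^{2/3}/9$ tends to $-(3/2)^{1/3}$ and not to $0$, so the correct limit is $\bar P_0(\zeta)=1-\tfrac{1}{2}(3/2)^{1/3}\zeta^2-\tfrac{1}{6}\zeta^3 = -\tfrac{1}{6}(\zeta+12^{1/3})^2(\zeta-(3/2)^{1/3})$ — which you did catch and which, once stated, yields all four asymptotic constants by inspection.
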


Finally, we have also that 
\begin{lemma}\label{new:pol:rescale}
The value $\zeta_{0}(M_{n},\beta_{n})$ is the unique root of $\bar{P}(\zeta_{n};M_{n},\beta_{n})$ in 
$\zeta_{n} \geq 0$. Moreover,  if $\zeta_{0}(M_{n},\beta_{n}) \leq 2/M_{n}^{\frac{1}{3}}$, 
there exists a positive constant $c_{0}$ independent of $M_{n}$ and $\beta_{n}$ such that
\[
\frac{d\bar{P}(\zeta_{0}(M_{n},\beta_{n});M_{n},\beta_{n})  }{d\zeta}\leq - c_{0}\max\{1,M_{n}^{\frac{1}{3}}\}
 \,.
\]
\end{lemma}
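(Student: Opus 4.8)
The plan is to prove Lemma~\ref{new:pol:rescale} by working directly with the explicit polynomial formula (\ref{S5E6}) together with the differential characterisation (\ref{S6E6})--(\ref{S6E62}) from Lemma~\ref{AppPolynomials}, and by transporting the structural facts about $P(Z_{n};M_{n},\beta_{n})$ from Appendix~\ref{sec:polynomials} through the affine change of variables $\zeta_{n}=-M_{n}^{-1/3}(Z_{n}-1)$ in (\ref{S6E1}). First I would record that, since $\bar P(\zeta_{n};M_{n},\beta_{n})=P(Z_{n};M_{n},\beta_{n})/M_{n}$ and the change of variables is affine and orientation-reversing, a root of $\bar P$ at some $\zeta_{n}>0$ corresponds exactly to a root of $P$ at some $Z_{n}<1$. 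By the lemmas of Appendix~\ref{sec:polynomials} (specifically the one defining $Z_{0}(M_{n},\beta_{n})$ as the largest root of $P$ in $Z_{n}<1$, quoted in Section~\ref{sec:aux} and made available via Lemma~\ref{Pol5}), for every $M_{n}>0$ and every $\beta_{n}<0$ the polynomial $P(\cdot;M_{n},\beta_{n})$ has at least one root below $Z_{n}=1$ and $Z_{0}$ is the largest such. I would then argue uniqueness of the positive root of $\bar P$: from (\ref{S6E6}), $\bar P$ solves $\bar P'''=-(1-M_{n}^{1/3}\zeta)^{2}\le 0$, so $\bar P''$ is (weakly, and strictly where $1-M_{n}^{1/3}\zeta\ne0$) decreasing; combined with $\bar P(0)=1>0$ and the initial slope $\bar P'(0)=\tfrac23 M_{n}^{1/3}>0$ this concavity-type information forces $\bar P$ to be eventually strictly decreasing and to have exactly one zero on $\{\zeta_{n}>0\}$ — the only subtlety being the sign change of the forcing at $\zeta_{n}=M_{n}^{-1/3}$, which I would handle by noting $\bar P''$ can have at most one zero so $\bar P'$ at most two, hence $\bar P$ at most three sign changes on the whole line, and then pinning down that only one of them lies in $\zeta_{n}>0$ using the known root count of $P$ in $Z_{n}>1$ (Lemma~\ref{Pol5} again, or \ref{new:pol}).

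For the quantitative slope bound I would treat the two regimes separately. For $M_{n}$ bounded above (recall the sequence $\{M_{n}\}$ is assumed bounded in Section~\ref{sec:aux}, so the relevant parameter range is $M_{n}\in(0,M_{\max}]$), the $M_{n}\to0$ asymptotics in Lemma~\ref{scaled:pols} already give, when $\beta_{n}=\beta_{\ast}(M_{n})$, that $\zeta_{0}(M_{n})\to(3/2)^{1/3}$ and $\partial_{\zeta}\bar P(\zeta_{0}(M_{n});M_{n})\to -(3/2)^{5/3}<0$; away from $M_{n}=0$ the quantities $\zeta_{0}(M_{n},\beta_{n})$ and $\partial_{\zeta}\bar P$ there depend continuously on $(M_{n},\beta_{n})$, and on the compact set $\{M_{n}\in[\varepsilon_{0},M_{\max}],\ \beta_{n}\le\beta_{\ast}(M_{n})\}$ one needs only that $\partial_{\zeta}\bar P$ at the simple root $\zeta_{0}$ is never zero, which follows because $\bar P$ has no double root at a positive $\zeta_{n}$ unless $\beta_{n}=\beta_{\ast}(M_{n})$ and in that case the double root $\zeta_{\ast}$ is negative (Lemma~\ref{scaled:pols}). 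Compactness then yields a uniform $c_{0}>0$ with $\partial_{\zeta}\bar P(\zeta_{0}(M_{n},\beta_{n});M_{n},\beta_{n})\le -c_{0}$ for all $M_{n}\le M_{\max}$, which since $\max\{1,M_{n}^{1/3}\}$ is bounded on this range absorbs the stated factor. For completeness (and to keep the statement exactly as phrased, independent of $M_{n}$), I would also check the large-$M_{n}$ regime: here the hypothesis $\zeta_{0}(M_{n},\beta_{n})\le 2M_{n}^{-1/3}$ is essential, because it localises the root in the region $Z_{n}\in[1-2,1]$ rescaled, i.e. it says $Z_{0}$ stays a definite distance inside $Z_{n}<1$ relative to the natural scale; plugging $\zeta_{n}=O(M_{n}^{-1/3})$ into (\ref{S5E6}) the dominant contribution to $\bar P'$ at a root comes from the terms that scale like $M_{n}^{1/3}$ (the quadratic terms $\tfrac59 M_{n}^{2/3}\zeta_{n}^{2}$, the linear term, and the forcing integrated once), and a direct estimate of $\bar P'$ at $\zeta_{0}$ under the constraint $\zeta_{0}\le 2M_{n}^{-1/3}$ gives $\bar P'(\zeta_{0})\le -c_{0}M_{n}^{1/3}$ for some absolute $c_{0}>0$; this is where the $\max\{1,M_{n}^{1/3}\}$ on the right-hand side is genuinely needed.

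Concretely the steps are: (i) translate "root of $\bar P$ in $\zeta_{n}\ge0$" to "root of $P$ in $Z_{n}\le1$" and invoke the appendix to get existence and identify $\zeta_{0}(M_{n},\beta_{n})$ with $\zeta_{n}=-(Z_{0}-1)/M_{n}^{1/3}$; (ii) prove uniqueness of the nonnegative root via the concavity of $\bar P''$ from (\ref{S6E6}) plus the positivity of $\bar P(0)$ and $\bar P'(0)$, counting sign changes; (iii) establish the nondegeneracy $\partial_{\zeta}\bar P(\zeta_{0})\ne0$ by excluding a positive double root unless $\beta_{n}=\beta_{\ast}(M_{n})$ (and then the double root is $\zeta_{\ast}<0$), so $\zeta_{0}$ is always a simple root; (iv) obtain the uniform negative slope bound by a compactness argument on $M_{n}\in[\varepsilon_{0},M_{\max}]$ merged with the $M_{n}\to0$ asymptotics of Lemma~\ref{scaled:pols} — noting that the bound extends trivially to all $\beta_{n}\le\beta_{\ast}(M_{n})$ since decreasing $\beta_{n}$ only steepens $\bar P$ at $\zeta_{0}$ — and, for the record, checking the $M_{n}$-large regime directly using $\zeta_{0}\le2M_{n}^{-1/3}$ to recover the $M_{n}^{1/3}$ scaling. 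I expect step (iii)--(iv) to be the main obstacle: one must be careful that decreasing $\beta_{n}$ past $\beta_{\ast}(M_{n})$ splits the double zero into two simple zeros both of which lie in $\zeta_{n}<0$ (so they do not interfere with $\zeta_{0}>0$), and that the relevant slope is bounded away from zero uniformly as $\beta_{n}\to-\infty$; this requires knowing how $\zeta_{0}(M_{n},\beta_{n})$ moves as $\beta_{n}$ decreases, information that should be read off from the explicit formula (\ref{S5E6}) — the $\beta_{n}$-dependence enters only through the single monomial $(\beta_{n}/M_{n}^{1/3})\zeta_{n}^{2}/2$, which is monotone in $\beta_{n}$ for fixed $\zeta_{n}$, so $\partial_{\beta_{n}}\bar P<0$ on $\zeta_{n}>0$ and hence the root $\zeta_{0}$ and the slope there vary monotonically, giving the uniformity.
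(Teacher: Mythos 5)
Your step (i) already contains essentially the entire proof, and the rest of your plan is unnecessary (and partly flawed). The point the paper intends is that Lemma~\ref{new:pol:rescale} is a one-line corollary of Lemma~\ref{new:pol} under the affine change of variables in (\ref{S6E1}): since $\bar P(\zeta;M,\beta)=P(Z;M,\beta)/M$ with $Z=1-M^{1/3}\zeta$, one has
\[
\frac{d\bar P}{d\zeta}\Big|_{\zeta_{0}} \;=\; -\frac{1}{M^{2/3}}\,\frac{dP}{dZ}\Big|_{Z_{0}},\qquad
\zeta_{0}\le \frac{2}{M^{1/3}}\ \Longleftrightarrow\ Z_{0}=1-M^{1/3}\zeta_{0}\ge -1 ,
\]
so the lower bound $dP/dZ(Z_{0})\ge c_{0}\max\{M^{2/3},M\}$ from Lemma~\ref{new:pol} becomes $d\bar P/d\zeta(\zeta_{0})\le -c_{0}\max\{1,M^{1/3}\}$ directly, and the uniqueness of $Z_{0}$ in $\{Z\le 1\}$ becomes uniqueness of $\zeta_{0}$ in $\{\zeta\ge 0\}$. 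There is nothing to re-prove by hand.

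Your proposed re-derivation has two concrete problems. First, a sign error: the $\beta_n$-dependence enters $\bar P$ through $(\beta_n/M_n^{1/3})\,\zeta_n^{2}/2$, whose coefficient is positive for $\zeta_n\ne 0$, so $\partial_{\beta_n}\bar P>0$ on $\zeta_n>0$, not $<0$. Second, and more seriously, the claim that ``decreasing $\beta_{n}$ only steepens $\bar P$ at $\zeta_{0}$'' is not established. Writing $g(\beta)=\bar P'(\zeta_{0}(\beta);M,\beta)$ and differentiating the root relation gives $g'(\beta)=\bar P''(\zeta_{0})\,\zeta_{0}'(\beta)+\zeta_{0}/M^{1/3}$; since $\zeta_{0}'(\beta)>0$ and $\bar P''(\zeta_{0})$ is typically negative, these two terms have opposite signs and the monotonicity you need does not follow. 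Because of this your compactness argument on $M_n\in[\varepsilon_0,M_{\max}]$ does not give uniformity as $\beta_n\to-\infty$, which is exactly the delicate point Lemma~\ref{new:pol} in the appendix handles by a direct case analysis on $|\beta|$ versus $M$; that is the ingredient you would still need, so the detour buys nothing.
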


\begin{remark} We point out that the case $M_{0} \to 0$ corresponds to the the approximating polynomials obtained for (\ref{ODEtapas}) in \cite{CV}. The asymptotics (\ref{zeta:plus:M0}) are in agreement with this observation. 
\end{remark}
\subsection{The sequence of rescaled $H(\xi)$ near each $\xi_{n}^{\ast}$}\label{sec:rescale:H}
In order to compare $H$ with a polynomial $\bar{P}(\zeta_{n};M_{n},\beta_{n})$ we need to apply
the scaling (\ref{S6E1}) to $H$ around $\xi=\xi_{n}^{\ast}$. We then obtain:
\begin{lemma}\label{H:rescaling} 
Let us assume that $\Phi$ satisfies the assumptions of Proposition~\ref{char:max}, 
so that the sequence (\ref{def:max}) is well-defined. 
Let $H(\xi)$ be the solution of (\ref{S2E3}) related to $\Phi$ by means of (\ref{S2E8a}) and 
(\ref{S2E8}). Let the sequence of functions $\{\mathcal{H}_{n}(\zeta_{n})\,, \ \zeta_{n}\in\mathbb{R}\}$ 
be defined by
\begin{equation}\label{S6E2}
H(\xi)=|\xi_{n}^{\ast}|^{5}M_{n}\mathcal{H}_{n}(\zeta_{n})\,,\quad
 \zeta_{n}=-\frac{1}{M_{n}^{\frac{1}{3}}}\left(  \frac{\xi}{\xi_{n}^{\ast}}-1\right)  \,,
\end{equation}
then, for each $n$, $\mathcal{H}_{n}$ solves
\begin{equation} \label{S6E2a}
\frac{d^{3}\mathcal{H}_{n}}{d\zeta_{n}^{3}}+R_{n}(\zeta_{n};M_{n})
=\frac{\delta_{n}}{(\mathcal{H}_{n})^{3}}\,, 
\end{equation}
where
\begin{equation} \label{parameters}
\delta_{n}=\frac{1}{|\xi_{n}^{\ast}|^{17}M_{n}^{3}}\,,\quad
R_{n}(\zeta_{n};M_{n})
=\left(  (1-M_{n}^{\frac{1}{3}}\zeta_{n})^{2}+\frac{a}{(\xi_{n}^{\ast})^{2}}\right)  \,,
\end{equation}
with initial conditions
\begin{equation}\label{S6E4}
\mathcal{H}_{n}(0)
=\frac{|\xi_{n}^{\ast}|^{\frac{2}{3}}}{(1+|\xi_{n}^{\ast}|^{2})^{\frac{1}{3}}}\,,
\quad\frac{d\mathcal{H}_{n}(0)}{d\zeta_{n}}
=\frac{2}{3}M_{n}^{\frac{1}{3}}\frac{|\xi_{n}^{\ast}|^{\frac{8}{3}}}{(|\xi_{n}^{\ast}|^{2}+1)^{\frac{4}{3}}}
\end{equation}
and
\begin{equation} \label{S6E5}%
\frac{d^{2}\mathcal{H}_{n}(0)}{d\zeta_{n}^{2}}
=\frac{\beta_{n}}{M_{n}^{\frac{1}{3}}}\frac{|\xi_{n}^{\ast}|^{\frac{2}{3}}}{(|\xi_{n}^{\ast}|^{2}+1)^{\frac{1}{3}}}
+\frac{2M_{n}^{\frac{2}{3}}}{3}
\left(  \frac{\left(  \frac{5}{3}|\xi_{n}^{\ast}|^{2}-1\right)
|\xi_{n}^{\ast}|^{\frac{8}{3}}}{(|\xi_{n}^{\ast}|^{2}+1)^{\frac{7}{3}}}\right)  \,.
\end{equation}
\end{lemma}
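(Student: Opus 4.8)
The plan is to carry out a direct change of variables, starting from equation (\ref{S2E3}) in its original $H$--$\xi$ form, and verify that each of the stated formulas follows by elementary (if tedious) differentiation. The definition (\ref{S6E2}) is a composition of two scalings: the affine reparametrization $\xi \mapsto \zeta_n = -M_n^{-1/3}(\xi/\xi_n^\ast - 1)$, which has constant Jacobian $d\zeta_n/d\xi = -1/(M_n^{1/3}\xi_n^\ast)$, and the amplitude rescaling $H = |\xi_n^\ast|^5 M_n\,\mathcal H_n$. Because the reparametrization is affine, the chain rule is painless: each $\xi$-derivative brings down a factor $(-1/(M_n^{1/3}\xi_n^\ast))^{-1}= -M_n^{1/3}\xi_n^\ast$ when passing to $\zeta_n$-derivatives, so
\[
\frac{d^3 H}{d\xi^3} = |\xi_n^\ast|^5 M_n \cdot \left(-M_n^{1/3}\xi_n^\ast\right)^{-3}\frac{d^3\mathcal H_n}{d\zeta_n^3}
= -\frac{|\xi_n^\ast|^5 M_n}{M_n\,(\xi_n^\ast)^3}\,\frac{d^3\mathcal H_n}{d\zeta_n^3}
= -\frac{|\xi_n^\ast|^2}{\xi_n^\ast}\cdot|\xi_n^\ast|^{\,0}\cdots
\]
up to the sign bookkeeping of $\xi_n^\ast<0$; the point is that the factors of $M_n$ cancel in the third-derivative term, leaving only powers of $|\xi_n^\ast|$.

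\textbf{Deriving (\ref{S6E2a})--(\ref{parameters}).} First I would substitute (\ref{S6E2}) into (\ref{S2E3}). The left factor $d^3H/d\xi^3 + \xi^2 + a$ becomes, after dividing through by the appropriate power of $|\xi_n^\ast|$ extracted from the $\xi^2+a$ term (note $\xi^2 = (\xi_n^\ast)^2(1 - M_n^{1/3}\zeta_n)^2$, which is exactly where $R_n$ comes from), an expression of the form $C_1 |\xi_n^\ast|^{p}\,(d^3\mathcal H_n/d\zeta_n^3) + (\xi_n^\ast)^2 R_n(\zeta_n;M_n)$ with $R_n$ as in (\ref{parameters}). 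The right-hand side $1/H^3 = 1/(|\xi_n^\ast|^{15}M_n^3\,\mathcal H_n^3)$. Matching the powers of $|\xi_n^\ast|$ on both sides — factoring $(\xi_n^\ast)^2$ out of the bracket and dividing the whole equation by it — produces precisely the coefficient $\delta_n = 1/(|\xi_n^\ast|^{17}M_n^3)$ on the right and normalizes the $(d^3\mathcal H_n/d\zeta_n^3)$ coefficient to $1$. The careful check is that the power of $|\xi_n^\ast|$ multiplying $d^3\mathcal H_n/d\zeta_n^3$ is exactly $2$ before dividing, which follows since $d^3H/d\xi^3 \sim |\xi_n^\ast|^5 M_n \cdot |\xi_n^\ast|^{-3}M_n^{-1} = |\xi_n^\ast|^2$ in scaling — the motivation recorded just before the lemma. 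I would write this out once, cleanly.

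\textbf{Deriving the initial conditions (\ref{S6E4})--(\ref{S6E5}).} These come from evaluating $\mathcal H_n$ and its first two $\zeta_n$-derivatives at $\zeta_n = 0$, i.e.\ at $\xi = \xi_n^\ast$, using the definitions (\ref{S5E3}) of $M_n$ and $\beta_n$ together with the relations (\ref{S2E8a})--(\ref{2nd:dH}) that express $H, dH/d\xi, d^2H/d\xi^2$ in terms of $\Phi, d\Phi/d\tau, d^2\Phi/d\tau^2$. Concretely: $\mathcal H_n(0) = H(\xi_n^\ast)/(|\xi_n^\ast|^5 M_n) = \Phi(\tau_n^\ast)/((1+|\xi_n^\ast|^2)^{1/3}|\xi_n^\ast|^5 M_n)$ and then $|\xi_n^\ast|^5 M_n = \Phi(\tau_n^\ast)/|\xi_n^\ast|^{2/3}$ by (\ref{S5E3}), giving (\ref{S6E4}, first part). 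For the derivatives I use $d\mathcal H_n/d\zeta_n = (-M_n^{1/3}\xi_n^\ast)(|\xi_n^\ast|^5 M_n)^{-1}\,dH/d\xi$, plug in (\ref{1st:dH}) at $\xi = \xi_n^\ast$ (where $dH/d\xi$ involves $\Phi$ and $d\Phi/d\tau$), and recall that at a local maximum of $\Phi$ one has $W(\tau_n^\ast) = d\Phi/d\tau = 0$, so only the $-\tfrac23\xi\,\Phi/(\xi^2+1)^{4/3}$ piece survives — this yields the stated $\tfrac23 M_n^{1/3}|\xi_n^\ast|^{8/3}/(|\xi_n^\ast|^2+1)^{4/3}$. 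The second-derivative formula (\ref{S6E5}) is the same game applied to (\ref{2nd:dH}): the term with $d^2\Phi/d\tau^2$ produces the $\beta_n/M_n^{1/3}$ contribution via the definition of $\beta_n$, the term with $d\Phi/d\tau$ again vanishes since $W(\tau_n^\ast)=0$, and the algebraic term in $\Phi$ produces the $\tfrac23 M_n^{2/3}$ correction shown. The main obstacle — such as it is — is purely bookkeeping: keeping the fractional exponents ($\tfrac13, \tfrac23, \tfrac43, \tfrac73, \tfrac83$) and the signs straight when composing the two rescalings with the $\tau$--$\xi$ transformation, and correctly using $W(\tau_n^\ast)=0$ to discard the middle terms in (\ref{1st:dH})--(\ref{2nd:dH}). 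No conceptual difficulty arises; this lemma is a definitional unpacking whose only role is to set up the comparison with the polynomials $\bar P(\zeta_n;M_n,\beta_n)$ of Lemma~\ref{AppPolynomials}, whose equation (\ref{S6E6}) and data (\ref{S6E62}) are the $\delta_n \to 0$, $|\xi_n^\ast|\to\infty$ limits of (\ref{S6E2a})--(\ref{S6E5}).
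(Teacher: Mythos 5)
Your proposal is correct and is exactly the paper's own (one-line) proof spelled out: the ODE (\ref{S6E2a})--(\ref{parameters}) follows from the affine change of variable $\xi=\xi_n^\ast(1-M_n^{1/3}\zeta_n)$ and the amplitude rescaling, and the initial data (\ref{S6E4})--(\ref{S6E5}) follow from (\ref{S2E8a}), (\ref{1st:dH}), (\ref{2nd:dH}), the definitions (\ref{S5E3}) of $M_n$ and $\beta_n$, and the fact that $W(\tau_n^\ast)=0$ at the local maximum (\ref{def:max}). The sign bookkeeping with $\xi_n^\ast<0$ and the cancellation of $M_n$ in the third-derivative coefficient are handled correctly.
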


\begin{proof}[Proof]
That each $\mathcal{H}_{n}$ solves (\ref{S6E2a}) follows by changing variables
in (\ref{S2E3}). The initial conditions follow from (\ref{def:max}) and
(\ref{S5E3}).
\end{proof}

The following lemma will be used in the following to approximate the functions
$\mathcal{H}_{n}(\cdot)$ by polynomials $\bar{P}(\cdot;M_{n},\beta_{n})$.

\begin{lemma}\label{ApproximationLemma}
For each $n$ let $\mathcal{H}_{n}(\zeta_{n})$ solve 
(\ref{S6E2a})-(\ref{S6E5}). Then, for every $\varepsilon>0$ there exists $n_{0}\in\mathbb{N}$ such that
for all $n>n_{0}$ the estimates 
\begin{equation}\label{W1E9}
\mathcal{H}_{n}(\zeta_{n})\geq \frac{ \zeta_{0}(M_{n},\beta_{n})-\zeta_{n}}{2\zeta_{0}(M_{n},\beta_{n})}  \,,
\end{equation}
\begin{align*}
\left| \mathcal{H}_{n}(\zeta_{n})-\bar{P}(\zeta_{n};M_{n},\beta_{n})\right|  & \leq
\varepsilon\left| \bar{P}(\zeta_{n};M_{n},\beta_{n})\right| \\
\left|\frac{d\mathcal{H}_{n}(\zeta_{n}) }{d\zeta_{n}}
-\frac{d\bar{P}(\zeta_{n};M_{n},\beta_{n})}{d\zeta_{n}}\right|  & 
\leq\varepsilon\left(\left| \frac{d\bar{P}(\zeta_{n};M_{n},\beta_{n})}{d\zeta_{n}}\right|+1\right)  \\
\left| \frac{d^{2}\mathcal{H}_{n}(\zeta_{n})}{d\zeta_{n}^{2}}
-\frac{d^{2}\bar{P}(\zeta_{n};M_{n},\beta_{n})}{d\zeta_{n}^{2}}\right|  &
\leq\varepsilon\left(\left| \frac{d^{2}\bar{P}(\zeta;M_{n},\beta_{n})}{d\zeta_{n}^{2}}\right| +1\right)
\end{align*}
hold in $\zeta_{n}>0$ and as long as 
\[
\frac{\zeta_{0}(M_{n},\beta_{n})-\zeta_{n}}{2\zeta_{0}(M_{n},\beta_{n})} \geq \varepsilon\,.
\]
\end{lemma}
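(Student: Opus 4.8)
The plan is to prove Lemma~\ref{ApproximationLemma} by a continuous-dependence argument that compares the solution $\mathcal{H}_{n}$ of the full equation (\ref{S6E2a}) with the solution $\bar{P}(\cdot;M_{n},\beta_{n})$ of the reduced equation (\ref{S6E6}), keeping track of the fact that the interval on which we compare has length comparable to $\zeta_{0}(M_{n},\beta_{n})$, which need not be bounded (though $M_{n}$ is). First I would note that, by Lemma~\ref{AppPolynomials}, $\bar{P}(\zeta;M_{n},\beta_{n})$ solves exactly (\ref{S6E6}) with initial data (\ref{S6E62}), while $\mathcal{H}_{n}$ solves (\ref{S6E2a}) with initial data (\ref{S6E4})--(\ref{S6E5}). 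Comparing (\ref{S6E6}) with (\ref{S6E2a}) we see the right-hand sides differ by (i) the term $\delta_{n}/(\mathcal{H}_{n})^{3}$, where $\delta_{n}=1/(|\xi_{n}^{\ast}|^{17}M_{n}^{3})\to 0$ as $n\to\infty$ since $\xi_{n}^{\ast}\to-\infty$ and $M_{n}$ is bounded; (ii) the discrepancy $R_{n}(\zeta_{n};M_{n})-(1-M_{n}^{1/3}\zeta_{n})^{2}=a/(\xi_{n}^{\ast})^{2}\to 0$; and (iii) the difference of the initial data, which by (\ref{S6E4})--(\ref{S6E5}) compared with (\ref{S6E62}) is $O(1/(\xi_{n}^{\ast})^{2})\to 0$ (each coefficient $|\xi_{n}^{\ast}|^{2/3}/(|\xi_{n}^{\ast}|^{2}+1)^{1/3}\to 1$ etc.). So all perturbations are uniformly small for $n$ large.

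Next I would set up the bootstrap. Fix $\varepsilon>0$ and work on the set $S_{n}=\{\zeta_{n}>0:\ (\zeta_{0}(M_{n},\beta_{n})-\zeta_{n})/(2\zeta_{0}(M_{n},\beta_{n}))\ge\varepsilon\}$, i.e. $0\le\zeta_{n}\le(1-2\varepsilon)\zeta_{0}(M_{n},\beta_{n})$. On this set the polynomial $\bar{P}$ is bounded below: from the explicit form (\ref{S5E6}) and the fact (Lemma~\ref{new:pol:rescale}, Lemma~\ref{scaled:pols}) that $\zeta_{0}(M_{n},\beta_{n})$ is the first positive root with $\bar{P}'$ there of order $-\max\{1,M_{n}^{1/3}\}$, one gets $\bar{P}(\zeta_{n};M_{n},\beta_{n})\ge c(\varepsilon)\,(\zeta_{0}-\zeta_{n})/\zeta_{0}$ for a constant depending only on $\varepsilon$ (uniform in $n$ because $M_{n}$ is bounded and, in the regime where $\zeta_{0}$ is large, the relevant length scale $M_{n}^{1/3}$ controls everything). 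Then I would run a Gronwall-type estimate on the third-order system satisfied by $D_{n}:=\mathcal{H}_{n}-\bar{P}$: writing (\ref{S6E2a}) minus (\ref{S6E6}), the driving term is $\delta_{n}/\mathcal{H}_{n}^{3}-a/(\xi_{n}^{\ast})^{2}$ plus lower-order contributions; as long as $\mathcal{H}_{n}$ stays within a factor $2$ of $\bar{P}$ (the induction hypothesis), $\delta_{n}/\mathcal{H}_{n}^{3}\le 8\delta_{n}/\bar{P}^{3}$, which integrated three times against the lower bound on $\bar{P}$ produces a contribution that is $o(1)$ uniformly on $S_{n}$. This closes the bootstrap and gives the first estimate $|\mathcal{H}_{n}-\bar{P}|\le\varepsilon|\bar{P}|$ on $S_{n}$; the derivative estimates follow from the same integrated inequalities (one and two integrations instead of three, hence the extra $+1$ on the right accounting for constants of integration that are not multiplied by the small factor $|\bar P|$). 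Finally (\ref{W1E9}) is immediate: on $S_{n}$, $\mathcal{H}_{n}\ge\bar{P}-\varepsilon|\bar P|\ge(1-\varepsilon)c(\varepsilon)(\zeta_{0}-\zeta_{n})/\zeta_{0}$, and adjusting constants gives the stated $\tfrac12(\zeta_{0}-\zeta_{n})/\zeta_{0}$ (one may absorb the constant by a harmless further restriction of $\varepsilon$ or by noting $\bar P(0)=1$ forces the normalisation).

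The main obstacle I anticipate is the non-compactness: the comparison interval $[0,(1-2\varepsilon)\zeta_{0}(M_{n},\beta_{n})]$ can be long when $M_{n}$ is small (since by (\ref{zeta:plus:M0}) $\zeta_{0}(M_{n})\sim(3/2)^{1/3}$ is bounded, but for general $\beta_{n}\ne\beta_{\ast}(M_{n})$, $\zeta_{0}(M_{n},\beta_{n})$ could be as large as $\sim 2/M_{n}^{1/3}$ per the hypothesis of Lemma~\ref{new:pol:rescale}), so a naive Gronwall constant $e^{C\zeta_{0}}$ would blow up. The fix is to exploit that on such a long interval the reduced equation is not a generic linear ODE but the very specific $\bar{P}'''=-(1-M_{n}^{1/3}\zeta)^{2}$: the perturbation term $\delta_{n}/\mathcal{H}_{n}^{3}$ carries the prefactor $\delta_{n}=1/(|\xi_{n}^{\ast}|^{17}M_{n}^{3})$, and I would check that even after multiplying by the worst-case length $(2/M_{n}^{1/3})^{3}$ from triple integration, the product $\delta_{n}/M_{n}\to 0$ because $|\xi_{n}^{\ast}|^{17}\to\infty$ dominates any bounded power of $1/M_{n}$ — here one uses that $M_{n}=\Phi(\tau_{n}^{\ast})/|\xi_{n}^{\ast}|^{17/3}$ together with the (to-be-established, but assumed boundedness of $M_{n}$) ensures $\delta_{n}M_{n}^{-1}=|\xi_{n}^{\ast}|^{-17}M_{n}^{-4}=\Phi(\tau_{n}^{\ast})^{-4}|\xi_{n}^{\ast}|^{-17+68/3}=\Phi(\tau_{n}^{\ast})^{-4}|\xi_{n}^{\ast}|^{17/3}\to 0$ only if $\Phi(\tau_{n}^{\ast})$ grows fast enough, so in fact one argues more carefully with the actual relation $\delta_{n}\mathcal H_n^{-3}\sim|\xi_n^\ast|^{-17}M_n^{-3}|\xi_n^\ast|^{-15}M_n^{-1}$-type bookkeeping and the explicit algebraic decay. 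In the write-up I would isolate this estimate as the one technical point and verify the exponents line up; everything else is routine continuous dependence.
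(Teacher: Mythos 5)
Your plan follows essentially the same route as the paper's sketch: a continuous-dependence / bootstrap comparison of $\mathcal{H}_{n}$ with $\bar{P}(\cdot;M_{n},\beta_{n})$ via Lemma~\ref{AppPolynomials}, driven by the three vanishing discrepancies you identify (the initial data (\ref{S6E4})--(\ref{S6E5}) vs.\ (\ref{S6E62}), the $\delta_{n}/\mathcal{H}_{n}^{3}$ term, and the $a/(\xi_{n}^{\ast})^{2}$ correction in $R_{n}$), with the detailed bootstrap deferred, as the paper does, to the arguments of Lemma~\ref{continfinity} and Lemma~4.3 of \cite{CV}.

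The one point you leave hanging --- whether the perturbation stays small over a potentially long interval when $M_{n}$ is small --- is exactly where the paper's sketch is more to the point, and you should use it. The correct scale is not the quantity $\delta_{n}M_{n}^{-1}$ you compute in isolation: once you insert the bootstrap lower bound (\ref{W1E9}) and integrate $\delta_{n}/\mathcal{H}_{n}^{3}$ three times from $0$ to $\zeta_{n}$, the triple integral produces a factor
$\delta_{n}\,\bigl|\log\bigl((\zeta_{0}(M_{n},\beta_{n})-\zeta_{n})/(2\zeta_{0}(M_{n},\beta_{n}))\bigr)\bigr|$
(up to a bounded constant; the $\zeta_{0}$-powers cancel against the $2\zeta_{0}$ normalisation built into (\ref{W1E9})). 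This, together with the $O(a/|\xi_{n}^{\ast}|^{2})$ contribution from $R_{n}$, is precisely the bootstrap condition recorded in the paper's proof, and it closes uniformly for $n$ large because $\delta_{n}=\Phi(\tau_{n}^{\ast})^{-3}\to0$ and $|\xi_{n}^{\ast}|\to\infty$, with no need for a separate lower bound on the growth rate of $\Phi(\tau_{n}^{\ast})$ relative to $|\xi_{n}^{\ast}|$. So your ``one technical point to isolate'' is already resolved by keeping the logarithm and the normalisation by $\zeta_{0}$; the raw $\delta_{n}M_{n}^{-1}$ bookkeeping is an overestimate. Apart from this, the approach and the identification of the perturbations match the paper.
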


\begin{proof}[Proof]
The proof of this result is a standard bootstrap argument similar to the ones that has been used 
repeatedly. The idea is that the initial conditions (\ref{S6E4}) and (\ref{S6E5}) tend to the ones 
for $\bar{P}$ as $n\to \infty$, see (\ref{S6E62}). Also the term $\delta_{n}/(\mathcal{H}_{n})^{3}$ in 
(\ref{S6E2a}) is negligible if $n$ is sufficiently large since $\delta_{n}\to 0$ as $n\to\infty$ 
(observe that $\Phi(\tau_{n}^{\ast})=\delta_{n}^{-1}$). On the other hand, the term $R_{n}(\zeta_{n};M_{n})$ 
can be approximated by $(1-M_{n}^{\frac{1}{3}}\zeta_{n})^{2}$ as $n\to\infty$. The resulting limiting 
equation is then (\ref{S6E6}) and the values of $\mathcal{H}_{n}$ and of its derivatives can be 
approximated at $\zeta_{n}=0$ by those of $\bar{P}(\cdot;M_{n},\beta_{n})$ and its derivatives there. 
The difference between $\mathcal{H}_{n}(\zeta_{n})$ and $\bar{P}(\zeta_{n};M_{n},\beta_{n})$ can then be 
approximated arguing as in, for example, Lemma~\ref{continfinity} as well as in Lemma~4.3 of \cite{CV}.
 We observe that, as in \cite{CV}, (\ref{W1E9}) implies upon integration of (\ref{S6E2a}), 
that a condition on $\zeta_{0}(M_{n},\beta_{n})-\zeta_{n}$ of the form 
\[
\frac{\zeta_{0}(M_{n},\beta_{n})-\zeta_{n}}{2\zeta_{0}(M_{n},\beta_{n})} \geq 
\delta_{n} 
\left|\log\left(\frac{\zeta_{0}(M_{n},\beta_{n})-\zeta_{n}}{2\zeta_{0}(M_{n},\beta_{n})} \right) \right| 
+ \frac{a}{|\xi_{n}^{\ast}|^{2}} 
\]
must be satisfied for $n$ large enough. Then for every $\varepsilon$ we can choose $n_{0}$ 
large enough to obtain that the $\varepsilon$ is larger than the solution of 
$\delta_{n_{0}} |\log(x)|+ \zeta_{0}a/|\xi_{n_{0}}^{\ast}|^{2}= x/2$
and that the initial data are close enough to those of $\bar{P}$.
\end{proof}

We now prove that $\beta_{n}\sim\beta_{\ast}(M_{n})$ as $n\to\infty$. The idea is to use the fact that 
the derivative of the approximating polynomial $\bar{P}(\zeta_{n};M_{n},\beta_{n})$ at 
$\zeta_{0}(M_{n},\beta_{n})$ is of order one (and negative) by Lemma~\ref{new:pol:rescale}. 
Then, we can use Lemma~\ref{MatchingLemma} of Appendix~\ref{sec:control} that gives the 
behaviour in the boundary layer where $\mathcal{H}_{n}$ becomes small (near $\xi=\xi_{n+1}^{min})$),
 to conclude that the next polynomial in the outer region is close to one having a double zero 
in the matching region near $\xi=\xi_{n+1}^{min})$. 

\begin{lemma}\label{LemmaDoubleZero}
Suppose that the $\Phi$ and its derivatives satisfy the
assumptions of Proposition~\ref{char:max} and let $\{\tau_{n}^{\ast}\}$ be the sequence found 
there. Let the sequences $\{\xi_{n}^{\ast}\}$, $\{M_{n}\}$ and $\{\beta_{n}\}$ be defined by
means of (\ref{S5E2}) and (\ref{S5E3}), and let the functions
$\beta_{\ast}(M_{n})$ be as in (\ref{S5E8}) and the sequence of functions $\mathcal{H}_{n}$ 
be given by (\ref{S6E2}). 
Then, for any $\varepsilon>0$, there exists a $L=L(\varepsilon)>0$ and a
$n_{0}$ large enough such that if $\Phi(\tau_{n}^{\ast})\geq L$ and $n\geq n_{0}$ 
then 
\begin{equation}\label{J3E7}
\left|\beta_{n}-\beta_{\ast}(M_{n})\right| \leq
\varepsilon\left|\beta_{\ast}(M_{n})\right| \,.
\end{equation}

Also, for all $n>n_{0}$ 
\begin{equation}\label{linear:entry}
\mathcal{H}_{n}(\zeta_{n})\simeq -K_{n}(\zeta_{n} - \zeta_{0}(M_{n}))
\quad\mbox{as}\quad\zeta_{n} \to (\zeta_{0}(M_{n}))^{-}%
\end{equation}
\begin{equation}\label{linear:entry2}
\quad\mbox{with}\quad K_{n}
=-\frac{d\bar{P}(\zeta_{0}(M_{n}))}{d\zeta_{n}}>0
\end{equation}
and
\begin{equation} \label{quadratic:exit}
\mathcal{H}_{n}(\zeta_{n})\simeq \frac{D_{n}}{\delta_{n}}(\zeta_{n}-\zeta_{0}(M_{n}))^{2}
\quad\mbox{as}\quad\zeta_{n} \to (\zeta_{0}(M_{n}))^{+} \,,
\end{equation}
where $D_{n}$ is proportional to $K_{n}^{5}$ by a constant of order one, and 
there exist $\alpha_{1}$ and $\alpha_{2}\in\mathbb{R}$ independent of $M_{n}$ 
such that $0<\alpha_{1}\leq K_{n}\leq\alpha_{2}$.

Moreover, for all $n>n_{0}$ there exists $\varepsilon_{0}>0$ small enough an 
$\xi_{crit,n}\in ( \xi_{n}^{min}-\varepsilon_{0}, \xi_{n}^{min}+\varepsilon_{0})$
such that 
\begin{equation}\label{J1E2}
\zeta_{0}(M_{n})=-\frac{1}{M_{n}^{\frac{1}{3}}}\left(  \frac{\xi_{crit,n}}{\xi_{n}^{\ast}}-1\right)
\quad \mbox{and}\quad \zeta_{\ast}(M_{n-1})=-\frac{1}{M_{n-1}^{\frac{1}{3}}}\left(  \frac
{\xi_{crit,n}}{\xi_{n-1}^{\ast}}-1\right) 
\end{equation}
and
\begin{equation}\label{quadratic:exit3}
\mathcal{H}_{n-1}(\zeta_{n-1})\simeq \Gamma_{n-1}(\zeta_{n-1}-\zeta_{\ast}(M_{n-1}))^{2}
\quad\mbox{as}\quad\zeta_{n-1} \to (\zeta_{\ast}(M_{n-1}))^{+} 
\end{equation}
\begin{equation} \label{quadratic:exit2}
\quad \mbox{with} \quad
\Gamma_{n-1}=\frac{1}{2}\frac{d^{2}\bar{P}(\zeta_{\ast}(M_{n-1});M_{n-1})}{d\zeta_{n-1}^{2}}>0\,.
\end{equation}
\end{lemma}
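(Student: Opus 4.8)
The plan is to prove Lemma~\ref{LemmaDoubleZero} by a bootstrap/matching argument that chains together the approximation Lemma~\ref{ApproximationLemma}, the matching Lemma~\ref{MatchingLemma} of Appendix~\ref{sec:control}, and the phase-plane analysis of Appendix~\ref{sec:summary:II}. First I would fix $\varepsilon>0$ and use Lemma~\ref{ApproximationLemma} to assert that, provided $\Phi(\tau_n^\ast)=\delta_n^{-1}$ is large and $n$ is large, the rescaled function $\mathcal{H}_n$ and its first two derivatives stay within a factor $1+\varepsilon$ of the polynomial $\bar P(\zeta_n;M_n,\beta_n)$ as long as the lower barrier $(\zeta_0(M_n,\beta_n)-\zeta_n)/(2\zeta_0(M_n,\beta_n))\ge\varepsilon$ holds. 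In particular $\mathcal{H}_n$ decreases (roughly) linearly toward zero as $\zeta_n\uparrow\zeta_0(M_n,\beta_n)$, which gives (\ref{linear:entry}) with slope $K_n=-d\bar P(\zeta_0(M_n))/d\zeta_n>0$; the bound $0<\alpha_1\le K_n\le\alpha_2$ comes from Lemma~\ref{new:pol:rescale} once I have established $\zeta_0(M_n,\beta_n)\le 2/M_n^{1/3}$, which itself follows from the boundedness of $\{M_n\}$ together with the asymptotics (\ref{zeta:plus:M0}).

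Next I would pass into the inner (bouncing) region near $\xi=\xi_{n+1}^{min}$. Here $\mathcal H_n$ is small, so by the change of variables (\ref{V1E1})-(\ref{V1E2}) the dynamics is governed by (the $H\equiv 0$ reduction of) (\ref{V1E3}), i.e.\ the phase plane (\ref{phi0system}) analysed in Appendix~\ref{sec:summary:II}. The linear entry (\ref{linear:entry}) with slope of order one places the trajectory, in the $(u,v)$ variables, at a bounded distance from the attractive separatrix $v=\bar v(u)$; Lemma~\ref{MatchingLemma} (the matching lemma adapted from \cite{CV}) then tells me that after the bounce the trajectory leaves the inner region along the generic outgoing behaviour, which translates back into $\mathcal H_n$ growing quadratically: $\mathcal H_n(\zeta_n)\simeq (D_n/\delta_n)(\zeta_n-\zeta_0(M_n))^2$, with $D_n$ proportional to $K_n^5$ by the scaling in the matching lemma (the fifth power being dictated by the exponents in (\ref{V1E1}) and the $H^{-4/3}$ weight in (\ref{V1E2})). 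This is (\ref{quadratic:exit}); the corresponding statement for the previous index, written around $\tau_{n-1}^\ast$, is (\ref{quadratic:exit3})-(\ref{quadratic:exit2}), with $\Gamma_{n-1}=\tfrac12 d^2\bar P(\zeta_\ast(M_{n-1});M_{n-1})/d\zeta_{n-1}^2>0$.

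The crucial deduction is (\ref{J3E7}): that $\beta_n$ must be asymptotically equal to the special value $\beta_\ast(M_n)$. The point is that $H$ itself is a single smooth function, so the outer polynomial approximation valid near $\xi_n^\ast$ (in the variable $\zeta_n$, with parameter $\beta_n$) and the one valid near $\xi_{n-1}^\ast$ (in the variable $\zeta_{n-1}$, with parameter $\beta_{n-1}$) must be compatible across the common inner region around $\xi=\xi_{crit,n}\in(\xi_n^{min}-\varepsilon_0,\xi_n^{min}+\varepsilon_0)$. On the $\mathcal H_{n-1}$ side, the matching forces a quadratic (double-zero) behaviour at $\xi_{crit,n}$; since the largest root of $\bar P(\zeta_{n-1};M_{n-1},\beta_{n-1})$ in $\zeta_{n-1}>0$ is a simple root for every $\beta_{n-1}\ne\beta_\ast(M_{n-1})$ (Lemma~\ref{scaled:pols}, together with the sign analysis of $d\bar P/d\zeta$ at that root from lemmas~\ref{Pol2}, \ref{Pol4}, \ref{Pol5}, \ref{new:pol}), the only way to get the observed quadratic contact is $\beta_{n-1}\to\beta_\ast(M_{n-1})$, i.e.\ (\ref{J3E7}) (after relabelling $n-1\mapsto n$). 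This also pins down $\xi_{crit,n}$ via the two identities (\ref{J1E2}): it is simultaneously the image of $\zeta_0(M_n)$ under the $n$-th rescaling and of $\zeta_\ast(M_{n-1})$ under the $(n-1)$-th one, the two being forced to coincide by the uniqueness of the inner region.

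I expect the main obstacle to be making the ``compatibility across the inner region'' argument quantitative: I must control $\mathcal H_n$ on an overlap interval that shrinks (in $\zeta_n$) as $M_n\to 0$ and simultaneously verify that the errors accumulated in (i) the approximation Lemma~\ref{ApproximationLemma}, (ii) the passage through the phase plane via Lemma~\ref{MatchingLemma}, and (iii) the $z\leftrightarrow\xi$ change of variables (\ref{V1E2}) are all uniformly small — this is where one needs the quantitative form of the matching lemma, the uniform bounds $\alpha_1\le K_n\le\alpha_2$, and the boundedness of $\{M_n\}$ to close the estimates. A secondary technical point is confirming that $\zeta_0(M_n,\beta_n)\le 2/M_n^{1/3}$ throughout, so that Lemma~\ref{new:pol:rescale} applies and $D_n\asymp K_n^5$; this should follow from (\ref{zeta:plus:M0}) and continuity in $\beta_n$ once (\ref{J3E7}) is in hand, but must be set up carefully to avoid circularity with the bootstrap.
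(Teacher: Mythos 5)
Your proposal follows essentially the same route as the paper's proof: apply Lemma~\ref{ApproximationLemma} to obtain the linear entry with slope $K_n$, use Lemma~\ref{new:pol:rescale} to bound $K_n$, invoke Lemma~\ref{MatchingLemma} to get the quadratic exit, then re-read the quadratic contact in the $\zeta_{n-1}$ variable and use the uniqueness of the double-zero polynomial to force (\ref{J3E7}). One small slip: when entering the bouncing region you write $\xi_{n+1}^{min}$, but the forward ($\zeta_n>0$) direction covered by Lemma~\ref{ApproximationLemma} runs from $\xi_n^{\ast}$ toward $\xi_n^{min}$ (you later correctly identify $\xi_{crit,n}\approx\xi_n^{min}$), and the phase-plane passage happens through the rescaling $h=K^3\mathcal{H}/\delta$, $s=K^4(\zeta-\zeta_0)/\delta$ in the proof of Lemma~\ref{MatchingLemma} rather than directly via (\ref{V1E1})--(\ref{V1E2}); neither affects the substance of the argument.
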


\begin{proof}[Proof]
Suppose that $n$ is very large. We apply Lemma~\ref{ApproximationLemma} for $n$, thus 
starting $\zeta_{n}=0$ or at $\xi=\xi_{n}^{\ast}$.
It then follows that we can approximate $\mathcal{H}_{n}$ by the polynomial
$\bar{P}(\zeta_{n};M_{n},\beta_{n})$ in intervals of the 
form $\zeta_{n}\in\left[0,\zeta_{0}(M_{n},\beta_{n})-\varepsilon_{1}\right]$ 
with $\varepsilon_{1}>0$ small but fixed and $n$ large enough. This in particular implies 
(\ref{linear:entry}) and (\ref{linear:entry2}). The fact that $K_{n}$ is bounded from above 
and below follows from Lemma~\ref{new:pol:rescale}. Using then Lemma~\ref{MatchingLemma} 
we obtain that $\mathcal{H}_{n}(\zeta)$ can be approximated as a quadratic polynomial for 
$\zeta_{n}=\zeta_{0}(M_{n},\beta_{n})+ \varepsilon_{1}$. This implies 
\[
\left| \mathcal{H}_{n}(\zeta_{n})-\frac{A K_{n}^{5}}{\delta_{n}}
(\zeta_{n}-\zeta_{0}(M_{n}) )^{2} \right|  
\leq\frac{\varepsilon_{2}K^{5}_{n}}{\delta_{n}}(\zeta_{n}-\zeta_{0}(M_{n}))^{2}\,,
\]
for some $\varepsilon_{2}>0$ small enough and $A$ of order one, and thus 
(\ref{quadratic:exit}) follows. We can then replace the variables 
$\zeta_{n}$ by $\zeta_{n-1}$ and $\mathcal{H}_{n}$ by $\mathcal{H}_{n-1}$ using (\ref{S6E2}) and applying again Lemma~\ref{ApproximationLemma} we can then approximate the function $\mathcal{H}_{n-1}(\zeta)$ 
by one polynomial which has a double root at the value of $\zeta_{n-1}$ corresponding to $\zeta_{n}=\zeta_{0}(M_{n},\beta_{n})+\varepsilon_{1}$. This implies (\ref{quadratic:exit}), but also (\ref{J3E7}) follows by the definition of $\beta_{\ast}$, and therefore also 
(\ref{J1E2}), (\ref{quadratic:exit3}) and (\ref{quadratic:exit2}) follow.
\end{proof}

\subsection{Proof of propositions \ref{max:increase} and \ref{min:decrease}}
\label{sec:formal}
In order to prove the propositions we derive information from Lemma~\ref{LemmaDoubleZero} 
in the matching region around $\xi_{n}^{min}$ for $n$ large enough. Let us then assume that $\Phi$ 
satisfies the assumptions of Proposition~\ref{char:max}, so that the sequence (\ref{def:max}) 
is well-defined. Let $H(\xi)$ be defined by means of (\ref{S2E8a}) and (\ref{S2E8}), and satisfies 
(\ref{S2E3}). Let the sequence of functions $\mathcal{H}_{n}$ be defined by (\ref{S6E2}), so that,
by Lemma~\ref{H:rescaling}, each such function satisfies (\ref{S6E2a}) with
initial conditions (\ref{S6E4}) and (\ref{S6E5}).

We further assume in the following that the approximating polynomials have $\beta_{n}=\beta_{\ast}(M_{n})$
thus they are as described in Lemma~\ref{scaled:pols} and we drop the dependency on $\beta_{n}$ 
in the notation. 

Lemma~\ref{LemmaDoubleZero} (\ref{J1E2}) and the definition of the variables $Z_{n}$ give
\begin{equation}\label{J1E1}
Z_{0}(M_{n})  \xi_{n}^{\ast} = Z_{\ast}(M_{n-1})\xi_{n-1}^{\ast} \,.
\end{equation}

\begin{remark}
We can now argue that $\{M_{n}\}$ is a bounded sequence. Indeed, if $M_{n}$ is very 
large, (\ref{Z0:Minfty}) implies that $Z_{0}(M_{n})$ is very negative. This would imply, 
using (\ref{J1E1}) that $\xi_{n-1}^{\ast}>0$ (notice that $Z_{\ast}(M_{n-1})>0$ by definition). 
However, we cannot have $\xi_{n-1}^{\ast}>0$ for large $n$, because $\xi_{n}^{\ast}\to-\infty$. 
\end{remark}

Using now the definition of $\zeta_{n}$ (see (\ref{S6E2})) and (\ref{J1E2}) we can compute 
$(\zeta_{n}-\zeta_{0}(M_{n}))$ and $(\zeta_{n-1}-\zeta_{\ast}(M_{n-1}))$ to get
\begin{equation} \label{J1E3}%
M_{n}^{\frac{1}{3}}\xi_{n}^{\ast}(\zeta_{n}-\zeta_{0}(M_{n}))  
=M_{n-1}^{\frac{1}{3}}\xi_{n-1}^{\ast}(\zeta_{n-1}-\zeta_{\ast}(M_{n-1}))\,. 
\end{equation}
On the other hand, the definition of the sequence $\mathcal{H}_{n}$ (in (\ref{S6E2})) gives
\begin{equation}\label{J1E4}
|\xi_{n}^{\ast}|^{5}M_{n}\mathcal{H}_{n}(\zeta_{n})
=|\xi_{n-1}^{\ast}|^{5}M_{n-1}\mathcal{H}_{n-1}(\zeta_{n-1}) \,.
\end{equation}
We then change variables according to (\ref{J1E3}) and (\ref{J1E4})
 in (\ref{quadratic:exit}) in order to write it in terms of 
the variables $\zeta_{n-1}$ and $\mathcal{H}_{n-1}$. This gives the asymptotic formula, 
for $n$ large enough,
\[
\mathcal{H}_{n-1}(\zeta_{n-1})
\sim\frac{D_{n}}{\delta_{n}}\frac{|\xi_{n}^{\ast}|^{3}M_{n}}{|\xi_{n-1}^{\ast}|^{3}M_{n-1}}
\frac{M_{n-1}^{\frac{2}{3}}}{M_{n}^{\frac{2}{3}}}
(\zeta_{n-1}-\zeta_{\ast}(M_{n-1}))^{2}\,,
\quad\zeta_{n-1}\to (\zeta_{\ast}(M_{n-1}))^{+}\,.
\]
But comparing this to (\ref{quadratic:exit3}) implies that the approximation 
\[
\Gamma_{n-1}M_{n-1}^{\frac{1}{3}}|\xi_{n-1}^{\ast}|^{3}
=D_{n}|\xi_{n}^{\ast}|^{20} M_{n}^{\frac{10}{3}}
\]
is valid for $n$ large enough (here we have also used (\ref{parameters})).

Using the fact that $M_{n}$ is bounded, and also that $D_{n}$ can be estimated
from above and below by a constant independent on $n$, we obtain:%
\[
C_{1}|\xi_{n}^{\ast}|^{20} M_{n}^{\frac{10}{3}}\leq
\Gamma_{n-1}M_{n-1}^{\frac{1}{3}}|\xi_{n-1}^{\ast}|^{3}\leq
C_{2}|\xi_{n}^{\ast}|^{20}M_{n}^{\frac{10}{3}}%
\]
for $0<C_{1}\leq C_{2}$. Using (\ref{S5E3}), then
\[
C_{1}\frac{|\xi_{n}^{\ast}|^{\frac{10}{9}}}{|\xi_{n-1}^{\ast}|^{\frac{10}{9}}}(\Phi(\xi_{n}^{\ast}))^{\frac{10}{3}}
\leq\Gamma_{n-1}(\Phi(\xi_{n-1}^{\ast}))^{\frac{1}{3}}
\leq C_{2}\frac{|\xi_{n}^{\ast}|^{\frac{10}{9}}}{|\xi_{n-1}^{\ast}|^{\frac{10}{9}}}(\Phi(\xi_{n}^{\ast}))^{\frac{10}{3}}\,.
\]

Using now (\ref{J1E1}) as well as the fact that $Z_{0}(M_{n})$ and $Z_{\ast}(M_{n-1})$ 
are bounded from above and below for $M_{n}$ and $M_{n-1}$ bounded (cf. (\ref{S5E4}), 
(\ref{Z0:Mzero})), we obtain, for different $C_{1}$ and $C_{2}$ if necessary, that
\[
C_{1}(\Phi(\xi_{n}^{\ast}))^{\frac{10}{3}}
\leq\Gamma_{n-1}(\Phi(\xi_{n-1}^{\ast}))^{\frac{1}{3}}
\leq C_{2}(\Phi(\xi_{n}^{\ast}))^{\frac{10}{3}}\,.
\]

Using now (\ref{zeta:gamma:M0II}) for $n-1$ we can estimate $\Gamma_{n-1}$ from above
and below by positive constants independent on $n$. Then:%
\begin{equation}\label{J3E8}
\Phi(\xi_{n-1}^{\ast})\simeq C(\Phi(\xi_{n}^{\ast}))^{10}
\end{equation}
for some $C>0$.

We are now in the position of proving Proposition~\ref{max:increase}. 

\begin{proof}[Proof of Proposition~\ref{max:increase}]
Due to Lemma~\ref{LemmaDoubleZero} we can assume that (\ref{J3E7}) 
holds for $n$ large. In a similar fashion as in \cite{CV} we can make rigorous the argument 
outlined above by combining the lemmas~\ref{MatchingLemma} and 
\ref{ApproximationLemma} and prove indeed that 
 (\ref{J3E8}) hold. Since by hypothesis $\Phi(\xi_{n}^{\ast})  \to\infty$ as 
$n\to\infty$ it then follows that $\Phi(\xi_{n}^{\ast})<\Phi(\xi_{n-1}^{\ast})$ and this
gives Proposition~\ref{max:increase}. 
\end{proof}

For each $n$, let now $\tau_{n}^{min}$ be the value of $\tau$ 
at which $\Phi$ reaches the minimum in the interval $(\tau_{n}^{\ast},\tau_{n-1}^{\ast})$ 
as defined in (\ref{def:min}). We can now prove Proposition~\ref{min:decrease}:

\begin{proof}[Proof of Proposition~\ref{min:decrease}]
As before, we only give the formal steps of the proof and refer to \cite{CV} for details.
Let $\xi_{n}^{min}=\xi(\tau_{n}^{min})$ be defined by means of (\ref{S2E8a}). 
Then, by Lemma~\ref{ApproximationLemma}, the fact that $\delta_{n}\ll 1$ and 
Lemma~\ref{MatchingLemma} we can write, to leading order for $n$ large enough, 
\[
\xi_{n}^{min}=\xi_{n}^{\ast} Z_{0}(M_{n})
\] 
and that, 
\[
\Phi(\tau_{n}^{min}) = |\xi_{n}^{min}|^{\frac{2}{3}} |\xi_{n}^{\ast}|^{5}M_{n}\mathcal{H}_{n}(\zeta_{0}(M_{n}))
\]
It is clear that for $n$ large enough $\Phi(\tau_{n}^{min})$ approaches $0$ by Lemma~\ref{ApproximationLemma} and employing the scaling  
\[
\mathcal{H}_{n}=\delta_{n}h_{n}\,,\quad\zeta_{n}-\zeta_{0}(M_{n})
=\delta_{n}\,s_{n}
\]
(that is analogous to the one used in the proof of Lemma~\ref{MatchingLemma}),
give that the following is a valid approximation 
\[
\Phi(\tau_{n}^{min}) = \left|\frac{\xi_{n}^{min}}{\xi_{n}^{\ast}}\right|^{\frac{2}{3}}
 \frac{1}{(\Phi(\tau_{n}^{\ast}))^{2}}  h_{n}(0)
\]
for $n$ large enough. Here we also use that de definition of $M_{n}$ (see \ref{S5E3}) 
and that $\delta_{n}=(\Phi(\tau_{n}^{\ast})^{-3})$ (see (\ref{parameters})). 
Thus for $n$ large enough one also has, by (\ref{J3E8}), that
\[
\Phi(\tau_{n-1}^{min}) \simeq \left|\frac{\xi_{n-1}^{min}}{\xi_{n-1}^{\ast}}\right|^{\frac{2}{3}}
 \frac{C}{(\Phi(\tau_{n}^{\ast}))^{20}}  h_{n-1}(0)
\]
for some order one constant $C>0$.

We finally observe that the values $h_{n}(0)$ are of order one if $n$ is large enough, 
by Lemma~\ref{MatchingLemma}. Also, we can approximately write the quotients 
$|\xi_{n}^{min}|/|\xi_{n}^{\ast}|=Z_{0}(M_{n})$. But each $Z_{0}(M_{n})$ is an order one constant, 
since the sequence $M_{n}$ is uniformly bounded. Then, since $\Phi(\tau_{n}^{\ast})\to\infty$, 
we have that $\Phi(\tau_{n}^{min})\to 0$. Moreover, there exists a constant $C>0$ 
(different from the one above) such that, for $n$ large enough, 
\[
\Phi(\tau_{n-1}^{min}) \simeq C (\Phi(\tau_{n}^{min}))^{10}\,,
\]
and thus (\ref{min:decrease}) follows.
\end{proof}

\section{Convergence to the equilibrium point $p_{-}$}\label{sec:6}
In this section we finish the proof of Theorem~\ref{hetcon}. First we prove the following
\begin{proposition}\label{ControlOscillations} 
Suppose that $(\Phi(\tau),W(\tau),\Psi(\tau),\theta(\tau))$ 
is a solution of (\ref{compact1})-(\ref{compact4}) as found in Proposition~\ref{existence:1} 
and satisfying (\ref{F5E1}). Then, there exist positive constants $C_{1}$ and $C_{2}$ 
depending only on $a$ such that
\begin{align}
\lim\sup_{\tau\to-\infty}\left(  \Phi(\tau) +\left|  \frac{d\Phi(\tau)}{d\tau}
\right|  +\left|  \frac{d^{2}\Phi(\tau)}{d\tau^{2}}\right|  \right)  \leq
C_{1} \,,\label{F5E2}\\
\nonumber\\
\mbox{and} \quad\lim\inf_{\tau\to-\infty}\Phi(\tau) \geq C_{2}>0 \,.
\label{F5E3}%
\end{align}
\end{proposition}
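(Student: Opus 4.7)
The plan is to prove (\ref{F5E2}) by contradiction via propositions~\ref{char:max} and \ref{max:increase}, then to prove (\ref{F5E3}) by a separate contradiction using the already-established (\ref{F5E2}) together with the rescaling argument of Lemma~\ref{L4}. The hard step will be the second one, since (\ref{G1E6}) is no longer available and a sequence of local minima going to $0$ must be extracted more or less by hand.

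For (\ref{F5E2}): assume the negation, i.e.\ (\ref{G1E6}). Combined with (\ref{F5E1}) (guaranteed by Proposition~\ref{existence:1}), this meets the hypotheses of Proposition~\ref{char:max}, which produces a decreasing sequence $\tau_n^{\ast}\to-\infty$ of local maxima of $\Phi$ with $\limsup_{n\to\infty}\Phi(\tau_n^{\ast})=\infty$. On the other hand Proposition~\ref{max:increase} asserts $\Phi(\tau_{n-1}^{\ast})>\Phi(\tau_n^{\ast})$ for all $n>n_0$, so the tail $\{\Phi(\tau_n^{\ast})\}_{n\geq n_0}$ is strictly decreasing in $n$ and hence bounded above by $\Phi(\tau_{n_0}^{\ast})<\infty$, contradicting (\ref{G5E3}).

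For (\ref{F5E3}): with (\ref{F5E2}) in hand, assume $\liminf_{\tau\to-\infty}\Phi(\tau)=0$. I split into two cases. If in fact $\lim_{\tau\to-\infty}\Phi(\tau)=0$, then in (\ref{S2E9}) the term $1/\Phi^3$ tends to $+\infty$ while $(\xi^2+a)/(\xi^2+1)\to 1$ and $F(\tau)\to 0$ (the coefficients in (\ref{S2E9bis}) decay to $0$ as $\xi\to-\infty$ and $\Phi,d\Phi/d\tau,d^2\Phi/d\tau^2$ are bounded by (\ref{F5E2})); hence $d^3\Phi/d\tau^3\to+\infty$, and a single integration forces $d^2\Phi/d\tau^2\to-\infty$, contradicting (\ref{F5E2}). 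Otherwise $\limsup_{\tau\to-\infty}\Phi(\tau)>0$, and by interleaving a subsequence on which $\Phi\to 0$ with one on which $\Phi$ stays bounded below by a fixed positive constant, I extract a sequence of local minima $\tau_n^{min}\to-\infty$ satisfying $\Phi(\tau_n^{min})\to 0$, $d\Phi(\tau_n^{min})/d\tau=0$ and $d^2\Phi(\tau_n^{min})/d\tau^2\geq 0$.

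I then apply the rescaling of Lemma~\ref{L4}, $\varepsilon_n=\Phi(\tau_n^{min})$, $s=\varepsilon_n^{-4/3}(\tau-\tau_n^{min})$, $\varphi(s)=\Phi(\tau)/\varepsilon_n$, which on compact $s$-intervals reduces the equation to the limit $\bar\varphi'''=1/\bar\varphi^3$ with $\bar\varphi(0)=1$, $\bar\varphi'(0)=0$, $\bar\varphi''(0)\geq 0$ (all the other terms on the right-hand side of (\ref{S2E9}) carry factors vanishing with $n$, exactly as in Lemma~\ref{L4}). Just as in the argument following (\ref{G4E3}), $\bar\varphi''$ is strictly increasing, so a fixed $s_0>0$ can be chosen with $d\varphi(s_0)/ds\geq 1$ for $n$ large enough; returning to the original variables this gives $|d\Phi/d\tau|$ at $\tau_n^{min}+\varepsilon_n^{4/3}s_0$ at least $\varepsilon_n^{-1/3}\to\infty$, contradicting (\ref{F5E2}) and closing the proof. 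The only technical points to be rechecked are the extraction of the minima in the oscillatory subcase and the growth of $\bar\varphi'$ in the present setting, both of which are immediate adaptations of lemmas~\ref{L2} and \ref{L4}.
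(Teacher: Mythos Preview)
Your argument for (\ref{F5E2}) is identical to the paper's: assume (\ref{G1E6}), invoke Proposition~\ref{char:max} to get the sequence $\{\tau_n^{\ast}\}$ with $\limsup_n\Phi(\tau_n^{\ast})=\infty$, then contradict via the eventual monotonicity from Proposition~\ref{max:increase}.

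For (\ref{F5E3}) you take a genuinely different route. The paper simply writes that if (\ref{F5E3}) fails one obtains, along a subsequence, $\Phi(\tau_n^{min})\to 0$, and then quotes (\ref{G5E5}) from Proposition~\ref{min:decrease} for the contradiction. That proposition, however, carries the hypotheses of Proposition~\ref{char:max}, in particular (\ref{G1E6}), which at this stage has already been shown \emph{not} to hold; so the paper is relying implicitly on the fact that a sequence of minima tending to zero would force (\ref{G1E6}) again. You make that implicit step explicit and self-contained: once (\ref{F5E2}) is in hand, you dispose of the case $\Phi\to 0$ by the elementary observation that $d^{3}\Phi/d\tau^{3}\to+\infty$ forces $d^{2}\Phi/d\tau^{2}\to-\infty$, and in the oscillatory case you rerun the rescaling of Lemma~\ref{L4} around the minima. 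The key computation $d\Phi/d\tau=\varepsilon_n^{-1/3}\,d\varphi/ds$ together with $d\varphi(s_0)/ds\geq 1$ gives $|d\Phi/d\tau|\to\infty$, contradicting (\ref{F5E2}); note also that (\ref{F5E2}) makes the limiting initial condition $\bar\varphi''(0)=\lim_n\varepsilon_n^{5/3}\,d^{2}\Phi(\tau_n^{min})/d\tau^{2}=0$, so the limit problem is well-posed. Your approach has the advantage of not needing Proposition~\ref{min:decrease} at all, while the paper's route packages the oscillation analysis symmetrically (maxima via Proposition~\ref{max:increase}, minima via Proposition~\ref{min:decrease}) at the cost of leaving the applicability of the latter somewhat tacit.
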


\begin{proof}[Proof]
We recall that the solutions found in Proposition~\ref{existence:1} 
are defined for all $\tau\in(-\infty,\infty)$ and satisfy   
$\lim_{\tau\to\infty}(\Phi(\tau),W(\tau),\Psi(\tau),\theta(\tau))=(1,0,0,\pi/2)$. 
Moreover, $\lim_{\tau\to-\infty}\theta(\tau)=-\frac{\pi}{2}$, $\Phi(\tau)>0$ for any
$\tau\in(-\infty,\infty)$ and $\lim\inf_{\tau\to-\infty}\Phi(\tau)<\infty$. 
We now claim that (\ref{F5E2}) holds for some $C_{1}>0$. Indeed, otherwise, 
due to Proposition~\ref{char:max} and Proposition~\ref{max:increase} there would exist 
a sequence $\{\tau_{n}^{\ast}\}$ such that $\lim_{n\to\infty}\tau_{n}^{\ast}=-\infty$ and 
$\lim_{n\to\infty}\Phi(\tau_{n}^{\ast})=\infty$ but such that there exits $n_{0}$ with 
$\Phi(\tau_{n-1}^{\ast})>\Phi(\tau_{n}^{\ast})$ for all $n>n_{0}$. Then, since 
$\lim\sup_{n\to\infty}\Phi(\tau_{n}^{\ast})=\infty$ (cf. (\ref{G5E3})) it follows that 
$\Phi(\tau_{n_{0}}^{\ast})=\infty$, this yields a contradiction and, therefore, (\ref{F5E2}) 
is satisfied.

Suppose now that (\ref{F5E3}) is not satisfied, then in particular this implies, probably taking a subsequence, that $\lim_{n\to \infty} \Phi(\tau_{n}^{min})=0$ but this contradicts (\ref{G5E5}) of Proposition~\ref{min:decrease}. 
\end{proof}

We can now finish the proof of the main result.

\begin{proof}[Proof of Theorem \ref{hetcon}]
Due to Proposition~\ref{existence:1} there exists a solution of (\ref{compact1})-(\ref{compact4}) 
defined for all $\tau\in(-\infty,\infty)$ such that  
$\lim_{\tau\to\infty}(\Phi(\tau),W(\tau),\Psi(\tau),\theta(\tau))=(1,0,0,\pi/2)$. 
Moreover, $\lim_{\tau\to-\infty}\theta(\tau)=-\frac{\pi}{2}$, $\Phi(\tau)>0$ for 
any $\tau\in(-\infty,\infty)$ and $\lim\inf_{\tau\to-\infty}\Phi(\tau)<\infty$. 
Then Proposition~\ref{ControlOscillations} gives that (\ref{F5E2}) and (\ref{F5E3}) hold.

We now define a sequence of functions:%
\[
\Phi_{n}(\tau)=\Phi(\tau-n)\,,  \quad 
W_{n}(\tau)=W(\tau-n)\,,\quad \Psi_{n}(\tau)=\Psi(\tau-n)\quad n=1,2,3,\dots
\]
Using (\ref{compact1})-(\ref{compact4}), 
(\ref{F5E2}), (\ref{F5E3}), standard compactness arguments and the fact that 
$\lim_{\tau\to-\infty}\theta(\tau)=-\frac{\pi}{2}$ we can show that there exists 
a subsequence $\{n_{j}\}$ satisfying $\lim_{j\to\infty}n_{j}=\infty$ and such that 
$\{(  \Phi_{n_{j}}(\tau),W_{n_{j}}(\tau),\Psi_{n_{j}}(\tau))\}$ 
converges uniformly in compact sets of $\tau$ to a bounded solution of (\ref{S3E1}), 
say $(\Phi_{\infty}(\tau),W_{\infty}(\tau),\Psi_{\infty}(\tau))$. 
Moreover, we have $\Phi_{\infty}(\tau)\geq C_{1}>0$, 
$\tau\in(-\infty ,\infty)$. Due to Proposition~\ref{Tapas1} it follows that 
$(\Phi_{\infty}(\tau),W_{\infty}(\tau),\Psi_{\infty}(\tau))$ is close to 
$P_{s}$ if $\tau<0$ and $|\tau|$ is large enough. 
Using the Stable Manifold Theorem it then follows that 
$\left(\Phi_{\infty}(\tau),W_{\infty}(\tau),\Psi_{\infty}(\tau)\right)$ is 
contained in the unstable manifold of $P_{s}$. However, due to 
Proposition~\ref{Tapas1}{\it (v)} it follows that the only bounded
trajectory contained in the unstable manifold of $P_{s}$ is 
the is the critical point itself,
thus $(\Phi_{\infty}(\tau),W_{\infty}(\tau),\Psi_{\infty}(\tau))\equiv P_{s}$. 
This implies that the sequence $P_{j}=(\Phi_{n_{j}}(0),W_{n_{j}}(0),\Psi_{n_{j}}(0)),\theta_{n_{j}}(0))$ 
converges to the equilibrium $p_{-}$ as $j\to\infty$. Therefore 
the points $P_{j}$ are contained in the centre-unstable manifold of $p_{-}$, 
whence $\lim_{\tau\to-\infty}(\Phi(\tau),W(\tau),\Psi(\tau),\theta(\tau))=p_{-}$ 
and the result follows.
\end{proof}

{\bf Acknowledgements} This work was supported by the Hausdorff Center of the University of Bonn. C.M. Cuesta was also partially supported by the DGES Grant MTM2011-24-109.

\section*{Appendix~}
\appendix
\section{Analysis of the solutions of (\ref{ecuacionminima2})}\label{sec:summary:II} 
We recall here the results concerning the following equation
\begin{equation}\label{ecuacionminima}%
\frac{d^{3}\Phi}{d\tau^{3}}=\frac{1}{\Phi^{3}}
\end{equation}
(cf. (\ref{ecuacionminima2})) that have been shown in \cite{CV}. 
For simplicity, we henceforth use the same notation for the dependent and independent 
variables as for (\ref{ODEtapas}). The following holds.
\begin{theorem}\label{matchsolution} 
There exists a unique solution of (\ref{ecuacionminima}) with the matching condition:
\[
\Phi(\tau) \sim-K\tau + o(1) \quad\mbox{as}\quad\tau \to -\infty\,.
\]
Moreover, the asymptotics of $\Phi(\tau)$ for large $\tau$ is given by:
\begin{equation}\label{para:match}
\Phi(\tau) \sim\Gamma\tau^{2}\quad\mbox{as}\quad
\tau\to\infty\quad\text{for some}\quad\Gamma>0\,.
\end{equation}
Finally, the exists a unique solution of (\ref{para:match}) with matching
condition
\[
\Phi(\tau) \sim-\tilde{K}\tau+o(1) \quad\mbox{as}\quad\tau\to
\infty\,\quad\text{for some}\quad\tilde{K}>0\,.
\]
It also satisfies that there exists a finite $\tau_{\ast}$ such that
\begin{equation}\label{back:zero}
\Phi(\tau)\to0 \quad\mbox{as}\quad\tau\to(\tau_{\ast})^{+}\,.
\end{equation}
All other solutions satisfy (\ref{para:match}) for increasing $\tau$, and,
for decreasing $\tau$, either (\ref{back:zero}) or
\[
\Phi(\tau) \sim\tilde{\Gamma}\tau^{2} \quad\mbox{as}\quad\tau\to
-\infty\quad\text{for some}\quad\tilde{\Gamma}>0
\] 
holds.
\end{theorem}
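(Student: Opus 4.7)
My approach would be to reduce (\ref{ecuacionminima}) to the planar autonomous system (\ref{phi0system}) via the substitution $u=\Phi^{1/3}\Phi_{\tau}$, $v=\Phi^{5/3}\Phi_{\tau\tau}$, $dz/d\tau=\Phi^{-4/3}$ of (\ref{V1E1})--(\ref{V1E2}). Because the right-hand side of (\ref{ecuacionminima}) carries no inhomogeneous or $\Phi$-dependent correction to $1/\Phi^{3}$, the $(u,v)$ equations decouple from $\Phi$ and coincide exactly with (\ref{phi0system}); the relations $d\Phi/dz=u\Phi$ and $d\tau=\Phi^{4/3}dz$ then recover $\Phi(\tau)$ from any $(u,v)$-trajectory, modulo the translation and scaling symmetries $\Phi(\tau)\mapsto\lambda^{-3/4}\Phi(\lambda(\tau-c))$ of (\ref{ecuacionminima}). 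All dynamics are therefore governed by the phase portrait furnished by Lemma~\ref{separa} (the attractive separatrix $v=\bar v(u)$) and Lemma~\ref{tozero-bwds} (the global flow), with the unique critical point $(u_{e},v_{e})$ corresponding, after undoing the substitutions, to the self-similar solution $\Phi=(64/15)^{1/4}(\tau-\tau_{\ast})^{3/4}$.

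The rest of the proof would consist in translating asymptotics of $\Phi(\tau)$ into asymptotics of $(u,v)(z)$. The linear matching $\Phi\sim -K\tau$ as $\tau\to-\infty$ forces $u\to-\infty$ with $v/u^{2}\to-1/3$, so such a trajectory must enter along $\bar v$ at $u\to-\infty$; conversely, linearising the $(u,v)$ flow along $\bar v$ at that end produces exactly this $\Phi$, so the symmetries together with the unique entry onto the separatrix give existence and uniqueness of a matched solution. Propagating this trajectory forward in $z$, Lemma~\ref{tozero-bwds} forces it to exit with $u\to+\infty$, which bookkeeping identifies with parabolic growth $\Phi\sim\Gamma\tau^{2}$ and yields (\ref{para:match}). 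The solution with matching $\Phi\sim -\tilde K\tau+o(1)$ as $\tau\to+\infty$ is the time-reversed version of the same construction; Lemma~\ref{tozero-bwds} then forces its backward-$z$ behaviour to be convergence to $(u_{e},v_{e})$, which via the self-similar identification above translates to $\Phi\to 0$ at some finite $\tau_{\ast}>-\infty$, giving (\ref{back:zero}). For any trajectory off $\bar v$, Lemma~\ref{tozero-bwds} forces $u\to+\infty$ as $z$ increases (parabolic growth as $\tau\to+\infty$), while backward in $z$ the trajectory either escapes with $u\to+\infty$ (yielding $\tilde\Gamma\tau^{2}$ growth) or spirals into $(u_{e},v_{e})$ (yielding finite-time collapse to zero), providing the stated dichotomy.

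The main obstacle I expect is controlling the $o(1)$ error in the linear matchings. This requires linearising the flow along $\bar v$ in the $u\to\pm\infty$ limits and showing that $\Phi(\tau)+K\tau\to 0$, rather than merely $o(\tau)$; concretely, one needs the rate at which $\bar v(u)+u^{2}/3$ decays as $|u|\to\infty$, propagated through the quadratures $\Phi=\exp(\int u\,dz)$ and $\tau=\int\Phi^{4/3}dz$. This computation is analogous to the expansion near the critical point $(u_{e},v_{e})$ that yielded (\ref{P2:prime}) in Lemma~\ref{Tapas2}, but must now be performed at the far end of the separatrix where the nonlinear terms in (\ref{phi0system}) dominate. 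Once this expansion is in hand, the remaining steps — uniqueness of the trajectory entering or leaving $\bar v$, identification of $(u_{e},v_{e})$ with the self-similar solution, and the generic dichotomy — reduce to routine consequences of the planar phase-plane analysis recorded in this appendix.
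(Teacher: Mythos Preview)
Your strategy coincides with the paper's (which defers to \cite{CV} for full details): reduce (\ref{ecuacionminima}) to the planar system (\ref{phi0system}) via (\ref{SEphi0-trans}), and read off all asymptotic regimes of $\Phi$ from the two separatrices $\bar v$, $\hat v$ of Lemma~\ref{separa} together with the global description in Lemma~\ref{tozero-bwds}.

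Two points need correction, however. First, the linear matching $\Phi\sim-K\tau$ as $\tau\to-\infty$ does \emph{not} give $v/u^{2}\to-1/3$. A direct computation from $\Phi\sim K|\tau|$, $\Phi_{\tau}\sim-K$, $\Phi_{\tau\tau}\sim 1/(2K^{3}\tau^{2})$ yields $u\sim -K^{4/3}|\tau|^{1/3}$ and $v\sim \tfrac{1}{2}K^{-4/3}|\tau|^{-1/3}$, hence $uv\to-\tfrac{1}{2}$, i.e.\ $v\sim-1/(2u)$; this is exactly the $u\to-\infty$ asymptotic of $\bar v$ recorded in Lemma~\ref{separa}(i), and the isocline $\Gamma_{1}=\{v=-u^{2}/3\}$ plays no role in this identification. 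Second, (\ref{ecuacionminima}) is \emph{not} invariant under $\tau\mapsto-\tau$ (the right-hand side $1/\Phi^{3}$ keeps its sign, whereas $\Phi_{\tau\tau\tau}$ does not), so the second special solution cannot be obtained by time-reversal. The correct object is the separatrix $\hat v$ of Lemma~\ref{separa}(ii), a genuinely distinct orbit emanating from $(u_{e},v_{e})$ as $z\to-\infty$ with $\hat v(u)\sim-1/(2u)$ as $u\to+\infty$; it is not $\bar v$ run backwards. With these two fixes your outline goes through and matches the paper's argument.
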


The proof of Theorem~\ref{matchsolution} is done by a series of lemmas. The
crucial step is to apply the transformation
\begin{equation}\label{SEphi0-trans}
\frac{d\Phi}{d\tau}=\Phi^{-\frac{1}{3}}\,u\,,
\ \frac{d^{2}\Phi}{d\tau} =\Phi^{-\frac{5}{3}}\,v \,,\ d\tau
=\Phi^{\frac{4}{3}}dz\,,
\end{equation}
that reduces (\ref{ecuacionminima}) to the system
\begin{equation}\label{phi0system}
\frac{d\Phi}{dz}=u\,\Phi\, , \quad\frac{du}%
{dz}=v+\frac{1}{3}u^{2}\,, \quad\frac{dv}{dz}=1+\frac{5}{3}u\,v\,.
\end{equation}
The lemmas then give the behaviour of the corresponding trajectories and are given
below for reference.

The last two equations in (\ref{phi0system}) can be studied independently by
means of a phase-plane analysis. The isoclines of this system are $\Gamma_{1}=
\{(u,v):\ v+\frac{1}{3}u^{2}=0\}$, that has $du/dz=0$, and $\Gamma_{2}=
\{(u,v):\ 1+\frac{5}{3}v\,u=0\}$, that has $dv/dz=0$. The only critical point
is $p_{e}=(u_{e},v_{e})=((9/5)^{\frac{1}{3}},-(1/3)(9/5)^{\frac{2}{3}})$, and
linearisation gives two complex eigenvalues with positive real part, namely
$\lambda_{\pm}=(1/2)(1/15)^{\frac{1}{3}}(7\pm\sqrt{11}i)$.

We distinguish five regions, $R_{1}$ to $R_{5}$, in the phase plane that are
separated by the isoclines. These are depicted in Figure~\ref{phase-plane} 
where the direction field is also shown. 
\begin{figure}[th]
\begin{center}
\input{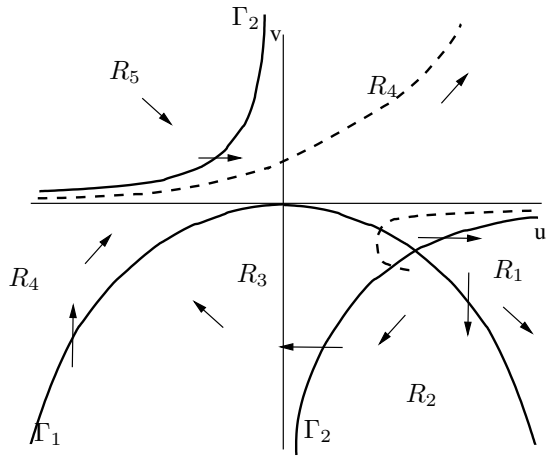}
\end{center}
\caption{Phase portrait associated to (\ref{phi0system}) showing the direction
field. The thick solid lines represent the isoclines and the dashed ones the
separatrices.}%
\label{phase-plane}%
\end{figure}

Standard arguments imply that any orbit on the phase plane eventually crosses
the isoclines into the region $R_{4}$ forwardly in $z$. We recall that 
$R_{4}=\{(u,v): \ -u^{2}/3<v<-3/(5u) \ \mbox{if} \ u<0\,, \ v>\max(-u^{2}/3,-3/5u) \ \mbox{if}\ u>0\}$ 
and the field in it satisfies $du/dv>0$. 
If, however, an orbit has $(u,v)\in R_{4}$ at some value of $z$, it is possible to discern from
which of the regions is coming from for smaller values of $z$ by identifying
the separatrices of the system. We have the following result.
\begin{lemma}[Separatrices]\label{separa}
\begin{enumerate}
\item There exists a unique orbit $v=\bar{v}(u)$ in the phase plane associated
to system (\ref{phi0system}) that is contained in $R_{4}$ for all
$u\in\mathbb{R}$. Moreover, $\bar{v}(u)$ has the following asymptotic
behaviour
\[
\bar{v}(u) =\frac{u^{2}}{2} + O\left(  u^{\frac{4}{5}}\right)
\quad\mbox{as}\quad u\to\infty\,,\quad\bar{v}(u) = -\frac{1}{2u}(1+o(1))
\quad\mbox{as}\quad u\to-\infty
\]

\item There exists a unique orbit $v=\hat{v}(u)$ in the phase plane associated
to system (\ref{phi0system}) that has the following asymptotic behaviour
\[
\hat{v}(u)= -\frac{1}{2u}\left(  1+o\left(  1\right)  \right)
\quad\mbox{as}\quad u\to+\infty
\]
and
\begin{equation}\label{W3E6e}
|(u,\hat{v})-(u_{e},v_{e})|\leq C e^{\mbox{Re}(\lambda_{+})z}
\quad\mbox{as}\quad z\to-\infty\,.
\end{equation}

\end{enumerate}
\end{lemma}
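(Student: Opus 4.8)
## Proof proposal for Lemma~\ref{separa}

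The plan is to obtain each separatrix as the boundary of a suitable trapping region in the $(u,v)$ phase plane, using the structure of the five regions $R_1,\dots,R_5$ and the direction field displayed in Figure~\ref{phase-plane}. I will treat the two items separately, but both rely on the same monotonicity principle: inside $R_4$ one has $du/dz>0$, so $v$ can be regarded as a function $v(u)$ there, and the equation $dv/du = (1+\tfrac{5}{3}uv)/(v+\tfrac{1}{3}u^2)$ governs orbits until they leave $R_4$.

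\textbf{Item (i): the orbit trapped in $R_4$ for all $u$.} First I would establish the two asymptotic regimes that a bounded-away-from-the-isocline orbit must obey. As $u\to+\infty$ inside $R_4$, dominant balance in $dv/du$ forces $v\sim u^2/2$, with the correction being $O(u^{4/5})$ by substituting $v=u^2/2+w$ and estimating $w$; as $u\to-\infty$ the orbit must hug the isocline $\Gamma_2=\{1+\tfrac53 uv=0\}$, giving $v\sim -1/(2u)$ (the factor $\tfrac12$ rather than $\tfrac35$ coming from the first-order correction, analogous to the computation in \cite{CV}). Existence and uniqueness of an orbit realizing \emph{both} asymptotics simultaneously is then a shooting argument in $u$: parametrize orbits entering $R_4$ from below (through $\Gamma_1$) by the value of $u$ at crossing; orbits that cross too early peel off through $\Gamma_2$ into $R_3$ for decreasing $u$ (equivalently, escape forward), while orbits crossing too late blow up faster than $u^2$; the separating orbit is the unique one that stays in $R_4$. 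Monotonicity of the exit behaviour in the shooting parameter — guaranteed because two orbits in $R_4$ cannot cross and the field $du/dv>0$ orders them — gives uniqueness. I expect the main obstacle here to be the backward-in-$z$ (i.e.\ $u\to-\infty$) control: one must show the trapped orbit cannot spiral into the critical point $p_e=(u_e,v_e)$ nor cross $\Gamma_2$, which requires a careful barrier argument pinning $\bar v(u)$ between $-u^2/3$ and $-3/(5u)$ uniformly and then bootstrapping to the sharp constant $-1/(2u)$.

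\textbf{Item (ii): the orbit converging to $p_e$ as $z\to-\infty$.} Since the linearization at $p_e$ has eigenvalues $\lambda_\pm=\tfrac12(1/15)^{1/3}(7\pm\sqrt{11}\,i)$ with positive real part, $p_e$ is an unstable spiral, so its \emph{unstable} manifold is the whole local neighbourhood — every orbit near $p_e$ satisfies \eqref{W3E6e}. The content of item (ii) is that there is a \emph{unique} global orbit whose backward trajectory limits to $p_e$ \emph{and} whose forward branch has the far-field behaviour $\hat v(u)\sim -1/(2u)$ as $u\to+\infty$. I would argue: any orbit emanating from $p_e$ (backward) enters $R_4$ forwardly and then must, by the same dominant-balance analysis as in (i), either blow up like $\Gamma u^2$ or track $\Gamma_2$ to $-1/(2u)$; a further shooting argument on the one-parameter family of orbits leaving the small circle around $p_e$ isolates the unique one with the $-1/(2u)$ behaviour. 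Uniqueness again follows from the non-crossing of phase-plane orbits together with monotone dependence of the far-field exponent on the shooting angle. The estimate \eqref{W3E6e} is then immediate from the stable/unstable manifold theorem applied in reverse time near the hyperbolic spiral $p_e$.

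Throughout, I would lean on the fact — already recorded in the paragraph preceding the lemma — that every orbit eventually enters $R_4$ forwardly and that in $R_4$ the field has $du/dv>0$; this reduces all the shooting to monotone one-dimensional problems. The delicate points are purely the sharp asymptotic constants ($u^2/2$ and $-1/(2u)$), which come from one round of dominant-balance-plus-correction and are the kind of computation carried out in \cite{CV}; I would cite that reference for the detailed estimates rather than reproduce them.
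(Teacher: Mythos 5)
The paper does not prove this lemma: Appendix~\ref{sec:summary:II} opens by stating that the results there ``have been shown in \cite{CV}'' and records Lemma~\ref{separa} only ``for reference.'' So there is no internal proof to compare against, and your attempt to reconstruct one is, in spirit, what one would expect the argument in \cite{CV} to look like. That said, the dichotomy you state for item (i) is wrong and, as written, would not isolate the separatrix: you claim that ``orbits crossing too late blow up faster than $u^2$,'' but no such orbits exist. Substituting $v \sim c u^2$ as $u \to +\infty$ inside $R_4$ into $dv/du=(1+\tfrac53 uv)/(v+\tfrac13 u^2)$ forces $2c(3c+1)=5c$, i.e.\ $c=1/2$, so \emph{every} orbit that escapes forward in $R_4$ with $v$ unbounded grows like $u^2/2$; the one-parameter family differs only at the subleading $O(u^{4/5})$ level (the homogeneous mode of the linearisation there is $\propto u^{4/5}$). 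Consequently, uniqueness of $\bar v$ cannot come from the forward growth rate. What actually singles out $\bar v$ is the backward, $u \to -\infty$, behaviour: the correction $w$ to the asymptotic $v \sim -1/(2u)$ satisfies $dw/du \approx 5w/u$, whose homogeneous solution $\propto u^5$ diverges, so at most one orbit has that asymptotic, and the shooting must separate orbits that exit $R_4$ backward through $\Gamma_1$ at finite $z$ from the single orbit that does not. The same computation at $u\to+\infty$ shows the $-1/(2u)$ asymptotic alone already pins down $\hat v$ uniquely, so the convergence to $p_e$ in item (ii) is a consequence of the $u\to+\infty$ asymptotic (plus Lemma~\ref{tozero-bwds}), not a further shooting constraint as your proposal suggests. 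Reorganising the shooting around these two ``unique decaying branch'' observations, rather than around a nonexistent faster-than-$u^2$ blow-up, would close the gap; the rest of your outline (trapping region, non-crossing of planar orbits, instability of $p_e$) is sound, modulo the detailed estimates that both you and the paper defer to \cite{CV}.
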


\begin{lemma}\label{tozero-bwds} All orbits associated to solutions of (\ref{phi0system})
enter $R_{4}$. Those that are below the separatrix $v=\bar{v}(u)$ come from the
critical point $p_{e}$ as $z\to-\infty$ and satisfy (\ref{W3E6e}). All other
orbits, except for $\bar{v}$, come from the region $R_{5}$.
\end{lemma}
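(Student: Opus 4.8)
The plan is to exploit the phase-plane structure of the planar subsystem given by the last two equations of (\ref{phi0system}), which is decoupled from $\Phi$. First I would recall the elementary flow-direction analysis that has already been carried out in the paragraph preceding Lemma~\ref{separa}: the isoclines $\Gamma_1=\{v+u^2/3=0\}$ and $\Gamma_2=\{1+5uv/3=0\}$ partition the plane into the five regions $R_1,\dots,R_5$, and a direct check of the signs of $du/dz$ and $dv/dz$ in each region shows that the only way the field can leave $R_1$, $R_2$, $R_3$ or $R_5$ is by entering $R_4$, while $R_4$ is forward-invariant except that trajectories there may spiral toward $p_e$. Thus every orbit eventually enters $R_4$ for large $z$; this is the first assertion. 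The main work is then to classify, for an orbit sitting in $R_4$ at some time $z_0$, where it comes from as $z\to-\infty$.

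The key device is the attracting separatrix $v=\bar v(u)$ from Lemma~\ref{separa}\textit{(i)}, which is the unique orbit lying in $R_4$ for all $u\in\mathbb{R}$, together with the asymptotics $\bar v(u)\sim u^2/2$ as $u\to+\infty$ and $\bar v(u)\sim -1/(2u)$ as $u\to-\infty$. I would argue that $\bar v$ divides the (open, connected) region $R_4$ into two pieces: the part above $\bar v$ and the part below. Because $R_4$ is forward-invariant and $\bar v$ is itself an orbit, no trajectory can cross $\bar v$; hence an orbit in $R_4$ is, for all $z$ for which it stays in $R_4$, either identically $\bar v$, or strictly above it, or strictly below it. For an orbit strictly below $\bar v$: running $z$ backwards, the orbit cannot exit $R_4$ through $\bar v$, and the flow-direction analysis in $R_4$ (where $du/dv>0$, and where below $\bar v$ the component $v$ is squeezed between $-u^2/3$ and the separatrix) forces $(u(z),v(z))$ to remain in a bounded neighbourhood of $p_e$ as $z\to-\infty$ — it can only accumulate on the single critical point $p_e$, since there are no other critical points and no periodic orbits (the eigenvalues $\lambda_\pm$ at $p_e$ have positive real part, so $p_e$ is a repeller and by the Poincaré–Bendixson trichotomy together with the absence of periodic orbits in $R_4$, confirmed by the divergence/Bendixson–Dulac-type sign argument, the backward $\omega$-limit set must be $\{p_e\}$). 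Entering $p_e$ backwards along an orbit of a linear-repeller-type fixed point then yields the exponential rate $|(u,\hat v)-(u_e,v_e)|\le Ce^{\operatorname{Re}(\lambda_+)z}$, which is exactly (\ref{W3E6e}); note this also identifies that such an orbit coincides, up to the time-translation freedom in $z$, with the orbit $\hat v$ of Lemma~\ref{separa}\textit{(ii)}, and in particular there is a genuine family of orbits below $\bar v$ emanating from $p_e$.

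For an orbit strictly above $\bar v$, the remaining claim is that, traced backwards, it must leave $R_4$ — and it can only do so into $R_5$. This follows because above $\bar v$ the orbit cannot be trapped near $p_e$ (it is separated from $p_e$ by the separatrix), and it cannot stay in $R_4$ for all $z<z_0$ with $u$ bounded (that would force a backward limit point, hence a critical point, in the closure of the region above $\bar v$, of which there is none). So the orbit reaches the boundary of $R_4$; inspecting the three boundary arcs (pieces of $\Gamma_1$ and $\Gamma_2$) and the direction of the field along each, the only arc through which an orbit can enter $R_4$ forwardly while lying above $\bar v$ is the one bordering $R_5$, which gives the final statement. I expect the main obstacle to be the rigorous exclusion of periodic orbits and of pathological backward behaviour (e.g. an orbit spiralling without converging, or escaping to infinity inside $R_4$ above $\bar v$): the cleanest route is a Bendixson–Dulac argument on $R_4$ combined with the explicit quadratic/hyperbolic growth rates of $\bar v$ at $u\to\pm\infty$ to rule out unbounded excursions, after which Poincaré–Bendixson closes the classification. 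All of this is already essentially contained in \cite{CV}, so in the paper the proof can legitimately be compressed to a reference plus the direction-field bookkeeping above.
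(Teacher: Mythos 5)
The paper itself gives no proof of this lemma: Appendix~\ref{sec:summary:II} simply recalls it as established in \cite{CV}, so there is no in-paper argument to compare against. Judged on its own, your sketch has two genuine gaps at the load-bearing steps of the backward classification. The first is the claim that $R_{4}$ is forward-invariant, which is false: on the arc of $\Gamma_{2}$ with $u>u_{e}$ (the lower boundary of $R_{4}$ there) one has $dv/dz=0$ and $du/dz=v+u^{2}/3=u^{2}/3-3/(5u)>0$, so the flow leaves $R_{4}$ across that arc and winds through the neighbouring regions around $p_{e}$ before re-entering. As a result your "squeeze" argument — that a backward orbit below $\bar v$ stays in $R_{4}$, trapped between $\Gamma_{1}$ and $\bar v$, hence stays near $p_{e}$ — breaks down: the backward orbit does exit $R_{4}$ through $\Gamma_{1}$, and in any case the sub-region $\{-u^{2}/3<v<\bar v(u)\}$ is unbounded (as $u\to-\infty$, $\Gamma_{1}\to-\infty$ while $\bar v\to 0^{+}$). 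Some separate argument is required for backward boundedness of these orbits — e.g. trapping them between two successive coils of their own forward spiral toward $\bar v$ — before Poincar\'e--Bendixson can be invoked at all.

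The second gap is the Bendixson--Dulac step: the divergence of the planar field is $\partial_{u}(v+u^{2}/3)+\partial_{v}(1+5uv/3)=7u/3$, which changes sign across $u=0$, and $R_{4}$ (and any would-be periodic orbit enclosing $p_{e}$) straddles $u=0$. So the "divergence sign argument in $R_{4}$" does not exist as stated; one needs either a genuine Dulac multiplier or a different mechanism (for instance, that orbits spiraling out of the unstable focus $p_{e}$ accumulate forward on the unbounded separatrix $\bar v$, which precludes any enclosing periodic orbit). Both gaps are repairable, and the correct structure you have in mind — $\bar v$ separates the plane into two invariant components, the one containing $p_{e}$ has $\alpha$-limit $\{p_{e}\}$ with the exponential rate from the linearization, the other is backward-trapped between $\bar v$ and $\Gamma_{2}$ and must exit into $R_{5}$ — is the right one; but as written the boundedness and no-periodic-orbit steps are not justified, and the forward-invariance of $R_{4}$ on which much of the bookkeeping leans is simply wrong.
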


\section{Analysis of the polynomial solutions of (\ref{ecuacionminima1})}\label{sec:polynomials} 
We identify the functions giving the leading order behaviour of the
solutions of (\ref{S2E3}) if $|\xi|^{2/3}H(\xi)$ becomes large on a bounded
interval around some negative value $\xi=\xi^{\ast}$ that gives a large local maximum of $\Phi$ 
(after the change of variables (\ref{S2E8a})). 

Due to (\ref{S2E8}), these approximating functions, if $\xi\to-\infty$ and $|\xi|^{2/3}H$ is large, are 
\begin{equation}\label{Q1E3}
\bar{\Phi}(\xi) =|\xi|^{\frac{2}{3}}\bar{H}(\xi) \,,
\end{equation}
where $\bar{H}(\xi)$ solves (\ref{ecuacionminima1}). 

Let $\xi=\xi^{\ast}<0$ be such that $\bar{\Phi}(\xi^{\ast})$ is a local maximum. We  shall derive in the 
following some properties of these functions, such us conditions to ensure that they have a double zero 
at a point $\xi^{min}<\xi^{\ast}$. But first we need to normalise them in an adequate way. 

We can readily integrate (\ref{ecuacionminima1}) with given initial conditions at $\xi=\xi^{\ast}$. 
This gives a family of fifth order polynomials that depend on $\xi^{\ast}$, $\bar{H}(\xi^{\ast})$ 
and the first and second derivatives of $\bar{H}$ evaluated at $\xi=\xi^{\ast}$. Using that 
$d\bar{\Phi}(\xi^{\ast})/d\xi=0$ we can write
\[
|\xi^{\ast}|\frac{d\bar{H}(\xi^{\ast})}{d\xi}= \frac{2}{3}\bar{H}(\xi^{\ast})
\]
thus we can eliminate one parameter and obtain that 
\begin{eqnarray}
\bar{H}(\xi) &=&-\frac{(\xi-\xi^{\ast})^{3}}{60}
\left((\xi-\xi^{\ast})^{2}+5\xi^{\ast}(\xi-\xi^{\ast})+10(\xi^{\ast})^{2}\right)
\nonumber
\\
&+&\bar{H}(\xi^{\ast}) \left(1+ \frac{2(\xi-\xi^{\ast})}{3\xi^{\ast}}\right)
+ \frac{d^{2}\bar{H}(\xi^{\ast})}{d\xi^{2}}\frac{(\xi-\xi^{\ast})^{2}}{2} 
\,.\label{int:H:bar}
\end{eqnarray}
We want to characterise the functions $\bar{\Phi}$ in terms of the parameters $\xi^{\ast}$, $\bar{\Phi}(\xi^{\ast})$ and
 $d^{2}\bar{\Phi}(\xi^{\ast})/d\xi^{2}$, however. In order to do that we first compute 
\[
\bar{\Phi}(\xi^{\ast})= |\xi^{\ast}|^{\frac{2}{3}}\bar{H}(\xi^{\ast})\,, 
\quad \frac{d^{2}\bar{\Phi}(\xi^{\ast})}{d\xi^{2}}= -\frac{10}{9}|\xi^{\ast}|^{-\frac{4}{3}}\bar{H}(\xi^{\ast})
+|\xi^{\ast}|^{\frac{2}{3}} \frac{d^{2}\bar{H}(\xi^{\ast})}{d\xi^{2}}
\]
and then define a family of polynomials $P$ that normalises $\bar{H}$ as follows:
\[
\bar{H}(\xi) =|\xi^{\ast}|^{5} P(Z;M,\beta) \,,
\quad Z=\frac{\xi}{\xi^{\ast}}\,, 
\]
thus $P$ solves
\begin{equation}\label{P:Z:eq}
\frac{d^{3} P }{dZ^{3}} = Z^{2}\,,
\end{equation}
and the parameters $M$ and $\beta$ are defined by 
\begin{equation}\label{S4E3}
M=P(1;M,\beta) \quad 
\mbox{and} \quad
\beta= \frac{1}{|\xi^{\ast}|^{\frac{11}{3}}}\frac{d^{2}\bar{\Phi}(\xi^{\ast})}{d\xi^{2}}\,.
\end{equation}
Then, (\ref{Q1E3}) implies that
\begin{equation} \label{H6}
\bar{\Phi}(\xi) =| \xi^{\ast}|^{\frac{17}{3}} (Z)^{\frac{2}{3}} P(Z;M,\beta) \,.
\end{equation}
Changing to these variables in (\ref{int:H:bar}) we obtain  
\begin{equation}\label{S4E4}
P(Z;M,\beta) =\frac{(Z-1)^{3}}{60}(Z^{2}+3Z+6) + \frac{M}{9}(5 Z^{2}-16Z+20) +\frac{\beta}{2}(Z-1)^{2}\,.
\end{equation}

We point out that $Z$ has the opposite sign as $\xi$ and thus we shall be
interested in the region $Z\geq1$, or that one ahead of the local maximum of
$(Z)^{2/3}P$. We next show the existence, for each $M>0$, of a unique $\beta_{\ast}(M)$ 
such that $P(Z;M,\beta)$ has a unique and double root in $\{Z > 1\}$.

\begin{lemma}\label{Pol2} 
Let $P(Z;M,\beta)$ be as in (\ref{S4E4}) with $M>0$ as in (\ref{S4E3}). 
Then, there exists a $\beta_{\ast}(M)<0$ such that
\begin{enumerate}
\item If $\beta>\beta_{\ast}(M)$, $P(Z;M,\beta)$ is strictly positive in
$Z\geq 1$.

\item If $\beta<\beta_{\ast}(M)$ there exists a unique $Z_{r}=Z_{r}(M,\beta)>1$ 
such that $P(Z;M,\beta)>0$ in $1\leq Z<Z_{r}$, $P(Z_{r};M,\beta)=0$ and 
$dP(Z_{r};M,\beta)/dZ<0$.

\item If $\beta=\beta_{\ast}(M)$ there exists $Z_{\ast}(M)>1$ that is a double
zero of $P(Z;M,\beta)$, and $P(Z;M,\beta)>0$ for $1\leq Z\neq Z_{\ast}(M)$.
\end{enumerate}
\end{lemma}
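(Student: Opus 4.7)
The plan is to exploit the affine dependence of $P$ on $\beta$. Since $\partial P/\partial\beta = (Z-1)^{2}/2 \geq 0$, with strict inequality for $Z>1$, the family $\{P(\cdot;M,\beta)\}_{\beta\in\mathbb{R}}$ is strictly increasing in $\beta$ on $(1,\infty)$. Writing $Q(Z):=P(Z;M,0)$, the identity
\[
P(Z;M,\beta) = (Z-1)^{2}\bigl(g(Z) + \beta/2\bigr), \qquad Z>1,
\]
with $g(Z) := Q(Z)/(Z-1)^{2}$, reduces the entire lemma to studying a single scalar function $g$ on $(1,\infty)$. If I can show that $g$ attains a unique, strict global minimum at some $Z_{\ast}(M)\in(1,\infty)$, then putting $\beta_{\ast}(M):=-2g(Z_{\ast}(M))$ handles all three cases at once from the sign of $g+\beta/2$.

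First I would check that $g$ is well-defined and blows up at both endpoints. Expanding gives $Q(Z)=(Z-1)^{3}(Z^{2}+3Z+6)/60 + M(5Z^{2}-16Z+20)/9$, where $Z^{2}+3Z+6$ and $5Z^{2}-16Z+20$ both have negative discriminant; hence $Q>0$ on $[1,\infty)$ with $Q(1)=M$. Thus $g(Z)\to+\infty$ as $Z\to 1^{+}$ (the numerator stays near $M>0$ while the denominator vanishes) and $g(Z)\sim Z^{3}/60\to+\infty$ as $Z\to\infty$, so $g$ attains an interior minimum, which is strictly positive.

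The key computation is uniqueness of the minimiser. A direct differentiation gives
\[
g'(Z) = \frac{h(Z)}{(Z-1)^{3}}, \qquad h(Z):=(Z-1)Q'(Z)-2Q(Z),
\]
and expanding,
\[
h(Z) = \frac{(Z-1)^{3}(3Z^{2}+4Z+3)}{60} + \frac{2M(Z-4)}{3},
\qquad
h'(Z) = \frac{(Z-1)^{2}(3Z^{2}+2Z+1)}{12} + \frac{2M}{3}.
\]
Since $3Z^{2}+2Z+1$ has negative discriminant and $M>0$, $h'(Z)>0$ on $(1,\infty)$, so $h$ is strictly increasing there. Combined with $h(1)=-2M<0$ and $h(Z)\to+\infty$ as $Z\to\infty$, this forces a unique root $Z_{\ast}(M)\in(1,\infty)$ of $h$, which is the unique critical point of $g$ and therefore its unique global minimiser; moreover $g$ is strictly decreasing on $(1,Z_{\ast}(M))$ and strictly increasing on $(Z_{\ast}(M),\infty)$.

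The three cases then drop out of the decomposition $P=(Z-1)^{2}(g+\beta/2)$ together with the relation $P'(Z)=g'(Z)(Z-1)^{2}$ valid at any root $Z>1$ of $P$. For $\beta>\beta_{\ast}(M)$ one has $g+\beta/2\geq g(Z_{\ast})+\beta/2>0$ on $(1,\infty)$ and $P(1)=M>0$, giving (i). For $\beta=\beta_{\ast}(M)$, $g+\beta/2\geq 0$ vanishes only at $Z_{\ast}(M)$, and $g'(Z_{\ast}(M))=0$ forces $P'(Z_{\ast}(M))=0$, so $Z_{\ast}(M)$ is the announced double zero, giving (iii). For $\beta<\beta_{\ast}(M)$, the level set $\{g=-\beta/2\}$ has exactly two preimages by strict monotonicity on each side of $Z_{\ast}(M)$; the smaller, $Z_{r}\in(1,Z_{\ast}(M))$, lies in the region where $g'<0$, whence $P>0$ on $[1,Z_{r})$ and $P'(Z_{r})=g'(Z_{r})(Z_{r}-1)^{2}<0$, giving (ii). The main technical obstacle is isolating the sign of $h'$, i.e.\ uniqueness of the critical point of $g$; once that monotonicity is established, the rest is algebraic bookkeeping in the variable $g+\beta/2$.
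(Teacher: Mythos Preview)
Your argument is correct and takes a genuinely different route from the paper's. The paper defines $\beta_{\ast}(M)$ as the supremum of those $\beta<0$ for which $P(\cdot;M,\beta)$ becomes negative somewhere in $\{Z\geq 1\}$, then uses continuity to locate the tangential zero $Z_{\ast}(M)$ and a Rolle's-theorem count (applied to $P$, $P'$, $P''$, $P'''$, together with $P'''(Z)=Z^{2}>0$) to rule out more than two zeros when $\beta<\beta_{\ast}(M)$. Your factorisation $P=(Z-1)^{2}(g+\beta/2)$ bypasses the sup construction and the Rolle argument entirely: everything is read off from the shape of the single function $g$, and uniqueness of the minimiser comes from the explicit computation $h'>0$. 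A bonus of your route is that the equation $h(Z)=0$ you obtain for $Z_{\ast}(M)$ is exactly the identity $(Z-1)^{3}(3Z^{2}+4Z+3)=40(4-Z)M$ which the paper only derives later, in the proof of the next lemma; so your approach effectively merges the existence statement with the algebraic characterisation of $Z_{\ast}$.

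One small point worth making explicit: for part (iii) the paper checks that $Z_{\ast}(M)$ is a zero of order \emph{exactly} two by excluding a fourth-order zero via $P'''>0$. In your setup this follows immediately from what you have already computed, since at $Z_{\ast}$ one has $g''(Z_{\ast})=h'(Z_{\ast})/(Z_{\ast}-1)^{3}>0$, so $g+\beta_{\ast}/2$ vanishes quadratically and hence $P''(Z_{\ast};M,\beta_{\ast}(M))=(Z_{\ast}-1)^{2}g''(Z_{\ast})>0$. Stating this closes the only implicit step in your write-up.
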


\begin{proof}[Proof]
The polynomial $P(Z;M,\beta)$ is monotonically increasing in $\beta$. Since
$(\frac{5}{9}Z^{2}-\frac{16}{9}Z+\frac{20}{9})>0$, it follows that
$P(Z;M,\beta)>0$ if $Z\geq1$ and $\beta\geq0$. On the other hand given $M>0$
and $Z>1$, $P(Z;M,\beta)<0$ for negative values of $\beta$ if $|\beta|$ is
large enough.

For each $M$ we define
\begin{equation}\label{S4E9}
\beta_{\ast}(M) =\sup\left\{  \beta<0:\ P(Z;M,\beta)<0
\quad\mbox{for some}\quad Z\geq1\right\}  \,.
\end{equation}
Then, by continuity in $\beta$ there exists a $Z_{\ast}(M)$ such that
$P(Z_{\ast}(M) ;M,\beta_{\ast}(M)) =0$. Moreover, 
$dP(Z_{\ast}(M) ;M,\beta_{\ast}(M))/dZ=0$, 
since otherwise there would exists
$\beta>\beta_{\ast}(M)$ and $Z\geq1$ such that $P(Z;M,\beta) <0$.

We now claim that $d^{2}P(Z_{\ast}(M);M,\beta_{\ast}(M))/dZ^{2} \neq 0$. 
Indeed, otherwise the definition of $\beta_{\ast}(M)$ would 
imply that $Z=Z_{\ast}(M)$ is a zero of $P(Z;M,\beta_{\ast}(M))$ of order
four and, in particular, that 
$d^{3}P(Z_{\ast} (M);M,\beta_{\ast}(M))/dZ^{3}=0$. 
But $P$ solves (\ref{P:Z:eq}), thus
\begin{equation}\label{S5E1}%
\frac{d^{3}P(Z;M,\beta_{\ast}(M))}{dZ^{3}}>0\quad
\mbox{for any}\quad Z\neq0\,.
\end{equation}

Suppose that $\beta<\beta_{\ast}(M)$. The monotonicity of $P(Z;M,\beta)$ in
$\beta$, combined with the fact that $P(1;M,\beta) =M>0$ and 
$\lim_{Z\to\infty}P(Z;M,\beta)=\infty$ imply that there exists at least two zeros
of $P(Z;M,\beta)$ in $\{Z\geq1\}$. Actually, there are exactly two zeros of
$P(Z;M,\beta)$ in $\{Z\geq1\}$. For otherwise, there would be four zeros in
$\{Z\geq1\}$ counting multiplicities, and this would imply, by Rolle's
Theorem, the existence of three zeros of $dP(Z;M,\beta)/dZ$ 
in $\{Z\geq1\}$, and iterating the argument, also the existence of two zeros
of $d^{2}P(Z;M,\beta)/dZ^{2}$ in $\{Z\geq1\}$ and at least
one zero of $d^{3}P(Z;M,\beta)/dZ^{3}$ in $\{Z\geq1\}$. But
this contradicts (\ref{S5E1}).

Therefore, $P(Z;M,\beta)$ has exactly two zeros in $\{Z\geq1\}$. The smallest of
which, $Z_{r}$, satisfies $dP(Z_{r};M,\beta)/dZ<0$
(by continuity since $M>0$).
\end{proof}

We now compute the asymptotic behaviour of $\beta_{\ast}(M)$ in the limits
$M\to 0$ and $M\to\infty$.

\begin{lemma}\label{Pol4} For every $M>0$ let $\beta_{\ast}(M)$ and $Z_{\ast}(M)$ 
be as in Lemma~\ref{Pol2}. Then, they satisfy that $1<Z_{\ast}(M)<4$ and 
$\beta_{\ast}(M) <0$, and have the following asymptotic behaviour:
\begin{equation}\label{S5E4}%
\begin{split}
Z_{\ast} (M) \sim1 + (12 M)^{\frac{1}{3}}\,, \quad\beta_{\ast}(M) \sim-
\left(\frac{3M}{2}\right)^{\frac{1}{3}} \,,\\
\quad\frac{d^{2}P(Z_{\ast};M,\beta_{\ast}(M))}{dZ^{2}}
\sim\left(\frac{3 M}{2}\right)^{\frac{1}{3}} \quad \mbox{as}\quad M \to 0 
\end{split}
\end{equation}
and
\begin{equation}\label{S5E5}%
\begin{split}
Z_{\ast} (M) \to4 \,,\quad\beta_{\ast}(M) \sim-\frac{8M}{9} \,, \\ 
\quad\frac{d^{2}P(Z_{\ast};M,\beta_{\ast}(M))}{dZ^{2}}\sim\frac{2M}{9} 
\quad \mbox{as}\quad M\to\infty\,. 
\end{split}
\end{equation}
\end{lemma}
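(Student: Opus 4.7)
The plan is to work from the defining system
\[
P(Z_\ast;M,\beta_\ast)=0, \qquad \partial_Z P(Z_\ast;M,\beta_\ast)=0,
\]
which characterises $(Z_\ast(M),\beta_\ast(M))$ by Lemma~\ref{Pol2}. That lemma also gives $d^2P/dZ^2(Z_\ast)>0$, and since $\partial_\beta P = (Z-1)^2/2>0$ for $Z>1$, the Jacobian of this system in $(Z,\beta)$ is non-singular, so the implicit function theorem renders $Z_\ast(\cdot),\beta_\ast(\cdot)$ of class $C^1$ on $(0,\infty)$. The strict bound $\beta_\ast(M)<0$ is a consequence of $P(Z;M,0)=P_0(Z)+MP_1(Z)\geq M>0$ for all $Z\geq 1$ (so $\beta=0$ admits no double zero). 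For the bound $Z_\ast(M)<4$, I would insert $Z=4$ into the two equations and solve each one for $\beta$: a direct computation yields $\beta=-17/5-8M/9$ from $P(4)=0$ and $\beta=-27/4-8M/9$ from $P'(4)=0$. These never coincide, hence $Z_\ast(M)\neq 4$ for any $M$; combined with continuity and the asymptotics established below, this forces $Z_\ast(M)<4$ uniformly in $M$.

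For the limit $M\to 0$ I would rescale $Y:=Z-1=M^{1/3}\widetilde Y$ and $\beta=M^{1/3}\widetilde\beta$. After dividing the first equation by $M$ and the second by $M^{2/3}$, both reduce to regular $O(M^{1/3})$ perturbations of the limit system
\begin{equation*}
\frac{\widetilde Y^{3}}{6}+1+\frac{\widetilde\beta\,\widetilde Y^{2}}{2}=0,\qquad \frac{\widetilde Y^{2}}{2}+\widetilde\beta\,\widetilde Y=0,
\end{equation*}
whose unique solution with $\widetilde Y>0$ is $\widetilde Y_\ast=12^{1/3}$, $\widetilde\beta_\ast=-(3/2)^{1/3}$. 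The Jacobian of this limit system is non-singular; up to sign its determinant equals the leading part of $d^2P/dZ^2$, namely $\widetilde Y_\ast+\widetilde\beta_\ast=(3/2)^{1/3}$. A second application of the implicit function theorem (at $M=0$ in the rescaled variables) then yields the asymptotics $Z_\ast-1\sim(12M)^{1/3}$, $\beta_\ast\sim-(3M/2)^{1/3}$, and by direct substitution into $d^2P/dZ^2$ we obtain $d^2P/dZ^2(Z_\ast;M,\beta_\ast)\sim(3M/2)^{1/3}$, which is \eqref{S5E4}.

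For the limit $M\to\infty$ the correct scaling is different: $Z_\ast$ stays of order one while $|\beta_\ast|$ grows linearly in $M$, so I would set $\alpha=-\beta/M$ and $\varepsilon=1/M$, turning the system into
\begin{equation*}
\varepsilon P_0(Z)+P_1(Z)-\alpha P_2(Z)=0,\qquad \varepsilon P_0'(Z)+P_1'(Z)-\alpha P_2'(Z)=0.
\end{equation*}
At $\varepsilon=0$ a short Wronskian calculation gives $P_1'P_2-P_1P_2'=(Z-1)(Z-4)/3$, forcing the double zero location to be $Z=4$ (the alternative $Z=1$ being excluded since $P_2(1)=0$), and then $\alpha=P_1(4)/P_2(4)=8/9$. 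Non-degeneracy is supplied by $(P_1-\tfrac{8}{9}P_2)''(4)=\tfrac{10}{9}-\tfrac{8}{9}=\tfrac{2}{9}>0$, so a final invocation of the implicit function theorem delivers $Z_\ast(M)\to 4$, $\beta_\ast(M)\sim-8M/9$, and $d^2P/dZ^2(Z_\ast)\sim 2M/9$, i.e.\ \eqref{S5E5}. The main technical obstacle will be the a priori compactness required to run the $M\to\infty$ implicit function argument: having already shown $Z_\ast<4$, one must also rule out $Z_\ast$ accumulating at $1$ as $M\to\infty$ (which would allow $|\beta_\ast|/M$ to blow up). This is handled by substituting an ansatz $Z_\ast=1+\gamma$ into the relation $P'(Z_\ast)=0$, using the explicit form $P'(1)=-2M/3$, and deducing that $\gamma$ is bounded below by a positive constant independent of $M$.
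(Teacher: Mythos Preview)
Your argument is essentially correct, but it takes a different route from the paper and is considerably more elaborate than necessary.

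The paper proceeds by a single algebraic elimination: writing out the two equations $P(Z_\ast)=0$ and $P'(Z_\ast)=0$ explicitly, then subtracting $(Z-1)/2$ times the second from the first. The $\beta$-terms cancel exactly and one is left with the single relation
\[
(Z_\ast-1)^{3}\bigl(3Z_\ast^{2}+4Z_\ast+3\bigr)=40\,(4-Z_\ast)\,M .
\]
From this, the bound $1<Z_\ast<4$ is immediate (the left side is positive iff $Z_\ast>1$, the right side iff $Z_\ast<4$), and both asymptotic regimes are read off by inspection: as $M\to 0$ one gets $(Z_\ast-1)^{3}\cdot 10\sim 120 M$, while as $M\to\infty$ the left side stays bounded on $[1,4]$, forcing $4-Z_\ast=O(1/M)$. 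The behaviour of $\beta_\ast$ is then recovered by back-substitution.

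Your approach via rescaling and the implicit function theorem also works, and has the merit of being the ``generic'' method one would use if no explicit elimination were available. The cost is real, however: you need separate ad hoc arguments for $Z_\ast\neq 4$, for the smoothness of $Z_\ast(\cdot)$, and for the a priori compactness as $M\to\infty$. The last of these is the one place your write-up is genuinely sketchy; your proposed argument using only $P'(Z_\ast)=0$ and $P'(1)=-2M/3$ does not quite close, since $P''$ involves the unknown $\beta_\ast$. One clean way to finish is to combine both defining equations: if $Z_\ast\to 1$ along some $M_n\to\infty$, then $P(Z_\ast)=0$ forces $\beta_\ast\sim -2M/(Z_\ast-1)^{2}$, and feeding this into $P'(Z_\ast)=0$ gives, after dividing by $M$, a left-hand side tending to $-\infty$ against a bounded right-hand side. (Two minor points: the bound $\beta_\ast<0$ is already part of Lemma~\ref{Pol2}, so you need not re-derive it; and your inequality $P(Z;M,0)\geq M$ is not quite true on $(1,11/5)$, though $P(Z;M,0)>0$ is, and that suffices.)

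In short, both proofs are valid; the paper's algebraic elimination is the more economical one here because the system happens to be linear in $\beta$.
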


\begin{proof}[Proof]
In order to determine $Z_{\ast}(M)$ and $\beta_{\ast}(M)$ we have to solve the
system that results from imposing that $P(Z;M,\beta)$ has a double zero:%
\begin{align}
\frac{(Z-1)^{3}}{60} (Z^{2}+3Z+6) + \frac{M}{9} \left(  5Z^{2} - 16Z + 20
\right)  +\frac{\beta}{2}(Z-1)^{2}  &  =0\,,\label{double:zero1}\\
\frac{(Z-1)^{2}}{12} (Z^{2} + 2Z + 3) +\frac{2M}{9}(5Z-8) + \beta(Z-1)  &=0\,. 
\label{double:zero2}%
\end{align}
Subtracting the second equation of (\ref{double:zero2}) multiplied by
$(Z-1)/2$ to (\ref{double:zero1}) we obtain%
\begin{equation}\label{double:zero3}
(Z-1)^{3}(3Z^{2}+4Z+3) =40(4-Z)M\,,
\end{equation}
the solution of which gives $Z_{\ast}(M)$, the position of the double zero.
Now $\beta_{\ast}(M)$ can be computed from either (\ref{double:zero1}) or
(\ref{double:zero2}) by substituting $Z=Z_{\ast}(M)$. It is clear that
$Z_{\ast}(M) \in(1,4)$ for any $M>0$, since the left-hand side of
(\ref{double:zero3}) is positive for $Z>1$.

It then follows from (\ref{double:zero3}) that, if $M\to0$, $(Z_{\ast}(M)-1)$
is of order $M^{\frac{1}{3}}$ and that, using (\ref{double:zero1}),
$\beta_{\ast} (M)$ behaves like $M^{\frac{1}{3}}$. On the other hand, if
$M\to\infty$, we obtain that $Z_{\ast}(M)\to4^{-}$ and that $\beta_{\ast}(M)$
is of order $M$. The precise asymptotic behaviours stated in (\ref{S5E4}) and
in (\ref{S5E5}) follow easily from (\ref{double:zero3}) and
(\ref{double:zero1}) by using the leading order behaviour of $Z_{\ast}(M)$ in
both limits $M\to 0$ and $M\to\infty$.
\end{proof}

Finally, we identify the largest root of $P$ in the region $Z<1$:
\begin{lemma}\label{Pol5} 
For all $M>0$ and $\beta\in\mathbb{R}$ the following value is
well defined
\begin{equation}\label{max:Zroot}
Z_{0}(M,\beta):=\max\{Z<1:\ P(Z;M,\beta)=0\}\,. 
\end{equation}
If $\beta=\beta_{\ast}(M)$ then, setting $Z_{0}(M):=Z_{0}(M,\beta_{\ast}(M))$
\begin{equation} \label{Z0:Mzero}%
Z_{0}(M)\sim1-\frac{(12M)^{\frac{1}{3}}}{2}\,,
\quad\frac{dP(Z_{0};M)}{dZ}\sim\left(  \frac{3}{2}\right)^{\frac{5}{3}}
M^{\frac{2}{3}}\quad\mbox{as}\quad M\to 0\,, 
\end{equation}
and
\begin{equation}\label{Z0:Minfty}
Z_{0}(M)\sim-\left(  \frac{20M}{3}\right)^{\frac{1}{3}}\,, 
\quad\frac{dP(Z_{0};M)}{dZ}\sim
\quad\frac{1}{3}\left(\frac{20}{3}\right)^{\frac{1}{3}}
M^{\frac{4}{3}}\quad\mbox{as}\quad M\to \infty\,.
\end{equation}
Moreover, in this case, $Z_{0}(M)$ is the only real root in $Z<1$.
\end{lemma}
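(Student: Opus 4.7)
The plan has four ingredients: well-definedness of $Z_{0}(M,\beta)$ for any $(M,\beta)$, uniqueness of the real root in $\{Z<1\}$ when $\beta=\beta_{\ast}(M)$, and the two asymptotic regimes. For existence, expanding $(Z-1)^{3}(Z^{2}+3Z+6)=Z^{5}-10Z^{2}+15Z-6$ shows that $P(Z;M,\beta)$ is a quintic with leading coefficient $1/60$ and no $Z^{4}$ or $Z^{3}$ term. Thus $P\to-\infty$ as $Z\to-\infty$, while $P(1;M,\beta)=M>0$ by (\ref{S4E3}); the set of roots in $(-\infty,1)$ is closed and bounded above, so the maximum in (\ref{max:Zroot}) is attained. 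For uniqueness, I would use (\ref{P:Z:eq}) and Rolle's theorem: since $d^{3}P/dZ^{3}=Z^{2}$, $d^{2}P/dZ^{2}$ is strictly increasing away from $Z=0$ and has at most one real zero; iterating Rolle, $P$ has at most three real zeros counted with multiplicity. Lemma~\ref{Pol2}~\textit{(iii)} supplies a double zero at $Z_{\ast}(M)>1$, so at most one more real zero can exist, namely $Z_{0}(M)<1$.

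For the $M\to 0$ asymptotics I would substitute $Z=1+u$ in (\ref{S4E4}) and retain leading orders when $u,\beta=O(M^{1/3})$:
\[
P(1+u;M,\beta)=\frac{u^{3}}{6}+\frac{\beta u^{2}}{2}+M+o(M),
\]
using $u^{2}+5u+10=10+O(u)$ and $5(1+u)^{2}-16(1+u)+20=9+O(u)$. Writing $u=CM^{1/3}$ and $\beta=BM^{1/3}$ with $B=-(3/2)^{1/3}$ from (\ref{S5E4}), the leading equation becomes $C^{3}-3(3/2)^{1/3}C^{2}+6=0$, which (using $(12)^{1/3}=2(3/2)^{1/3}$) factors as $(C-(12)^{1/3})^{2}(C+(3/2)^{1/3})$. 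The double root $C=(12)^{1/3}$ recovers $Z_{\ast}(M)\sim 1+(12M)^{1/3}$, while the simple root $C_{0}=-(3/2)^{1/3}$ gives $Z_{0}(M)\sim 1-(12M)^{1/3}/2$. Differentiating yields $dP/du\sim u^{2}/2+\beta u$; at $u=\beta/M^{1/3}\cdot M^{1/3}=-(3/2)^{1/3}M^{1/3}$ this equals $(3/2)^{2/3}M^{2/3}/2+(3/2)^{2/3}M^{2/3}=(3/2)^{5/3}M^{2/3}$. The implicit function theorem at the simple root of the leading cubic makes this rigorous.

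For the $M\to\infty$ asymptotics I would expand
\[
60P(Z;M,\beta)=Z^{5}+\left(\tfrac{100M}{3}+30\beta-10\right)Z^{2}+\left(15-\tfrac{320M}{3}-60\beta\right)Z+\left(\tfrac{400M}{3}+30\beta-6\right),
\]
and insert $\beta=\beta_{\ast}(M)\sim -8M/9$ from (\ref{S5E5}) to obtain coefficients $20M/3+O(1)$, $-160M/3+O(1)$, $320M/3+O(1)$. Rescaling $Z=M^{1/3}\tilde{Z}$ and dividing by $M^{5/3}$ gives, as $M\to\infty$, the limit polynomial $\tilde{Z}^{5}+(20/3)\tilde{Z}^{2}=\tilde{Z}^{2}(\tilde{Z}^{3}+20/3)$, whose real zeros are $\tilde{Z}=0$ (double, tracking $Z_{\ast}(M)\to 4$ in the original scale) and $\tilde{Z}=-(20/3)^{1/3}$ (simple); the implicit function theorem then yields $Z_{0}(M)\sim -(20M/3)^{1/3}$. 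For the derivative, $60\,dP/dZ\sim 5Z^{4}+(40M/3)Z-160M/3$, and at $Z=-(20M/3)^{1/3}$ this is $5(20/3)^{4/3}M^{4/3}-(40/3)(20/3)^{1/3}M^{4/3}+O(M)=20(20/3)^{1/3}M^{4/3}+O(M)$, hence $dP/dZ(Z_{0};M)\sim (1/3)(20/3)^{1/3}M^{4/3}$.

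The main subtlety is the uniqueness of the real root in $\{Z<1\}$ when $\beta=\beta_{\ast}(M)$, which rests on the three-real-zero ceiling forced by $d^{3}P/dZ^{3}\geq 0$ together with the double-zero structure at $Z_{\ast}(M)$ supplied by Lemma~\ref{Pol2}. Once that is in place, both asymptotic regimes reduce to standard perturbation arguments at simple roots of the respective limiting cubic and quintic; verifying that $dP/dZ(Z_{0};M)$ does not vanish to leading order confirms that the implicit function theorem applies and that the claimed leading-order expressions are attained.
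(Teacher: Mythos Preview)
Your proof is correct and follows precisely the route the paper sketches in its one-line proof (``follows by continuity, Lemma~\ref{Pol2} and the Implicit Function Theorem''): you establish existence of $Z_0$ via the sign change of the quintic, deduce uniqueness from the Rolle-type zero count combined with the double zero at $Z_\ast$ supplied by Lemma~\ref{Pol2}\textit{(iii)}, and obtain both asymptotic regimes by expanding about the appropriate scale and invoking the implicit function theorem at the simple root of the limiting polynomial. In effect you have carried out explicitly the computations the paper leaves to the reader; the only cosmetic point is that your ``at most three real zeros counted with multiplicity'' is most cleanly justified as ``at most three \emph{distinct} real zeros'' (since $P''=Z^3/3+c$ is strictly monotone), after which the double zero at $Z_\ast$ forces the remaining root in $\{Z<1\}$ to be unique and simple.
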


The proof follows by continuity, Lemma~\ref{Pol2} and the Implicit Function Theorem.

We need to derive some information concerning the derivative of the polynomial
$P(Z;M,\beta)  $ at $Z=Z_{0}(M,\beta)$. 
\begin{lemma}\label{new:pol}
The value $Z_{0}(M,\beta)$ is the unique root of $P(Z;M,\beta)$ in $\{ Z\leq 1\}$. 
Moreover, if $Z_{0}(M,\beta)\geq - 1$, there exists a positive $c_{0}$
independent of $M$ and $\beta$ such that
\begin{equation}\label{J1E6}
\frac{dP(Z_{0}(M,\beta);M,\beta)  }{dZ}\geq c_{0}\max\{M^{\frac{2}{3}}, M \}\,.
\end{equation}
\end{lemma}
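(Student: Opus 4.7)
The plan is to derive a single closed-form expression for $P'$ at any root of $P(\cdot;M,\beta)$ in $\{Z<1\}$, from which both assertions follow at once. The key algebraic input is the pair of factorizations
\begin{equation*}
Z^{5} - 10Z^{2} + 15Z - 6 = -y^{3}(y^{2} - 5y + 10), \qquad Z^{4} - 4Z + 3 = y^{2}(y^{2} - 4y + 6),
\end{equation*}
in the shifted variable $y = 1 - Z$; both can be read off directly by integrating $P'''(Z) = Z^{2}$ backwards from $Z = 1$ using the boundary values $P(1) = M$, $P'(1) = -2M/3$, $P''(1) = 10M/9 + \beta$ obtained from (\ref{S4E4}). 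At any root $Z_{\ast} < 1$, setting $y_{\ast} = 1 - Z_{\ast} > 0$, the relation $P(Z_{\ast}) = 0$ solves linearly for the combination $\tfrac{5M}{9} + \tfrac{\beta}{2}$; substituting this into $P'(Z_{\ast})$ and using the elementary identity $5(y^{2}-4y+6) - 2(y^{2}-5y+10) = 3y^{2}-10y+10$ produces the closed form
\begin{equation*}
P'(Z_{\ast}) = \frac{2M}{3} + \frac{2M}{y_{\ast}} + \frac{y_{\ast}^{2}\bigl(3y_{\ast}^{2} - 10 y_{\ast} + 10\bigr)}{60}.
\end{equation*}
Since $3y^{2} - 10y + 10$ has negative discriminant and hence attains a global minimum of $5/3 > 0$ on $\mathbb{R}$, every term on the right is strictly positive whenever $M > 0$ and $y_{\ast} > 0$.

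Uniqueness in $\{Z \leq 1\}$ then follows immediately. Since $P(1) = M > 0$, no root lies at $Z = 1$, and since $P(Z) \to -\infty$ as $Z \to -\infty$, the intermediate value theorem supplies at least one root in $(-\infty, 1)$. If two such roots $Z_{a} < Z_{b} < 1$ existed, the formula above would force $P'(Z_{a}) > 0$, so $P > 0$ immediately to the right of $Z_{a}$; choosing $Z_{b}$ to be the next zero, $P$ would approach $Z_{b}$ from above, forcing $P'(Z_{b}) \leq 0$ and contradicting the same formula applied at $Z_{b}$. Hence the root is unique and equals $Z_{0}(M,\beta)$.

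For the quantitative bound, the hypothesis $Z_{0}(M,\beta) \geq -1$ translates to $y_{0} \in (0, 2]$, and the middle term alone gives $\tfrac{2M}{y_{0}} \geq M$, which settles the regime $M \geq 1$. For arbitrary $M > 0$, splitting $\tfrac{2M}{y_{0}} = \tfrac{M}{y_{0}} + \tfrac{M}{y_{0}}$, using the crude lower bound $\tfrac{y_{0}^{2}(3y_{0}^{2}-10y_{0}+10)}{60} \geq \tfrac{y_{0}^{2}}{36}$, and applying three-term AM-GM gives
\begin{equation*}
P'(Z_{0}) \;\geq\; \frac{M}{y_{0}} + \frac{M}{y_{0}} + \frac{y_{0}^{2}}{36} \;\geq\; 3\Bigl(\frac{M^{2}}{36}\Bigr)^{1/3} = \frac{3\,M^{2/3}}{36^{1/3}},
\end{equation*}
which handles $M < 1$. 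Taking $c_{0} = \min\bigl\{1,\,3/36^{1/3}\bigr\}$ produces (\ref{J1E6}). The only genuine obstacle is the algebraic manipulation leading to the closed form for $P'(Z_{\ast})$; once that identity is in hand, both the uniqueness statement and the quantitative lower bound reduce to elementary real-variable inequalities.
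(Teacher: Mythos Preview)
Your proof is correct, and it is considerably cleaner than the paper's. The paper argues uniqueness by computing $P''(Z;M,\beta) = \tfrac{1}{3}(Z^3-1) + \tfrac{10M}{9} + \beta$ and using the resulting concavity structure together with $P'(1;M,\beta) = -2M/3 < 0$ to conclude that $P$ has a single interior maximum on $(-\infty,1)$ and hence a unique zero there. For the lower bound (\ref{J1E6}) the paper then splits into cases on the size of $M$ and of $|\beta|/M$; in the large-$M$, large-$|\beta|$ regime it approximates $P$ on $[-1,1]$ by the quadratic $MW_M(Z) = \tfrac{M}{9}(5Z^2-16Z+20) + \tfrac{\beta}{2}(Z-1)^2$, normalizes by $W_M(1)=1$, and bounds the derivative at the root via a separation-of-zeros argument. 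Your route bypasses all of this: by using $P(Z_\ast)=0$ to eliminate the combination $\tfrac{5M}{9}+\tfrac{\beta}{2}$ you obtain a single closed form for $P'(Z_\ast)$ that is manifestly positive for every root in $\{Z<1\}$, which simultaneously forces uniqueness and delivers an explicit constant $c_0$ via AM--GM. What you gain is a proof with no case analysis, no approximation step, and an explicit $c_0$; what the paper's approach retains is a more geometric picture of why $P$ has a single hump on $(-\infty,1)$, which connects naturally to the convexity discussion used elsewhere in Appendix~\ref{sec:polynomials}.
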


\begin{proof}[Proof]
Using (\ref{P:Z:eq}), and differentiating $P(Z;M,\beta)$ then
\[
\frac{d^{2}P(Z;M,\beta)}{dZ^{2}}=\frac{1}{3}(Z^{3}-1) 
+\left(\frac{10M}{9}+\beta\right)
\]
and we obtain that $P(Z;M,\beta)$ is strictly concave
for $Z<1$ if $10M/9+\beta\leq 0$. On the other hand, 
if $10M/9+\beta>0$ we obtain that $P(Z;M,\beta)$ is 
strictly concave for $Z<\tilde{Z}:=-(1-3(10M/9+\beta))^{\frac{1}{3}}$
and strictly convex if $Z>\tilde{Z}$. 
Then, since $dP(1;M,\beta)/dZ=-2M/3$ it follows that there 
exists a $\hat{Z}(M,\beta)<1$ such that $dP(Z;M,\beta)/dZ>0$ if 
$Z<\hat{Z}(M,\beta)$ and $dP(Z;M,\beta)/dZ<0$ if $\hat{Z}(M,\beta)<Z\leq 1$. 
Using now that $P(1;M,\beta)=M>0$ it then follows that $P(Z;M,\beta)$ 
attains a positive maximum in the interval $Z\in(-\infty,1)$ at 
$Z=\hat{Z}(M,\beta)$. Therefore, since $\lim_{Z\to -\infty}P(Z;M,\beta)=-\infty$ 
and the concavity, the function $P(\cdot;M,\beta)$ has a unique zero 
$Z_{0}(M,\beta)\in(-\infty,1)$.

It is easy to prove that for $M$ bounded $dP(Z;M,\beta)/dZ\geq K_{0}>0$ 
uniformly for $\beta\leq 0$, uniformly on $Z\in(-1,1)$ (differentiating (\ref{S4E4})). 
In fact, one can actually show that for $M$ small enough there exists a positive 
constant $c_{0}$ such that $dP(Z;M,\beta)/dZ > c_{0}M^{\frac{2}{3}}$ if $Z\in(-1,1)$. 

In order to obtain (\ref{J1E6}) for large $M$ we first consider the case 
$|\beta| \leq\varepsilon_{0}M$ with $\varepsilon_{0}$ sufficiently small, 
and then (\ref{J1E6}) follows easily $Z\in(-1,1)$ 
Suppose then that $\beta\leq-\varepsilon_{0}M$. Then, if $M$ is large we can approximate 
$P(Z;M,\beta)$ and its derivatives in the interval $Z\in(-1,1)$ by 
\[
\frac{M}{9}(5Z^{2}-16Z+20)  +\frac{\beta}{2}(Z-1)^{2}=M W_{M}(Z)
\]
and its derivatives respectively. 
Observe that then $W_{M}(Z)$ is a quadratic polynomial with bounded
coefficients satisfying $W_{M}(1)=1$. Suppose that $Z_{0}=Z_{0}(M,\beta)\in(-1,1)$.
 Notice that we cannot have any other zero of $W_{M}(Z)$ in the region $Z<1$ 
because then, by continuity, $P(Z;M,\beta)$ would have more than 
one zero in the domain $\{Z<1\}$ and this would contradict the 
statement above. Then, using also that $W_{M}(1)=1$, we can write
\[
W_{M}(Z)  =\frac{(Z - Z_{0})(Z-Z_{1})}{(1-Z_{0})(1-Z_{1})}
\]
with $\frac{1}{(1-Z_{0})(1-Z_{1})}$ bounded and $Z_{1}>1$. 
Therefore $\min\{(1-Z_{0}),(Z_{1}-1)\}  \geq K_{1}>0$, 
uniformly for large $M$. It then follows that $(Z_{1}-Z_{0})\geq 2K_{1}$. 
Then, since $Z_{0}\geq-1$,
\[
\frac{dW_{M}(Z_{0})}{dZ}=\frac{(Z_{0}-Z_{1})}{(1-Z_{0})(1-Z_{1})}
=\frac{(Z_{1}-Z_{0})}{(1-Z_{0})(Z_{1}-1)}
\geq\frac{1}{2}\frac{(Z_{1}-Z_{0})}{(Z_{1}-1)}\geq c_{1}>0\,.
\]
And the result follows.
\end{proof}

\begin{figure}[th]
\begin{center}
\input{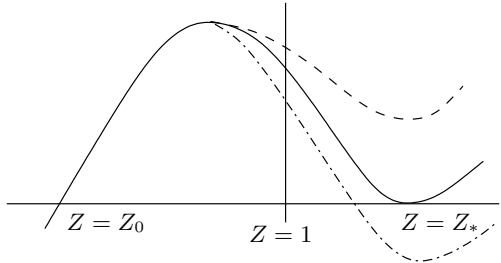}
\end{center}
\caption{Schematic depiction of the polynomials $P(Z;M,\beta)$ for fixed $M$ and different values of $\beta$. The solid line represents a polynomial with $\beta=\beta_{\ast}$, 
the dashed line one with $\beta >\beta_{\ast}$ and the dashed-dotted line one with $\beta< \beta_{\ast}$. The figure also reflects the fact that $dP(1;M,\beta)/dZ<0$ 
for any $\beta<0$ and $M>0$.}
\label{polsP}%
\end{figure}

\section{Analysis of the bouncing region}\label{sec:control}
We seek to reformulate the results concerning the region of very small $H$ 
(or $\Phi$) that can be approximated by (\ref{ecuacionminima2}), that were
obtained in \cite{CV}. The result here is more general, namely the class 
of equations under consideration is
\begin{equation}\label{order1:eq}
\frac{d^{3}\mathcal{H}}{d\zeta^{3}}+R(\zeta)=\frac{\delta}{\mathcal{H}^{3}} \,,\quad \delta>0
\end{equation}
where, if $\zeta$ belongs to some given bounded interval, then
\begin{equation}
| R(\zeta)| +\left|\frac{dR}{d\zeta}(\zeta)\right|\leq C_{0}\,.\label{J3E1}%
\end{equation}

\begin{lemma}\label{MatchingLemma}
Suppose that the equation (\ref{order1:eq}) is satisfied in an interval 
$\zeta\in(\zeta_{0}-\varepsilon_{1},\zeta_{0}+\varepsilon_{1})$ for some 
$\varepsilon_{1}>0$ independent of $\delta$ and where $R$ satisfies (\ref{J3E1}). 
Suppose that:
\begin{equation}\label{J3E2}
\left| \mathcal{H}(\zeta)+K(  \zeta-\zeta_{0})  \right|  
\leq\varepsilon_{2}K\left| \zeta-\zeta_{0}\right| \,, \quad
\left| \frac{d\mathcal{H}}{d\zeta}(\zeta) + K\right|   
\leq\varepsilon_{2} K \,,\quad
\left| \frac{d^{2}\mathcal{H}}{d\zeta^{2}}(\zeta)\right| 
\leq\varepsilon_{2}
\end{equation}
for $\zeta\in(\zeta_{0}-\varepsilon_{1},\zeta_{0}-\varepsilon_{1}/2)$, $K\geq c_{0}>0$ 
and some $\varepsilon_{2}>0$. For any $\varepsilon_{3}>0$, there exists $\varepsilon_{0}>0$ 
independent of $\delta$ and $K$, but in general depending on $c_{0}$, and a real constant $A>0$
such that if $\varepsilon_{1}+\varepsilon_{2}\leq\varepsilon_{0}$ there
exists a $\delta_{0}=\delta_{0}(\varepsilon_{1},\varepsilon_{3},c_{0})>0$ such that for 
$\delta\leq\delta_{0}$ we have:%
\begin{align}
\left| \mathcal{H}(\zeta)-\frac{A K^{5}}{\delta}(\zeta-\zeta_{0})^{2}\right|  & 
\leq\frac{\varepsilon_{3}K^{5}}{\delta}(\zeta-\zeta_{0})^{2}\,,
\nonumber\\
\left| \frac{d\mathcal{H}}{d\zeta}(\zeta)  
-\frac{2AK^{5}}{\delta}(\zeta-\zeta_{0})  \right|  & 
\leq\frac{\varepsilon_{3}K^{5}}{\delta}(\zeta-\zeta_{0})  \,,
\label{J3E3}\\
\left| \frac{d^{2}\mathcal{H}}{d\zeta^{2}}(\zeta)-\frac{2AK^{5}}{\delta}\right|  & 
\leq\frac{\varepsilon_{3}K^{5}}{\delta}\,, \nonumber
\end{align}
for $\zeta\in(\zeta_{0}+\varepsilon_{1}/2,\zeta_{0}+\varepsilon_{1})$.
\end{lemma}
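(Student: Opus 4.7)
My plan is to adapt the rescaled matching argument used in the analogous lemma of \cite{CV} to accommodate the perturbation $R(\zeta)$ in (\ref{order1:eq}). The natural bouncing scales are obtained by balancing $d^{3}\mathcal{H}/d\zeta^{3}$ with $\delta/\mathcal{H}^{3}$ under the incoming linear profile $\mathcal{H}\sim-K(\zeta-\zeta_{0})$: equating $K|\zeta-\zeta_{0}|$ with $\delta/[K|\zeta-\zeta_{0}|]^{3}$ yields $|\zeta-\zeta_{0}|\sim\delta/K^{4}$ and $\mathcal{H}\sim\delta/K^{3}$. I therefore introduce
\[
\mathcal{H}(\zeta)=\frac{\delta}{K^{3}}\,h(s),\qquad s=\frac{K^{4}}{\delta}(\zeta-\zeta_{0}),
\]
which transforms (\ref{order1:eq}) into
\[
\frac{d^{3}h}{ds^{3}}+\frac{\delta^{2}}{K^{9}}\,R\!\left(\zeta_{0}+\frac{\delta s}{K^{4}}\right)=\frac{1}{h^{3}}.
\]
By (\ref{J3E1}) and $K\geq c_{0}>0$, the perturbation term is $O(\delta^{2}/K^{9})$ uniformly on any bounded $s$-window, and the hypothesis (\ref{J3E2}) becomes, in the rescaled variables, $h(s)=-s+o(1)$, $h'(s)=-1+o(1)$, $h''(s)=o(1)$ on $s\in(-\varepsilon_{1}K^{4}/\delta,-\varepsilon_{1}K^{4}/(2\delta))$, a window that recedes to $-\infty$ as $\delta\to 0$.

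The unperturbed limit is $d^{3}h/ds^{3}=1/h^{3}$, i.e., (\ref{ecuacionminima}). Theorem~\ref{matchsolution} furnishes a unique solution $h_{\ast}$ with $h_{\ast}(s)=-s+o(1)$ as $s\to-\infty$, and this same solution satisfies $h_{\ast}(s)\sim\Gamma s^{2}$ as $s\to+\infty$ for some universal positive constant $\Gamma$; I set $A:=\Gamma$. In particular, $h_{\ast}$ is bounded below by a strictly positive constant on any compact interval. The argument then reduces to showing that any solution $h$ of the perturbed equation whose data match those of $h_{\ast}$ up to $\varepsilon_{2}$ at some large negative $s_{1}$ (chosen independent of $\delta$) remains $C^{2}$-close to $h_{\ast}$ on every fixed compact interval $[-S,S]$, and then carrying the quadratic asymptotic out to the exit window $s\in(\varepsilon_{1}K^{4}/(2\delta),\varepsilon_{1}K^{4}/\delta)$. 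The first assertion is a continuous-dependence argument for the integral formulation of the ODE: once a bootstrap keeps $h$ trapped in a compact subset of $(0,\infty)$ along $[-S,S]$, the nonlinearity $h\mapsto 1/h^{3}$ is Lipschitz on a neighbourhood of the range of $h_{\ast}$, and a Gronwall estimate yields $\|h-h_{\ast}\|_{C^{2}[-S,S]}\leq C(S)(\varepsilon_{2}+\delta^{2}/K^{9})$.

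The delicate step is the bootstrap through the minimum of $h_{\ast}$, where the singular term $1/h^{3}$ is largest; it closes because $h_{\ast}$ has a strictly positive minimum (whose value is universal, independent of $K$ and $\delta$), so choosing $\varepsilon_{0}$ and $\delta_{0}$ small in terms of $c_{0}$ keeps $h$ uniformly away from $0$ throughout the crossing. With this in hand I fix $S=S(\varepsilon_{3})$ so large that $|h_{\ast}(s)-As^{2}|\leq(\varepsilon_{3}/4)\,As^{2}$ on $[S/2,S]$ and then shrink $\delta_{0}$ further so that $h$ inherits the same expansion up to $\varepsilon_{3}/4$ on the same interval. For $s\in[S,\varepsilon_{1}K^{4}/\delta]$, triple integration of the rescaled equation produces corrections to the quadratic profile bounded by $O(s^{-3})$ from $1/h^{3}\sim(As^{2})^{-3}$ and by $O(\delta^{2}s^{3}/K^{9})$ from the perturbation; at the endpoint $s=\varepsilon_{1}K^{4}/\delta$ both are small compared to $As^{2}$ since $K\geq c_{0}$ and $\delta\to 0$, so the quadratic asymptotic persists relative to $As^{2}$ with error at most $\varepsilon_{3}/2$. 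Undoing the scaling yields (\ref{J3E3}) with $A=\Gamma$; the main obstacle throughout is precisely the bouncing-through-zero step, and it is exactly the one that was engineered in \cite{CV} via the phase plane of Appendix~\ref{sec:summary:II} (in which the trajectory $v=\hat v(u)$ stays bounded away from the singular line $v=-u^{2}/3$).
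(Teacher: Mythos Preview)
Your rescaling $h(s)=(K^{3}/\delta)\,\mathcal{H}$, $s=(K^{4}/\delta)(\zeta-\zeta_{0})$, the resulting perturbed equation $h'''+(\delta^{2}/K^{9})R=1/h^{3}$, and the identification of the limiting solution $h_{\ast}$ of $d^{3}h/ds^{3}=1/h^{3}$ with $h_{\ast}(s)\sim-s$ at $-\infty$ and $h_{\ast}(s)\sim A s^{2}$ at $+\infty$ are exactly what the paper does.

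The gap is in your stability step. You write that (\ref{J3E2}) becomes $h(s)=-s+o(1)$ on the receding window; in fact the first inequality in (\ref{J3E2}) rescales to $|h(s)+s|\le\varepsilon_{2}|s|$, so the zero of $h$ (equivalently of $\mathcal{H}$) is located only up to a shift of order $\varepsilon_{2}|s_{2}|$ in $s$, i.e.\ $O(\varepsilon_{1}\varepsilon_{2})$ in $\zeta$, and the triple integral of $R$ across the incoming leg adds a further $O(\varepsilon_{1}^{3}/K)$ offset. Hence at any fixed $s_{1}$ the data of $h$ match a \emph{translate} $h_{\ast}(\cdot-a)$ with $a\to\infty$ as $\delta\to0$, not $h_{\ast}$ itself, and your claimed bound $\|h-h_{\ast}\|_{C^{2}[-S,S]}\le C(S)(\varepsilon_{2}+\delta^{2}/K^{9})$ fails in the zeroth-order component. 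The repair is either to track this shift explicitly (in $\zeta$ it is $\ll\varepsilon_{1}$, so it is harmless for (\ref{J3E3})) or, as the paper actually does, to pass to the translation-invariant phase-plane variables $(u,v)$ of (\ref{SEphi0-trans}) and use that the separatrix $\bar v(u)$ of Lemma~\ref{separa}---not $\hat v$---is \emph{attracting in the forward $z$-direction}. That forward attractivity is the mechanism the paper singles out: it absorbs the translation ambiguity and controls the whole unbounded incoming leg in one stroke, whereas your closing remark attributes the phase plane merely to the crossing of the positive minimum of $h_{\ast}$, which is the easy compact-interval part once one knows $\min h_{\ast}>0$.
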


\begin{proof}[Proof]
This result can be adapted from the results proved in \cite{CV}. More precisely, 
the result follows from arguing as in the proof of the lemmas 4.4, 4.5, 4.6 and 4.7 
of this article. We recall the main ideas of the argument here. 

We introduce the variables $h(s)=K^{3}\mathcal{H}(\zeta)/\delta$ and
$s=K^{4}(\zeta-\zeta_{0})/\delta$ and obtain that $h$ satisfies the equation
\begin{equation}\label{h:eq}
\frac{d^{3}h}{ds^{3}} +O\left(  \frac{\delta^{2}}{K^{9}}\right)  =\frac{1}{h^{3}}
\end{equation}
with the matching condition
\[
h(s)=-s  \quad \mbox{as}\quad s \to-\infty\,.
\]
We can then reformulate (\ref{h:eq}) by the transformation (\ref{SEphi0-trans}) 
of Appendix~\ref{sec:summary:II}, thus, to leading order of approximation 
and due to the assumption (\ref{J3E2}), the solution of the resulting perturbation of (\ref{phi0system}) 
follows the separatrix $\bar{v}(u)$ (see Lemma~\ref{separa}). A key point in the argument is
that the system of ODEs (\ref{phi0system}) is integrated forward, the direction for which the separatrix
is stable. This allows to prove that $h(s)\sim A s^{2}$ as $s\to\infty$ for some $A>0$ (see Theorem~\ref{matchsolution}), 
thus the asymptotics of the resulting solution for $\zeta>\zeta_{0}$ can be described by means of (\ref{J3E2}).
\end{proof}

\def\cprime{$'$}

\end{document}